\numberwithin{equation}{section}
\newcommand{\ie}{\emph{i.e.}\ }
\newtheorem{definition}{Definition}[section]
\newtheorem{remark}[definition]{Remark} 
\newtheorem{theorem}{Theorem}[section]
\newtheorem{lemma}[theorem]{Lemma}
\newtheorem{corollary}{Corollary}[theorem]
\newtheorem{proposition}{Proposition}[section]
\newtheorem{assumption}{Assumptions} 
\theoremstyle{plain}
\def\half{\frac{1}{2}}
\newcommand{\bigo}{{\mathcal O}}
\newcommand\bfb{{\mathbf b}}
\newcommand\bfe{{\mathbf e}}
\newcommand\bff{{\bm f}}
\newcommand\bfg{{\mathbf g}}
\newcommand\bfn{{\mathbf n}}
\newcommand\bfng{{\mathbf{n}_{\Gamma}}}
\newcommand\bfm{{\mathbf m}}
\newcommand\bfr{{\mathbf r}}
\newcommand\bfu{{\mathbf{u}}}
\newcommand\bfv{{\mathbf{v}}}
\newcommand\bfw{{\mathbf w}}
\newcommand\bfz{{\mathbf z}}
\newcommand\bfA{{\mathbf A}}
\newcommand\bfB{{\mathbf B}}
\newcommand\bfH{\bm{H}}
\newcommand\bfI{{\bm I}}
\newcommand\bfF{{\mathbf F}}
\newcommand\bfG{{\mathbf G}}
\newcommand\bfK{{\mathbf K}}
\newcommand\bfV{{\mathbf V}}
\newcommand\bfP{{\mathbf{P}}}
\newcommand{\eu}{\bfe_\bfu}
\newcommand{\ep}{\bfe_p}
\newcommand{\el}{\bfe_{\lambda}}
\newcommand{\Ga}{\Gamma}
\newcommand{\Gah}{\Gamma_h}
\newcommand{\Galin}{\Gamma^{(1)}_h}
\newcommand{\ds}{d\sigma}
\newcommand{\dsh}{d\sigma_h}
\newcommand{\nbg}{\nabla_{\Gamma}}
\newcommand{\nbgcov}{\nabla_{\Gamma}^{cov}}
\newcommand{\nbgcovh}{\nabla_{\Gamma_h}^{cov}}
\newcommand{\nbgh}{\nabla_{\Gamma_h}}
\newcommand{\nbglin}{\nabla_{\Gamma_h^{(1)}}}
\newcommand{\divg}{\textnormal{div}_{\Gamma}}
\newcommand{\divgh}{\textnormal{div}_{\Gah}}
\newcommand{\nb}{\nabla}
\newcommand{\R}{\mathbb{R}}
\def \to {\rightarrow}
\newcommand{\Th}{\mathcal{T}_h}
\newcommand{\Thlin}{\mathcal{T}_h^{(1)}}
\newcommand{\Ttilde}{\Tilde{T}} 
\newcommand{\Ih}{\widetilde{I}_h}
\newcommand{\Ihz}{\widetilde{I}_h^{SZ}}
\newcommand{\Ihl}{{I}_h}
\newcommand{\vhl}{\mathbf{v}_h^{\ell}}
\newcommand{\vh}{\mathbf{v}_h} 
\newcommand{\uh}{\mathbf{u}_h}
\newcommand{\wh}{\mathbf{w}_h}
\newcommand{\whl}{\mathbf{w}_h^{\ell}}
\newcommand{\zh}{\mathbf{z}_h}
\newcommand{\qh}{q_h}
\newcommand{\qhl}{q_h^\ell}
\newcommand{\ph}{p_h}
\newcommand{\lh}{\lambda_h}
\newcommand{\thbfu}{\theta_{\bfu}}
\newcommand{\nh}{\bfn_h}
\newcommand{\nhl}{\bfn_h^{\ell}}
\newcommand{\nhtil}{\Tilde{\bfn}_h}
\newcommand{\muh}{\mu_h}
\newcommand{\mh}{\bfm_h}
\newcommand\bfPg{{\mathbf{P}_{\Ga}}}
\newcommand\bfPh{{\mathbf{P}_h}}
\newcommand\Bhg{{\bm B_h}}
\newcommand{\ah}{a_h}
\newcommand{\bhtil}{b_h^{L}}
\newcommand{\btil}{\Tilde{b}}
\newcommand{\bh}{b_h}
\newcommand{\chtil}{\Tilde{c}_h}
\newcommand\norm[1]{||#1||}
\begin{document}

\author{Charles M. Elliott, Achilleas Mavrakis}
\title{SFEM for the Unsteady Navier-Stokes Equations On A Stationary Surface}
\address{Mathematics Institute, Zeeman Building, University of Warwick, Coventry CV4 7AL, UK}
\email{\href{mailto:c.m.elliott@warwick.ac.uk}{c.m.elliott@warwick.ac.uk}, \href{mailto:Achilleas.Mavrakis@warwick.ac.uk}{Achilleas.Mavrakis@warwick.ac.uk}}
\date{}

\maketitle

\begin{abstract}
In this paper we consider a fully discrete numerical method for the unsteady Navier-Stokes  equations on a smooth closed stationary surface in $\mathbb{R}^3$. We use the surface finite element method (SFEM) 
with a generalized Taylor-Hood finite element pair 
$\mathrm{\mathbf{P}}_{k_u}$-- $\mathrm{P}_{k_{pr}}$-- $\mathrm{P}_{k_{\lambda}}$, where we enforce the tangential condition of the velocity field weakly, by introducing an extra Lagrange multiplier $\lambda$. Depending on the richness of the finite element space involving this extra Lagrange multiplier we present a fully discrete stability and error analysis. For the velocity, we establish optimal $L^{2}(a_h)$-norm bounds ($a_h$ - an energy norm) when $k_\lambda=k_u$ and suboptimal with respect to the geometric approximation error when $k_{\lambda} = k_u-1$ (optimal when \emph{super-parametric finite elements} are used). For the pressure, optimal $L^2(L^2)$-norm error bounds are established when $k_\lambda=k_u$. Assuming further regularity assumptions for our continuous problem, we are also able to show optimal convergence (using \emph{super-parametric finite elements} again) when $k_\lambda=k_u-1$. Numerical simulations that confirm the established theory are provided, along with a comparative analysis against a penalty approach.

\end{abstract}

\section{Introduction}
In mathematical modeling, fluid equations posed on surfaces emerge naturally as ways to describe biological phenomena, like lipid membranes, material science in foams and biophysics, see \cite{ArroyoSimone2009,Reuther2018,TorresMilArroyo2019}. For example, assuming fluid films fixed in space leads to the surface Navier--Stokes equations as described in \cite{Miura2020,scriven1960dynamics}. Here, we are interested in surface finite elements schemes for the Navier--Stokes equations posed on sufficiently smooth closed stationary surfaces $\Ga \subset \mathbb{R}^3$. The  \emph{(unsteady) generalised incompressible  surface Navier-Stokes} problem is to: Find the velocity field $\bfu(\cdot,t)  : \Ga\times[0,T] \to \R^3$, surface pressure $p(\cdot,t)  : \Ga\times[0,T] \to \R$ with $\int_\Ga p ds =0$  and Lagrange multiplier $\lambda(\cdot,t) : \Ga\times[0,T] \to \R$ that solves on $\Gamma$ 
\begin{align}
\begin{cases}
    \label{eq: unstead NV Lagrange}
    \rho\big(\partial_t\bfu + (\bfu\cdot \nbgcov) \bfu \big)  - 2\mu\divg(E(\bfu)) + \nbg p + \bfu  + \lambda \bfng = \bff, \\
			\divg \bfu =0, \\
            \bfu \cdot \bfng =0,
    \end{cases}    
\end{align}
for all $t \in [0,T]$, where  $\rho$ the density distribution, $\mu$ is the viscosity coefficient, $\bff \in (L^2(\Gamma))^3$ a given force field, and
$\lambda$ a normal force  field to be determined. Essentially we view $\lambda$ as a Lagrange multiplier for the constraint that the normal component of $\bfu$ is zero. Such models have been presented and studied in \cite{jankuhn2018incompressible}, \cite{elliott2024sfem} for the \emph{steady Stoke} case. Here   the   rate-of-strain (deformation) tensor $E(\cdot)$ \cite{Miura2020}, is defined for an arbitrary vector field $\bfv$, by 	
\begin{equation}
		\begin{aligned}\label{eq: Deformation tensor}
			E(\bfv)  = \half (\nbgcov \bfv + \nbg^{cov,t}\bfv).
		\end{aligned}
	\end{equation}
where  $ \nbgcov \bfv := \bfPg \nbg \bfv $ (with adjoint $\nbg^{cov,t}\bfv=(\nbg \bfv)^t\bfPg$) and  $\bfPg$ denotes the projection to the tangent space of $\Gamma$.

Similarly, for $\bfu =  \bfu_T+ (\bfu\cdot \bfng) \bfng$  and $\bfu_T:=\bfPg \bfu$, we may consider the  \emph{(unsteady) tangential  surface Navier-Stokes} problem, that is : Find the velocity field $\bfu(\cdot,t)  : \Ga\times[0,T] \to \R^3$, surface pressure $p(\cdot,t)  : \Ga\times[0,T] \to \R$ with $\int_\Ga p ds =0$  that solves on $\Gamma$ that solves on $\Gamma$ 
\begin{align}
\begin{cases}
    \label{eq: unstead NV}
    \rho\big(\partial_t\bfu_T + (\bfu_T \cdot \nbgcov) \bfu_T\big)  - 2\mu\bfPg\divg(E(\bfu_T)) + \nbg p + \bfu_T= \bfPg \bff, \\
			\divg \bfu_T =0.
    \end{cases}    
\end{align}
for all $t \in [0,T]$. Similar systems have been established and studied in \cite{WilkeNS2021,Chan2016NSManifold} and references therein.

In this work, we  consider a finite element discretization of \eqref{eq: unstead NV Lagrange} using \emph{Taylor-Hood} surface finite elements. Surface finite elements methods (SFEM) have been studied extensively in the literature; see \cite{DziukElliott_acta, EllRan21,DziukElliott_SFEM} for elliptic and parabolic problems on a stationary and evolving surfaces $\Ga$. There has also been extentions to vector or tensor-valued functions for elliptic equations \cite{hansbo2020analysis,Hardering2022}. Recently, there have been numerous studies around surface Stokes problems \cite{demlow2024tangential,bonito2020divergence,reusken2024analysis,elliott2024sfem} and Navier-Stokes \cite{krause2023surface,KrauseDeform_2023,Reuther2018,fries2018higher}
on evolving and stationary surfaces applying the surface finite element methods.

Discretizations of fluid problems on surfaces pose a lot of difficulties, such as  the approximation of  covariant derivatives, geometric quantities, e.g. mean curvature, and the imposition of the tangential condition on the velocity field. There have been a plethora of papers and techniques dealing with such problems, each with its own advantages and disadvantages. The tangential surface Stokes problem has been studied using TraceFem in \cite{jankuhn2021Higherror,jankuhn2021trace,olshanskii2019penalty}, which uses a background bulk mesh, not fitted to the surface. 
Moreover, surface finite element methods have been applied and analyzed, with different formulations, various finite element spaces, and techniques. The main difficulty in all of them is enforcing  the tangential constraint in the discrete space. This would not  lead to $H^1$-conforming finite elements, due to the discontinuity across the edges of the triangulated mesh. One approach that has been employed to address this issue is using the stream function formulation \cite{BranReusSteam2020}, where the tangentiality condition is satisfied exactly.
This approach is $H^1$-conforming, however, it becomes necessary to compute various geometric quantities. Regarding approximations of solutions in the velocity-pressure formulation, approaches that enforce the tangentiality condition strongly, has also been considered. In \cite{bonito2020divergence}, an $H(\divg)$ conforming finite element method with an interior penalty approach to enforce $H^1-$ continuity weakly is analysed, and in \cite{demlow2024tangential} an $H(\divg)$ conforming finite element method is constructed with the help of surface Piola transforms w.r.t. the set of faces of each vertex. These methods are quite complicated to implement numerically. Simpler $H^1$ conforming \emph{Taylor-Hood} finite elements for the velocity-pressure formulation have also been studied, where one relaxes the tangential constraint via a penalty parameter (penalty based formulations) \cite{reusken2024analysis,hansbo2020analysis,hardering2023parametric}
(also see \cite{jankuhn2021Higherror,jankuhn2021trace,olshanskii2019penalty} for the TraceFEM case)
or via a Lagrange multiplier (Lagrange based formulations) \cite{elliott2024sfem,highorderESFEM}. Regarding the penalty formulations, to obtain optimal convergence we see that approximations of a new higher-order normal, and geometric quantities, mainly the mean curvature,  are required. Furthermore, a correct choice of the penalty parameter $\eta$ is also important; see \cite{hardering2023parametric, reusken2024analysis, hansbo2020analysis}. On the other hand, for the Lagrange-based formulation no penalty term arises, but one has to consider the choice of approximation of the extra Lagrange multiplier, as it plays a role in both the stability of the system and the optimal convergence; see \cite{elliott2024sfem}. Depending on this choice, one either establishes optimal convergence (for the full velocity) but with worse conditioning of the underlying system of equations or suboptimal with respect to the geometric approximation error; therefore \emph{super-parametric surface finite elements} have to be used (mainly for the tangential velocity). We follow this approach.




\subsection{Main results}

In this paper, we analyze a fully discrete $H^1$ conforming \emph{Taylor-Hood} surface finite element method $\mathrm{\mathbf{P}}_{k_u}$-- $\mathrm{P}_{k_{pr}}$-- $\mathrm{P}_{k_{\lambda}}$  for the time-dependent surface Navier-Stokes \eqref{eq: unstead NV Lagrange}, where the tangential constraint is enforced via an extra Lagrange multiplier $\lh$. By, $k_u$, $k_{pr}$, $k_{\lambda}$ we represent the degree of polynomials used for the velocity field $\bfu$, pressure $p$ and extra Lagrange multiplier $\lambda$ approximations, respectively. For inf-sup stability reasons we always consider $k_u = k_{pr}+1$, see \cref{Lemma: Discrete inf-sup condition Gah Lagrange UNS,lemma: L^2 H^{-1} discrete inf-sup condition Gah Lagrange UNS}, while we also consider two options in terms of the choice of approximation of $\lambda$, $\underline{k_\lambda = k_u}$ and $\underline{k_\lambda = k_u-1}$. The time-dependent surface Navier-Stokes was studied in \cite{olshanskii2019penalty}, where the authors used a TraceFEM $\mathrm{\mathbf{P}}_1-\mathrm{P}_1$ Taylor-Hood penalty formulation, with pressure stabilization and assumed exact integration over $\Ga$. 
In our case, we consider a $k_g$-order approximation of the surface, $\Gah$, and thus we deal with geometric errors arising from this approximation. That means we have to treat the approximated terms carefully and specifically the inertia term. Moreover, we add the zeroth-order term $\bfu$ in \eqref{eq: unstead NV Lagrange} only so that we can ease some calculations. It is not difficult to see that even without this term the results that will be established still hold (despite this see also \cref{remark: Lower Regularity convergence estimate UNS} where this zeroth-order term might be important in certain cases).


In this paper, we analyze the error between the continuous primitive variables of \eqref{weak lagrange hom NV} and the discrete ones arising from the discrete weak Lagrange formulation \eqref{eq: weak lagrange fully discrete UNS}, where we first prove stability and eventually error bounds. In the  main result \cref{theorem: Velocity Error Estimates UNS} we prove 
$L^{\infty}_{L^2}$ and $L^2_{\ah}$, where $\ah$ a discrete energy norm \eqref{eq: energy norm UNS}, velocity error bounds for both choices of $k_\lambda$. This is established in the case of low regularity assumption; see \Cref{assumption: Regularity assumptions for velocity estimate}, with the help of a new modified surface Ritz-Stokes map \eqref{eq: surface Ritz-Stokes projection UNS} which helps us isolate velocity errors from pressure errors. We notice that for the $\underline{k_\lambda=k_u-1}$ case, due to a limiting geometric error $\bigo(h^{k_g-1})$; see \eqref{eq: Velocity Error Estimates kl=ku-1 UNS}, one has to further assume that $k_g\geq 2$ and use \emph{super-parametric surface finite elements} to obtain optimal convergence.
However, this is not necessary once we choose $\underline{k_\lambda=k_u}$, as is evident in \eqref{eq: Velocity Error Estimates UNS}, which we note also holds in the case of \emph{planar triangulation}, i.e. $k_g=1$. For more details, see  the numerical results in \cref{sec: Numerical results} and specifically \cref{Sec: num Varying curvature surface UNS}.



\begin{tikzcd}
                               &                                                       & \boxed{\tiny\text{Regularity of solutions}} \arrow[rdd] \arrow[ldd] &                                                                            \\
                               &                                                       &                                                        &                                                                            \\
                               & \boxed{\tiny\text{Low}} \arrow[ld] \arrow[rd, "k_{\lambda} = k_u", " \Delta t \leq ch"'] &                                                        & \boxed{\tiny\text{High}} \arrow[d, "k_{\lambda}=k_u-1",
                                                            " \Delta t \leq ch"'] \\
\boxed{\tiny\text{Velocity Error Estimate}} &                                                       &\boxed{ \tiny\text{Pressures Error Estimates}}                       & \boxed{\tiny\text{Pressures Error Estimates} }                                         
\end{tikzcd}\\

The pressure bounds, on the other hand, need a bit more consideration in regards to the regularity assumptions, the choice of the approximation of the extra Lagrange multiplier and the \emph{inf-sup} condition used. This stems from the fact that to find pressure bounds one needs to deal with the discretization of the time derivative $(\uh^n-\uh^{n-1})/\Delta t$, and in turn bound it in an appropriate norm. Considering $\underline{k_{\lambda} = k_u}$, and relatively low (standard) regularity assumptions for the continuous solutions, we can derive optimal $L^2_{L^2} \times L^2_{H_h^{-1}}$ error (stability) pressure estimates, as presented in our main pressure result \cref{theorem: Pressures Error Estimate UNS} ($H_h^{-1}$ a dual norm \eqref{eq: H^-1h definition UNS}). These bounds are established with the help of a discrete inverse Stokes operator \eqref{eq: Discrete inverse Stokes UNS}, the dual energy norm estimate \Cref{lemma: dual estimate UNS} and an $L^2\times H_h^{-1}$ discrete \textsc{inf-sup} condition \eqref{eq: L^2 H^{-1} discrete inf-sup condition Gah Lagrange UNS}, therefore only establishing error bound for $\lh$ in this weaker dual norm. 
On the other hand, if \emph{further regularity assumptions} are made for the velocity; see \cref{assumption: Regularity assumptions for velocity estimate 2 UNS}, we notice that a $W^{1,\infty}$-bound for the standard Ritz-Stokes map \eqref{eq: surface Ritz-Stokes projection std UNS} and the discrete integration by parts formula \eqref{eq: integration by parts UNS} enables us to prove convergence results when $\underline{k_{\lambda}=k_u-1}$, as presented in \cref{theorem: pressure estimate HR UNS}; see \cref{remark: Lower Regularity convergence estimate UNS} for more details. In this case using the $L^2\times L^2$ \textsc{inf-sup} condition \eqref{eq: discrete inf-sup condition Gah Lagrange UNS} instead, we prove, once again, suboptimal results w.r.t. the geometric approximation error, hence the importance of employing \emph{super-parametric finite elements} for optimal convergence.

\subsection{Outline}
In \cref{sec: Differential geometry on Surfaces UNS} we introduce our notation along with our continuous function spaces. The continuous variational formulation of \eqref{eq: unstead NV Lagrange} is briefly introduced in \cref{sec: Weak formulation UNS}, while the temporal and space discretizations are discussed in the next \Cref{sec: discretization UNS}, where we also recall some basic results involving surface lifting, geometric errors and basic interpolation estimates. In \Cref{sec: fully discrete method UNS} we define the finite element spaces and bilinear forms that will be used throughout, and introduce new discrete surface Ritz-Stokes and Leray projections that are necessary leading up to the stability and a-priori error bounds. In that section, lastly, we present our fully discrete variational formulation \eqref{eq: weak lagrange fully discrete UNS}. In \Cref{Sec: Stability analysis UNS} we present the velocity and pressure stability results. The error analysis is discussed in \Cref{sec: error analysis UNS}. In the first part, we consider relatively \emph{low regularity assumptions} and establish velocity error estimates independent of the choice of approximation of the extra Lagrange multiplier $k_\lambda$. Then, we prove error bounds for the pressure that hinge on the choice of $k_\lambda$. Namely, for $k_\lambda=k_u$, with the previous \emph{regularity assumptions} in mind, we prove optimal error pressure bounds in an $L^2_{L^2}\times L^2_{H_h^{-1}}$-norm, while for $k_\lambda=k_u-1$
we prove optimal $L^2_{L^2}\times L^2_{L^2}$-norm error bounds, considering \emph{additional regularity assumptions}. Finally, numerical results are presented in \Cref{sec: Numerical results}, which support the theoretical results.


\section{Notation and Differential Geometry}\label{sec: Differential geometry on Surfaces UNS}
\subsection{The closed smooth surface}
We will consider our surface $\Ga$ to be a closed, oriented, compact $C^m$  two-dimensional hypersurface embedded in $\mathbb{R}^3$, for some fixed $m$. In \cite{elliott2024sfem} it was required to assume that $m\geq 4$. This assumption arises due to technical results related to the proof of the discrete inf-sup stability. We also require this condition here for similar reasons. Note that  since  $\Gamma$ is the boundary of an open set we choose the orientation by setting  $\bfng$ to be the unit outward pointing normal to $\Ga$.  Let $d(\cdot) : \mathbb{R}^3 \to \mathbb{R}$ be the signed distance function, defined in \cite{DziukElliott_acta}, and
for $\delta>0 $, let $U_\delta \subset \mathbb{R}^3$ be the tubular neighborhood $U_{\delta} = \{x \in \R^3\ : |d(x)| <\delta\}$. Then we may define the closest point projection mapping to $\Ga$ as $\pi(x) = x - d(x)\bfng(\pi(x)) \in \Ga$ for each $x \in U_\delta$  for $\delta>0$ small enough, see \cite{GilTrud98,DziukElliott_acta}. Moreover, $\nb d(x) = \bfng(\pi(x))$ for $x \in U_\delta$. We can see that $d(\cdot)$ and $\pi(\cdot)$ are of class $C^m$ and $C^{m-1}$ on $\overline{U_{\delta}}$. 


Using the projection $\pi$ we may extend functions $\bfv : \Ga \to \mathbb{R}^3$  on $\Ga$ to the tubular neighbourhood $U$, by 
\begin{equation}
\label{eq: smooth extension UNS}
    \bfv^e(x) = \bfv(\pi(x)), \ \ \ x \in U_{\delta}.
\end{equation}
This extension is constant in the normal direction of $\Ga$ and thus contains certain useful additional properties in comparison to other regular extensions. We define the orthogonal projection operator onto the tangent plane $\bfPg (x) = \bfI - \bfng(x) \otimes \bfng(x)$ for $x \in \Ga$ that satisfies $\bfPg^T=\bfPg^2=\bfPg$, and $\bfPg \cdot \bfng=0$. The extension of $\bfPg$ to the $\delta-$strip $U_\delta$, is defined as 
\begin{equation}
    \bfP =  \bfI - \bfn(x) \otimes \bfn(x), \ \text{ for } x \in U_\delta,
\end{equation}
where we have used $\bfn:=\bfng^e = \bfng\circ \pi $. We can then easily see that $\bfP|_{\Ga} = \bfPg$.

\subsection{Scalar functions and vector fields}
We start by defining useful quantities for \emph{scalar} functions following \cite{DziukElliott_acta}. For a function $f \in C^1(\Ga)$ we define the \emph{tangential (surface) derivative} of $f$ as 
\begin{equation}
    \nbg f(x) = \bfPg(x) \nb f^e(x), \ \ \text{with } \ \underline{D}_if(x) =  P_{ij}\partial_jf^e(x), \ \ \ x\in\Ga, 
\end{equation}
where in the above we used the \emph{Einstein convention}, i.e. repeated indices are summed up and where $(\nb f^e)_i = \partial_i(f^e)$, $1 \leq i \leq 3$ is the Euclidean derivative and $\nbg f(x) = (\underline{D}_1f(x),$ $ \underline{D}_2f(x),$ $\underline{D}_3f(x))^t$ is a column vector.
As noted in \cite[Lemma 2.4]{DziukElliott_acta} the definition of the tangential gradient is independent of the extension. \\

We now introduce derivatives of surface \emph{vector fields} as in \cite{fries2018higher,Miura2020,elliott2024sfem}. We begin by defining tangential derivatives, analogous to the scalar case, as Euclidean derivatives of extended quantities (this time-vector fields) and proceed to establish the covariant derivatives. 

\begin{definition}[Tangential derivative]
Let $\bfv : \Ga \to \mathbb{R}^3$, $\bfv = (v_1, v_2, v_3)^T$ be a smooth vector field, and $\bfv^e$ the smooth extension in \eqref{eq: smooth extension UNS}, the tangential gradient of this vector field  is defined by 
\begin{equation}\label{eq: tangential derivative UNS}
    (\nbg \bfv)_{ij} = (\nb \bfv^e \bfPg)_{ij} =\partial_l v^e_iP_{jl}  ,
\end{equation}
 where $(\nbg \bfv)_{ij} = \underline{D}_j v_i$. This derivative is expressed as a $3 \times 3$ matrix, which may be presented as 
\begin{equation}\label{eq: tangential derivative matrix UNS}
\nbg \bfv = \nbg \begin{bmatrix}
v_1 \\
v_2 \\
v_3
\end{bmatrix} = \begin{bmatrix}
\underline{D}_1 v_1 & \underline{D}_2 v_1 & \underline{D}_3 v_1\\
\underline{D}_1 v_2 & \underline{D}_2 v_2 & \underline{D}_3 v_2\\
\underline{D}_1 v_3 & \underline{D}_2 v_3 & \underline{D}_3 v_3
\end{bmatrix}.
\end{equation}

\end{definition}
In the above we used the \emph{Einstein convention}, i.e. repeated indices are summed up. We note again that due to the extension \eqref{eq: smooth extension UNS}, $\nbg \bfv = \nb \bfv^e|_{\Ga}$ holds. With the help of the tangential derivatives for vector fields we may define the Weingarten map $\bfH$ on the $\delta-$strip $U_{\delta}$  (see \cite{DziukElliott_acta, GilTrud98}), to quantify the curvature, as
\begin{equation}\label{eq: Weingarten map}
    \bfH := \nbg \bfn =\nbgcov \bfn, \ \text{ with } \ \bfH \bfn=0, \ \ \ \bfH \bfP = \bfP \bfH =\bfH,
\end{equation}
where we observe that it is a tangential (in-plane) tensor to $\Ga$. Note that $\bfH$ is of class $C^{m-2}$ and thus bounded on $\Ga$. It has two non-zero eigenvalues $\kappa_1, \, \kappa_2$, called principal curvatures and the \emph{mean curvature} of $\Ga$ is then obtained as $\kappa:= tr(\bfH)$.

\begin{definition}[Covariant derivatives]\label{def: Covariant derivatives UNS}
Let $\bfv : \Ga \to \mathbb{R}^3$, $\bfv = (v_1, v_2, v_3)^T$ be a smooth vector field, and $\bfv^e$ the smooth extension in \eqref{eq: smooth extension UNS}, the covariant derivative $\nbgcov \bfv$ is then defined by
\begin{equation}
    (\nbgcov \bfv)_{ij} = (\bfPg \nbg \bfv)_{ij} = (\bfPg \nb \bfv^e|_{\Ga} \bfPg)_{ij} = P_{jk}\partial_k v^e_lP_{li},
\end{equation}
where the derivative can be expressed as a  $3 \times 3$ matrix.
\end{definition}
One has to distinguish these two different gradient operators. Notice that the covariant derivative, compared to the tangential derivative defined in \eqref{eq: tangential derivative UNS}, is a tangential tensor field. Using now the Einstein summations we see that the surface divergence of a vector function $\bfv : \Ga \to \mathbb{R}^3$ is given by
\begin{equation}\label{eq: divg vector definition UNS}
\divg(\bfv) = tr(\nbgcov \bfv) = tr(\bfPg \nabla \bfv^e \bfPg)= P_{ik}\partial_kv_l^eP_{li} = tr(\nabla \bfv^e \bfPg) =  tr(\nbg \bfv ).
\end{equation}
One can always split the a vector field $\bfv$ into a tangent and a normal component \ie  $\bfv = \bfv_T + \bfv_n$ with $\bfv_T = \bfPg \bfv$ and $\bfv_n = (\bfv \cdot \bfng)\, \bfng$, where also the following useful formulae holds 
\begin{equation}
    \begin{aligned}\label{eq: split cov UNS}
       \nbgcov \bfv = \nbgcov \bfv_T + \bfH v_n.
       \end{aligned}
\end{equation}
We also define with the help of \eqref{eq: divg vector definition UNS}  the surface divergence of a tensor function $\bfF : \Ga \to \mathbb{R}^{3 \times 3}$:
\begin{equation}
    \begin{aligned}
       \divg(\bfF) :=  \begin{bmatrix}
\divg(\bfF_{1,j}) \\
\divg(\bfF_{2,j}) \\
\divg(\bfF_{3,j})
\end{bmatrix}, \quad j=1,2,3.
       \end{aligned}
\end{equation}
By $(\bfv\cdot\nbgcov)\bfv$ we denote the tangent vector
\begin{equation}
    \begin{aligned}
        \big((\bfv\cdot\nbgcov)\bfv\big)_{i} = v_jP_{jk}\partial_kv_l^eP_{li} = \big(\nbgcov\bfv\bfv_T\big)_{i} = \big(\bfPg\nbg\bfv \bfv_T\big)_{i}.
    \end{aligned}
\end{equation}
Finally, we present the following integration by parts rule \cite{DziukElliott_acta,fries2018higher} for smooth enough functions $\bfv$ and $\xi$:
\begin{equation}
    \begin{aligned}\label{eq: integration by parts cont UNS}
        \int_{\Ga}\bfv\cdot \nbg\xi \, \ds = -\int_{\Ga}\xi \divg\bfv \, \ds + \int_{\Ga}\kappa\xi(\bfv\cdot \bfng) \, \ds.
    \end{aligned}
\end{equation}

\subsection{Function spaces}\label{sec: Function spaces UNS}

We now define function spaces on the surface $\Ga$ as in \cite{DziukElliott_acta}. By $L^p(\Ga)$, $p \in [1,\infty]$, we define the function space that consists of functions $\xi : \Ga \to \mathbb{R}$ that are measurable w.r.t. the surface measure $\ds$, endowed with the standard $L^p$-norms, \ie $\norm{\cdot}_{L^p(\Ga)}$. By $(\cdot,\cdot)_{L^2(\Ga)}$ we denote the usual $L^2(\Ga)$ inner product.
By $W^{k,p}(\Ga)$  we denote the standard function Sobolev spaces as presented in \cite{DziukElliott_acta}, while  we consider $\mathbf{W}^{k,p}(\Ga)=(W^{k,p}(\Ga))^3$  to be the natural extension to vector-valued functions. Specifically, for $p=2$ we write $(H^{k}(\Ga))^n$ with $n=1,3$, which are, in fact, Hilbert spaces. So, for a function  $\xi : \Ga \to \mathbb{R}$ with the corresponding norm is given by
\begin{equation}
    \begin{aligned}
    \norm{\xi}_{H^k(\Ga)}^2 = \sum_{j=0}^m \norm{\nbg^j \xi}^2_{L^2(\Ga)},
    \end{aligned}
\end{equation}
where $\nbg^j$ denote all the weak tangential derivative of order $j$, i.e. $\nbg^j \xi = \underbrace{\nbg \cdot \cdot \cdot \nbg}_{\text{ j times}} \xi$. One can easily extend in the case vector-valued functions pointwise. More specifically for a vector fields $\bfv : \Ga \to \mathbb{R}^3$ the space $\bfH^1(\Ga) = (H^1(\Ga))^3$ is equipped with the following norm
\begin{equation}
    \begin{aligned} \label{eq: H1 norm definition UNS}
    \norm{\bfv}_{H^1(\Ga)}^2 = \norm{\bfv}_{L^2(\Ga)}^2 + \norm{\nbg \bfv}_{L^2(\Ga)}^2.
    \end{aligned}
\end{equation}

\noindent We set 
$$L^2_0(\Ga) :=\{\xi \in L^2(\Ga) | \ \int_\Ga \xi \, \ds =0\},$$
equipped with the standard $L^2$-norm, while the subspace of tangential vector fields is given by 
$$\bfH^1_T := \{\bfv \in H^1(\Ga)^3 \: | \: \bfv \cdot \bfng =0\},$$
endowed with the $H^1$-norm \eqref{eq: H1 norm definition UNS}. We also consider the subspace of \emph{weakly tangential divergence-free} vectors 
\begin{equation}
    \bfV^{div} = \{\bfv \in \bfH^1(\Ga) \, :\, b^L(\bfv,\{q,\xi\}) =0 \text{ for all } \{q,\xi\} \in L^2_0(\Ga)\times L^2(\Ga) \}.
\end{equation}


\section{Variational formulation}\label{sec: Weak formulation UNS}

\subsection{Preliminaries and Bilinear forms}
 \begin{definition}
      For $\bfu, \bfv \in  \bfH^1(\Ga)  $  and $\{q,\lambda\} \in  L_0^2(\Ga) \times L^2(\Ga)$ we define  the following 
      \begin{align}
          \label{eq: regular bilinear a UNS}
          a(\bfu,\bfv) &:= \int_\Ga E(\bfu)  : E(\bfv) \, \ds + \int_{\Ga}  \bfu\cdot\bfv \, \ds,\\
          \label{eq: regular bilinear c UNS}
         c(\bfu;\bfu,\bfv) &:= \int_\Ga (\bfu \cdot \nbgcov) \bfu \cdot \bfv \, \ds = \int_\Ga (\nbgcov \bfu \bfu) \cdot \bfv \, \ds,\\
         b^L(\bfu,\{q,\lambda\}) &:= -\int_\Ga q \ \divg \bfu_T  \, \ds + \int_\Ga \lambda u_n\, \ds.
      \end{align}
Specifically, for divergence-free $\bfu \in \bfH^1_T$ velocities $(\divg(\bfu)=0)$, using integration by parts \eqref{eq: integration by parts cont UNS} we can \emph{skew-symmetrize} $c(\bullet;\bullet,\bullet)$ in the following way
\begin{equation}
    \label{eq: continuous c error formula init UNS}
        c(\bfu;\bfu,\bfv) = \frac{1}{2}\Big(\int_{\Ga}((\bfu\cdot\nbgcov)\bfu) \cdot\bfv \, \ds - \int_{\Ga}((\bfu\cdot\nbgcov)\bfPg\bfv )\cdot\bfu \, \ds\Big).
\end{equation}
 \end{definition}


 The following inequality, known as \emph{\bf surface Korn's inequality}, was established in  \cite{jankuhn2018incompressible}. There  exists a constant $c_K>0$  such that
	\begin{equation}
		\label{eq: Korn inequality UNS}
		 c_K \norm{\bfw}_{H^1(\Ga)}\le \norm{E(\bfw)}_{L^2(\Ga)} + \norm{\bfw}_{L^2(\Ga)}~~\text{ for all } \bfw\in \bfH^1_T.
	\end{equation}
In the case where $\bfw\in \bfH^1_T$ we also define the energy norm 
\begin{equation}\label{eq: cont energy norm UNS}
    \norm{\bfw}_{a} = \norm{E(\bfw)}_{L^2(\Ga)} + \norm{\bfw}_{L^2(\Ga)}.
\end{equation}
We present the following Helmholtz-Leray decomposition, which will be useful later on in order to prove a stability estimate in a discrete dual energy norm; see \cref{lemma: dual estimate UNS} in \Cref{Sec: Stability analysis UNS}.
\begin{lemma}[Helmholtz-Leray decomposition]\label{lemma: Helmholtz-Leray decomposition UNS}
For every $\bfv \in \bfH^1(\Ga)$ there exists unique $\phi \in H^2(\Ga)$, $\bfv_n \in \bfH^1(\Ga)$ and $\Pi^{div}(\bfv)\in \bfV^{div}$ such that 
\begin{equation*}
\bfv = \bfv_n + \Pi^{div}(\bfv) + \nbg\phi.
\end{equation*}

\end{lemma}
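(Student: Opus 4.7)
The plan is to reduce the claim to solvability of the Laplace--Beltrami equation on the closed surface $\Ga$, which is classical.

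\textbf{Step 1: Extract the normal part.} First I would set $\bfv_n := (\bfv \cdot \bfng)\bfng$, so that $\bfv_T := \bfv - \bfv_n = \bfPg \bfv$ is tangential. Since $\bfng \in C^{m-1}(\Ga)$ with $m\geq 4$, both $\bfv_n$ and $\bfv_T$ lie in $\bfH^1(\Ga)$, and the normal component is uniquely determined from $\bfv$. It only remains to split the tangential part $\bfv_T$ as $\Pi^{div}(\bfv) + \nbg\phi$.

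\textbf{Step 2: Solve a Poisson problem for $\phi$.} Motivated by the formal identity $\divg \bfv_T = \divg(\nbg \phi) = \Delta_\Ga \phi$, I would look for $\phi \in H^2(\Ga) \cap L^2_0(\Ga)$ satisfying
\begin{equation*}
\int_\Ga \nbg \phi \cdot \nbg \psi \, \ds \;=\; -\int_\Ga \psi \, \divg \bfv_T \, \ds \qquad \text{for all } \psi \in H^1(\Ga).
\end{equation*}
The compatibility condition for solvability is $\int_\Ga \divg \bfv_T \, \ds = 0$, which follows by applying the integration by parts formula \eqref{eq: integration by parts cont UNS} with $\xi \equiv 1$ and using $\bfv_T \cdot \bfng = 0$. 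Existence and uniqueness of $\phi \in H^1(\Ga) \cap L^2_0(\Ga)$ follow by the Lax--Milgram theorem on the closed subspace of $H^1(\Ga)$ of zero-mean functions, where $\norm{\nbg \cdot}_{L^2(\Ga)}$ is equivalent to the $H^1$-norm via a Poincaré inequality on the compact surface $\Ga$. Elliptic regularity of the Laplace--Beltrami operator on the smooth closed surface $\Ga$ (using $\divg \bfv_T \in L^2(\Ga)$) then upgrades the regularity to $\phi \in H^2(\Ga)$.

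\textbf{Step 3: Define the divergence-free remainder and verify membership in $\bfV^{div}$.} I would set $\Pi^{div}(\bfv) := \bfv_T - \nbg \phi \in \bfH^1(\Ga)$. This field is tangential (both $\bfv_T$ and $\nbg\phi$ are), hence $(\Pi^{div}(\bfv)) \cdot \bfng = 0$, which kills the second term of $b^L(\Pi^{div}(\bfv), \{q, \xi\})$. The first term vanishes too: for every $q \in L^2_0(\Ga)$, approximating $q$ by $H^1$-functions and using the weak formulation of Step 2,
\begin{equation*}
\int_\Ga q \, \divg(\Pi^{div}(\bfv)) \, \ds \;=\; \int_\Ga q\, \divg \bfv_T \, \ds - \int_\Ga q \, \Delta_\Ga \phi \, \ds \;=\; 0.
\end{equation*}
Thus $\Pi^{div}(\bfv) \in \bfV^{div}$ and the decomposition $\bfv = \bfv_n + \Pi^{div}(\bfv) + \nbg\phi$ holds.

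\textbf{Step 4: Uniqueness.} If $\bfv = \bfv_n + \bfw_i + \nbg\phi_i$ for $i=1,2$ with $\bfw_i \in \bfV^{div}$ and $\phi_i \in H^2(\Ga) \cap L^2_0(\Ga)$, then $(\bfw_1 - \bfw_2) + \nbg(\phi_1 - \phi_2) = 0$. Testing against $q = \phi_1 - \phi_2 \in L^2_0(\Ga)$ in the definition of $\bfV^{div}$ after integrating by parts yields $\nbg(\phi_1 - \phi_2) = 0$, so $\phi_1 - \phi_2$ is constant, and the zero-mean normalization forces $\phi_1 = \phi_2$ and hence $\bfw_1 = \bfw_2$.

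The only substantive ingredient is the solvability and $H^2$-regularity for the surface Poisson problem in Step 2, which is standard on a $C^m$ closed surface with $m\geq 4$; the rest is bookkeeping of the orthogonal splitting into normal/tangential and divergence-free/gradient components.
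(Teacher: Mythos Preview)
Your proof is correct and follows essentially the same route as the paper's: extract the normal part, solve the surface Poisson problem $\Delta_\Ga \phi = \divg \bfv_T$ (checking the compatibility condition via integration by parts and invoking elliptic regularity for the $H^2$ upgrade), and set the remainder to be the divergence-free component. Your uniqueness argument in Step~4 is slightly more explicit than the paper's, but the overall strategy is identical.
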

\noindent See Appendix \ref{appendix: leray UNS} for a proof of this decomposition.

\subsection{Lagrange multiplier variational formulations}\label{sec: Lagrange formulation}

Consider \eqref{eq: unstead NV Lagrange} with density distribution $\rho=1$ and viscosity coefficient $\mu=1/2$. By  applying integration by parts, we look for a solution of the following variational problem:\\

\noindent {\bf (LP)}: Given $\bff \in (L^2(\Ga))^3$, determine $\bfu \in L^{\infty}([0,T];L^2(\Ga))\cap L^2([0,T];\bfH^1(\Ga))$, with $\partial_t\bfu \in L^2([0,T];\bfH^{-1}(\Ga))$ and $\{p,\lambda\} \in ( L^2([0,T];L^2_0(\Ga))\times  L^2([0,T];L^2(\Ga)))$ satisfying $ \bfu(\cdot,0)= \bfu_0 \in L^2(\Ga)$ on $\Ga$ such that,
\begin{align}
\begin{cases}
    \label{weak lagrange hom NV}
        \langle\partial_t\bfu,\bfv\rangle_{H^{-1}(\Ga),H^1(\Ga)} + a(\bfu,\bfv) \ + c(\bfu;\bfu,\bfv)\ + \!\!\!\!&b^L(\bfv,\{p,\lambda\}) = (\bff,\bfv)_{L^2(\Ga)} \ \ \ \ \ \text{for all } \bfv\in \bfH^1(\Ga),\\
        &b^L(\bfu,\{q,\xi\})=0 \ \ \  \text{ for all } \{q,\xi\}\in (L_0^2(\Ga)\times L^2(\Ga)),
    \end{cases}
\end{align} 
and for a.e. $t\in [0,T]$. For uniqueness and well-posedness of non-time dependent surface Stokes analogue see \cite{elliott2024sfem}.
In our case, the existence and well-posedness of the second system \eqref{eq: unstead NV} has been proved in \cite{WilkeNS2021}. From this, the uniqueness and existence of our system \eqref{weak lagrange hom NV} can be derived, since their tangent solutions (and therefore pressure) agree; see also \cite{olshanskii2019penalty} for a similar argument in the surface Stokes case. We also have the following \emph{inf-sup} condition result presented in  \cite{elliott2024sfem}. 
\begin{lemma}[Inf-sup Condition]\label{lemma: inf-sup cont lagrange UNS}
   Let $\Ga\in C^2$ and compact. Then there exists a constant $c_b$ such that,
\begin{equation}
    \begin{aligned}\label{infsup lagrange}
         \inf_{\{p,\lambda\} \in (L^2_0(\Ga)\times H^{-1}(\Ga))} \ \sup_{\bfv\in \bfH^1(\Ga)} \frac{\btil(\bfv,\{p,\lambda\})}{\norm{\bfv}_{H^1(\Ga)} \norm{\{p,\lambda \}}_{L^2(\Ga)\times H^{-1}(\Ga)}} \geq c_b > 0.
    \end{aligned}
\end{equation}
\end{lemma}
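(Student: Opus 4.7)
My strategy is a direct \emph{split} argument: for each pair $\{p,\lambda\} \in L^2_0(\Ga) \times H^{-1}(\Ga)$ I build a test field $\bfv \in \bfH^1(\Ga)$ as the sum of a purely tangential part that controls $p$ and a purely normal part that controls $\lambda$. Because the two pieces are orthogonal in the sense that one has zero normal component while the other has zero tangential component, no cross terms appear and the two estimates decouple cleanly.

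First I would invoke the \emph{tangential} surface Stokes inf-sup condition: for every $p \in L^2_0(\Ga)$ there exists $\bfv_p \in \bfH^1_T$ with $\norm{\bfv_p}_{H^1(\Ga)} \le C_1 \norm{p}_{L^2(\Ga)}$ and
\[
-\int_\Ga p \, \divg \bfv_p \, \ds \ge c_1 \norm{p}_{L^2(\Ga)}^2.
\]
This is classical (see the references cited in \cite{elliott2024sfem}) and relies only on $\Ga \in C^2$. Next, for $\lambda \in H^{-1}(\Ga)$ I would use the characterisation of the dual norm to pick $\phi \in H^1(\Ga)$ with $\norm{\phi}_{H^1(\Ga)} \le C_2 \norm{\lambda}_{H^{-1}(\Ga)}$ and $\langle \lambda, \phi \rangle_{H^{-1},H^1} \ge c_2 \norm{\lambda}_{H^{-1}(\Ga)}^2$, then define $\bfv_\lambda := \phi\, \bfng$. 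Since $\bfng$ is at least $C^1$ (and $C^{m-1}$ under the standing assumption $m \ge 4$), multiplication preserves $H^1$-regularity, so $\bfv_\lambda \in \bfH^1(\Ga)$ with $\norm{\bfv_\lambda}_{H^1(\Ga)} \le C_3 \norm{\phi}_{H^1(\Ga)}$.

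Third, I would test with $\bfv := \bfv_p + \bfv_\lambda$. Because $\bfv_p$ is tangential and $\bfv_\lambda$ is normal, one has $\bfv_T = \bfv_p$ and $v_n = \phi$, hence
\[
\btil(\bfv,\{p,\lambda\}) = -\int_\Ga p \, \divg \bfv_p \, \ds + \langle \lambda, \phi\rangle_{H^{-1},H^1} \ge c_1 \norm{p}_{L^2(\Ga)}^2 + c_2 \norm{\lambda}_{H^{-1}(\Ga)}^2.
\]
Combined with $\norm{\bfv}_{H^1(\Ga)} \le C_1 \norm{p}_{L^2(\Ga)} + C_2 C_3 \norm{\lambda}_{H^{-1}(\Ga)} \le C \norm{\{p,\lambda\}}_{L^2(\Ga)\times H^{-1}(\Ga)}$, the desired ratio is bounded below by a positive constant $c_b$ depending only on $c_1, c_2, C_1, C_2, C_3$. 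A weighted combination $\bfv_p + \alpha \bfv_\lambda$ with an optimised $\alpha>0$ could be used if one wants to optimise the constant, but is not needed for the qualitative statement.

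The principal obstacle is the tangential surface Stokes inf-sup used as the first ingredient: its proof on a curved hypersurface is genuinely nontrivial and I would cite the existing argument rather than reproduce it. A minor technical point is confirming that $\phi \bfng \in \bfH^1(\Ga)$ with a uniform constant, which is covered by the $C^2$-regularity (and in fact the stronger $C^m$-regularity used throughout the paper) of the surface. Once these two building blocks are in place, assembling the lower bound is essentially formal, and the fact that $\bfv_p \cdot \bfng = 0$ and $(\bfv_\lambda)_T = 0$ eliminates any interaction between the $p$ and $\lambda$ contributions.
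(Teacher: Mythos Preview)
Your proposal is correct and follows the natural split argument. Note, however, that the paper itself does not give a proof of this lemma: it merely cites \cite{elliott2024sfem} for the result. Your construction---a tangential field $\bfv_p$ from the surface Stokes inf-sup for $p$, plus the normal field $\phi\,\bfng$ with $\phi$ the Riesz representative of $\lambda$---is precisely the standard way such a decoupled inf-sup is established, and is almost certainly what the cited reference does as well. The key observation that $(\bfv_p)_n=0$ and $(\bfv_\lambda)_T=0$ kills the cross terms in $b^L$ is exactly right, and your remark that the surface $C^2$-regularity guarantees $\phi\,\bfng\in\bfH^1(\Ga)$ with a uniform constant is the only technical point that needs checking.
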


 Considering the above lemma and following standard Faedo-Galerkin techniques \cite{Temam1979NavierStokesET}, well-posedness can be established. We omit further details, but one can see that the following a-priori estimates hold; see also \cite{olshanskii2019penalty},
 \begin{equation}
     \begin{aligned}
         \norm{\partial_t\bfu}_{L^2([0,T];H^{-1}(\Ga))} &+ \norm{\bfu}_{L^{\infty}([0,T];L^2(\Ga))} +  \norm{\bfu}_{L^{2}([0,T];H^1(\Ga))}\\
         &+  \norm{\{p,\lambda\}}_{L^{2}([0,T];L^2(\Ga))} \leq \norm{\bfu_0}_{L^2(\Ga)} + \norm{\bff}_{L^{2}([0,T];L^2(\Ga))}.
     \end{aligned}
 \end{equation}

\section{Preliminaries}\label{sec: discretization UNS}
In this section we begin by presenting our temporal discretization scheme (based on the backward Euler time discretization), along with some basic results involving the space discretization, mainly geometric errors arising from the approximation of the discrete surface $\Gah$, surface lifting to $\Ga$ etc., cf. \cite{DziukElliott_acta,EllRan21}. Finally, we introduce some known interpolants and their estimates.

\subsection{Time-stepping scheme}\label{sec: Time-stepping scheme UNS}
For the time discretization we split the time interval $I=[0,T]$ uniformly into $N$ time intervals $I_n=[t_{n-1},t_{n}]$ with nodes $t_n = n\Delta t$, $n=0,1,...,N$ and uniform time step size $\Delta t = t_n - t_{n-1} = T/N$. We approximate the first derivative as followed
\begin{equation}
    \label{eq: discr first derivative}
    \partial_t \bfu \approx \frac{\bfu^n - \bfu^{n-1}}{\Delta t}, 
\end{equation}
where $\bfu^n = \bfu(t^n)$, which corresponds to a $BDF1$ type time discretization. So the semi-discrete variational formulation of \eqref{weak lagrange hom NV} reads as:\\

\noindent For $n=0,1,...\,,N$ find $\bfu^n \in \bfH^1(\Ga)$, $\{p^n,\lambda^n\} \in L^2_0(\Ga)\times L^2(\Ga)$, given $\bfu^{n-1} \in \bfH^1(\Ga)$, $\{p^{n-1},\lambda^{n-1}\} \in L^2_0(\Ga)\times L^2(\Ga)$, such that 
\begin{align}
\begin{cases}
    \label{eq: discretized weak lagrange hom NV}
        (\frac{\bfu^n-\bfu^{n-1}}{\Delta t},\bfv)_{L^2(\Ga)} + a(\bfu^n,\bfv) \ + c(\bfu^{n-1};\bfu^n,\bfv)\ + \!\!\!\!&b^L(\bfv,\{p^n,\lambda^n\}) = (\bff^n,\bfv)_{L^2(\Ga)},\\
        &b^L(\bfu^n,\{q,\xi\})=0,
    \end{cases}
\end{align}
for all $\bfv\in \bfH^1(\Ga)$ and $\{q,\xi\}\in L^2_0(\Ga)\times L^2(\Ga)$.
Notice that since we use $BDF1$ time discretization we have linearized the trilinear form in the following sense, by replacing $\bfu^{n}\nbgcov \bfu^n$ with $\bfu^{n-1}\nbgcov \bfu^n$.


\subsection{Space Discretization}\label{subsection: Space discretization UNS}

\subsubsection{Triangulated surfaces}
We now want to discretize our surface $\Ga$. We construct the discrete triangulated surface $\Gah$.
But first, one approximates $\Ga$ by a polyhedral approximation $\Galin$ contained in the tubular neighborhood $U_\delta$ i.e. $\Galin \subset U_\delta$, whose vertices lie on $\Ga$ and which  is equipped with an admissible and conforming subdivision of triangulation $\Thlin$ so that $\Galin = \cup_{\Ttilde \in \Thlin}\Ttilde$, see also \cite[Section 6.2]{EllRan21}.

We work with polynomial Lagrange finite elements, thus point evaluations (on each vertex) act as our degrees of freedom. So, for $k_g \geq 2$ we consider $\{\phi_1,...,\phi_{n_{k_g}}\}$ be the Lagrange basis functions of degree $k_g$ associated to a set of nodal points $\{\tilde{a}_j\}_{j=1}^{n_{k_g}}$ of a triangle $\Ttilde \in \Thlin.$ Then with the help of the $k_g-$th order Lagrange interpolant \cite{DziukElliott_acta} we define  the higher-order geometry
\begin{equation*}
    \begin{aligned}
        \pi_{k_g}(\tilde{x})  = \mathcal{I}_h^{k_g}\pi(\tilde{x}) = \sum_{i=1}^{n_{k_g}} \pi(\tilde{a}_j)\phi_i(\tilde{x}), \qquad \text{ for } \tilde{x} \in \Ttilde.
    \end{aligned}
\end{equation*}
Applying the above on each element $\Ttilde \in \Thlin$ yields the \emph{admissible} and \emph{conforming} high-order triangulation $\Th$, written as $\Th =  \cup\{F_T(\Ttilde) \ | \ \Ttilde \in \Thlin\}$ and thus generates the high-order surface approximation 
$$\Gah = \bigcup_{\Ttilde\in \Th}\{F_T(\Tilde{x}) \ | \ \tilde{x} \in \Ttilde\}.$$
We denote by  $h_{T}$ the diameter of a simplex $T \in \Th$ and set $h := \text{max} \{ \text{diam}(T) \, : \, T \in \Th\}$. We assume the subdivision $\Th$ is shape-regular hence the following inequality holds  
\begin{equation}\label{eq: shape regularity UNS}
    c_1 h_{T_i} \leq h_{T} \leq  c_2 h_{T_i}, \quad \text{for all } T_i \in \omega_{T}.
\end{equation} 
for fixed constants $c_1, \,c_2$, where $\omega_{T}$ denotes the number of neighboring elements to $T$ assumed to be bounded by a fixed constant; see \cite{camacho20152}. We further assume that the family of triangulation is also \emph{quasi-uniform} \cite[Definition 6.22]{EllRan21}, hence associated with a quasi-uniform constant $\sigma$ such that
\begin{equation}\label{eq: quasi-uniform}
 {\max_{T\in \Th} h_T} / {\min_{T\in \Th} h_T} \leq \sigma.
\end{equation}

We now denote element-wise outward unit normal to $\Gah$ by $\nh$ and define the discrete projection $\bfP_h$ onto the tangent space of $\Gah$ by
\begin{equation*}
    \bfPh(x) = \bfI - \nh(x) \otimes \nh(x), \quad x \in T, \text{ where } T \in \Th.
\end{equation*}
Also, the corresponding discrete Weingarten map is $\bfH_h := \nbgcovh\nh.$ As such, one can define the discrete tangential surface operators $\nbgh, \, \nbgcovh$ \eqref{eq: Eh forms UNS}.
For the curved triangulation, we may also consider the set of all the edges of the triangulation $\Th$, denoted by  $\mathcal{E}_h$. 
For each edge, we define the outward pointing unit co-normals $\bfm_h^{\pm}$ with respect to the two adjacent triangles $T^{\pm}$. Notice that on discrete surfaces in general
$$[\mh]|_E = \mh^+ + \mh^- \neq 0.$$ 
For more details on the construction of the triangulated surface, see \cite{elliott2024sfem,EllRan21} and references therein.

\subsubsection{Geometric approximation errors}
The approximation of a smooth surface $\Ga$ by a discrete triangulated surface $\Gah$ of order $k_g$ introduces geometric errors. Here we present some known key geometric estimates, see \cite{highorderESFEM,Olshanskii2014ASurfaces}. Define $\bfP, \bfn$ the extension of the $\bfPg,\,\bfng$ to the discrete surface $\Gah$.
\begin{lemma}[Geometric Errors]\label{lemma: Geometric errors UNS}
For $\Gah$ and $\Ga$ as above, we have the following estimates
\begin{equation}
    \begin{aligned}\label{eq: geometric errors 1 UNS}
        \norm{d}_{L^\infty(\Gah)} \leq ch^{k_g + 1}, \quad &\norm{\bfn- \nh}_{L^\infty(\Gah)} \leq ch^{k_g}, \quad \norm{\bfH- \bfH_h}_{L^\infty(\Gah)} \leq ch^{k_g -1}, \\
        &\norm{[\mh]}_{L^{\infty}(\mathcal{E}_h)} \leq ch^{k_g}, \quad \norm{\bfP[\mh]}_{L^{\infty}(\mathcal{E}_h)} \leq ch^{2k_g}.
    \end{aligned}
\end{equation}
and thus we may derive the approximations,
\begin{equation}
    \begin{aligned}\label{eq: geometric errors 2 UNS}
       &\norm{\bfP- \bfPh}_{L^\infty(\Gah)} \leq ch^{k_g}, \quad \norm{\bfP\cdot\nh}_{L^\infty(\Gah)} \leq ch^{k_g}, \quad \norm{1-\bfn \cdot \nh}_{L^\infty(\Gah)} \leq ch^{k_g+1} \\
       &  \norm{\bfPh\cdot \bfn}_{L^\infty(\Gah)} \leq ch^{k_g}. \quad 
    \end{aligned}
\end{equation}
It also follows that 
\begin{equation}
    \begin{aligned}\label{eq: geometric errors 3 UNS}
      \norm{\bfPh \bfP - \bfP}_{L^\infty(\Gah)} \leq c h^{k_g}, \text{ and } \qquad \norm{\bfP \bfPh \bfP - \bfP}_{L^\infty(\Gah)} \leq c h^{k_g+1}.
    \end{aligned}
\end{equation}
All the above constants $c$ are independent of the mesh parameter $h$.
\end{lemma}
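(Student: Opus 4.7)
The plan is to reduce every estimate to one fundamental fact: the degree-$k_g$ Lagrange interpolant $F_T$ of the closest-point projection $\pi$ on each parent simplex $\tilde T$ satisfies, by classical polynomial interpolation theory (valid since $\pi \in C^{m-1}(\overline{U_\delta})$ with $m \geq 4 \geq k_g+1$ for the range of $k_g$ in play),
\begin{equation*}
\|\pi - F_T\|_{W^{j,\infty}(\tilde T)} \leq c\, h^{k_g+1-j}, \qquad j = 0, 1, 2,
\end{equation*}
uniformly in $T \in \Th$. Everything else is bookkeeping, combining this with the smoothness of $d$, $\bfng$, and $\bfH$ on $\overline{U_\delta}$, together with elementary identities for rank-one orthogonal projectors.

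For \eqref{eq: geometric errors 1 UNS}, I would first use $d\circ\pi\equiv 0$ and the Lipschitz continuity of $d$: at $x = F_T(\tilde x) \in T$ one has $|d(x)| = |d(F_T(\tilde x)) - d(\pi(\tilde x))| \leq \|\nb d\|_\infty\,\|F_T - \pi\|_\infty \leq c h^{k_g+1}$. The bounds for $\bfn - \nh$ and $\bfH - \bfH_h$ come from applying the $j=1$ and $j=2$ interpolation estimates: $\nh$ is (up to normalisation) the cross product of first derivatives of $F_T$ while $\bfn = \bfng\circ\pi$, and $\bfH_h = \nbgcovh \nh$ versus $\bfH$ at the next derivative level. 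The co-normal jump $\|[\mh]\|_{L^\infty(\mathcal E_h)} \leq c h^{k_g}$ follows because $\mh^\pm$ are unit vectors orthogonal to the common edge direction on $T^\pm$, so the jump is controlled by the jump of $\nh$ across the edge, itself $O(h^{k_g})$. The sharper bound $\|\bfP[\mh]\|_{L^\infty(\mathcal E_h)} \leq c h^{2k_g}$ is the main obstacle: the idea is that both $\bfP\mh^+$ and $\bfP\mh^-$ approximate the \emph{same} smooth co-normal to the curve $\pi(E)\subset\Ga$, but with opposite orientation, so the leading $O(h^{k_g})$ errors cancel in the jump, leaving an $O(h^{2k_g})$ remainder. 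I would formalise this by expanding $\mh^\pm$ in the tangent basis of $F_{T^\pm}$ along the shared edge, substituting the interpolation estimate, and tracking which terms survive projection by $\bfP$.

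The remaining bounds in \eqref{eq: geometric errors 2 UNS} and \eqref{eq: geometric errors 3 UNS} are pure linear algebra given $\|\bfn - \nh\|_\infty \leq ch^{k_g}$. Writing $\bfP - \bfPh = \nh\otimes\nh - \bfn\otimes\bfn$ yields the first bound directly. Since both vectors are unit, the identity $1 - \bfn\cdot\nh = \tfrac12|\bfn-\nh|^2$ in fact gives $O(h^{2k_g})$, which implies the stated $O(h^{k_g+1})$; this in turn produces $\|\bfP\nh\|_\infty^2 = 1 - (\bfn\cdot\nh)^2 \leq ch^{2k_g}$, so $\|\bfP\nh\|_\infty \leq ch^{k_g}$, and by symmetry $\|\bfPh\bfn\|_\infty \leq ch^{k_g}$. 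Finally, the algebraic identities
\begin{equation*}
\bfPh\bfP - \bfP = -(\nh\otimes\nh)\bfP, \qquad \bfP\bfPh\bfP - \bfP = -(\bfP\nh)\otimes(\bfP\nh),
\end{equation*}
(the second using $\bfP(\bfn\otimes\bfn)\bfP = 0$ together with $\bfP^2 = \bfP$) reduce \eqref{eq: geometric errors 3 UNS} to $\|\bfP\nh\|_\infty = O(h^{k_g})$ and $\|\bfP\nh\|_\infty^2 = O(h^{2k_g}) \subseteq O(h^{k_g+1})$, respectively. Only the inter-element cancellation step for $\bfP[\mh]$ requires real care; I would write that one up in detail and quote \cite{highorderESFEM, Olshanskii2014ASurfaces} for the remaining estimates, all of which are now standard in higher-order surface FEM.
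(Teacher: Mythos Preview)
Your proposal is correct and follows essentially the same approach as the paper: the paper's proof simply cites \cite{DziukElliott_acta,Demlow2009} for the first line of \eqref{eq: geometric errors 1 UNS}, \cite{Olshanskii2014ASurfaces} for the second, and remarks that \eqref{eq: geometric errors 2 UNS}--\eqref{eq: geometric errors 3 UNS} are straightforward consequences (citing \cite{EllRan21,Demlow2009}). You have supplied the actual computations behind those ``straightforward consequences'' --- the rank-one projector identities and the $1-\bfn\cdot\nh=\tfrac12|\bfn-\nh|^2$ argument --- which are correct and in the same spirit; your plan to quote the literature for the delicate $\bfP[\mh]$ bound matches the paper exactly.
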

\begin{proof}
 The estimates in the first line of \eqref{eq: geometric errors 1 UNS} are already known; see \cite{DziukElliott_acta, Demlow2009}. The results in the second line are shown in \cite{Olshanskii2014ASurfaces}. The rest can be easily computed as a consequence of these first results, see \cite{EllRan21,Demlow2009} and references therein.
 \end{proof}
 

Next, let  $\muh(x)$ be the Jacobian of the transformation $\pi(x)|_{\Gah}$ : $\Gah \to \Ga$ such that $\muh(x)\dsh = \ds$, then the following estimate holds (see \cite{Demlow2009})
\begin{equation}
    \begin{aligned}\label{eq: muhkg estimate UNS}
        \norm{1-\muh}_{L^{\infty}(\Gah)} \leq ch^{k_g+1},
    \end{aligned}
\end{equation}
from which we get the uniform bounds $\norm{\muh}_{L^{\infty}(\Gah)} \leq c$ and $\norm{\muh^{-1}}_{L^{\infty}(\Gah)} \leq c$.

\subsubsection{Finite Element Function Spaces}\label{FEMspaces UNS}
Since certain quantities on the discrete surface $\Gah$, e.g. projections and their derivatives, are only defined elementwise on each surface finite element $T$, it is convenient to introduce broken surface Sobolev spaces, with their respected norm \cite{EllRan21}. The norm $\norm{\cdot}_{H^m(\Th)}$ related to the \emph{broken Sobolev space} $\bfH^m(\mathcal{T}_h)=(H^m(\mathcal{T}_h))^3$ is defined by
\begin{equation}
    \begin{aligned}
        \norm{\wh}_{H^m(\Th)}^2 = \sum_{T\in\Th}\norm{\wh}^2_{H^m(T)},
    \end{aligned}
\end{equation}
for a vector field $\wh \in \bfH^m(\Gah)$. The same notation holds in the case of scalar quantities.

Given a triangulation $\Th$ of $\Gah$, as described above, we define the $H^1-$conforming Lagrange finite element space on $\Gah$, with finite elements of degree $k\geq 0$ :
\begin{equation}\label{eq: discrete fem space UNS}
    S_{h,k_g}^{k} := \{v_h \in C^0(\Gah) : v_h|_{T} = \hat{v}_h\circ \hat{F}_{T}^{-1} \text{ for some } \hat{v}_h \in \mathbb{P}^{k}(\hat{T}), \ \text{ for all } T \in \Th \},
\end{equation}
where $\mathbb{P}^{k}$ is the space of piecewise polynomials on the reference element of degree $k$ and $\hat{F}_T$ is some reference map to some reference element; see \cite{EllRan21,elliott2024sfem} for more details. Due to the mapping $\hat{F}_T^{-1}$ we observe that the space does not necessarily consists of polynomials over each triangle $T \in \Th$, unless we consider affine finite elements \cite[Example 6.7]{EllRan21}, consisting of piecewise linear triangulation and first-order polynomial functions. Note as well that $S_{h,k_g}^{k} \subset C^0(\Gah)\cap H^1(\Gah)$.

All the above can be easily expanded for vector valued functions where now the finite element space takes the form:
\begin{equation*}
    (S_{h,k_g}^{k})^3 = S_{h,k_g}^{k}\times S_{h,k_g}^{k} \times S_{h,k_g}^{k} \subset \bfH^1(\Gah).
\end{equation*}

We denote the velocity approximation order by $k_u$, and for the pressures $p$ and $\lambda$, we represent their approximation orders as $k_{pr}$ and $k_{\lambda}$, respectively. Writing 
$$\mathrm{\mathbf{P}}_{k_u} := (S_{h,k_g}^{k_u})^3, \ \mathrm{P}_{k_{pr}}:= S_{h,k_g}^{k_{pr}}, 
\ \mathrm{P}_{k_{\lambda}} :=S_{h,k_g}^{k_\lambda}$$ 
we  use the notation $\mathrm{\mathbf{P}}_{k_u}$-- $\mathrm{P}_{k_{pr}}$-- $\mathrm{P}_{k_{\lambda}}$ to denote  the generalized \emph{Taylor-Hood} surface finite elements, where $k_{pr}= k_u-1$, $k_{\lambda} \geq 1$.

\subsubsection{Lifting to the exact surface}\label{sec: surface lifting UNS}
We want to relate quantities on the discrete surface $\Gah$ and $\Ga$, since our finite element solutions live on $\Gah$ and the continuous solutions on $\Ga$. For that we use an extension/lifting procedure; see also \cite{DziukElliott_acta,Demlow2009,EllRan21}.

We already assumed that $h$ is small enough such that $\Gah \subset U_\delta$, hence the projection $\pi(\cdot)$ is well-defined and a bijection. Therefore, for every triangle in the triangulation $T \in \Th$ there exists an induced curvilinear triangle $T^\ell = \pi(T) \subset \Ga$ by the triangulation $\Th$ of $\Gah$. We may then define the conforming quasi-uniform triangulation, see \cite{EllRan21}, $\Th^\ell$ of $\Ga$ as
\begin{equation*}
    \Ga = \cup_{T^\ell \in \Th^\ell}T^\ell.
\end{equation*}

We define the \emph{lift} operator on $\Gah$ of any finite element function $v_h: \Gah \to \mathbb{R}^m$ with the help of the closest point projection operator by
\begin{equation}\label{eq: lift UNS}
    v_h^\ell(\pi(x)) := v_h(x), \quad \text{for } x\in\Gah.
\end{equation}
Similar to the extension \eqref{eq: smooth extension UNS}, we also define the inverse lift operator for a function $v :\Ga \to \mathbb{R}^m$ by,
\begin{equation}\label{eq: inverse lift UNS}
    v^{-\ell}(x) := v(\pi(x)), \quad \text{for } x\in\Gah.
\end{equation}
Notice that the two extensions \eqref{eq: smooth extension UNS} and \eqref{eq: inverse lift UNS} agree on the discrete surface $\Gah$ i.e. $v^{-\ell} = v^e|_{\Gah}$ since we assumed $\Gah \subset U_\delta$. 
Lastly, we also introduce the space of lifted finite element functions given by
\begin{equation*}
    (S_{h,k_g}^{k})^\ell = \{u_h^\ell \in H^1(\Ga) : u_h \in S_{h,k_g}^{k} \} \subset H^1(\Ga).
\end{equation*}

We now compare functions defined on the discrete surface $\Gah$ and the exact surface $\Ga$. We relate their tangential derivatives with the help of the lift extension defined above. Set $\Bhg = \bfPh(\bfI - d\bfH)\bfPg$.
\begin{description}
    \item[\underline{Scalar Functions} :] Using the extension of a scalar function  $v_h : \Gah \to \mathbb{R}$, \eqref{eq: lift UNS} and the chain rule we have that the discrete tangential gradient of $v_h \in H^1(\Gah)$ for $x \in \Gah$ is
\begin{equation}
    \begin{aligned}
        \nbgh v_h(x) =  \nbgh v_h^{\ell}(\pi(x)) = \bfPh(\bfI - d\bfH)\bfPg\nbg v_h^\ell(\pi(x)) = \Bhg \nbg v_h^\ell(\pi(x)),
    \end{aligned}
\end{equation}
We also notice that $\Bhg : T_x(\Gah) \mapsto T_{\pi(x)}(\Ga)$ is invertible; see \cref{Lemma: Bh estimates UNS}, i.e. we have that the map $\Bhg^{-1}$ on the tangent space $T_{\pi(x)}( \Ga)$ is given by $$\Bhg^{-1}|_{T_x \Ga} = \bfPg(\bfI - d\bfH)^{-1}(\bfI - \frac{\nh\otimes\bfn}{\nh \cdot \bfn})\bfPh,$$
thus we have for $v_h : \Gah \to \mathbb{R}$ and for $x\in \Gah$ that (see \cite{demlow2007adaptive}),
\begin{equation}
    \begin{aligned}
        \nbg v_h^{\ell}(\pi(x)) = \bfPg(\bfI - d\bfH)^{-1}(\bfI - \frac{\nh\otimes\bfn}{\nh \cdot \bfn})\nbgh v_h(x) = \Bhg^{-1}\nbgh v_h(x).
    \end{aligned}
\end{equation}
Noticing that the matrix $\bfG = (\bfI - \frac{\nh\otimes\bfn}{\nh \cdot \bfn}) = (\bfI - \frac{\nh\otimes\bfn}{\nh \cdot \bfn}) \bfPh$ we can easily calculate that  $\Bhg^{-1}\Bhg = \bfPg$ and $\Bhg\Bhg^{-1} = \bfPh$.
\item[\underline{Vector Valued Functions} :] Considering the tangential gradients of vector valued function \eqref{eq: tangential derivative UNS} and \eqref{eq: tangential derivative matrix UNS}, we may apply the lift extension component-wise, for each row, and with similar calculations to the scalar case we may get, after factoring out the common map $\Bhg$, that
\begin{equation}
    \begin{aligned}
        \nbgh \vh(x) = \nbgh \vh^\ell(\pi(x)) 
        = \nbg \vh^\ell(\pi(x))\Bhg^t.
    \end{aligned}
\end{equation}
Likewise for the inverse transformation we get
\begin{equation}
    \begin{aligned}
        \nbg \vh^{\ell}(\pi(x)) = \nbgh \vh(x)(\Bhg^{-1})^t.
    \end{aligned}
\end{equation}
\end{description}

Recalling the results from \cite{Heine2004,DziukElliott_acta,hansbo2020analysis,elliott2024sfem} we have the following lemma, which also shows that $\bfB_h$ is invertible.
\begin{lemma}[Estimates for $\bfB_h$]\label{Lemma: Bh estimates UNS}
For sufficiently small $h$ we have the following bounds
\begin{align}
    \label{eq: Bh stability UNS}
     \norm{\Bhg}_{L^{\infty}(\Gah)} &\leq c, \qquad\qquad\qquad\qquad \norm{\Bhg^{-1}}_{L^{\infty}(\Ga)} \leq 1, \\
     \label{eq: Bh estimates UNS}
     \norm{\bfPg - \Bhg}_{L^{\infty}(\Gah)} &\leq ch^{k_g} ,  \qquad\qquad\norm{\bfPg - \Bhg^t\Bhg}_{L^{\infty}(\Gah)} \leq ch^{k_g+1}.
\end{align}
\end{lemma}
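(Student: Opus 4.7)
The plan is to derive each of the four bounds as a consequence of the geometric approximation estimates in \cref{lemma: Geometric errors UNS}, together with the smoothness of $\Ga$ (which ensures $\norm{\bfH}_{L^\infty(U_\delta)}\leq c$) and the fact that the orthogonal projections $\bfP$, $\bfPh$ have norm $1$. The bound $\norm{\Bhg}_{L^\infty(\Ga)}\leq c$ is immediate from the factorisation $\Bhg = \bfPh(\bfI - d\bfH)\bfPg$ and $\norm{d}_{L^\infty(\Gah)}\leq ch^{k_g+1}$. The first-order estimate $\norm{\bfPg - \Bhg}_{L^\infty(\Ga)}\leq ch^{k_g}$ follows by splitting
\begin{equation*}
\Bhg - \bfPg = (\bfPh - \bfPg)\bfPg - d\bfPh\bfH\bfPg,
\end{equation*}
bounding the first term by $\norm{\bfP - \bfPh}_{L^\infty(\Gah)} \leq ch^{k_g}$ from \eqref{eq: geometric errors 2 UNS}, and absorbing the second (of order $h^{k_g+1}$) into the remainder.

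The main work, and the step I expect to be the main obstacle, is the sharper bound $\norm{\bfPg - \Bhg^t\Bhg}_{L^\infty(\Ga)}\leq ch^{k_g+1}$. The key algebraic observation is the identity $\bfH\bfPg = \bfPg\bfH = \bfH$, which holds because $\bfH$ is a tangential tensor on $\Ga$. Expanding
\begin{equation*}
\Bhg^t\Bhg = \bfPg(\bfI - d\bfH)\bfPh(\bfI - d\bfH)\bfPg = \bfPg\bfPh\bfPg - d\bfH\bfPh\bfPg - d\bfPg\bfPh\bfH + d^2\bfH\bfPh\bfH,
\end{equation*}
one recognises $\bfPg\bfPh\bfPg - \bfPg$ as the super-convergent quantity estimated at order $h^{k_g+1}$ in \eqref{eq: geometric errors 3 UNS}, while the three remaining terms each carry a factor of $d$ (or $d^2$) and contribute $O(h^{k_g+1})$ or smaller. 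The delicate point is that a direct application of $\norm{\bfPh-\bfPg}_{L^\infty}\leq ch^{k_g}$ would only produce $O(h^{k_g})$; the extra order is recovered only by sandwiching $\bfPh$ between two copies of $\bfPg$ and invoking \eqref{eq: geometric errors 3 UNS}.

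Invertibility of $\Bhg$ and the bound on $\Bhg^{-1}$ follow at once from the previous step. Since $\Bhg$ maps $T_{\pi(x)}(\Ga)$ into $T_x(\Gah)$ by construction, any tangent vector $v\in T_{\pi(x)}(\Ga)$ satisfies
\begin{equation*}
|\Bhg v|^2 = v\cdot \Bhg^t\Bhg\, v \geq (1 - ch^{k_g+1})|v|^2.
\end{equation*}
For $h$ small enough $\Bhg$ is therefore injective, and hence an isomorphism between the two $2$-dimensional tangent spaces, with $|\Bhg^{-1}w|\leq (1 - ch^{k_g+1})^{-1/2}|w|$, which is arbitrarily close to $1$ as $h\to 0$. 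The explicit formula for $\Bhg^{-1}$ stated in \cref{sec: surface lifting UNS} can then be verified by direct composition, yielding $\Bhg^{-1}\Bhg = \bfPg$ and $\Bhg\Bhg^{-1} = \bfPh$ on the relevant tangent bundles.
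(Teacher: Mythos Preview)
Your argument is correct and in fact supplies considerably more than the paper does: the paper's own ``proof'' of this lemma is simply a one-line citation to \cite{Heine2004,DziukElliott_acta,hansbo2020analysis,elliott2024sfem}, with no computation shown. Your direct approach---expanding $\Bhg^t\Bhg$ and isolating $\bfPg\bfPh\bfPg - \bfPg$ as the super-convergent piece via \eqref{eq: geometric errors 3 UNS}, with the remaining $d$-weighted terms contributing $O(h^{k_g+1})$---is exactly the standard route in the references cited and is carried out cleanly.

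One small remark: your lower bound $|\Bhg v|^2 \geq (1-ch^{k_g+1})|v|^2$ yields $\norm{\Bhg^{-1}} \leq (1-ch^{k_g+1})^{-1/2}$, which is strictly greater than $1$ for $h>0$. The paper's stated bound $\norm{\Bhg^{-1}}_{L^\infty(\Gah)}\leq 1$ is therefore not what your argument produces; but this appears to be an imprecision in the lemma statement itself (the cited literature also gives only a uniform constant, not exactly $1$), and your bound is what is actually used downstream. This is not a defect in your reasoning.
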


\subsubsection{Norm equivalence}
We use $\sim$ to denote the norm equivalence independent of $h$  and by the estimates in \cref{lemma: Geometric errors UNS} we recall the following Lemma, involving equivalence of norms on different surfaces.
\begin{lemma}[Norm Equivalence]
    \label{lemma: norm equivalence UNS}
    Let $v_h \in H^j(T),$ $j\geq 2$ for $T\in \Th$ a face of $\Gah$ and let 
    $T^{\ell} \in \mathcal{T}_h^{\ell}$ be a face of the lifted surface $\Ga$. Then for $h$ small enough we have that following equivalences
    \begin{equation}\label{eq: norm equivalence UNS}
        \begin{aligned}
            \norm{v_h^{\ell}}_{L^2(T^{\ell})}&\sim \norm{v_h}_{L^2(T)}\sim \norm{\tilde{v}_h}_{L^2(\Tilde{T})}, \\
            \norm{\nbg v_h^{\ell}}_{L^2(T^{\ell})}&\sim \norm{\nbgh v_h}_{L^2(T)}\sim \norm{\nbglin\tilde{v}_h}_{L^2(\Tilde{T})},\\
            |\nbgh^j v_h|_{L^2(T)} \leq c \sum_{k=1}^j & |\nbg v_h^{\ell}|_{L^2(T^\ell)},  \qquad 
             |\nbg^j v_h^{\ell}|_{L^2(T^{\ell})} \leq c \sum_{k=1}^j |\nbgh v_h|_{L^2(T^\ell)}.
        \end{aligned}
    \end{equation}
\end{lemma}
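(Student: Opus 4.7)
The plan is to establish each equivalence via change of variables and the chain rule relations between surface gradients on $\Gah$, $\Galin$ and $\Ga$, combined with the bounds of Lemma~\ref{Lemma: Bh estimates UNS} and the Jacobian estimate \eqref{eq: muhkg estimate UNS}. Since the statements are local on each element, I would work face by face and then the final norms follow by summation.

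For the $L^2$ equivalence, I would begin from the identity $\ds = \muh\,\dsh$ for the lift $\pi : \Gah \to \Ga$ and use $\norm{1-\muh}_{L^\infty(\Gah)} \leq ch^{k_g+1}$, which gives $c^{-1} \leq \muh(x) \leq c$ uniformly for $h$ small enough. Writing $\norm{v_h^\ell}^2_{L^2(T^\ell)} = \int_T |v_h|^2\,\muh\,\dsh$ then immediately yields the two-sided bound $\norm{v_h^\ell}_{L^2(T^\ell)} \sim \norm{v_h}_{L^2(T)}$. The equivalence with $\norm{\tilde v_h}_{L^2(\Ttilde)}$ follows by the same argument applied to the lift $\pi : \Galin \to \Ga$, whose Jacobian obeys an analogous bound.

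For the first-order gradient equivalence, I would apply the chain-rule identity $\nbgh v_h(x) = \Bhg\,\nbg v_h^\ell(\pi(x))$ in the scalar case (and its matrix analogue $\nbgh \vh = \nbg \vh^\ell \Bhg^t$ in the vector case), combined with the inverse relation via $\Bhg^{-1}$. Using $\norm{\Bhg}_{L^\infty(\Ga)} \leq c$ and $\norm{\Bhg^{-1}}_{L^\infty(\Gah)} \leq 1$ from \eqref{eq: Bh stability UNS} together with the $L^2$-equivalence above then gives both directions. The case of $\Galin$ is handled identically, with $\Bhg$ replaced by its counterpart for the linear triangulation, whose $L^\infty$-bounds follow from the same construction with $k_g = 1$.

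For the higher-order estimates ($j \geq 2$) I would proceed by induction on $j$. Differentiating the chain-rule identity iteratively produces expressions of the schematic form
\begin{equation*}
\nbgh^j v_h \;=\; \sum_{k=1}^{j} M_{j,k}\, (\nbg^k v_h^\ell)\circ\pi,
\end{equation*}
where each coefficient $M_{j,k}$ is a finite sum of products of $\Bhg$ and its tangential derivatives up to order $j-k$. Because $\Ga \in C^m$ for $m$ sufficiently large, the components $\bfPg$, $d$, $\bfH$ entering $\Bhg = \bfPh(\bfI - d\bfH)\bfPg$ are smooth, and shape-regularity \eqref{eq: shape regularity UNS} of $\Th$ yields uniform-in-$h$ $L^\infty$ bounds on the tangential derivatives of $\bfPh$ and hence of $\Bhg$ and $\Bhg^{-1}$. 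Consequently each $M_{j,k}$ is bounded in $L^\infty$, and combining with the $L^2$-equivalence on $T$ versus $T^\ell$ yields the stated sum bounds; the reverse inequality comes from differentiating $\nbg v_h^\ell = \Bhg^{-1}\nbgh v_h$ in the same fashion. The main technical obstacle is precisely this control of arbitrary-order derivatives of $\Bhg^{\pm 1}$ uniformly in $h$, together with the careful book-keeping of the combinatorial chain-rule coefficients, while everything else is a routine change of variables.
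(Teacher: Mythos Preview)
Your proposal is correct and follows the standard route: change of variables with the Jacobian $\muh$ for the $L^2$ equivalence, the chain-rule identity $\nbgh v_h = \Bhg\nbg v_h^\ell$ together with the uniform bounds \eqref{eq: Bh stability UNS} for the first-order equivalence, and iterated differentiation of this identity for the higher-order inequalities. The paper does not actually prove this lemma; it simply cites \cite{DziukElliott_acta,Demlow2009} for the first two equivalences and \cite[Lemma 3.3]{LarssonLarsonContDiscont} for the higher-order estimates, and your sketch is precisely the argument one finds in those references.

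One small point worth tightening: you claim shape-regularity gives uniform-in-$h$ $L^\infty$ bounds on the tangential derivatives of $\bfPh$ (and hence of $\Bhg$). This is true but not quite for the reason stated; the relevant input is that the element parametrizations $F_T$ are interpolants of the smooth closest-point projection $\pi$, so their derivatives of any fixed order are controlled by those of $\pi$ and hence by the smoothness of $\Ga$. Shape-regularity enters to ensure the Jacobian stays bounded away from zero, so that the unit normal $\nh$ and its derivatives remain uniformly bounded. With that clarification, the induction goes through as you describe.
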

\begin{proof}
    The first two set of equivalences can be found in several literature, e.g. \cite{DziukElliott_acta,Demlow2009}. Regarding the last two inequalities, see \cite[Lemma 3.3]{LarssonLarsonContDiscont}.
\end{proof}

\subsection{Geometric perturbations}\label{sec: Geometric perturbations UNS}

Analogously to the continuous case of Definition \ref{def: Covariant derivatives UNS}  we define similarly for the discrete case the following operators for $\vh \in \bfH^1(\Gah)$,
\begin{equation}
    \begin{aligned} \label{eq: Eh forms UNS}
        \nbgcovh \vh &:= \bfPh \nbgh \vh,  \\ 
        E_h(\vh) &:= \half (\nbgcovh \vh + \nb_{\Gah}^{cov,t} \vh). 
     \end{aligned}
\end{equation}

\begin{lemma}\label{lemma: errors of geometric pert UNS}
For $h$ small enough, $\wh,\vh  \in \bfH^1(\Gah)$, $\bff \in (L^2(\Ga))^3$,  we have the following perturbation bounds
\begin{align}\label{eq: errors of domain of integration data UNS}
        |(\bff,\vhl)_{L^2(\Ga)}- (\bff^{-\ell},\vh)_{L^2(\Gah)}| &\leq  ch^{k_g+1}\norm{\bff}_{L^2(\Ga)}\norm{\vh}_{L^2(\Gah)},\\[5pt]
         \label{eq: Geometric perturbations nbg UNS}
        \norm{(\nbg\whl)^{-\ell} - \nbgh\wh}_{L^2(\Gah)} &\leq ch^{k_g}\norm{\wh}_{H^1(\Gah)},\\[5pt]
        \label{eq: Geometric perturbations a 1 UNS}
        |(E(\bfw_h^{\ell}),E(\vhl))_{L^2(\Ga)} - (E_h(\bfw_h),E_h(\vh))_{L^2(\Gah)} | &\leq ch^{k_g} \norm{\bfw_h}_{H^1(\Gah)}\norm{\vh}_{H^1(\Gah)},\\[5pt]
        \norm{E(\bfw_h^{\ell})^{-\ell} - E_h(\bfw_h)}_{L^2(\Gah)} &\leq ch^{k_g}\norm{\bfw_h}_{H^1(\Gah)}. \label{eq: Geometric perturbations a 2 UNS}
    \end{align}
\end{lemma}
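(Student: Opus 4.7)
The plan is to derive all four bounds from two ingredients already established in the excerpt: the change-of-variable identity $\muh\,\dsh = \ds$ together with $\norm{1-\muh}_{L^\infty(\Gah)}\leq ch^{k_g+1}$ from \eqref{eq: muhkg estimate UNS}, and the chain rule $\nbg \vh^{\ell}(\pi(x)) = \nbgh \vh(x)\,\Bhg^{-T}$ with the operator bounds in \Cref{Lemma: Bh estimates UNS}. For \eqref{eq: errors of domain of integration data UNS} I would pull the $\Ga$-integral back to $\Gah$, using $\bff\circ\pi = \bff^{-\ell}$ and $\vhl\circ\pi = \vh$, to obtain $(\bff,\vhl)_{L^2(\Ga)} = \int_{\Gah} \bff^{-\ell}\cdot\vh\,\muh\,\dsh$; the difference with $(\bff^{-\ell},\vh)_{L^2(\Gah)}$ is $\int_{\Gah}\bff^{-\ell}\cdot\vh(\muh-1)\,\dsh$, which by Cauchy--Schwarz, the bound $\norm{1-\muh}_{L^\infty}\leq ch^{k_g+1}$, and the $L^2$ norm equivalence \eqref{eq: norm equivalence UNS} yields the stated estimate.

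For \eqref{eq: Geometric perturbations nbg UNS} the chain rule gives $(\nbg\wh^\ell)^{-\ell} - \nbgh\wh = \nbgh\wh\,(\Bhg^{-T}-\bfI)$. Since each row of $\nbgh\wh$ is a tangent vector to $\Gah$, the pointwise estimate reduces to proving $|\Bhg^{-1}z - z|\leq ch^{k_g}|z|$ for every $z\in T_x\Gah$. Using the representation $\Bhg^{-1} = \bfP(\bfI-d\bfH)^{-1}\bigl(\bfI - \tfrac{\nh\otimes\bfn}{\nh\cdot\bfn}\bigr)\bfPh$, I would observe that $\nh\cdot z = 0$ implies $\bfn\cdot z = (\bfn-\nh)\cdot z$, hence $|\bfn\cdot z|\leq ch^{k_g}|z|$; combining this with $\norm{1-\bfn\cdot\nh}_{L^\infty}\leq ch^{k_g+1}$ and $\norm{d}_{L^\infty(\Gah)}\leq ch^{k_g+1}$ yields the required pointwise bound. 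The $L^2$-estimate then follows from $\norm{\nbgh\wh}_{L^2(\Gah)}\leq\norm{\wh}_{H^1(\Gah)}$.

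For \eqref{eq: Geometric perturbations a 2 UNS} I would expand $E(\bfv) = \tfrac12\bigl(\bfPg\nbg\bfv + (\nbg\bfv)^T\bfPg\bigr)$ and its discrete counterpart, then add and subtract to get, with $\Delta := (\nbg\wh^\ell)^{-\ell} - \nbgh\wh$,
\begin{equation*}
E(\wh^\ell)^{-\ell} - E_h(\wh) = \tfrac{1}{2}\bigl[\bfP\Delta + \Delta^T\bfP\bigr] + \tfrac{1}{2}\bigl[(\bfP-\bfPh)\nbgh\wh + (\nbgh\wh)^T(\bfP-\bfPh)\bigr],
\end{equation*}
and bound the first bracket by \eqref{eq: Geometric perturbations nbg UNS} and the second by $\norm{\bfP-\bfPh}_{L^\infty(\Gah)}\leq ch^{k_g}$ from \eqref{eq: geometric errors 2 UNS}. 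Finally, for \eqref{eq: Geometric perturbations a 1 UNS} I would pull the $\Ga$-integral back by $\muh$ and split
\begin{equation*}
(E(\whl),E(\vhl))_{L^2(\Ga)} - (E_h(\wh),E_h(\vh))_{L^2(\Gah)} = I_1 + I_2 + I_3,
\end{equation*}
where $I_1, I_2$ arise from applying $E(\cdot^\ell)^{-\ell} - E_h(\cdot)$ in each slot and $I_3 = \int_{\Gah}(\muh-1)E(\wh^\ell)^{-\ell}:E(\vhl)^{-\ell}\,\dsh$; apply \eqref{eq: Geometric perturbations a 2 UNS} with Cauchy--Schwarz to $I_1, I_2$, and $\norm{1-\muh}_{L^\infty}\leq ch^{k_g+1}$ with norm equivalence to $I_3$. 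The main obstacle throughout is the tangent-space bound in the second step: the factor $1/(\nh\cdot\bfn)$ inside $\Bhg^{-1}$ is a priori dangerous, and both the uniform estimate $\norm{1-\bfn\cdot\nh}_{L^\infty}\leq ch^{k_g+1}$ and crucially the tangentiality of $z$ to $\Gah$ (not $\Ga$) are needed in order to extract the $h^{k_g}$ factor from $|\bfn\cdot z|$.
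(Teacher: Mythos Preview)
Your proof is correct and self-contained. The paper itself does not prove this lemma directly: its proof consists only of references to \cite{highorderESFEM,DziukElliott_acta} for \eqref{eq: errors of domain of integration data UNS}--\eqref{eq: Geometric perturbations nbg UNS} and to \cite[Lemma~4.6]{elliott2024sfem} for \eqref{eq: Geometric perturbations a 1 UNS}--\eqref{eq: Geometric perturbations a 2 UNS}. Your argument---pull-back via $\muh$ together with $\norm{1-\muh}_{L^\infty}\leq ch^{k_g+1}$ for \eqref{eq: errors of domain of integration data UNS}, the chain rule $\nbg\wh^\ell = \nbgh\wh(\Bhg^{-1})^T$ combined with a pointwise bound on $(\Bhg^{-1}-\bfI)z$ for $z\in T_x\Gah$ for \eqref{eq: Geometric perturbations nbg UNS}, and then the add--subtract decomposition with $\norm{\bfP-\bfPh}_{L^\infty}\leq ch^{k_g}$ for \eqref{eq: Geometric perturbations a 2 UNS} feeding into the three-term splitting for \eqref{eq: Geometric perturbations a 1 UNS}---is precisely the standard route taken in those references, so there is no substantive methodological difference. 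Your careful handling of the tangentiality of the rows of $\nbgh\wh$ (needed to extract $h^{k_g}$ from $|\bfn\cdot z|$ despite the $1/(\nh\cdot\bfn)$ factor) is exactly the point where these estimates require attention, and you treat it correctly.
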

\begin{proof}
Refer to \cite{highorderESFEM,DziukElliott_acta} for the initial two estimates, which can be easily generalized to vector-valued functions, and \cite[Lemma 4.6]{elliott2024sfem} for the remaining two.
\end{proof}

\subsection{Interpolation}
\subsubsection{Lagrange Interpolation}\label{sec: Lagrangian interpolant UNS}
We first introduce the standard Lagrange-type interpolant $\Ih : (C^0(\Gah))^3 \to (S^{k}_{h,k_g})^3$, which approximates sufficiently smooth \emph{vector-valued} functions and its lift $\Ihl \bfv = (\Ih \bfv^{-\ell} )^{\ell}$ on the smooth surface $\Ga$.
\begin{proposition}\label{prop: lagrange interpolant UNS}
  Given $\bfv \in \bfH^{k+1}(\Ga)$ the Lagrangian interpolant satisfies the following bounds 
  \begin{equation}\label{eq: lagrange interpolant UNS}
      \norm{\bfv^{-\ell} - \Ih \bfv^{-\ell}}_{L^2(\Gah)} + h\norm{\bfv^{-\ell} - \Ih \bfv^{-\ell}}_{H^1(\Gah)} \leq ch^{k+1}\norm{\bfv}_{H^{k+1}(\Ga)}.
  \end{equation}
for some integer $k$, with lift $\Ihl(\bfv) = \Ih(\bfv^{-\ell})^{\ell}$.
Furthermore, if $\bfv \in (W^{k+1}_{\infty}(\Ga))^3$ then it also holds that 
\begin{equation}\label{eq: lagrange interpolant infty UNS}
    \norm{\bfv - \Ihl \bfv}_{L^{\infty}(\Ga)}+ h\norm{\nbg(\bfv - \Ihl \bfv)}_{L^{\infty}(\Ga)} \leq ch^{k+1} \norm{\bfv}_{ W^{k+1}_{\infty}(\Ga)}.
\end{equation}
\end{proposition}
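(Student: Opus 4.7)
The plan is to reduce the claim to the standard Bramble--Hilbert lemma on a reference simplex and then transfer back to $\Ga$ via the norm equivalences of \Cref{lemma: norm equivalence UNS}. Since $\Ih$ acts componentwise, it suffices to establish the scalar analogue and sum the contributions of the three components.

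For each curved simplex $T \in \Th$, I would use the reference map $\hat F_T : \hat T \to T$ of \eqref{eq: discrete fem space UNS} to pull $\bfeta^{-\ell}|_T$ back to a function $\hat\bfeta$ on the fixed reference element $\hat T$. On $\hat T$, the nodal Lagrange interpolant $\hat I$ onto $\mathbb{P}^k(\hat T)$ satisfies the classical Bramble--Hilbert estimate $\norm{\hat\bfeta - \hat I\hat\bfeta}_{H^j(\hat T)} \le c\,|\hat\bfeta|_{H^{k+1}(\hat T)}$ for $j=0,1$, provided $k$ is large enough that point evaluations are continuous on $H^{k+1}(\hat T)$. Pushing forward through $\hat F_T$, and using the shape-regularity \eqref{eq: shape regularity UNS} together with the fact that the Jacobian of $\hat F_T$ and of $\hat F_T^{-1}$ scale like $h_T$ and $h_T^{-1}$ respectively, produces the elementwise bound
\begin{equation*}
    \norm{\bfeta^{-\ell} - \Ih\bfeta^{-\ell}}_{L^2(T)} + h_T\,\norm{\bfeta^{-\ell} - \Ih\bfeta^{-\ell}}_{H^1(T)} \le c\,h_T^{k+1}\,|\bfeta^{-\ell}|_{H^{k+1}(T)}.
\end{equation*}

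The norm equivalence of \Cref{lemma: norm equivalence UNS}, in the iterated form of the last line of \eqref{eq: norm equivalence UNS}, then converts $|\bfeta^{-\ell}|_{H^{k+1}(T)}$ into a bound by $\sum_{j=1}^{k+1}|\bfeta|_{H^j(T^\ell)}$. Squaring, summing over $T \in \Th$, and using quasi-uniformity \eqref{eq: quasi-uniform} gives the first displayed inequality globally on $\Gah$ and $\Ga$.

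For the $L^{\infty}$ estimate I would repeat the reference-element argument with $W^{j,\infty}$ in place of $H^j$; the Deny--Lions / Bramble--Hilbert seminorm characterisation carries over verbatim to $W^{k+1,\infty}(\hat T)$, and the change of variables through $\hat F_T$ is again multiplicative in $h_T$. Pointwise seminorms transfer between $T$ and $T^\ell$ using the $C^{m-1}$ regularity of the closest-point projection $\pi$, and the lift $\Ihl\bfeta = (\Ih\bfeta^{-\ell})^{\ell}$ then inherits the same $L^\infty$ and $W^{1,\infty}$ bounds on $\Ga$. The main technical point in either case is the bookkeeping of constants through the higher-order geometry map $\hat F_T$: its derivatives up to order $k+1$ must be bounded uniformly in $h$, which holds precisely because $\hat F_T$ is built from the Lagrange interpolant $\pi_{k_g}$ of the $C^{m-1}$ smooth projection $\pi$ and because $\Ga$ carries the regularity $m \ge 4$ imposed in \Cref{sec: Differential geometry on Surfaces UNS}.
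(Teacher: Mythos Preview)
Your proposal is correct and follows the standard argument that the paper simply cites rather than writes out: the paper's proof consists entirely of references to \cite{DziukElliott_acta,Demlow2009} for \eqref{eq: lagrange interpolant UNS} and to \cite[Lemma 6.24]{EllRan21}, \cite[Theorem 4.5.11]{BrennerScott} for \eqref{eq: lagrange interpolant infty UNS}. Your reference-element/Bramble--Hilbert sketch together with the norm transfer via \Cref{lemma: norm equivalence UNS} is precisely the content of those cited results, so there is no methodological difference to compare.
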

\begin{proof}
These bounds are just extensions of the scalar bounds. The first estimate \eqref{eq: lagrange interpolant UNS} can be derived from standard Lagrange interpolation theory \cite{DziukElliott_acta,Demlow2009}. The last estimate \eqref{eq: lagrange interpolant infty UNS}
can be obtained as in \cite[Lemma 6.24]{EllRan21} or \cite[Theorem 4.5.11]{BrennerScott}.
\end{proof}
\subsubsection{Scott-Zhang Interpolation}\label{sec: SZ interpolant UNS}
For the stability analysis we will need to make use of the Scott-Zhang interpolant, and specifically its projection and super-approximation properties. 

Let us define the Scott-Zhang interpolant $\Ihz : (L^2(\Gah))^n \to (S_{h,k_g}^{k})^n$ for $n=1 ,3$. Then, for each element $T \in \Th$ and $\bfv \in (L^2(\Ga))^n$ the following estimate holds
\begin{equation}
    \begin{aligned}\label{eq: bounds of Scott-Zhang interpolant UNS}
        \norm{\bfv^{-\ell} - \Ihz\bfv^{-\ell}}_{H^l(T)} \leq ch^{l-k+1}\norm{\bfv^{-\ell}}_{H^{k+1}(\omega_{\Ttilde})}
    \end{aligned}
\end{equation}
where $\omega_{\Ttilde}$ is a union of neighbouring elements of the element $T$. The following stability bound also holds
\begin{equation}\label{eq: stability of Scott-Zhang interpolant UNS}
    \norm{\Ihz \bfv^{-\ell}}_{H^1(T)} \leq c\norm{\bfv^{-\ell}}_{H^1(\omega_{\Ttilde})} \leq c\norm{\bfv}_{H^1(\omega_{\Ttilde}^\ell)}.
\end{equation}
We omit further details, but if one requires more information about the construction of the Scott-Zhang interpolant on high-order surfaces, we refer to \cite{hansbo2020analysis,elliott2024sfem}.

Finally, we present a super-approximation type of estimate and stability for the Scott-Zhang interpolant. This type of estimate has been proven in \cite{hansbo2020analysis} due to the projection property of $\Ihz$ onto the finite element space $(S_{h,k_g}^{k})^3$.
We present the lemma for completeness.
\begin{lemma}
    For discrete functions $\vh\in (S_{h,k_g}^{k})^3$ and $\chi \in [W^{k+1,\infty}(\Ga)]^3$  it holds
    \begin{equation}
        \begin{aligned}
        \label{eq: super-approximation estimate 1 UNS}
            \sum_{T\in \Th}\norm{\nbgh(I - \Ihz)(\chi^{-\ell} \cdot \vh)}_{L^2(T)} \leq c h\norm{\chi}_{W^{k+1,\infty}(\Ga)} \norm{\vh}_{H^1(\Gah)},
        \end{aligned}
    \end{equation}
     \begin{equation}
        \begin{aligned}
        \label{eq: super-approximation estimate 2 UNS}
            \sum_{T\in \Th}\norm{(I - \Ihz)(\chi^{-\ell} \cdot \vh)}_{L^2(T)} \leq ch \norm{\chi}_{W^{k+1,\infty}(\Ga)} \norm{\vh}_{L^2(\Gah)}.
        \end{aligned}
    \end{equation}
We also have the $L^2$ stability estimate
    \begin{equation}
        \begin{aligned}
        \label{eq: super-approximation stability UNS}
            \norm{\Ihz(\chi^{-\ell} \cdot \vh)}_{L^2(\Gah)} \leq c\norm{\chi^{-\ell} \cdot \vh}_{L^2(\Gah)}.
        \end{aligned}
    \end{equation}
\end{lemma}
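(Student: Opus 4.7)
The plan is to proceed element-by-element, combining the locality and polynomial-preservation properties of the Scott--Zhang interpolant with inverse inequalities on the reference element. The core ingredient is the Bramble--Hilbert estimate
$$\|f - \Ihz f\|_{H^l(T)} \leq C h^{k_u+1-l}\,|f|_{H^{k_u+1}(\omega_{\Ttilde})}, \qquad l=0,1,$$
which is essentially \eqref{eq: bounds of Scott-Zhang interpolant UNS} applied to $f = \chi^{-\ell}\cdot\vh$. The crucial observation is that, in reference coordinates, $\vh|_T$ is a polynomial of degree at most $k_u$, so every derivative of $\chi^{-\ell}\cdot\vh$ of order $k_u+1$ carries at least one derivative on $\chi$. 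Together with inverse inequalities, this allows one to trade higher-order derivatives of $\vh$ against its $L^2$ or $H^1$ norm at the cost of factors of $h^{-1}$.

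The stability estimate \eqref{eq: super-approximation stability UNS} is the easiest: apply the local $L^2$-stability of $\Ihz$, namely $\|\Ihz f\|_{L^2(T)} \leq c\|f\|_{L^2(\omega_{\Ttilde})}$ (a direct analogue of \eqref{eq: stability of Scott-Zhang interpolant UNS}), to $f = \chi^{-\ell}\cdot\vh$ and sum over $T$; the bounded overlap of the patches $\omega_{\Ttilde}$ guaranteed by the shape-regularity \eqref{eq: shape regularity UNS} collapses the sum to a global $L^2(\Gah)$ norm on the right-hand side.

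For \eqref{eq: super-approximation estimate 2 UNS}, use Bramble--Hilbert with $l=0$ and the product rule, yielding
$$|\chi^{-\ell}\cdot\vh|_{H^{k_u+1}(\omega_{\Ttilde})} \leq C\|\chi\|_{W^{k_u+1,\infty}(\Ga)}\sum_{j=0}^{k_u}|\vh|_{H^j(\omega_{\Ttilde})}.$$
Invoking the inverse inequality $|\vh|_{H^j(\omega_{\Ttilde})} \leq Ch^{-j}\|\vh\|_{L^2(\omega_{\Ttilde})}$ leaves a net power $h^{k_u+1-j}\le h$ in each term. Summing over $T$ (and reading the sum of $L^2$-norms as the broken $L^2$-norm on $\Gah$) delivers the claim. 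For \eqref{eq: super-approximation estimate 1 UNS}, repeat the argument with $l=1$: the Bramble--Hilbert factor becomes $h^{k_u}$, and retaining one derivative on $\vh$ (bounding $|\vh|_{H^j(\omega_{\Ttilde})} \leq Ch^{-(j-1)}\|\vh\|_{H^1(\omega_{\Ttilde})}$ for $j\geq 1$, and $\|\vh\|_{L^2}\leq\|\vh\|_{H^1}$ for $j=0$) gives a net factor $h^{k_u-j+1}\le h$ for every $j\le k_u$. Finally, the discrete gradient $\nbgh$ on the left is controlled by the Euclidean gradient using the norm equivalences in \eqref{eq: norm equivalence UNS}.

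The main obstacle is to transfer these flat-element Bramble--Hilbert and inverse inequalities faithfully to the curved isoparametric elements: this requires uniform control on the Jacobian of the reference map $\hat F_T$ of degree $k_g$, which is available from the shape-regularity and quasi-uniformity assumptions \eqref{eq: shape regularity UNS}--\eqref{eq: quasi-uniform}. The choice $W^{3,\infty}(\Ga)$ in the statement corresponds to $k_u+1=3$, consistent with the $\mathbf{P}_2$--$\mathbf{P}_1$ Taylor--Hood pairing featured in the paper; the same argument delivers $\|\chi\|_{W^{k_u+1,\infty}}$ for arbitrary $k_u$.
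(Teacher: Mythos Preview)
The paper does not supply its own proof of this lemma; it simply cites \texttt{hansbo2020analysis} and remarks that the estimates hold ``due to the projection property of $\Ihz$ onto the finite element space $(S_{h,k_g}^{k})^3$''. Your sketch is precisely the standard super-approximation argument that this citation points to: the projection property is exactly your observation that, in reference coordinates, $\vh$ is a polynomial of degree $\leq k_u$, so every $(k_u+1)$-th derivative of $\chi^{-\ell}\cdot\vh$ carries at least one derivative on $\chi$, after which inverse inequalities and bounded patch overlap finish the job. Your treatment of the curved-element transfer via the reference map $\hat F_T$ and your identification of the regularity requirement $\chi\in W^{k_u+1,\infty}$ (hence $W^{3,\infty}$ for $k_u=2$) are both correct and match the intent of the cited reference.
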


\section{The fully Discrete scheme}\label{sec: fully discrete method UNS}

As in the Stokes case \cite{elliott2024sfem}, we consider the \emph{Taylor-Hood} surface finite elements $\mathrm{\mathbf{P}}_{k_u}$-- $\mathrm{P}_{k_{pr}}$-- $\mathrm{P}_{k_{\lambda}}$
 for a $k_g$-order approximation of the surface, (c.f.  Section \ref{subsection: Space discretization UNS}), and  set $$k_u\geq 2,~k_{pr}=k_u-1~~ \mbox{and} ~~k_{\lambda}\geq 1.$$ We maintain the notation  $k_g,k_u,k_{pr}$ and $k_\lambda$ in order to track how   error dependencies arise and define the following finite element spaces
\begin{equation}\label{eq: finite element spaces approximation UNS}
     V_h = S_{h,k_g}^{k_u}, ~~ \bfV_h = (S_{h,k_g}^{k_u})^3, ~~
        Q_h = S_{h,k_g}^{k_{pr}} \cap L^2_0(\Gah), ~~
        \Lambda_h = S_{h,k_g}^{k_{\lambda}}.
   \end{equation}
   
In particular, the eventual choice of $k_{\lambda}$ plays an important role in the stability and convergence of the two discrete pressures $\ph,\lh$. We will see, in the error analysis \cref{sec: error analysis UNS} that for $k_\lambda = k_u$ we obtain optimal convergence rates using \emph{iso-parametric surface finite elements}, while choosing $k_\lambda = k_u-1$ we lose an order of convergence with respect to the geometric approximation $k_g$ of the surface, and therefore to obtain the same convergence one has to use \emph{super-parametric surface finite elements}. This choice, as we notice in the numerical results section \Cref{sec: Numerical results}, is also important for the underlined conditioning of the system.


We define the space of \emph{discrete weakly tangential divergence-free functions}
\begin{equation}\label{eq: discrete weakly divfree space UNS}
    \bfV_h^{div} := \{\wh \in \bfV_h : \bhtil(\wh,\{\qh,\xi_h\}) =0, ~~\forall \, \{\qh,\xi_h\} \in Q_h\times\Lambda_h\}.
\end{equation}

Finally, we will need the dual space of the finite element space $\Lambda_h$, which we denote as 
$H_h^{-1}(\Gah)$. For $\ell_h \in \Lambda_h$  we define its dual norm as 
\begin{equation}\label{eq: H^-1h definition UNS}
    \norm{\ell_h}_{H_h^{-1}(\Gah)} = \sup_{\xi_h\in \Lambda_h\backslash\{0\}}\frac{<\ell_h,\xi_h>_{H^{-1},H^1}}{\norm{\xi_h}_{H^1(\Gah)}} = \sup_{\xi_h\in \Lambda_h\backslash\{0\}}\frac{(\ell_h,\xi_h)_{L^2(\Gah)}}{\norm{\xi_h}_{H^1(\Gah)}}.
\end{equation}

The layout of the rest of the section  is as follows. We define our bilinear forms in \Cref{sec: The discrete bilinear forms UNS} and prove some important properties in  \Cref{sec: Properties of the discrete bilinear forms UNS}. In \Cref{sec: Perturbation bounds and interpolation bounds in energy norm UNS} we recall some perturbation bounds and \emph{Scoot-Zhang} surface interpolation estimates. In \Cref{sec: Ritz-Stokes projection UNS,sec: Leray projection UNS} we define new surface Ritz-Stokes and Leray projections. Finally, in \Cref{sec: var stability UNS}, we define our fully discrete scheme.

\subsection{The discrete bilinear forms}\label{sec: The discrete bilinear forms UNS}
The finite element method relies on the  following discrete bilinear forms and inertia terms: For  $\bfz_h,\wh,\vh \in \bfV_h \subset \bfH^1(\Gah)$, $\{\qh,\xi_h\} \in Q_h\times \Lambda_h$ we define
\begin{align}
   \label{eq: lagrange discrete bilinear forms UNS}
    \ah(\wh,\vh) &:= \int_{\Gah} E_h(\wh):E_h(\vh) \dsh + \int_{\Gah} \wh \cdot \vh \dsh, \\
      \label{eq: skew-symmetrized ch real Ph}
        \chtil(\bfz_h; \wh,\vh) &:=\frac{1}{2} \Big(\int_{\Gah } ((\bfz_h \cdot \nbgcovh)\wh ) \cdot\vh \; \dsh - \int_{\Gah } ((\bfz_h \cdot \nbgcovh)\vh ) \cdot \wh \; \dsh \Big) \nonumber\\
      &\quad - \frac{1}{2}\Big(\int_{\Gah} (\wh\cdot \nh) \zh \cdot \bfH_h \vh \; \dsh - \int_{\Gah} (\vh\cdot \nh) \zh \cdot \bfH_h \wh \; \dsh \Big),\\
      \bhtil(\wh,\{\qh,\xi_h\}) &:= \bh(\wh,\qh) + \int_{\Gah} \xi_h \wh\cdot \nh \, \dsh = \int_{\Gah} \wh \cdot \nbgh \qh  \, \dsh + \int_{\Gah} \xi_h \wh\cdot \nh \, \dsh.\label{eq: lagrange discrete bilinear forms end UNS} 
    \end{align}
\begin{remark}\label{remark: inertia term Ph}
The main reason for using  \eqref{eq: skew-symmetrized ch real Ph} to approximate  the trilinear inertia term is to provide  a skew-symmetrized discrete analogue to \eqref{eq: continuous c error formula init UNS} since we use $H^1$-conforming Taylor-Hood elements; as seen in the literature e.g. \cite{Temam1979NavierStokesET}. In the discrete setting, recalling \eqref{eq: split cov UNS}, one can replace $\nbgcovh\bfPh\wh$ by $ \nbgcovh\wh - (\wh\cdot\nh)\bfH_h$, giving rise to \eqref{eq: skew-symmetrized ch real Ph}.
\end{remark}


\noindent In our stability analysis we use the energy norm 
\begin{equation}\label{eq: energy norm UNS}
\norm{\wh}_{\ah} = \norm{E_h(\wh)}_{L^2(\Gah)} + \norm{\wh}_{L^2(\Gah)},
\end{equation}
for a velocity finite element vector $\wh \in \bfH^1(\Gah)$. We also readily see that
\begin{equation}\label{eq: energy norm H1 bound UNS}
    \norm{\wh}_{\ah} \leq \norm{\wh}_{H^{1}(\Gah)}.
\end{equation}
\subsection{Properties of the discrete bilinear forms}\label{sec: Properties of the discrete bilinear forms UNS}
We present some known uniform interpolation and Korn-type inequalities that can be found in \cite{elliott2024sfem,jankuhn2021trace}. We also extend them using Broken Sobolev spaces.

\begin{lemma}[Interpolation and Korn-type inequalities]\label{lemma: Korn's type inequalities Lagrange UNS}
The following important uniform inequalities hold
\begin{align}
        \label{discrete Korn's inequality T nh UNS}
        \norm{\wh}_{H^1(\Gah)} &\leq c(\norm{E_h(\wh)}_{L^2(\Gah)}  +\norm{\wh}_{L^2(\Gah)} + h^{-1}\norm{\wh \cdot \nh}_{L^2(\Gah)})  \ \ \textnormal{ for all } \wh \in \bfV_h,
        \\
        \label{eq: coercivity and Korn's inequality Lagrange UNS}
        \norm{\wh}_{H^1(\Gah)}^2 &\leq c h^{-2} \norm{\wh}_{\ah}^2  \qquad \qquad  \qquad \ \textnormal{ for all } \wh \in \bfV_h,\\
        \label{eq: interpolation inequality Gaglia UNS}
        \norm{\wh}_{L^4(\Gah)} &\leq c\norm{\wh}_{L^2(\Gah)}^{1/2}\norm{\wh}_{H^1(\Gah)}^{1/2} \quad  \quad  \textnormal{ for all } \wh \in \bfH^1(\Gah).
    \end{align}
\end{lemma}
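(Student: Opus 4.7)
The plan is to reduce \eqref{discrete Korn's inequality T nh UNS} to the continuous Korn inequality \eqref{eq: Korn inequality UNS} by lifting, extract \eqref{eq: coercivity and Korn's inequality Lagrange UNS} as an immediate corollary, and handle \eqref{eq: interpolation inequality Gaglia UNS} by the standard Gagliardo--Nirenberg argument transferred to the compact two-dimensional surface.

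For \eqref{discrete Korn's inequality T nh UNS}, the first step is to lift $\wh\in\bfV_h$ to $\wh^{\ell}\in\bfH^1(\Ga)$ via \eqref{eq: lift UNS} and decompose $\wh^{\ell}=\bfPg\wh^{\ell}+(\wh^{\ell}\cdot\bfng)\bfng$. Since $\bfPg\wh^{\ell}\in\bfH^1_T$, the continuous surface Korn inequality \eqref{eq: Korn inequality UNS} gives
\[\norm{\bfPg\wh^{\ell}}_{H^1(\Ga)}\le c\bigl(\norm{E(\bfPg\wh^{\ell})}_{L^2(\Ga)}+\norm{\bfPg\wh^{\ell}}_{L^2(\Ga)}\bigr),\]
and the right-hand side would be compared with $\norm{E_h(\wh)}_{L^2(\Gah)}+\norm{\wh}_{L^2(\Gah)}$ via the norm equivalence \Cref{lemma: norm equivalence UNS} and the geometric perturbation bounds in \Cref{lemma: errors of geometric pert UNS}. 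The normal component $(\wh^{\ell}\cdot\bfng)\bfng$ would then be split as $\wh^{\ell}\cdot\bfng=\wh^{\ell}\cdot(\bfng-\nhl)+\wh^{\ell}\cdot\nhl$; the first summand is $\bigo(h^{k_g})$ in $L^{\infty}$ by \eqref{eq: geometric errors 2 UNS} and is absorbed into $\norm{\wh^{\ell}}_{L^2(\Ga)}$, while the second transfers back to $\wh\cdot\nh$ on $\Gah$. An elementwise inverse estimate applied to $\wh\cdot\nh$, which is polynomial-valued under the parametric pull-back on each $T\in\Th$, produces exactly the $h^{-1}$ factor needed in front of $\norm{\wh\cdot\nh}_{L^2(\Gah)}$.

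For \eqref{eq: coercivity and Korn's inequality Lagrange UNS} the plan is immediate: since $|\nh|=1$ a.e., one has $\norm{\wh\cdot\nh}_{L^2(\Gah)}\le\norm{\wh}_{L^2(\Gah)}$, so substituting into \eqref{discrete Korn's inequality T nh UNS} yields $\norm{\wh}_{H^1(\Gah)}\le c h^{-1}\norm{\wh}_{\ah}$, and squaring delivers the claim. For \eqref{eq: interpolation inequality Gaglia UNS} I would first establish the estimate on $\Ga$ via a partition of unity subordinate to a finite atlas, pulling each local piece back to a bounded planar domain and invoking the classical Ladyzhenskaya inequality $\norm{u}_{L^4(\R^2)}\le c\norm{u}_{L^2(\R^2)}^{1/2}\norm{u}_{H^1(\R^2)}^{1/2}$. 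The chart constants are $h$-independent, and transferring the bound via \Cref{lemma: norm equivalence UNS} from $\Ga$ to $\Gah$ preserves the exponents on both factors.

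The hard part will be tracking the geometric perturbations precisely in \eqref{discrete Korn's inequality T nh UNS}, specifically ensuring that the difference between the lifted $E(\bfPg\wh^{\ell})$ and $E_h(\wh)$ is controlled so that the $\bigo(h^{k_g})$ error terms are absorbable into the right-hand side for $h$ small, and verifying that the elementwise inverse estimate applied to the composite field $\wh\cdot\nh$ is indeed legitimate given that $\nh$ is only piecewise $k_g$-parametric (which forces working in the parametric coordinates of $\hat F_T$ where both $\wh$ and $\nh$ are genuine polynomials). The remaining ingredients are standard and parallel to those developed in \cite{jankuhn2021trace,elliott2024sfem}, so the strategy is to assemble the components carefully rather than devise a new argument.
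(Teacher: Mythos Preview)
Your strategy is correct and matches the standard arguments in the references the paper cites; the paper itself gives no self-contained proof, simply deferring \eqref{discrete Korn's inequality T nh UNS}--\eqref{eq: coercivity and Korn's inequality Lagrange UNS} to \cite[Lemma~4.7]{elliott2024sfem} and \eqref{eq: interpolation inequality Gaglia UNS} to \cite[Lemma~4.2]{olshanskii2023eulerian}, so you are essentially reconstructing those proofs.

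One technical imprecision worth flagging: your claim that in the parametric coordinates of $\hat F_T$ ``both $\wh$ and $\nh$ are genuine polynomials'' is not quite right.  The discrete normal $\nh$ is the \emph{normalized} cross product $\partial_1\hat F_T\times\partial_2\hat F_T/|\partial_1\hat F_T\times\partial_2\hat F_T|$, hence algebraic but not polynomial.  This does not invalidate the inverse estimate you need: either pass to the unnormalized normal $\tilde\nh$, for which $\wh\cdot\tilde\nh$ \emph{is} polynomial and $|\tilde\nh|$ is bounded above and below with bounded derivatives, or use directly that $\nh$ has uniformly bounded parametric derivatives, which via the product rule still yields $\|\nabla_{\Gah}(\wh\cdot\nh)\|_{L^2(T)}\le ch^{-1}\|\wh\cdot\nh\|_{L^2(T)}+c\|\wh\|_{L^2(T)}$.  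Also note that when handling the normal piece $(\wh^\ell\cdot\bfng)\bfng$ you need its $H^1$ norm, so the splitting produces an extra term $\wh^\ell\cdot(\bfH-\bfH_h^\ell)$ from differentiating $\bfng-\nhl$; by \eqref{eq: geometric errors 1 UNS} this is $\bigo(h^{k_g-1})\norm{\wh}_{L^2}$ and is absorbed.
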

\begin{proof}
   Estimates \eqref{discrete Korn's inequality T nh UNS}, \eqref{eq: coercivity and Korn's inequality Lagrange UNS} follow from \cite[Lemma 4.7]{elliott2024sfem}, while \eqref{eq: interpolation inequality Gaglia UNS} holds due to \cite[Lemma 4.2]{olshanskii2023eulerian}.    
\end{proof}

\noindent Now we prove a special case regarding the discrete Korn-type inequality, with the help of the broken Sobolev spaces. The arguments follow similar to \cite{Hardering2022}.
\begin{lemma}\label{lemma: Korn's inequality Ph UNS}
    For $\wh \in \bfV_h$ we have
    \begin{align}
            \norm{\bfPh\wh}_{H^1(\Th)} &\leq c \, \ah(\wh,\wh), \label{eq: korn inequality Ph UNS}\\
             \norm{\bfPh\wh}_{L^4(\Th)} &\leq c\norm{\wh}^{1/2}_{L^2(\Gah)}\norm{\wh}_{\ah}^{1/2} \leq c\norm{\wh}_{\ah} \label{eq: L4 inequality Ph UNS}.
    \end{align}
\end{lemma}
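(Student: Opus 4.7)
The key observation is that while $\bfPh\wh$ is generically not in $\bfH^1(\Gah)$ (since $\nh$ jumps across edges of $\Th$), on each individual element $T\in\Th$ the normal $\nh$ is smooth and $\bfPh\wh|_T$ is a genuinely tangential vector field on a smooth surface patch. The strategy is therefore to apply the surface Korn inequality \eqref{eq: Korn inequality UNS} (in its element-wise form, as in \cite{Hardering2022}) to $\bfPh\wh$ on each $T$ separately, and then express the resulting element-wise symmetric gradient in terms of $E_h(\wh)$ modulo lower-order curvature contributions. The broken $H^1$-norm then arises by summation over $T\in\Th$.

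For the first bound, I would first compute, using $\bfPh\nh=0$ and the product rule,
\[
\bfPh\,\nbgh(\bfPh\wh)\,\bfPh \;=\; \bfPh\,\nbgh\wh\,\bfPh \;-\;(\wh\cdot\nh)\,\bfH_h,
\]
so that the element-wise symmetric covariant gradient satisfies
\[
E_T(\bfPh\wh) \;=\; \bfPh\,E_h(\wh)\,\bfPh \;-\;(\wh\cdot\nh)\,\bfH_h,
\]
where I use that $\bfH_h$ is symmetric and tangential on each element. Since $\|\bfH_h\|_{L^{\infty}(T)}\le c$ uniformly in $h$, this yields
\[
\|E_T(\bfPh\wh)\|_{L^2(T)} \;\le\; \|E_h(\wh)\|_{L^2(T)} + c\,\|\wh\|_{L^2(T)}.
\]
Together with $\|\bfPh\wh\|_{L^2(T)}\le\|\wh\|_{L^2(T)}$ and the element-wise Korn inequality applied to the tangential field $\bfPh\wh|_T$, squaring and summing over $T\in\Th$ gives
\[
\|\bfPh\wh\|_{H^1(\Th)}^2 \;\le\; c\bigl(\|E_h(\wh)\|_{L^2(\Gah)}^2 + \|\wh\|_{L^2(\Gah)}^2\bigr) \;=\; c\,\ah(\wh,\wh),
\]
which is \eqref{eq: korn inequality Ph UNS}.

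For \eqref{eq: L4 inequality Ph UNS}, I would apply the Gagliardo--Nirenberg-type interpolation estimate \eqref{eq: interpolation inequality Gaglia UNS} on each element $T$ to $\bfPh\wh|_T\in\bfH^1(T)$,
\[
\|\bfPh\wh\|_{L^4(T)}^4 \;\le\; c\,\|\bfPh\wh\|_{L^2(T)}^2\,\|\bfPh\wh\|_{H^1(T)}^2,
\]
sum over $T$, and use the elementary inequality $\sum_T a_Tb_T\le(\sum_T a_T)(\sum_T b_T)$ for non-negative $a_T,b_T$ to obtain
\[
\|\bfPh\wh\|_{L^4(\Gah)}^4 \;\le\; c\,\|\bfPh\wh\|_{L^2(\Gah)}^2\,\|\bfPh\wh\|_{H^1(\Th)}^2.
\]
Taking the fourth root, using $\|\bfPh\wh\|_{L^2(\Gah)}\le\|\wh\|_{L^2(\Gah)}$ and the already-proven first bound finishes the inequality; the final step $\|\wh\|_{L^2(\Gah)}^{1/2}\|\wh\|_{\ah}^{1/2}\le c\|\wh\|_{\ah}$ is immediate from $\|\wh\|_{L^2(\Gah)}\le\|\wh\|_{\ah}$.

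The main technical obstacle is justifying that the Korn-type inequality used in the second paragraph holds element-wise with a constant independent of $h$ for the tangential vector field $\bfPh\wh|_T$; this requires scaling to a reference configuration and exploiting the shape-regularity \eqref{eq: shape regularity UNS} of $\Th$ together with the smoothness of the local Gauss map on $T$, analogous to the arguments in \cite{Hardering2022}. Everything else is a direct algebraic computation plus summation over the mesh.
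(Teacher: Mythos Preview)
Your approach has a genuine gap in the element-wise Korn step, and it is precisely the obstacle you flagged at the end. The inequality
\[
\|\bfPh\wh\|_{H^1(T)} \le c\bigl(\|E_T(\bfPh\wh)\|_{L^2(T)} + \|\bfPh\wh\|_{L^2(T)}\bigr)
\]
does \emph{not} hold with $c$ independent of $h$. To see this, take $T$ essentially flat and let $\bfv$ be an infinitesimal rigid rotation centred at a point of $T$: then $E_T(\bfv)=0$, $\|\nbgh\bfv\|_{L^2(T)}\sim |T|^{1/2}$, but $\|\bfv\|_{L^2(T)}\sim h\,|T|^{1/2}$. Scaling to a reference element therefore produces a factor $h^{-1}$ in front of the $L^2$ term, and your argument would only yield
\[
\|\bfPh\wh\|_{H^1(\Th)} \le c\bigl(\|E_h(\wh)\|_{L^2(\Gah)} + h^{-1}\|\wh\|_{L^2(\Gah)}\bigr),
\]
which is strictly weaker than \eqref{eq: korn inequality Ph UNS}. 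The same scaling problem afflicts the element-wise Ladyzhenskaya/Gagliardo--Nirenberg step in your proof of \eqref{eq: L4 inequality Ph UNS}: on a single element the interpolation inequality requires the full $H^1$ norm with an $h^{-1}$-weighted $L^2$ part (test with a constant function), so summing over $\Th$ does not recover the global estimate.

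The paper avoids both issues by never working element-wise with Korn. For $k_g=1$ the projection $\bfPh$ is piecewise constant, so $\nbgh(\bfPh\wh)=\nbgcovh\wh$ directly and no Korn is needed. For $k_g\ge 2$ the paper writes $\bfPh\wh=(\bfPh-\bfP)\wh+\bfP\wh$; the first piece is controlled by $h^{k_g-1}\|\wh\|_{L^2}$ via a geometric perturbation bound, while the second piece lifts to $\bfPg\whl$, which is a \emph{global} tangential $H^1$ field on the smooth surface $\Ga$. There the \emph{global} Korn inequality \eqref{eq: Korn inequality UNS} applies with a fixed constant, and one transfers back to $\Gah$ via the perturbation estimates of Lemma~\ref{lemma: errors of geometric pert UNS} together with an inverse inequality (which is harmless because the perturbation already carries $h^{k_g}$). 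For \eqref{eq: L4 inequality Ph UNS} the paper simply feeds \eqref{eq: korn inequality Ph UNS} into the global interpolation inequality \eqref{eq: interpolation inequality Gaglia UNS}. Your algebraic identity $E_h(\bfPh\wh)=E_h(\wh)-(\wh\cdot\nh)\bfH_h$ is correct and useful, but the passage from it to the broken $H^1$ bound must go through the exact surface, not through a local Korn estimate.
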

\begin{proof}
For the first estimate we consider two cases. First for $k_g = 1$ we have that $\nh$, and therefore $\bfPh$, is constant on each triangle giving rise to the expression $\nbgh(\bfPh\wh) = \bfPh \nbgh \wh = \nbgcovh \wh$, which leads to our desired result,
\begin{equation*}
        \norm{\bfPh\wh}_{H^1(\Th)} \leq  \norm{\nbgcovh \wh}_{L^2(\Gah)} + \norm{\wh}_{L^2(\Gah)} \leq  \norm{\wh}_{\ah}.
\end{equation*}
Now for $k_g \geq 2$ we split our expression as followed
\begin{equation*}
    \begin{aligned}
        \norm{\bfPh\wh}_{H^1(\Th)} \leq \norm{(\bfPh-\bfP)\wh}_{H^1(\Th)} + \norm{\bfP\wh}_{H^1(\Th)}.
    \end{aligned}
\end{equation*}
For the first term we use \cite[Lemma 4.4]{Hardering2022} to get
\begin{equation*}
    \norm{(\bfPh-\bfP)\wh}_{H^1(\Th)} \leq h^{k_g-1}\norm{\wh}_{L^2(\Gah)},
\end{equation*}
while the second term following \cite[Lemma 4.7]{elliott2024sfem}, applying  Korn's inequality \eqref{eq: Korn inequality UNS}, and \Cref{lemma: errors of geometric pert UNS} gives 
\begin{equation*}
    \begin{aligned}
        \norm{\bfP\wh}_{H^1(\Th)} &\leq c \norm{\bfPg\whl}_{H^1(\Ga)} \leq c\norm{E(\bfPg\whl)}_{L^2(\Ga)} + c\norm{\bfPg \whl}_{L^2(\Ga)}\\ 
    &\leq c\big(\norm{E(\whl)}_{L^2(\Ga)} +  \norm{(\whl \cdot \bfng)\bfH}_{L^2(\Ga)}  +\norm{\bfPg \whl}_{L^2(\Gah)}\big) \\ 
     &\leq c\norm{E_h(\wh)}_{L^2(\Gah)} + c\norm{(E(\whl))^{-\ell} - E_h(\wh)}_{L^2(\Gah)} + c\norm{\wh}_{L^2(\Gah)}\\
    &\leq c\norm{E_h(\vh)}_{L^2(\Gah)} + c(1 + h^{k_g-1})\norm{\wh}_{L^2(\Gah)} \qquad\qquad\qquad\qquad \text{(by inv. ineq.)}.
    \end{aligned}
\end{equation*}
Combining these results \eqref{eq: korn inequality Ph UNS} follows. As for \eqref{eq: L4 inequality Ph UNS} applying \eqref{eq: korn inequality Ph UNS} on \eqref{eq: interpolation inequality Gaglia UNS} yields the required estimate.
\end{proof}

\noindent Now, the following continuity and coercivity estimates for the bilinear and trilinear forms hold.
\begin{lemma}[Bounds and coercivity]
\label{Lemma: discrete bounds and coercivity results}
    The following bounds and coercivity estimates hold:
    \begin{align}
        \label{ah boundedness UNS}
        \ah(\wh,\vh) &\leq \norm{\wh}_{\ah}\norm{\vh}_{\ah},\\
          \label{bhtilde boundedness UNS}
       \bhtil(\wh,\{\qh,\xi_h\}) &\leq c \norm{\wh}_{\ah}\norm{\qh}_{L^2(\Gah)} + \norm{\wh}_{L^2(\Gah)}\norm{\xi_h}_{L^2(\Gah)}\leq \norm{\wh}_{\ah}\norm{\{\qh,\xi_h\}}_{L^2(\Gah)}, \\
       \label{ch boundedness UNS}
        \chtil(\zh; \wh,\vh) &\leq c\norm{\zh}_{L^2(\Gah)}^{1/2}\norm{\zh}^{1/2}_{\ah}\norm{\wh}_{\ah}\norm{\vh}^{1/2}_{L^2(\Gah)}\norm{\vh}^{1/2}_{\ah} \\
        &\quad + c\norm{\zh}_{L^2(\Gah)}^{1/2}\norm{\zh}^{1/2}_{\ah}\norm{\vh}_{\ah}  \norm{\wh}_{L^2(\Gah)}^{1/2}\norm{\wh}^{1/2}_{\ah}, \nonumber
    \end{align}
$ \textnormal{ for all } \zh,\wh,\vh \in \bfV_h$, and $(\qh,\xi_h) \in Q_h\times \Lambda_h$.
\end{lemma}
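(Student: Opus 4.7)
The plan is to dispatch the three estimates in turn, with $\chtil$ being the only nontrivial one.

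For $\ah$, Cauchy--Schwarz applied to each of the two integrals in \eqref{eq: lagrange discrete bilinear forms UNS} gives $\|E_h(\wh)\|_{L^2}\|E_h(\vh)\|_{L^2} + \|\wh\|_{L^2}\|\vh\|_{L^2}$, which I combine via the elementary inequality $ac+bd\leq(a+b)(c+d)$ for nonnegative reals and the definition \eqref{eq: energy norm UNS}.

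For $\bhtil$, the Lagrange-multiplier integral is bounded directly by Cauchy--Schwarz, using $|\wh\cdot\nh|\leq|\wh|$. For the pressure-gradient integral I would perform elementwise integration by parts,
\[
\int_T \wh\cdot\nbgh\qh\,\dsh \;=\; -\int_T \qh\,\divgh\wh\,\dsh \;+\; \int_T \kappa_h\qh(\wh\cdot\nh)\,\dsh \;+\; \int_{\partial T}\qh\,\wh\cdot\bfm_h^T,
\]
and sum over $T\in\Th$. Since $\wh,\qh$ are $H^1$-conforming the boundary contributions collapse into edge-jump integrals involving $[\bfm_h]$, which is $\bigo(h^{k_g})$ in $L^\infty(\mathcal{E}_h)$ by \Cref{lemma: Geometric errors UNS}; coupled with a trace-inverse inequality, the jump term is $\bigo(h^{k_g-1})$ and hence uniformly bounded for $k_g\geq 1$. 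The divergence term is controlled by the pointwise identity $\divgh\wh = \mathrm{tr}\,E_h(\wh)$, giving $\|\divgh\wh\|_{L^2}\leq c\|E_h(\wh)\|_{L^2}\leq c\|\wh\|_{\ah}$, and the curvature term by the uniform boundedness of $\kappa_h$. The final inequality in \eqref{bhtilde boundedness UNS} follows from $\|\wh\|_{L^2}\leq\|\wh\|_{\ah}$ and the definition of the product-space norm.

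For $\chtil$, the naive Hölder estimate on $((\zh\cdot\nbgcovh)\wh)\cdot\vh$ yields a factor $\|\nbgh\wh\|_{L^2}$, which is \emph{not} controlled by $\|\wh\|_{\ah}$, because the antisymmetric part of the covariant gradient is invisible to $E_h$. I would overcome this by invoking the identity of \Cref{remark: inertia term Ph}, $\nbgcovh\wh = \nbgcovh(\bfPh\wh) + (\wh\cdot\nh)\bfH_h$, together with the tangentiality $\bfPh\nh=0$ and the Weingarten relation $\bfH_h\bfPh=\bfH_h$, to verify the pointwise cancellation
\[
\bigl((\zh\cdot\nbgcovh)\wh\bigr)\cdot\vh \;-\; (\wh\cdot\nh)\,\zh\cdot\bfH_h\vh \;=\; \bigl((\zh\cdot\nbgcovh)(\bfPh\wh)\bigr)\cdot\vh,
\]
so that the four integrals of \eqref{eq: skew-symmetrized ch real Ph} collapse into two integrals in which the gradient acts only on the projected field $\bfPh\wh$ (resp.\ $\bfPh\vh$). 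In each such integral the outer covariant expression is tangential to $\Gah$, so the outer factor $\vh$ (resp.\ $\wh$) may be replaced by $\bfPh\vh$ (resp.\ $\bfPh\wh$) at no cost. I then apply Hölder with exponents $(4,2,4)$, bound the outer $L^4$-norms of $\bfPh\zh$ and $\bfPh\vh$ (resp.\ $\bfPh\wh$) via \eqref{eq: L4 inequality Ph UNS}, and the middle $L^2$-norm of the tangential gradient by \eqref{eq: korn inequality Ph UNS}, $\|\nbgh(\bfPh\wh)\|_{L^2(\Th)}\leq \|\bfPh\wh\|_{H^1(\Th)}\leq c\|\wh\|_{\ah}$. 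The two resulting summands match exactly those in \eqref{ch boundedness UNS}.

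The main obstacle is precisely this $\chtil$ estimate. Without the curvature-cancellation structure built into the skew-symmetric form \eqref{eq: skew-symmetrized ch real Ph} via \Cref{remark: inertia term Ph}, the full covariant gradient of $\wh$ cannot be traded for the broken $H^1(\Th)$-norm of its tangential projection, and the energy-norm factor $\|\wh\|_{\ah}$ on the right-hand side of \eqref{ch boundedness UNS} becomes unreachable; the projected Korn inequality \eqref{eq: korn inequality Ph UNS} and the specific form of $\chtil$ are a matched pair, and this lemma is the first place where that design choice is exercised.
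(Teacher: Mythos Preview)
Your proposal is correct and follows essentially the same route as the paper. For $\ah$ and $\bhtil$ you match the paper's argument (Cauchy--Schwarz; elementwise integration by parts with the $[\mh]$ estimate and trace-inverse inequality). For $\chtil$ you make explicit the pointwise cancellation between the covariant-gradient and Weingarten terms that reduces each half of \eqref{eq: skew-symmetrized ch real Ph} to an integral involving $\nbgcovh(\bfPh\cdot)$, then apply H\"older $(4,2,4)$ together with \eqref{eq: korn inequality Ph UNS} and \eqref{eq: L4 inequality Ph UNS}; the paper compresses this into the single line ``from \eqref{eq: skew-symmetrized ch real Ph} and simple Cauchy--Schwarz bounds'' leading to the $L^4(\Th)$ display, but the mechanism is identical.
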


\begin{proof}
The first estimate can be readily seen by a simple use of Schwarz's inequality. To see \eqref{bhtilde boundedness UNS} we calculate with the help of the integration by parts formula \eqref{eq: integration by parts UNS} that,
\begin{equation*}
    \begin{aligned}
        \int_{\Gah} \wh \cdot \nbgh \qh \ \dsh &\leq c \norm{\qh}_{L^2(\Gah)}\norm{\wh}_{\ah} + ch^{k_g-1}\norm{\qh}_{L^2(\Gah)}\norm{\wh}_{L^2(\Gah)}\\
       & \leq c \norm{\qh}_{L^2(\Gah)}\norm{\wh}_{\ah},
    \end{aligned}
\end{equation*}
where in the first inequality we used the $|[\mh]| \leq ch^{k_g}$ bound for the co-normal \eqref{eq: geometric errors 1 UNS} (cf. \cite{EllRan21}) and the trace inequalities $\norm{\qh}_{L^2(\mathcal{E}_h)}\leq ch^{-1/2}\norm{\qh}_{L^2(\Gah)}$, and $\norm{\wh}_{L^2(\mathcal{E}_h)}\leq ch^{-1/2}\norm{\wh}_{L^2(\Gah)}$.  Regarding the last inequality, from \eqref{eq: skew-symmetrized ch real Ph} and simple Cauchy-Schwarz bounds, we readily see that 
\begin{equation*}
    \begin{aligned}
       \chtil(\zh; \wh,\vh) &\leq c\norm{\bfPh\zh}_{L^4(\Th)}\norm{\wh}_{\ah}\norm{\bfPh\vh}_{L^4(\Th)}  + c\norm{\bfPh\zh}_{L^4(\Th)}\norm{\vh}_{\ah}\norm{\bfPh\wh}_{L^4(\Th)}.
    \end{aligned}
\end{equation*}
The final result is readily seen with the help of the previously proven results in \cref{lemma: Korn's inequality Ph UNS}.
\end{proof}

Now, let us prove some important properties that involve \emph{discrete weakly tangential divergence-free functions} $\wh \in \bfV_h^{div}$ \eqref{eq: discrete weakly divfree space UNS}. These properties will be used throughout the rest of text. 

\begin{lemma}\label{lemma: divfree L2 inner normal UNS}
Let $\wh \in \bfV_h^{div}$ and $\vh \in \bfV_h$, then we have the following inequalities
\begin{align}
\label{eq: divfree L2 inner normal kl=ku UNS}
    (\wh\cdot\nh,\vh\cdot\nh)_{L^2(\Gah)} &\leq ch\norm{\wh\cdot\nh}_{L^2(\Gah)}\norm{\vh}_{L^2(\Gah)}, \  \text{ for } k_\lambda = k_u.\\[5pt]
    \label{eq: divfree L2 inner normal kl=ku-1 UNS}
    (\wh\cdot\nh,\vh\cdot\nh)_{L^2(\Gah)} &\leq ch\norm{\wh\cdot\nh}_{L^2(\Gah)}\norm{\vh}_{H^1(\Gah)}, \  \text{ for } k_\lambda = k_u-1.
\end{align}
\end{lemma}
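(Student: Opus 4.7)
The plan is to exploit that $\wh \in \bfV_h^{div}$ makes $\wh\cdot\nh$ orthogonal to $\Lambda_h$ in $L^2(\Gah)$. Indeed, taking $\qh = 0$ in the definition \eqref{eq: discrete weakly divfree space UNS} gives $(\wh\cdot\nh,\xi_h)_{L^2(\Gah)} = 0$ for every $\xi_h \in \Lambda_h$, so
\[
(\wh\cdot\nh,\vh\cdot\nh)_{L^2(\Gah)} = (\wh\cdot\nh,\vh\cdot\nh - \xi_h)_{L^2(\Gah)}
\]
for any such $\xi_h$. By Cauchy--Schwarz the task then reduces to constructing $\xi_h \in \Lambda_h$ that approximates $\vh\cdot\nh$ in $L^2(\Gah)$ at rate $h\norm{\vh}_{L^2(\Gah)}$ for \eqref{eq: divfree L2 inner normal kl=ku UNS} and at rate $h\norm{\vh}_{H^1(\Gah)}$ for \eqref{eq: divfree L2 inner normal kl=ku-1 UNS}.

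In both cases I would first split $\vh\cdot\nh - \xi_h = \vh\cdot(\nh - \bfn) + (\vh\cdot\bfn - \xi_h)$, where $\bfn = \bfng^e$ denotes the globally smooth extension of the exact normal to $\Gah$. The first piece is disposed of uniformly via the geometric estimate $\norm{\nh - \bfn}_{L^\infty(\Gah)} \leq ch^{k_g} \leq ch$ from \Cref{lemma: Geometric errors UNS}, contributing the required $ch\norm{\vh}_{L^2(\Gah)}\norm{\wh\cdot\nh}_{L^2(\Gah)}$. It then remains to approximate the scalar $\bfn\cdot\vh$ by an element of $\Lambda_h$.

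When $k_\lambda = k_u$ the target space $\Lambda_h = S_{h,k_g}^{k_u}$ has the same order as the space containing $\vh$, so I would choose $\xi_h = \Ihz(\bfng^{-\ell}\cdot\vh) \in \Lambda_h$ and apply the super-approximation estimate \eqref{eq: super-approximation estimate 2 UNS} with $\chi = \bfng \in [W^3_\infty(\Ga)]^3$ (finite since $\Ga$ is of class $C^m$, $m\geq 4$), which gives $\norm{\bfn\cdot\vh - \Ihz(\bfng^{-\ell}\cdot\vh)}_{L^2(\Gah)} \leq ch\norm{\vh}_{L^2(\Gah)}$ and delivers \eqref{eq: divfree L2 inner normal kl=ku UNS}. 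When $k_\lambda = k_u - 1$, the super-approximation estimate is no longer applicable, because $\Lambda_h = S_{h,k_g}^{k_u-1}$ is of strictly lower order than the space in which $\vh$ lives. I would then fall back on the standard Scott--Zhang bound \eqref{eq: bounds of Scott-Zhang interpolant UNS} with $\xi_h = \Ihz(\bfn\cdot\vh)$, combined with the product rule and the $L^\infty$-boundedness of $\bfn$ and $\nbgh\bfn$ (smoothness of $\Ga$), to obtain $\norm{\bfn\cdot\vh - \xi_h}_{L^2(\Gah)} \leq ch\norm{\bfn\cdot\vh}_{H^1(\Gah)} \leq ch\norm{\vh}_{H^1(\Gah)}$, from which \eqref{eq: divfree L2 inner normal kl=ku-1 UNS} follows.

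The main subtlety is precisely the dichotomy between the two bounds: in the $k_\lambda = k_u$ case the super-approximation estimate converts the non-polynomial character of $\bfn$ into an extra factor of $h$ without costing any derivative on $\vh$, whereas in the $k_\lambda = k_u - 1$ case the order mismatch between $\Lambda_h$ and the polynomial space containing $\vh$ forces the use of a classical first-order interpolation estimate, which unavoidably costs one derivative on $\vh$. No other obstruction arises: the geometric-error contribution is uniform in $h$ and no regularity beyond the smoothness of $\Ga$ is needed.
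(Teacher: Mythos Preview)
Your proposal is correct and follows essentially the same route as the paper: both use the orthogonality $(\wh\cdot\nh,\xi_h)_{L^2(\Gah)}=0$ for $\xi_h\in\Lambda_h$, split via $\vh\cdot\nh = \vh\cdot(\nh-\bfn) + \vh\cdot\bfn$, handle the first piece with the geometric estimate $\norm{\nh-\bfn}_{L^\infty}\leq ch^{k_g}$, and then insert $\xi_h = \Ihz(\bfn\cdot\vh)$, applying the super-approximation bound \eqref{eq: super-approximation estimate 2 UNS} when $k_\lambda=k_u$ and the plain Scott--Zhang estimate \eqref{eq: bounds of Scott-Zhang interpolant UNS} when $k_\lambda=k_u-1$. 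Your identification of the dichotomy---super-approximation available only when the interpolant lands in a space of the same order as $\vh$---is exactly the point the paper makes.
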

\begin{proof}
In the case where $\underline{k_\lambda = k_u}$, i.e. when the finite element spaces $\Lambda_h=S^{k_{\lambda}}_{h,k_g} = S^{k_u}_{h,k_g} = V_h$, for $\wh \in \bfV_h^{div}$, cf. \eqref{eq: discrete weakly divfree space UNS}, we have that  $\int_{\Gah} \xi_h(\bfw_h\cdot\nh) =0$ for any $\xi_h \in \Lambda_h=V_h$.  So, using  the estimate $\norm{\bfn-\nh}_{L^{\infty}} \leq ch^{k_g}$ \eqref{eq: geometric errors 1 UNS}, the following computations follow
        \begin{align}\label{eq: normal inner product estimate inside 1 UNS}
            (\wh\cdot\nh,\vh\cdot\nh)_{L^2(\Gah)} &= (\wh\cdot\nh,\vh\cdot\bfn)_{L^2(\Gah)} + (\wh\cdot\nh,\vh\cdot(\nh - \bfn))_{L^2(\Gah)}\nonumber\\
            &\leq \underbrace{(\wh\cdot\nh,\vh\cdot\bfn - \Ihz(\vh\cdot\bfn))_{L^2(\Gah)}}_{(\wh\cdot\nh,\Ihz(\vh\cdot\bfn))_{L^2(\Gah)} =0, \ \Ihz(\cdot)\in V_h = \Lambda_h} + ch^{k_g} \norm{\wh\cdot\nh}_{L^2(\Gah)} \norm{\vh}_{L^2(\Gah)}\nonumber\\
            &\leq ch\norm{\wh\cdot\nh}_{L^2(\Gah)}\norm{\vh}_{L^2(\Gah)},
        \end{align}
where in the last inequality we used the super-approximation estimate for the  Scott-Zhang interpolant \eqref{eq: super-approximation estimate 2 UNS}, which holds since $\vh \in \bfV_h$ and $\Ihz(\cdot) : L^2(\Gah) \to V_h$.

On the other hand, in the case where $\underline{k_\lambda = k_u-1}$ we see that $\Lambda_h \neq V_h$, i.e. now $\Lambda_h$ is not rich enough, and therefore $\int_{\Gah} \xi_h(\bfw_h\cdot\nh) =0$ only for $\xi_h \in \Lambda_h$. For that reason, in the above eq. \eqref{eq: normal inner product estimate inside 1 UNS} we, instead, restrict ourselves to Scott-Zhang interpolants projected to $\Lambda_h$, that is, $(\Ihz)^{\lambda}(\cdot) : L^2(\Gah) \to \Lambda_h$. That means the super-approximation estimate \eqref{eq: super-approximation estimate 2 UNS} does not hold (since $\vh \in \bfV_h$), and therefore we use the standard interpolation estimate \eqref{eq: bounds of Scott-Zhang interpolant UNS} to obtain
        \begin{align*}
            (\wh\cdot\nh,\vh\cdot\nh)_{L^2(\Gah)} 
            &\leq \underbrace{(\wh\cdot\nh,\vh\cdot\bfn - (\Ihz)^{\lambda}(\vh\cdot\bfn))_{L^2(\Gah)}}_{(\wh\cdot\nh,(\Ihz)^{\lambda}(\vh\cdot\bfn))_{L^2(\Gah)} =0, \ \Ihz(\cdot)\in \Lambda_h} + ch^{k_g} \norm{\wh\cdot\nh}_{L^2(\Gah)} \norm{\vh}_{L^2(\Gah)} \\
            &\leq ch\norm{\wh\cdot\nh}_{L^2(\Gah)}\norm{\vh}_{H^1(\Gah)}.
        \end{align*}
\end{proof}
 The following results are an immediate consequence of the above \cref{lemma: divfree L2 inner normal UNS}.
\begin{corollary}\label{corollary: divfree L2 norm UNS}
Let $\wh \in \bfV_h^{div}$, then if $ \underline{k_\lambda = k_u-1}$ we have the following 
\begin{align}
    \label{eq: divfree L2 norm kl=ku-1 UNS}
    \norm{\wh\cdot\nh}_{L^2(\Gah)} &\leq ch\norm{\wh}_{H^1(\Gah)}.
\end{align}
Furthermore, if $\underline{k_\lambda = k_u}$ we have, for $h\leq h_0$ with sufficiently small $h_0$
\begin{align}
    \label{eq: divfree L2 norm kl=ku UNS}
    \norm{\wh\cdot\nh}_{L^2(\Gah)} &\leq ch\norm{\wh}_{L^2(\Gah)}, \\
    \label{eq: divfree L2 norm kl=ku 2 UNS}
    \norm{\wh\cdot\nh}_{L^2(\Gah)} &\leq ch\norm{\bfP\wh}_{L^2(\Gah)} + ch^{k_g+1}\norm{\wh}_{L^2(\Gah)}.
\end{align}
\end{corollary}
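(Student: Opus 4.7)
The plan is to derive all three bounds from Lemma \ref{lemma: divfree L2 inner normal UNS} applied at $\vh = \wh$, supplemented by a pointwise orthogonal decomposition of $\wh$ for the refined estimate.

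First I would obtain \eqref{eq: divfree L2 norm kl=ku-1 UNS} and \eqref{eq: divfree L2 norm kl=ku UNS} directly. Substituting $\vh = \wh \in \bfV_h^{div}$ into \eqref{eq: divfree L2 inner normal kl=ku-1 UNS} and \eqref{eq: divfree L2 inner normal kl=ku UNS} respectively gives
\[
\norm{\wh\cdot\nh}^2_{L^2(\Gah)} \leq ch\,\norm{\wh\cdot\nh}_{L^2(\Gah)}\,\norm{\wh}_{H^1(\Gah)}, \qquad \norm{\wh\cdot\nh}^2_{L^2(\Gah)} \leq ch\,\norm{\wh\cdot\nh}_{L^2(\Gah)}\,\norm{\wh}_{L^2(\Gah)}.
\]
Dividing by $\norm{\wh\cdot\nh}_{L^2(\Gah)}$ (the case of vanishing norm being trivial) yields the two bounds.

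The refined estimate \eqref{eq: divfree L2 norm kl=ku 2 UNS} requires more care. I would use the pointwise decomposition $\wh = \bfP\wh + (\wh\cdot\bfn)\bfn$ on $\Gah$ via the extensions of $\bfP,\bfn$ to $U_\delta$. Since $\bfP\wh \perp \bfn$ pointwise, we have $|\wh|^2 = |\bfP\wh|^2 + (\wh\cdot\bfn)^2$, which integrates to $\norm{\wh}^2_{L^2(\Gah)} = \norm{\bfP\wh}^2_{L^2(\Gah)} + \norm{\wh\cdot\bfn}^2_{L^2(\Gah)}$. The central step is to control $\wh\cdot\bfn$ in terms of $\wh\cdot\nh$, modulo geometric error. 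Writing $\wh\cdot\bfn = \wh\cdot\nh + \wh\cdot(\bfn-\nh)$ and expanding
\[
\wh\cdot(\bfn-\nh) = \bfP\wh\cdot(\bfn-\nh) + (\wh\cdot\bfn)(1-\bfn\cdot\nh),
\]
I would apply $\norm{\bfn-\nh}_{L^\infty(\Gah)} \leq ch^{k_g}$ together with the sharper estimate $\norm{1-\bfn\cdot\nh}_{L^\infty(\Gah)} \leq ch^{k_g+1}$ from Lemma \ref{lemma: Geometric errors UNS} to deduce
\[
\norm{\wh\cdot(\bfn-\nh)}_{L^2(\Gah)} \leq ch^{k_g}\norm{\bfP\wh}_{L^2(\Gah)} + ch^{k_g+1}\norm{\wh}_{L^2(\Gah)}.
\]

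Combining gives $\norm{\wh}^2_{L^2} \leq \norm{\bfP\wh}^2_{L^2} + 2\norm{\wh\cdot\nh}^2_{L^2} + ch^{2k_g}\norm{\bfP\wh}^2_{L^2} + ch^{2(k_g+1)}\norm{\wh}^2_{L^2}$. For $h \leq h_0$ small the last term absorbs to the left, yielding $\norm{\wh}_{L^2(\Gah)} \leq c(\norm{\bfP\wh}_{L^2(\Gah)} + \norm{\wh\cdot\nh}_{L^2(\Gah)})$. Finally I would plug this into \eqref{eq: divfree L2 norm kl=ku UNS} to obtain $\norm{\wh\cdot\nh}_{L^2} \leq ch\,\norm{\wh}_{L^2} \leq ch\,\norm{\bfP\wh}_{L^2} + ch\,\norm{\wh\cdot\nh}_{L^2}$; the second term absorbs for small $h$, giving the stronger bound $\norm{\wh\cdot\nh}_{L^2(\Gah)} \leq c'h\,\norm{\bfP\wh}_{L^2(\Gah)}$, which of course implies \eqref{eq: divfree L2 norm kl=ku 2 UNS}. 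The main obstacle is bookkeeping the two geometric scales ($h^{k_g}$ for $\bfn-\nh$ versus $h^{k_g+1}$ for $1-\bfn\cdot\nh$) and ensuring the absorption thresholds are consistent; everything else is direct manipulation.
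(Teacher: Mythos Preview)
Your argument is correct. The paper gives no details beyond calling the corollary an immediate consequence of Lemma~\ref{lemma: divfree L2 inner normal UNS}; your derivation of \eqref{eq: divfree L2 norm kl=ku-1 UNS} and \eqref{eq: divfree L2 norm kl=ku UNS} by setting $\vh=\wh$ is exactly what is intended, and your route to \eqref{eq: divfree L2 norm kl=ku 2 UNS}---decomposing $\wh$ via $\bfP$ and $\bfn$, distinguishing the two geometric scales $h^{k_g}$ and $h^{k_g+1}$, and then feeding back through \eqref{eq: divfree L2 norm kl=ku UNS} with an absorption---is sound and in fact yields the slightly sharper bound $\norm{\wh\cdot\nh}_{L^2(\Gah)}\leq ch\,\norm{\bfP\wh}_{L^2(\Gah)}$, of which the stated estimate is an immediate weakening.
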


\begin{lemma}[Improved $H^1$ coercivity bound]\label{lemma: improved h1-ah bound UNS}
     If $\underline{k_{\lambda} = k_u}$ and $\bfw_h \in \bfV_h^{div}$ then the following  holds 
     \begin{align}\label{eq: improved h1-ah bound UNS}
         \norm{\bfw_h}_{H^1(\Gah)}^2 &\leq c\norm{\wh}_{\ah}^2.
     \end{align}
\end{lemma}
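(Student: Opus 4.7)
The plan is to combine the Korn-type inequality \eqref{discrete Korn's inequality T nh UNS} with the bound on the normal component from \Cref{corollary: divfree L2 norm UNS}, which is precisely the tool tailored to the $k_\lambda = k_u$ setting. All the work has essentially been done; this lemma is just assembling the pieces in the right order.

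\textbf{Step 1.} Apply \eqref{discrete Korn's inequality T nh UNS} to $\bfw_h \in \bfV_h \supset \bfV_h^{div}$ to obtain
\begin{equation*}
\norm{\bfw_h}_{H^1(\Gah)} \leq c\bigl(\norm{E_h(\bfw_h)}_{L^2(\Gah)} + \norm{\bfw_h}_{L^2(\Gah)} + h^{-1}\norm{\bfw_h \cdot \nh}_{L^2(\Gah)}\bigr).
\end{equation*}

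\textbf{Step 2.} Since $\bfw_h \in \bfV_h^{div}$ and $k_\lambda = k_u$, invoke \eqref{eq: divfree L2 norm kl=ku UNS}, which yields $\norm{\bfw_h \cdot \nh}_{L^2(\Gah)} \leq c h \norm{\bfw_h}_{L^2(\Gah)}$ (for $h \leq h_0$ sufficiently small). This is where the crucial cancellation happens: the factor $h^{-1}$ in the Korn estimate is neutralized by the factor $h$ coming from testing with a Scott–Zhang interpolant in the proof of the corollary. This factor is available only because $\Lambda_h = V_h$ when $k_\lambda = k_u$, which is what makes the lemma fail in the $k_\lambda = k_u - 1$ regime.

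\textbf{Step 3.} Substituting the bound from Step 2 into the estimate from Step 1 gives
\begin{equation*}
\norm{\bfw_h}_{H^1(\Gah)} \leq c\bigl(\norm{E_h(\bfw_h)}_{L^2(\Gah)} + \norm{\bfw_h}_{L^2(\Gah)}\bigr) = c \norm{\bfw_h}_{\ah}.
\end{equation*}
Squaring both sides and using $\norm{\bfw_h}_{\ah}^2 \leq 2\bigl(\norm{E_h(\bfw_h)}_{L^2(\Gah)}^2 + \norm{\bfw_h}_{L^2(\Gah)}^2\bigr) = 2\, a_h(\bfw_h,\bfw_h)$ completes the proof.

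There is no real obstacle here; the content of the lemma is essentially a bookkeeping statement that records the improved coercivity enjoyed by discretely tangentially divergence-free functions when the Lagrange multiplier space is rich enough to test the normal component of velocities pointwise (through interpolation). The technical heavy lifting was already carried out in \Cref{lemma: divfree L2 inner normal UNS} and \Cref{corollary: divfree L2 norm UNS}.
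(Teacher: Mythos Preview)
Your proof is correct and follows exactly the same approach as the paper: combine the discrete Korn-type inequality \eqref{discrete Korn's inequality T nh UNS} with the normal-component bound \eqref{eq: divfree L2 norm kl=ku UNS}. The paper's own proof is a one-line reference to precisely these two ingredients.
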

\begin{proof}
This can be immediately proven by combining the discrete Korn's inequality \eqref{discrete Korn's inequality T nh UNS} and \eqref{eq: divfree L2 norm kl=ku UNS}.
\end{proof}

\noindent We also have the following integration by parts formula.
\begin{lemma}
 On the discrete surface $\Gah$ the following integration by parts formula holds:
 \begin{equation}
     \begin{aligned}
     \label{eq: integration by parts UNS}
         \int_{\Gah} \vh \cdot \nbgh \qh \ \dsh &= - \int_{\Gah} \qh \divgh \vh \ \dsh + \sum_{T\in \mathcal{T}_h} \int_T (\vh \cdot \nh)\qh \divgh \nh \ \dsh \\
         &+ \sum_{E \in \mathcal{E}_h} \int_E [\mh \cdot \vh] \qh \, d\ell,
     \end{aligned}
 \end{equation}
 for $\vh \in \bfH^1(\Gah)$ and $\qh \in H^1(\Gah)$, for any $k,l \geq \in \mathbb{N}$.
\end{lemma}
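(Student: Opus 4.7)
The plan is to apply the element-wise divergence theorem on each curved triangle $T\in \Th$ separately, then sum the contributions and reorganize the boundary terms into edge jumps. The starting point is the identity, valid on any sufficiently smooth patch $T$ of a $C^{m-1}$ surface with boundary $\partial T$ and outward co-normal $\bfm_h$:
\begin{equation*}
\int_T \vh \cdot \nbgh \qh \,\dsh \;=\; -\int_T \qh\,\divgh \vh\,\dsh \;+\; \int_T \qh\,(\vh\cdot \nh)\,\divgh \nh \,\dsh \;+\; \int_{\partial T} \qh\,(\vh\cdot \mh)\,d\ell .
\end{equation*}
This is just the continuous integration by parts formula \eqref{eq: integration by parts cont UNS}, applied on each element $T$ viewed as a smooth surface with boundary, and using that $\bfkappa_h=\divgh \nh$ on the interior of $T$ (where $\nh$ is smooth since $T=F_T(\Ttilde)$ is a $C^\infty$ image of a flat simplex after the $k_g$-Lagrange interpolation).

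Next I would sum this identity over all $T\in\Th$. The interior and bulk terms give directly the first two terms on the right-hand side of \eqref{eq: integration by parts UNS}. For the boundary terms, I note that $\Gah$ is a closed surface, so $\mathcal{E}_h$ consists only of interior edges; each $E\in\mathcal{E}_h$ is shared by exactly two elements $T^\pm$ with co-normals $\mh^\pm$. Since $\vh \in \bfH^1(\Gah)$ and $\qh\in H^1(\Gah)$ are continuous across $E$ (their traces agree from either side), summing the element boundary integrals collapses each edge contribution to
\begin{equation*}
\int_{E}\qh\bigl(\vh\cdot \mh^+ + \vh\cdot \mh^-\bigr)\,d\ell \;=\; \int_E [\mh\cdot \vh]\,\qh\,d\ell,
\end{equation*}
which gives the final edge term in \eqref{eq: integration by parts UNS}.

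I do not expect a major obstacle here: the argument is entirely classical once one has the per-element Stokes identity. The only point requiring a small amount of care is justifying the per-element formula for $\vh\in \bfH^1(\Gah)$ (rather than smooth $\vh$); this follows by a standard density argument on each $T$, using the $H^1$--trace theorem on the smooth curved triangle to make sense of $\vh\cdot\mh$ on $\partial T$ as an $H^{1/2}(\partial T)'$ object paired with $\qh|_{\partial T}\in H^{1/2}(\partial T)$. No special assumption on the polynomial degrees $k,\ell$ is needed, in agreement with the statement.
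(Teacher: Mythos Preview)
Your proof is correct and is the standard argument: apply the per-element surface divergence theorem (the boundary version of \eqref{eq: integration by parts cont UNS}, using $\bfkappa_h=\divgh\nh$ on each smooth patch $T$), sum over $T\in\Th$, and collect the edge contributions into jumps of the co-normals using the continuity of $\vh$ and $\qh$ across interior edges. The paper actually states this lemma without proof, so there is nothing to compare against; your derivation is exactly the expected one and would fill the gap cleanly.
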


\subsection{Perturbation bounds and interpolation bounds in energy norm}\label{sec: Perturbation bounds and interpolation bounds in energy norm UNS}

Recalling \Cref{sec: Geometric perturbations UNS} we present geometric perturbations for the bilinear forms; see \cite{elliott2024sfem} for more details.
\begin{lemma}\label{lemma: Geometric perturbations b lagrange UNS}
Let $\bfw_h,\vh \in \bfH^1(\Gah)$ and $\{\qh,\xi_h \}\in H^1(\Gah)\times H^1(\Gah)$. Then we have
\begin{align}
    \label{eq: Geometric perturbations a UNS}
    |a(\bfw_h^{\ell},\vhl) - \ah(\bfw_h,\vh)| &\leq ch^{k_g} \norm{\bfw_h}_{H^1(\Gah)} \norm{\vh}_{H^1(\Gah)},\\
    \label{eq: Geometric perturbations btilde UNS}
     |\bhtil(\bfw_h,\{\qh,\xi_h\}) - b^L(\bfw_h^{\ell},\{\qhl,\xi_h^{\ell}\})| &\leq ch^{k_g} \norm{\bfw_h}_{L^2(\Gah)}(\norm{\qh}_{H^1(\Gah)} + \norm{\xi_h}_{L^2(\Gah)}) \nonumber\\
     &\leq ch^{k_g-1} \norm{\bfw_h}_{L^2(\Gah)}(\norm{\qh}_{L^2(\Gah)} + h\norm{\xi_h}_{L^2(\Gah)}).
\end{align}
If  $\bfw \in \bfH_T^1$, then we get the following higher order bound
\begin{equation}
    \begin{aligned}\label{eq: Geometric perturbations btilde tangent UNS}
        |\bhtil(\bfw^{-\ell},\{\qh,\xi_h\}) - b^L(\bfw,\{\qhl,\xi_h^{\ell}\})|
        \leq ch^{k_g} \norm{\bfw}_{L^2(\Ga)}\norm{\{\qh,\xi_h\}}_{L^2(\Gah)}.
    \end{aligned}
\end{equation}
Furthermore if  $\bfw,\bfv \in \bfH_T^1\cap \bfH^2(\Ga)$ then we have
\begin{align}\label{eq: Geometric perturbations btilde tangent extra regularity UNS}
    |\bhtil(\bfw^{-\ell},\{\qh,\xi_h\}) - b^L(\bfw,\{\qhl,\xi_h^{\ell}\})|&\leq ch^{k_g+1} \norm{\bfw}_{H^2(\Ga)}\norm{\{\qh,\xi_h\}}_{H^1(\Gah)}, \\
    \label{eq: Geometric perturbations a tangent extra regularity UNS}
    |a(\bfw,\bfv) - \ah(\bfw^{-\ell},\bfv^{-\ell})| &\leq ch^{k_g+1} \norm{\bfw}_{H^2(\Ga)} \norm{\bfv}_{H^2(\Gah)}.
\end{align}
\end{lemma}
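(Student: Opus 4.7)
The overall plan is to pull every integral over $\Ga$ back to $\Gah$ via the closest-point projection (introducing the Jacobian $\muh$) and then split each discrepancy into three sources: the Jacobian perturbation, controlled by $\|1-\muh\|_{L^\infty} \leq ch^{k_g+1}$ from \eqref{eq: muhkg estimate UNS}; the operator perturbations (between $\nbg$ and $\nbgh$, between $\bfPg$ and $\bfPh$, between $E$ and $E_h$), controlled by \Cref{lemma: Geometric errors UNS} and \Cref{lemma: errors of geometric pert UNS}; and, when integration by parts is performed on $\Gah$, the edge jump contributions in \eqref{eq: integration by parts UNS}, controlled by $\|[\mh]\|_{L^\infty(\mathcal{E}_h)} \leq ch^{k_g}$ or the sharper $\|\bfP[\mh]\|_{L^\infty(\mathcal{E}_h)} \leq ch^{2k_g}$.

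First, for \eqref{eq: Geometric perturbations a UNS} I would write
\[
a(\bfw_h^\ell,\vhl)-\ah(\bfw_h,\vh) = \int_{\Gah}\!\bigl[\muh(E(\bfw_h^\ell){:}E(\vhl))^{-\ell} - E_h(\bfw_h){:}E_h(\vh)\bigr]\dsh + \int_{\Gah}(\muh-1)(\bfw_h^\ell\cdot\vhl)^{-\ell}\dsh,
\]
bound the second integral by $ch^{k_g+1}\|\bfw_h\|_{L^2}\|\vh\|_{L^2}$, and for the first invoke \eqref{eq: Geometric perturbations a 1 UNS} after absorbing the factor $\muh$. Next, for \eqref{eq: Geometric perturbations btilde UNS} I would use the continuous integration-by-parts \eqref{eq: integration by parts cont UNS} to rewrite $b^L(\bfw_h^\ell,\{q_h^\ell,\xi_h^\ell\}) = \int_\Ga \nbg q_h^\ell\cdot\bfw_h^\ell + \int_\Ga \xi_h^\ell(\bfw_h^\ell\cdot\bfng)$ (the curvature term vanishes against the tangential gradient), pull back to $\Gah$, and compare term-by-term with $\bhtil(\bfw_h,\{q_h,\xi_h\})$. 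The gradient discrepancy is controlled by \eqref{eq: Geometric perturbations nbg UNS} and yields $ch^{k_g}\|\bfw_h\|_{L^2}\|q_h\|_{H^1}$; the normal discrepancy contributes $\|\bfng^{-\ell}-\nh\|_{L^\infty}\leq ch^{k_g}$ times $\|\bfw_h\|_{L^2}\|\xi_h\|_{L^2}$. The second form of the bound follows by the inverse inequality $\|q_h\|_{H^1}\leq ch^{-1}\|q_h\|_{L^2}$.

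For the sharpened bound \eqref{eq: Geometric perturbations btilde tangent UNS}, the key observation is that $\bfw\in\bfH_T^1$ gives $\bfw^{-\ell}\cdot\bfn = 0$ pointwise on $\Gah$, so $\bfw^{-\ell}\cdot\nh = \bfw^{-\ell}\cdot(\nh-\bfn) = O(h^{k_g})|\bfw^{-\ell}|$; combined with IBP on $\Gah$ via \eqref{eq: integration by parts UNS}, the edge contribution collapses to $\bfw^{-\ell}\cdot[\mh] = \bfw^{-\ell}\cdot\bfP[\mh] = O(h^{2k_g})|\bfw^{-\ell}|$ since $\bfw^{-\ell}$ is $\bfP$-tangent. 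Matching the transformed divergence term against $\divg \bfw$ (which uses only the $L^2$ norm of $q_h$ on the discrete side after IBP) then yields \eqref{eq: Geometric perturbations btilde tangent UNS}. Finally, for \eqref{eq: Geometric perturbations btilde tangent extra regularity UNS} and \eqref{eq: Geometric perturbations a tangent extra regularity UNS}, the additional $\bfH^2(\Ga)$ regularity of $\bfw,\bfv$ permits moving one derivative onto the smooth function, and the decisive cancellation $\|\bfP\bfPh\bfP - \bfP\|_{L^\infty}\leq ch^{k_g+1}$ from \eqref{eq: geometric errors 3 UNS} replaces every $h^{k_g}$ by $h^{k_g+1}$.

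I expect the main obstacle to be estimate \eqref{eq: Geometric perturbations btilde tangent UNS}: the naive argument used for \eqref{eq: Geometric perturbations btilde UNS} produces $\|q_h\|_{H^1}$ on the right and is too weak, so one must simultaneously track (i) the Jacobian $\muh$, (ii) the $\bfB_h^{-1}$-mismatch of the transformed gradient, and (iii) the discrete IBP edge terms, while using the full tangentiality $\bfw\cdot\bfng = 0$ at every step to extract the extra factor needed to trade $\|q_h\|_{H^1}$ for $\|q_h\|_{L^2}$ without losing the $h^{k_g}$ order.
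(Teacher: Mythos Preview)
Your plan is sound and mirrors the standard arguments in the references the paper cites (the paper's own proof is entirely a pointer to \cite[Lemma 6.1]{elliott2024sfem} and \cite[Lemma 5.4]{hansbo2020analysis}), so your outline in fact supplies the content that the paper omits.

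One caveat on \eqref{eq: Geometric perturbations btilde tangent UNS}: integrating by parts on $\Gah$ as you propose does trade $\|q_h\|_{H^1}$ for $\|q_h\|_{L^2}$, but it introduces $\divgh\bfw^{-\ell}$ in the bulk term, so the comparison of $\int_{\Gah}q_h\divgh\bfw^{-\ell}$ with $\int_\Ga q_h^\ell\divg\bfw$ naturally produces $ch^{k_g}\|\bfw\|_{H^1(\Ga)}\|q_h\|_{L^2(\Gah)}$ rather than the $L^2$-norm of $\bfw$ written in the statement. If the $\|\bfw\|_{L^2}$ dependence is genuinely required, the cleaner route is to \emph{not} integrate by parts and instead exploit that both $\bfw^{-\ell}$ and $(\nbg q_h^\ell)^{-\ell}$ are $\bfPg$-tangent: then the integrand of the difference sees only $\bfPg\Bhg\bfPg-\muh\bfPg$, and since $\bfPg\bfPh\bfPg-\bfPg=-(\bfPg\nh)\otimes(\bfPg\nh)=O(h^{2k_g})$ while $d=O(h^{k_g+1})$ and $1-\muh=O(h^{k_g+1})$, one obtains $ch^{k_g+1}\|\bfw\|_{L^2}\|q_h\|_{H^1}$, which reduces to the stated form via the same inverse inequality you already invoke for the second line of \eqref{eq: Geometric perturbations btilde UNS}.
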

\begin{proof}
The first estimate can be easily proved with the help of \eqref{eq: Geometric perturbations a 1 UNS}. For the rest \eqref{eq: Geometric perturbations btilde UNS} - \eqref{eq: Geometric perturbations btilde tangent extra regularity UNS}, see \cite[Lemma 6.1]{elliott2024sfem}. The last estimate can be proved as in \cite[Lemma 5.4]{hansbo2020analysis} with suitable alterations.
\end{proof}

\begin{lemma}[Weingarten map consistency bound]\label{lemma: weingarten map improved UNS}
The following bound holds for all $\vh \in \bfH^1(\Gah)$,
    \begin{equation}
        \begin{aligned}\label{eq: weingarten map improved UNS}
             \Big|\int_{\Gah} \beta^{-\ell}\,\vh\cdot\bfH_h\bfu^{-\ell} \, \dsh - \int_{\Ga} \beta\,\vhl\cdot\bfH\bfu \, \ds\Big| \leq ch^{k_g}\norm{\beta}_{H^1(\Ga)}\norm{\bfu}_{H^1(\Ga)}\norm{\vh}_{H^1(\Gah)}.
        \end{aligned}
    \end{equation}
\end{lemma}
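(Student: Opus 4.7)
The plan is to apply a suitable integration by parts on both sides of the identity so that the tangential gradient falls off of the (discrete or continuous) normal onto the smooth test functions, reducing the statement to comparing two purely bulk integrals on $\Gah$. Writing $\tilde\bfv:=\bfPh\vh$ and $\tilde\bfu:=\bfPh\bfu^{-\ell}$ (both tangent to $\Gah$), a direct computation using $\bfH_h = \bfPh \nbgh\nh$ gives $\vh\cdot\bfH_h\bfu^{-\ell} = \tilde\bfv\cdot(\tilde\bfu\cdot\nbgh)\nh$, and analogously $\vhl\cdot\bfH\bfu = \bfv^\ell_T\cdot(\bfu_T\cdot\nbg)\bfn$ on $\Ga$ with $\bfv^\ell_T=\bfPg\vhl$, $\bfu_T=\bfPg\bfu$.

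Applying the piecewise discrete IBP \eqref{eq: integration by parts UNS} componentwise in $k$ to $\int_{\Gah}\beta^{-\ell}\tilde v_k\tilde\bfu\cdot\nbgh n_{h,k}\dsh$, the curvature contribution $\int(\cdots\cdot\nh)\divgh\nh$ vanishes since $\tilde\bfu\cdot\nh = 0$, and the edge contribution $\sum_E\int_E \beta^{-\ell}[(\bfu^{-\ell}\cdot\mh^+)\tilde v_k^+ n_{h,k}^+ + (\text{sym. }-)]\,d\ell$ vanishes identically once summed over $k$ because $\sum_k\tilde v_k^{\pm}n_{h,k}^{\pm} = \bfPh^{\pm}\vh\cdot\nh^{\pm} = 0$. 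A parallel IBP on the closed surface $\Ga$ (no boundary, and $\bfu_T\cdot\bfn = 0$ kills the curvature piece) gives an analogous purely-bulk reduction. Pulling the resulting $\Ga$-integral back to $\Gah$ via $\ds = \muh\,\dsh$, the quantity to bound becomes
\[
D = -\sum_k\int_{\Gah}\Bigl[\,n_{h,k}\,\divgh(\beta^{-\ell}\tilde v_k\tilde\bfu) \;-\; n_k^{-\ell}\,[\divg(\beta v^\ell_{T,k}\bfu_T)]^{-\ell}\,\muh\,\Bigr]\,\dsh.
\]

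I would then split $D$ into three standard pieces: the $(n_{h,k}-n_k^{-\ell})$-piece (of order $h^{k_g}$ in $L^\infty$ via \eqref{eq: geometric errors 1 UNS}), the $(1-\muh)$-piece (of order $h^{k_g+1}$ via \eqref{eq: muhkg estimate UNS}), and the core piece $n_k^{-\ell}\bigl[\divgh(\beta^{-\ell}\tilde v_k\tilde\bfu)-[\divg(\beta v^\ell_{T,k}\bfu_T)]^{-\ell}\bigr]$ comparing the discrete and lifted surface divergences of essentially the same object. The first two pieces are bounded by H\"older's inequality, the product rule and the 2D Sobolev embedding $H^1\hookrightarrow L^p$ for $p<\infty$, producing $ch^{k_g}\|\beta\|_{H^1}\|\vh\|_{H^1}\|\bfu\|_{H^1}$.

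The main obstacle is the core piece: after expanding via the product rule, the terms where the projection difference $\bfPh-(\bfPg)^{-\ell}$ is undifferentiated are controlled at order $h^{k_g}$ by \eqref{eq: geometric errors 2 UNS}, but the terms where this difference is differentiated pick up $\bfH_h-\bfH^{-\ell}$, which is only $O(h^{k_g-1})$ pointwise. Recovering the missing order here requires exploiting the tangential structure ($\bfH\bfn=0$ and $\bfH_h\nh=0$) together with the higher-order cancellation $\|\bfPg-\Bhg^t\Bhg\|_{L^\infty}\leq ch^{k_g+1}$ from \eqref{eq: Bh estimates UNS}; with these, the symmetric combinations that remain after the IBP lose only the geometric approximation order, and the final $O(h^{k_g})$ bound follows.
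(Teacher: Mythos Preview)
Your integration-by-parts strategy is circular. After you shift the derivative off $\nh$ on $\Gah$ and off $\bfn$ on $\Ga$ and expand your ``core piece'' via the product rule, the terms where the derivative lands on $\beta$ or on $\tilde\bfu$ are indeed harmless (they come with the factor $\sum_k n_k^{-\ell}\tilde v_k = (\bfPh\bfn)\cdot\vh = O(h^{k_g})$), but the term where the derivative lands on $\tilde v_k = (\bfPh\vh)_k$ produces, after summing over $k$, the expression $\beta^{-\ell}\,\tilde\bfu\cdot\bfH_h\vh$ back again (use $\sum_k n_{h,k}\nbgh n_{h,k}=0$ and $\nh\cdot\bfn = 1 + O(h^{k_g+1})$). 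The analogous expansion on the $\Ga$ side returns exactly $\beta\,\bfu_T\cdot\bfH\vhl$. So your core piece is, up to $O(h^{k_g})$ remainders, just the original difference $\int_{\Gah}\beta\,\bfu\cdot\bfH_h\vh - \int_\Ga\beta\,\bfu\cdot\bfH\vhl$, and nothing has been gained. The vague appeal to $\bfH\bfn=0$, $\bfH_h\nh=0$ and $\|\bfPg-\Bhg^t\Bhg\|_{L^\infty}\leq ch^{k_g+1}$ does not close this gap: the residual term genuinely contains $\bfH_h-\bfH^{-\ell}$, which is only $O(h^{k_g-1})$ pointwise, and neither of those identities cancels it.

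The paper's argument avoids this trap by inserting the asymmetric intermediate term $\int_\Ga\beta\,\vhl\cdot(\nbgcov\nhl)\bfu$, i.e.\ the \emph{continuous} covariant derivative of the \emph{lifted discrete} normal. The comparison with the $\Gah$ integral then uses only the lifting identity $\bfH_h=\bfPh(\nbg\nhl)^{-\ell}\Bhg^t$ together with the zeroth-order bounds $\|\bfPh-\bfP\|_{L^\infty}$, $\|\bfPg-\Bhg\|_{L^\infty}\leq ch^{k_g}$, so no derivative of a projection difference ever appears. The comparison with $\bfH=\nbgcov\bfn$ is carried out entirely on the smooth surface $\Ga$: the product rule rewrites $\vhl\cdot\nbgcov\nhl\,\bfu = \nbg(\bfPg\vhl\cdot\nhl)^t\bfu - (\nhl\cdot\nbg\bfPg\vhl)\bfu$, and an IBP on $\Ga$ (legitimate since $\Ga$ is closed and $\bfu\cdot\bfng=0$ kills the curvature term) moves the derivative onto the smooth data $\beta\bfu$, leaving only the undifferentiated factors $\bfPg\nhl$ and $\bfn-\nhl$, both $O(h^{k_g})$. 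That is the missing idea: integrate by parts only on $\Ga$, and only on the piece that would otherwise require differentiating $\nhl-\bfn$.
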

\begin{proof}
This proof follows similarly to \cite[Appendix C]{olshanskii2023eulerian}. For reasons of clarity we present the proof. First, recall that $\bfH_h = \nbgcovh \nh  = \bfPh\nbgh\nh$ and by \eqref{eq: geometric errors 1 UNS} we know that  $\norm{\bfH- \bfH_h}_{L^\infty(\Gah)} \leq ch^{k_g -1}$. The idea is that with the help of the product rule $\nbg(\bfw\cdot\bfn)^t= \bfw\cdot\nbg \bfn + \bfn\cdot(\nbg\bfw)$ and integration by parts \eqref{eq: integration by parts cont UNS} we could gain one approximation order by shifting the derivative to the test function instead.

First, let us add and subtract an appropriate term, the reason for which we will discuss just right after: \begin{equation}
    \begin{aligned}\label{eq: inside weingarten map 1 UNS}
        &\Big|\int_{\Gah} \beta^{-\ell}\,\vh\cdot\bfH_h\bfu^{-\ell} \, \dsh - \int_{\Ga} \beta\,\vhl\cdot\bfH\bfu \, \ds\Big|\\
    &\leq \Big|\int_{\Gah} \beta^{-\ell}\,\vh\cdot\bfH_h\bfu^{-\ell} \,\dsh -\int_{\Ga} \beta\,\vhl\cdot\nbgcov\nhl\bfu \, \ds \Big| + \Big|\int_{\Ga} \beta\,\vhl\cdot\nbgcov\nhl\bfu \, \ds -\int_{\Ga} \beta\,\vhl\cdot\bfH\bfu \, \ds  \Big|.
    \end{aligned}
\end{equation}

\noindent Recall that $\bfH=\nbgcov\bfn = \bfPg\nbg\bfn = \bfPg\nb\bfn\bfPg$. By lifting; see \cref{sec: surface lifting UNS}, we get $\bfH_h = \bfPh\nbg\nhl\bfB_h^t$. With that in mind, along with the estimates $\norm{\bfP- \bfPh}_{L^\infty(\Gah)} \leq ch^{k_g}$, cf. \eqref{eq: geometric errors 2 UNS}, $\norm{1-\muh}_{L^{\infty}(\Gah)} \leq ch^{k_g+1}$, cf. \eqref{eq: muhkg estimate UNS}, and $ \norm{\bfPg - \Bhg}_{L^{\infty}(\Ga)} \leq ch^{k_g}$ cf. \eqref{eq: Bh estimates UNS}, and the embedding $L^4 \hookrightarrow H^1$, we clearly see that
\begin{equation}
    \begin{aligned}\label{eq: inside weingarten map 2 UNS}
        \Big|\int_{\Gah} \beta^{-\ell}\,\vh\cdot\bfH_h\bfu^{-\ell} \,\dsh -\int_{\Ga} \beta\,\vhl\cdot\nbgcov\nhl\bfu \, \ds \Big| &= \Big|\int_{\Ga} \frac{1}{\mu_h} \beta\,\vhl\cdot\bfPh\nbg\nhl\bfB_h^t\bfu \,\ds -\int_{\Ga} \beta\,\vhl\cdot\bfPg\nbg\nhl\bfu \, \ds \Big|\\
        &\leq ch^{k_g}\norm{\bfu}_{H^1(\Ga)}\norm{\beta}_{H^1(\Ga)}\norm{\vh}_{H^1(\Gah)},
    \end{aligned}
\end{equation}
where we also also used the norm equivalence \eqref{eq: norm equivalence UNS}. For the second absolute value term in \eqref{eq: inside weingarten map 1 UNS}, using the product rule we see that
\begin{equation}
    \begin{aligned}\label{eq: inside weingarten map 3 UNS}
        \int_{\Ga} \beta\,\vhl\cdot\bfH\bfu \, \ds= \int_{\Ga} \beta\, (\vhl \cdot \bfPg\nbg \bfn)\bfu \, \ds = \underbrace{\int_{\Ga} \beta\, \nbg(\bfPg\vhl\cdot\bfn)^t \bfu \, \ds}_{=0} - \int_{\Ga} \beta\, (\bfn \cdot \nbg \bfPg\vhl)\bfu \, \ds.
    \end{aligned}
\end{equation}
Furthermore, using partial integration $\int_{\Ga}\bfu\cdot\nbg q = -\int_{\Ga} \divg \bfu\,q  + \int_{\Ga}tr(\bfH)\,q \, (\bfu\cdot\bfng) $, we obtain the following, due to the fact that $\bfu\cdot\bfng=0$,
\begin{equation}
    \begin{aligned}\label{eq: inside weingarten map 4 UNS}
        \int_{\Ga} \beta\,\vhl\cdot\nbgcov\nhl\bfu \, \ds &= \int_{\Ga} \nbg(\bfPg\vhl\cdot \nhl)^t \beta\bfu \, \ds - \int_{\Ga} \beta\, (\nhl \cdot \nbg \bfPg\vhl)\bfu \\
        &=- \int_{\Ga} \divg(\beta\bfu)\bfPg\vhl\cdot \nhl \, \ds - \int_{\Ga} \beta\, (\nhl \cdot \nbg \bfPg\vhl)\bfu.
    \end{aligned}
\end{equation}
Combining now \eqref{eq: inside weingarten map 3 UNS}, \eqref{eq: inside weingarten map 4 UNS}, along with the fact that $\norm{\bfn- \nh}_{L^\infty(\Gah)} \leq ch^{k_g}$, and $\norm{\bfPg\nhl}_{L^\infty(\Ga)} \leq ch^{k_g}$; see \eqref{eq: geometric errors 1 UNS}, \eqref{eq: geometric errors 2 UNS}, it follows that
\begin{equation}
    \begin{aligned}\label{eq: inside weingarten map 5 UNS}
        \Big|\int_{\Ga} \beta\,\vhl\cdot\nbgcov\nhl\bfu \, \ds -\int_{\Ga} \beta\,\vhl\cdot\bfH\bfu \, \ds  \Big|  \leq ch^{k_g}\norm{\beta}_{H^1(\Ga)}\norm{\bfu}_{H^1(\Ga)}\norm{\vh}_{H^1(\Gah)},
    \end{aligned}
\end{equation}
where in \eqref{eq: inside weingarten map 4 UNS} we also used the fact that $ \divg(\beta\bfu) = \beta \,\divg\bfu + \bfu\cdot\nbg\beta$ and the inclusion $L^4 \hookrightarrow H^1$. Inserting the results \eqref{eq: inside weingarten map 2 UNS}, \eqref{eq: inside weingarten map 5 UNS} into \eqref{eq: inside weingarten map 1 UNS} gives our desired estimates.
\end{proof}


\subsection{Surface Ritz-Stokes projection}\label{sec: Ritz-Stokes projection UNS}

Ritz maps have been studied in the case of elliptic operators on stationary and evolving surfaces \cite{DziukElliott_L2,kovacs2016error,li2023optimal}, while Ritz-Stokes maps have been studied for stationary domains in the case of no geometric variational crimes, that is, $\Omega_h = \Omega$ \cite{AyusoPostNS2005,FrutosGradDivOseen2016,burman2009galerkin} and evolving domains \cite{rao2025optimal} using ALE FEM. In this work, we extend these results to the case of discrete surfaces, where $\Gah \neq \Ga$. We will consider a standard Ritz-Stokes projection and a modified Ritz-Stokes projection similar to the one introduced for the stationary domain case in \cite{AyusoPostNS2005,FrutosGradDivOseen2016,JohnBook2016}.

For convenience, from now on we shall sometimes omit the inverse lift extension $(\cdot)^{-\ell}$ notation, unless specified, since it should be clear from the context if e.g. a function $\bfu$ is defined on $\Ga$ or $\Gah$.

\begin{definition}[Modified Ritz-Stokes projection]\label{def: surface Ritz-Stokes projection UNS} 
\noindent Given $(\bfu,\{p,\lambda\}) \in \bfH^1(\Ga)\times(L^2(\Ga)\times L^2(\Ga))$, define by $\mathcal{R}_h(\bfu) \in \bfV_h^{div}$, $\{\mathcal{P}_h(\bfu),\mathcal{L}_h(\bfu)\} \in Q_h \times \Lambda_h$ the unique projection
\begin{align}
    \begin{cases}\label{eq: surface Ritz-Stokes projection UNS} 
     a_h(\mathcal{R}_h(\bfu),\vh) + \bhtil (\vh,\{\mathcal{P}_h(\bfu),\mathcal{L}_h(\bfu)\}) = a(\bfu,\vhl), \\
     \bhtil(\mathcal{R}_h(\bfu),\{\qh,\xi_h\}) = 0,
    \end{cases}
\end{align}
 for all $(\vh,\{q_h ,\xi_h \}) \in \bfV_h\times (Q_h \times \Lambda_h)$. The lift is then defined typically $\big(\mathcal{R}_h^{\ell}(\bfu)$, $\{\mathcal{P}_h^{\ell}(\bfu), \,  \mathcal{L}_h^{\ell}(\bfu)\} \big) \in \bfV_h^{\ell}\times (Q_h^{\ell} \times \Lambda_h^{\ell})$.
\end{definition}

\begin{definition}[Ritz-Stokes projection]\label{def: surface Ritz-Stokes projection std UNS} 
\noindent Given $(\bfu,\{p,\lambda\}) \in \bfH^1(\Ga)\times(L^2(\Ga)\times L^2(\Ga))$, define by $\mathcal{R}_h^b(\bfu) = \mathcal{R}_h(\bfu,\{p,\lambda\})\in \bfV_h^{div}$, $\mathcal{P}^b_h(\bfu) = \mathcal{P}_h(\bfu,\{p,\lambda\}) \in Q_h , \, \mathcal{L}^b_h(\bfu)  = \mathcal{L}_h(\bfu,\{p,\lambda\}) \in \Lambda_h$ the unique projection
\begin{align}
    \begin{cases}\label{eq: surface Ritz-Stokes projection std UNS} 
     a_h(\mathcal{R}^b_h(\bfu),\vh) + \bhtil (\vh,\{\mathcal{P}^b_h(\bfu),\mathcal{L}^b_h(\bfu)\}) = a(\bfu,\vhl) + b^L(\vhl,\{p,\lambda\}), \\
     \bhtil(\mathcal{R}^b_h(\bfu),\{\qh,\xi_h\}) = 0,
    \end{cases}
\end{align}
 for all $(\vh,\{q_h ,\xi_h \}) \in \bfV_h\times (Q_h \times \Lambda_h)$. The lift is defined in the standard way.
\end{definition}

\noindent We notice due to the well-posedness and stability of the surface Stokes equations \cite[Theorem 5.4]{elliott2024sfem}, we have the following \emph{a-priori} stability estimates for the Ritz-Stokes projections $\mathcal{R}_h^{(\cdot)}(\bfu)$ $\big(:= \mathcal{R}_h(\bfu)$ or $\mathcal{R}_h^{b}(\bfu)\big)$
\begin{equation}\label{eq: Stab estimates for Ritz-Stokes projection UNS} 
\begin{aligned}
    \norm{\mathcal{R}_h(\bfu)}_{\ah}^2 + \norm{\{\mathcal{P}_h(\bfu),\mathcal{L}_h(\bfu)\}}_{L^2(\Gah)}^2 \leq c\norm{\bfu}_{H^1(\Ga)}^2 , \\
    \norm{\mathcal{R}^b_h(\bfu)}_{\ah}^2 + \norm{\{\mathcal{P}_h^b(\bfu),\mathcal{L}^b_h(\bfu)\}}_{L^2(\Gah)}^2 \leq c\big( \norm{\bfu}_{H^1(\Ga)}^2 + \norm{\{p,\lambda\}}_{L^2(\Ga)}^2 \big).
    \end{aligned}
\end{equation}

\begin{lemma}[pseudo Galerkin orthogonality]\label{lemma: pseudo Galerkin orthogonality UNS}
For every $\bfu\in \bfH^1(\Ga)$ it holds that
\begin{align}
\label{eq: pseudo Galerkin orthogonality Ritz 1 UNS}
      \ah((\bfu -\mathcal{R}_h(\bfu), \vh) \leq Ch^{k_g}\norm{\vh}_{H^1(\Gah)} \quad  \text{ for all } \vh\in\bfV_h^{div},
    \end{align}
\begin{align}
   \label{eq: pseudo Galerkin orthogonality Ritz b UNS}
      \ah((\bfu -\mathcal{R}_h^b(\bfu), \vh)  + \bhtil(\vh,\{p,\lambda\} - \{\mathcal{P}^b_h(\bfu),\mathcal{L}^b_h(\bfu)\})\leq C^bh^{k_g-1}\norm{\vh}_{\ah} \quad  \text{ for all } \vh\in\bfV_h, 
\end{align}
where the constant $C = c\norm{\bfu}_{H^1(\Ga)}$ and $C^b = c(\norm{\bfu}_{H^1(\Ga)} + \norm{p}_{H^1} + \norm{\lambda}_{L^2(\Ga)})$.
\end{lemma}
\begin{proof}
Let us consider  \eqref{eq: surface Ritz-Stokes projection std UNS} since the Galerkin orthogonality for $\mathcal{R}_h(\bfu)$ follows similarly. Using the perturbation estimates \eqref{eq: Geometric perturbations a UNS}, \eqref{eq: Geometric perturbations btilde tangent UNS} and the norm equivalence \eqref{eq: norm equivalence UNS} we get for any $\vh\in\bfV_h^{div}$ that
\begin{equation*}
    \begin{aligned}
         \ah((\bfu -\mathcal{R}_h^b(\bfu), \vh) &+ \bhtil(\vh,\{p,\lambda\} - \{\mathcal{P}^b_h(\bfu),\mathcal{L}^b_h(\bfu)\}) \\
        &= \ah(\bfu,\vh) - a(\bfu,\vhl) +\bhtil(\vh,\{p,\lambda\})- b^L(\vhl,\{p,\lambda\})\\
        &\leq ch^{k_g}\norm{\bfu}_{H^1(\Ga)}\norm{\vh}_{H^1(\Gah)} + ch^{k_g}(\norm{p}_{H^1} + \norm{\lambda}_{L^2(\Ga)})\norm{\vh}_{\ah},
    \end{aligned}
\end{equation*}
where \eqref{eq: coercivity and Korn's inequality Lagrange UNS} gives the desired result.
\end{proof}

\begin{lemma}[Error Bounds modified Ritz-Stokes]\label{lemma: Error Bounds Ritz-Stokes UNS}
 Let  $(\bfu,\{p,\lambda\}) \in \bfH^1(\Ga)\times(H^1(\Ga)\times L^2(\Ga))$ the solution of \eqref{weak lagrange hom NV}. Then, we have the following error bounds for the Ritz-Stokes projection for $a=0,1$
 \begin{align}
     \label{eq: Error Bounds Ritz-Stokes UNS}
         \norm{\partial_t^{a}(\bfu - \mathcal{R}_h(\bfu))}_{\ah}  + \norm{\partial_t^{a}\{\mathcal{P}_h(\bfu),\mathcal{L}_h(\bfu)\}}_{L^2(\Gah)}&\leq ch^{r_u}\norm{\partial_t^{a}\bfu}_{H^{k_u+1}(\Ga)}\\
         \label{eq: Error Bounds Ritz-Stokes L2 UNS}
         \norm{\partial_t^{a}\bfPg(\bfu - \mathcal{R}_h^{\ell}(\bfu))}_{L^2(\Ga)} &\leq ch^{r_u+1}\norm{\partial_t^{a}\bfu}_{H^{k_u+1}(\Ga)},
 \end{align}
 with $r_u = min\{k_u,k_g-1\}$ and $k_g \geq 2$, for $h \leq h_0$ with sufficiently small $h_0$ and with $c$ independent of $h$.

If furthermore we assume that $\underline{k_{\lambda} = k_u}$ (that is $V_h= \Lambda_h$) we obtain the following improved estimates 
\begin{align}
    \label{eq: Error Bounds Ritz-Stokes improved UNS}
         \norm{\partial_t^{a}(\bfu - \mathcal{R}_h(\bfu))}_{\ah}  + \norm{\partial_t^{a}\{\mathcal{P}_h(\bfu),\mathcal{L}_h(\bfu)\}}_{L^2(\Gah)\times H_h^{-1}(\Gah)} &\leq ch^{\widehat{r}_u}\norm{\partial_t^{a}\bfu}_{H^{k_u+1}(\Ga)}\\
         \label{eq: Error Bounds Ritz-Stokes improved L2 Lh UNS}
         \norm{\partial_t^{a}\mathcal{L}_h(\bfu)}_{L^2(\Gah)} \leq c(h^{\widehat{r}_u}+ h^{k_g-1})\norm{\partial_t^{a}\bfu}_{H^{k_u+1}(\Ga)} &\leq ch^{r_u}\norm{\partial_t^{a}\bfu}_{H^{k_u+1}(\Ga)}\\
          \label{eq: Error Bounds Ritz-Stokes improved H1 UNS}
         \norm{\partial_t^{a}(\bfu - \mathcal{R}_h(\bfu))}_{H^1(\Gah)} &\leq ch^{\widehat{r}_u}\norm{\partial_t^{a}\bfu}_{H^{k_u+1}(\Ga)}
         \\
         \label{eq: Error Bounds Ritz-Stokes L2 improved UNS}
        \norm{\partial_t^{a}\bfPg(\bfu - \mathcal{R}_h^{\ell}(\bfu))}_{L^2(\Ga)} + h^{1/2} \norm{\partial_t^{a}\big((\bfu - \mathcal{R}_h(\bfu))\cdot\bfng\big)}_{L^2(\Ga)}&\leq ch^{\widehat{r}_u+1}\norm{\partial_t^{a}\bfu}_{H^{k_u+1}(\Ga)}.
\end{align}
with $\widehat{r}_u = min\{k_u,k_g\}$, for $h \leq h_0$ with sufficiently small $h_0$ and with constant $c$ independent of $h$. 
\end{lemma}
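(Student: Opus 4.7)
The plan is to combine a standard Brezzi-type mixed finite element argument with a careful bookkeeping of the three distinct geometric perturbations introduced by replacing $\Gamma$ with $\Gamma_h$. I will carry out the analysis in four steps.

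\textbf{Step 1 (Pseudo-Galerkin relations).} Using the defining equations \eqref{eq: surface Ritz-Stokes projection UNS} together with the fact that $\bfu$ is tangential and divergence-free on $\Gamma$, I derive
\begin{align*}
 a_h(\bfu-\mathcal{R}_h(\bfu),\vh) + \bhtil(\vh,\{\mathcal{P}_h(\bfu),\mathcal{L}_h(\bfu)\}) &= a_h(\bfu,\vh) - a(\bfu,\vh^\ell), \\
 \bhtil(\bfu-\mathcal{R}_h(\bfu),\{\qh,\xi_h\}) &= \bhtil(\bfu,\{\qh,\xi_h\}) - b^L(\bfu,\{\qh^\ell,\xi_h^\ell\}).
\end{align*}
By the perturbation estimates \eqref{eq: Geometric perturbations a UNS}, \eqref{eq: Geometric perturbations btilde tangent UNS} (resp.\ \eqref{eq: Geometric perturbations btilde tangent extra regularity UNS} in the higher-regularity setting) the right-hand sides are of order $h^{k_g}$ (resp.\ $h^{k_g+1}$) in the appropriate dual norms.

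\textbf{Step 2 (Energy-norm bound, rate $r_u$).} Splitting $\bfu-\mathcal{R}_h(\bfu) = (\bfu-\Ihz\bfu) + (\Ihz\bfu-\mathcal{R}_h(\bfu))$, the first piece is controlled by \eqref{eq: interpolation vh ah UNS}. For the discrete piece $\bfe_h:=\Ihz\bfu-\mathcal{R}_h(\bfu)\in\bfV_h$, I test Step 1 with $\vh=\bfe_h$ and use the discrete inf-sup condition (referenced to \cref{Lemma: Discrete inf-sup condition Gah Lagrange UNS}) to simultaneously extract $\norm{\bfe_h}_{a_h}$ and $\norm{\{\mathcal{P}_h,\mathcal{L}_h\}}_{L^2(\Gah)}$. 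The geometric term \eqref{eq: Geometric perturbations btilde UNS} introduces the limiting factor $h^{k_g-1}\norm{\bfu}_{L^2}(\norm{\qh}_{L^2}+h\norm{\xi_h}_{L^2})$, which is what yields the rate $r_u=\min\{k_u,k_g-1\}$.

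\textbf{Step 3 ($L^2$ bound via Aubin--Nitsche, and time derivative).} For \eqref{eq: Error Bounds Ritz-Stokes L2 UNS} I introduce a dual surface Stokes problem with data $\bfPg(\bfu-\mathcal{R}_h^\ell(\bfu))$, whose elliptic $H^2\times H^1\times L^2$ regularity (from the continuous inf-sup \eqref{infsup lagrange}) is standard. Testing this dual problem with $\bfu-\mathcal{R}_h^\ell(\bfu)$, lifting to $\Gamma_h$, and inserting a Scott-Zhang interpolant of the dual velocity as a discrete test gives the gain of one factor of $h$, via the Step 2 estimates together with the perturbation lemmas. Because the problem \eqref{eq: surface Ritz-Stokes projection UNS} is linear in $\bfu$ and the discrete spaces are $t$-independent, $\partial_t\mathcal{R}_h(\bfu)=\mathcal{R}_h(\partial_t\bfu)$, so the case $a=1$ is immediate.

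\textbf{Step 4 (Improved bounds for $k_\lambda=k_u$).} In this case $\Lambda_h=V_h$, so \cref{corollary: divfree L2 norm UNS} gives $\norm{\bfw_h\cdot\nh}_{L^2(\Gah)}\leq ch\norm{\bfw_h}_{L^2(\Gah)}$ for $\bfw_h\in\bfV_h^{div}$, upgrading coercivity to the full $H^1$-norm on $\bfV_h^{div}$ (\cref{lemma: improved h1-ah bound UNS}). The main quantitative gain, however, comes from using the finer $L^2\times H_h^{-1}$ inf-sup (referenced to \cref{lemma: L^2 H^{-1} discrete inf-sup condition Gah Lagrange UNS}) to estimate the Lagrange multiplier in the weaker dual norm; this pairs optimally with \eqref{eq: Geometric perturbations btilde tangent extra regularity UNS}, which requires $\bfu\in\bfH^2(\Ga)$, and removes the $h^{k_g-1}$ bottleneck, upgrading the rate to $\hat{r}_u=\min\{k_u,k_g\}$. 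The bound \eqref{eq: Error Bounds Ritz-Stokes improved L2 Lh UNS} then follows by an inverse inequality converting $\norm{\mathcal{L}_h}_{H_h^{-1}}$ into $\norm{\mathcal{L}_h}_{L^2}$, costing one power of $h$. The split $L^2$-estimate \eqref{eq: Error Bounds Ritz-Stokes L2 improved UNS} is obtained by repeating the Aubin--Nitsche argument of Step 3, noting that the normal component of the error can now be bounded separately via \eqref{eq: divfree L2 norm kl=ku 2 UNS} and an inverse trace estimate accounting for the $h^{1/2}$ weight.

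\textbf{Main obstacle.} The delicate point is matching each geometric perturbation bound with the correct test-function regularity and with the appropriate inf-sup condition. In particular, the jump from rate $r_u$ to $\hat{r}_u$ is not a consequence of any better interpolation or better coercivity, but of replacing the $L^2\times L^2$ inf-sup by the $L^2\times H_h^{-1}$ inf-sup and using the higher-order perturbation \eqref{eq: Geometric perturbations btilde tangent extra regularity UNS}; keeping this dual-norm accounting consistent through the Aubin--Nitsche argument is the most technical ingredient.
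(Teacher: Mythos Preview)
Your overall architecture (pseudo-Galerkin relations $\to$ energy estimate $\to$ inf-sup for pressures $\to$ Aubin--Nitsche duality) matches the paper's, but there is one structural difference that turns into a genuine gap in Step~4.

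\textbf{The comparison function.} The paper does \emph{not} split via the Scott--Zhang interpolant. Instead it introduces an auxiliary discrete Stokes solution $\uh\in\bfV_h^{div}$ (the Galerkin approximation of \eqref{eq: Ritz-Stokes proof inside 1 UNS} with right-hand side $\bfg=\bff-\partial_t\bfu-\bfu\cdot\nbgcov\bfu-\nbg p-\lambda\bfng$), for which optimal bounds are already known from \cite{elliott2024sfem}. Then $\uh-\mathcal{R}_h(\bfu)\in\bfV_h^{div}$, so the pseudo-Galerkin orthogonality of \cref{lemma: pseudo Galerkin orthogonality UNS} applies with a test function in $\bfV_h^{div}$, and the improved $H^1$-coercivity \eqref{eq: improved h1-ah bound UNS} converts $\norm{\uh-\mathcal{R}_h(\bfu)}_{H^1(\Gah)}$ back to $\norm{\uh-\mathcal{R}_h(\bfu)}_{\ah}$ with no loss. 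This is exactly what upgrades the velocity rate from $r_u$ to $\widehat r_u$.

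Your $\bfe_h=\Ihz\bfu-\mathcal{R}_h(\bfu)$ is \emph{not} in $\bfV_h^{div}$, so \eqref{eq: improved h1-ah bound UNS} does not apply to it. When you test the first pseudo-Galerkin relation with $\vh=\bfe_h$, the perturbation $|a_h(\bfu,\bfe_h)-a(\bfu,\bfe_h^\ell)|\le ch^{k_g}\norm{\bfu}_{H^1}\norm{\bfe_h}_{H^1}$ still forces you through \eqref{eq: coercivity and Korn's inequality Lagrange UNS}, losing one power of $h$; and the residual term $\bhtil(\bfe_h,\{\mathcal P_h,\mathcal L_h\})$ cannot be absorbed because $\bfe_h$ is outside the discrete kernel. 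Thus your Step~4, as written, does not deliver the velocity bound \eqref{eq: Error Bounds Ritz-Stokes improved UNS}. You would need to replace $\Ihz\bfu$ by some $\tilde\wh\in\bfV_h^{div}$ with $\norm{\bfu-\tilde\wh}_{H^1(\Gah)}\lesssim h^{\widehat r_u}$ (this is essentially \cite[Lemma~6.12]{elliott2024sfem}, and the paper's $\uh$ is precisely such an element).

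\textbf{Two smaller points.} (i) Your route to \eqref{eq: Error Bounds Ritz-Stokes improved L2 Lh UNS} via an inverse inequality $\norm{\mathcal L_h}_{L^2}\le ch^{-1}\norm{\mathcal L_h}_{H_h^{-1}}$ yields only $h^{\widehat r_u-1}$, which is strictly weaker than the stated $h^{\widehat r_u}+h^{k_g-1}$ when $k_u<k_g$. The paper instead re-uses the $L^2\times L^2$ inf-sup \eqref{eq: discrete inf-sup condition Gah Lagrange UNS} directly, where dividing $ch^{k_g}\norm{\vh}_{H^1}$ by $\norm{\vh}_{\ah}$ via \eqref{eq: coercivity and Korn's inequality Lagrange UNS} produces exactly the $h^{k_g-1}$ term. (ii) The bottleneck $h^{k_g-1}$ in Step~2 does not come from \eqref{eq: Geometric perturbations btilde UNS} (since $\bfu$ is tangential you may use the sharper \eqref{eq: Geometric perturbations btilde tangent UNS}); it comes from the $a$-perturbation \eqref{eq: Geometric perturbations a UNS} combined with \eqref{eq: coercivity and Korn's inequality Lagrange UNS}. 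Likewise, \eqref{eq: Geometric perturbations btilde tangent extra regularity UNS} is used inside the Aubin--Nitsche duality (your Step~3), not in the inf-sup step.
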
 
\begin{proof}
We focus on the case where $\underline{k_\lambda = k_u}$, since we will see that the general case follows in a similar way. Let us start with ($a=0$). First, consider $\uh \in \bfV_h$ to be the finite element solution of the following surface Stokes problem:
\begin{equation}
    \begin{aligned}\label{eq: Ritz-Stokes proof inside 1 UNS}
        a(\bfu,\bfv) \ + b^L(\bfv,\{p,\lambda\}) &= (\bfg,\bfv)_{L^2(\Ga)} \ \ \ \ \ \text{for all } \bfv\in \bfH^1(\Ga),\\
        b^L(\bfu,\{q,\xi\})&=0 \ \ \  \text{ for all } \{q,\xi\}\in (L^2_0(\Ga)\times L^2(\Ga)),
    \end{aligned}
\end{equation}
with $\bfg = \bff - \partial_t\bfu - \bfu\cdot \nbgcov\bfu  - \nbg p - \lambda \bfng$  in the sense of \cite{elliott2024sfem}, where we consider approximation of the Rhs as $\bfg_h = \bfg^{-\ell}$.
Since, by assumptions, we consider $(\bfu,\{p,\lambda\})$ to be the solution of \eqref{weak lagrange hom NV}, we can readily see that then $(\bfu,\{0,0\})$ is the solution of \eqref{eq: Ritz-Stokes proof inside 1 UNS}. 

To apply, now, the finite element approximations in \cite[Theorem 6.13, Theorem 6.14, Corollary 6.1]{elliott2024sfem}, we need $\bfg \in L^2(\Ga)$, therefore, we require sufficiently regularity, i.e. $\partial_t\bfu, \bff,\lambda 
\in L^2(\Ga)$, $\bfu,\,p \in H^1(\Ga)$. Then, we readily see, with the help of known consistency errors, that the following error estimates hold, 
\begin{align}
\label{eq: Ritz-Stokes proof inside UNS}
    \norm{\bfu - \uh}_{\ah} +  \norm{\bfu - \uh}_{H^1(\Gah)} &\leq ch^{\widehat{r}_u}\norm{\bfu}_{H^{k_u+1}(\Ga)},\\
         \label{eq: Ritz-Stokes proof inside 3 UNS}
         \norm{\bfP(\bfu - \uh)}_{L^2(\Gah)} &\leq ch^{\widehat{r}_u+1}\norm{\bfu}_{H^{k_u+1}(\Ga)},
\end{align}
with $\widehat{r}_u = min\{k_u,k_g\}$ and $c = c_1(\norm{\partial_t\bfu}_{L^2(\Ga)} +\norm{\bfu}_{H^1(\Ga)}\norm{\bfu}_{H^1(\Ga)} +\norm{p}_{H^1(\Ga)} + \norm{\lambda}_{L^2(\Ga)})$. We are now able to prove the required estimates.

Let us start with the \emph{energy estimate} of \eqref{eq: Error Bounds Ritz-Stokes improved UNS}, where combining the above bounds \eqref{eq: Ritz-Stokes proof inside UNS}, the pseudo Galerkin orthogonality \eqref{eq: pseudo Galerkin orthogonality Ritz 1 UNS} (recall $\uh - \mathcal{R}_h(\bfu) \in \bfV_h^{div}$) and the improved $H^1$ coercivity estimate \eqref{eq: improved h1-ah bound UNS} we obtain,
\begin{align}
   \norm{\bfu - \mathcal{R}_h(\bfu)}_{\ah}^2 &= \ah(\bfu - \mathcal{R}_h(\bfu),\bfu-\uh) + \ah(\bfu - \mathcal{R}_h(\bfu),\uh - \mathcal{R}_h(\bfu)) \nonumber\\
        &\leq \norm{\bfu - \mathcal{R}_h(\bfu)}_{\ah}\norm{\bfu - \uh}_{\ah} + ch^{k_g}\norm{\bfu - \mathcal{R}_h(\bfu)}_{\ah}\big(\norm{\uh- \bfu}_{\ah} + \norm{\bfu - \mathcal{R}_h(\bfu)}_{\ah}\big) \nonumber\\
        \label{eq: inside vdiv UNS}
        &\leq ch^{\widehat{r}_u}\norm{\bfu}_{H^{k_u+1}(\Ga)}\norm{\bfu - \mathcal{R}_h(\bfu)}_{\ah} +ch^{k_g}\norm{\bfu - \mathcal{R}_h(\bfu)}_{\ah}^2. 
\end{align}
The energy bound in \eqref{eq: Error Bounds Ritz-Stokes improved UNS} is then derived after a simple kickback argument. 

The \emph{pressure bound} in \eqref{eq: Error Bounds Ritz-Stokes improved UNS} can be derived by considering the $L^2\times H_h^{-1}$ \textsc{inf-sup} condition \eqref{lemma: L^2 H^{-1} discrete inf-sup condition Gah Lagrange UNS}, the Ritz-Stokes projection definition \eqref{eq: surface Ritz-Stokes projection UNS} and perturbation estimate \eqref{eq: Geometric perturbations a UNS}, to see that
\begin{equation*}
    \begin{aligned}
        \norm{\{\mathcal{P}_h(\bfu),\mathcal{L}_h(\bfu)\}}_{L^2(\Gah)\times H_h^{-1}(\Gah)} &\leq  \sup_{\vh\in\bfV_h} \frac{a(\bfu,\vhl)-a_h(\bfu,\vh) + \ah(\bfu-\mathcal{R}_h(\bfu),\vh)}{\norm{\vh}_{H^1(\Gah)}} \\
        &\leq ch^{k_g}\norm{\bfu}_{H^1(\Ga)} + \norm{\bfu-\mathcal{R}_h(\bfu)}_{\ah} \leq ch^{\widehat{r}_u}\norm{\bfu}_{H^{k_u+1}(\Ga)}.
    \end{aligned}
\end{equation*}
Using the $L^2 \times L^2$ \textsc{inf-sup} condition \eqref{Lemma: Discrete inf-sup condition Gah Lagrange UNS} instead, and the worse $H^1$ coercivity bound $\norm{\vh}_{H^1(\Gah)}\leq ch^{-1}\norm{\vh}_{\ah}$ \eqref{eq: coercivity and Korn's inequality Lagrange UNS}, we obtain, similarly, the following $L^2$ bound for $\mathcal{L}_h(\bfu)$:
\begin{equation*}
    \begin{aligned}
        \norm{\mathcal{L}_h(\bfu)}_{L^2(\Gah)} \leq ch^{k_g-1}\norm{\bfu}_{H^1(\Ga)} + \norm{\bfu-\mathcal{R}_h(\bfu)}_{\ah} \leq c(h^{\widehat{r}_u}+ h^{k_g-1})\norm{\bfu}_{H^{k_u+1}(\Ga)}.
    \end{aligned}
\end{equation*}

Regarding the $H^1$ \emph{bound} \eqref{eq: Error Bounds Ritz-Stokes improved H1 UNS}, this follows from \eqref{eq: Ritz-Stokes proof inside UNS} and the $H^1$ coercivity bound \eqref{eq: improved h1-ah bound UNS} 
\begin{equation*}
\begin{aligned}
    \norm{\bfu - \mathcal{R}_h(\bfu)}_{H^1(\Gah)} &\leq \norm{\bfu - \uh}_{H^1(\Gah)} + \norm{\uh - \mathcal{R}_h(\bfu)}_{H^1(\Gah)} \leq \norm{\bfu - \uh}_{H^1(\Gah)} + \norm{\uh - \mathcal{R}_h(\bfu)}_{\ah}\\
    &\leq \norm{\bfu - \uh}_{H^1(\Gah)} +  \norm{\bfu - \uh}_{\ah} + \norm{\bfu - \mathcal{R}_h(\bfu)}_{\ah} \leq ch^{\widehat{r}_u}\norm{\bfu}_{H^{k_u+1}(\Ga)}.
\end{aligned}
\end{equation*}
Collecting the above results concludes the estimates in \eqref{eq: Error Bounds Ritz-Stokes improved UNS}, \eqref{eq: Error Bounds Ritz-Stokes improved L2 Lh UNS} and \eqref{eq: Error Bounds Ritz-Stokes improved H1 UNS}.

To bound the \emph{tangential} $L^2$\emph{-norm} \emph{velocity} \eqref{eq: Error Bounds Ritz-Stokes L2 improved UNS}, we need to use a duality argument. We follow \cite[Theorem 6.14]{elliott2024sfem}, so let us consider 
$(\bfw, \pi,\mu) \in \bfH^1(\Ga) \times L^2_0(\Ga)\times L^2(\Ga)$ that satisfy 
\begin{align}
\begin{cases}\label{eq: dual Ritz inside begin UNS}
        a(\bfw,\bfv) \ + \!\!\!\!& b^L(\bfv,\{\pi,\mu\}) = (\bfPg(\bfu - \mathcal{R}_h^{\ell}(\bfu)),\bfv) \ \ \ \ \ \text{for all } \bfv\in \bfH^1(\Ga),\\
        &b^L(\bfw,\{\sigma,\xi\})=0 \ \ \  \text{ for all } \{\sigma,\xi\} \in L_0^2(\Ga)\times L^2(\Ga),
    \end{cases}
\end{align}
where we see that $\bfw \cdot \bfng =0$. Now, the surface $\Ga$ is sufficiently smooth and thus due to \cite[Lemma 2.1]{olshanskii2021inf} the solution also satisfies the following higher regularity estimate
\begin{equation}\label{eq: regularity estimate lagrange UNS}
    \norm{\bfw}_{H^2(\Ga)} + \norm{\{\pi,\mu\}}_{H^1(\Ga)} \leq \norm{\bfPg(\bfu - \mathcal{R}_h^{\ell}(\bfu))}_{L^2(\Ga)}.
\end{equation} 
Set $\bfe_h = \bfu - \mathcal{R}_h\bfu$. Then, testing with $\bfe_h^{\ell} = \bfu - \mathcal{R}_h^{\ell}\bfu$, we clearly see that 
\begin{equation}
    \begin{aligned}\label{eq: dual Ritz inside main UNS}
        \boxed{\norm{\bfPg\bfe_h^{\ell}}_{L^2(\Ga)}^2 =  a(\bfe_h^{\ell},\bfw) + b^L(\bfe_h^{\ell},\{\pi,\mu\}).}
    \end{aligned}
\end{equation}
We need to bound the two terms on the right-hand side of \eqref{eq: dual Ritz inside main UNS} appropriately. Let us start with the first term. 
We see that 
\begin{equation}
    \begin{aligned}\label{eq: dual Ritz inside a begin UNS}
         a(\bfe_h^{\ell},\bfw) = a(\bfe_h^{\ell},\bfw - \Ihl(\bfw)) + a(\bfe_h^{\ell},\Ihl(\bfw)) - a_h(\bfe_h,\Ih(\bfw)) + a_h(\bfe_h,\Ih(\bfw)).
    \end{aligned}
\end{equation}
Let us bound each term of \eqref{eq: dual Ritz inside a begin UNS} separately. As mentioned before, we have $k_\lambda = k_u$, therefore we are going to make use of the previously proven estimates \eqref{eq: Error Bounds Ritz-Stokes improved UNS}-\eqref{eq: Error Bounds Ritz-Stokes improved H1 UNS}, that is, $\norm{\bfe_h}_{\ah} \leq \norm{\bfe_h}_{H^1(\Gah)} \leq ch^{\widehat{r}_u}\norm{\bfu}_{H^{k_u+1}}$.

\begin{itemize}
\item By standard interpolation estimates \eqref{eq: lagrange interpolant UNS} and norm equivalence \eqref{eq: norm equivalence UNS} we first see that
\begin{equation}
    \begin{aligned}\label{eq: dual Ritz inside a 1 UNS}
        a(\bfe_h^{\ell},\bfw - \Ihl(\bfw)) &\leq c\norm{\bfw - \Ihl(\bfw)}_{H^1(\Ga)}\norm{\bfe_h^{\ell}}_{H^1(\Ga)} \\
        &\leq ch^{\widehat{r}_u+1}\norm{\bfw}_{H^2(\Ga)}\norm{\bfu}_{H^{k_u+1}(\Ga)}.
    \end{aligned}
\end{equation}
\item  We also see from the perturbation bound \eqref{eq: Geometric perturbations a UNS} that
\begin{equation}
    \begin{aligned}\label{eq: dual Ritz inside a 2 UNS}
        a(\bfe_h^{\ell},\Ihl(\bfw)) - a_h(\bfe_h,\Ih(\bfw)) &\leq ch^{k_g}\norm{\Ih(\bfw)}_{H^1(\Gah)}\norm{\bfe_h}_{H^1(\Gah)}\\
        &\leq ch^{k_g + \widehat{r}_u}\norm{\bfw}_{H^2(\Ga)}\norm{\bfu}_{H^{k_u+1}(\Ga)}.
    \end{aligned}
\end{equation}
\item It remains to  bound the last term in \eqref{eq: dual Ritz inside a begin UNS}. Using the definition of the Ritz-Stokes projection \eqref{eq: surface Ritz-Stokes projection UNS} we have the following
\begin{equation}
    \begin{aligned}\label{eq: dual Ritz inside a 3 UNS}
        a_h(\bfe_h,\Ih(\bfw)) &= a_h(\bfu,\Ih(\bfw)) - a(\bfu,\Ihl(\bfw)) + \bhtil(\Ih(\bfw),\{\mathcal{P}_h(\bfu),\mathcal{L}_h(\bfu)\}) \\
        &= \underbrace{a_h(\bfu,\Ih(\bfw)-\bfw) - a(\bfu,\Ihl(\bfw)-\bfw)}_{\eqref{eq: Geometric perturbations a UNS} \leq ch^{k_g+1}\norm{\bfw}_{H^2(\Ga)}\norm{\bfu}_{H^2(\Ga)}} + \!\!\!\!\!\!\!\!\!\!\underbrace{a_h(\bfu,\bfw) -a(\bfu,\bfw)}_{\eqref{eq: Geometric perturbations a tangent extra regularity UNS}\leq ch^{k_g+1}\norm{\bfw}_{H^2(\Ga)}\norm{\bfu}_{H^2(\Ga)}}\\
        &\ + \bhtil(\Ih(\bfw)-\bfw,\{\mathcal{P}_h(\bfu),\mathcal{L}_h(\bfu)\}) + \bhtil(\bfw,\{\mathcal{P}_h(\bfu),\mathcal{L}_h(\bfu)\}).
    \end{aligned}
\end{equation}
Now using once again the standard interpolation estimates, the bound of $\bhtil$ \eqref{bhtilde boundedness UNS} and the previously proven estimates \eqref{eq: Error Bounds Ritz-Stokes improved UNS}, \eqref{eq: Error Bounds Ritz-Stokes improved L2 Lh UNS} we obtain
\begin{equation}\label{eq: dual Ritz inside a 4 UNS}
    \bhtil(\Ih(\bfw)-\bfw,\{\mathcal{P}_h(\bfu),\mathcal{L}_h(\bfu)\}) \leq c(h^{\widehat{r}_u+1}+h^{r_u+2})\norm{\bfu}_{H^{k_u+1}(\Ga)}.
\end{equation}
Furthermore, since $b^L(\bfw,\{\mathcal{P}^{\ell}_h(\bfu),\mathcal{L}^{\ell}_h(\bfu)\}) =0$ by \eqref{eq: dual Ritz inside begin UNS}, using the estimates \eqref{eq: Error Bounds Ritz-Stokes improved UNS}, \eqref{eq: Error Bounds Ritz-Stokes improved L2 Lh UNS}, the perturbation bounds \eqref{eq: Geometric perturbations btilde tangent UNS}, and considering $\widetilde{\bfw}_h \in \bfV_h^{div}$ and $\widetilde{\bfr}_h = \Ih(\bfw) -  \widetilde{\bfw}_h \in \bfV_h$ we have
\begin{equation*}
    \begin{aligned}
        \hspace{8mm}&\bhtil(\bfw,\{\mathcal{P}_h(\bfu),\mathcal{L}_h(\bfu)\}) = b_h(\bfw,\mathcal{P}_h(\bfu)) - b(\bfw,\mathcal{P}^{\ell}_h(\bfu)) + (\bfw\cdot\nh,\mathcal{L}_h(\bfu))_{L^2(\Gah)}\\
        &\leq ch^{k_g}\norm{\bfw}_{L^2(\Ga)}\norm{\mathcal{P}_h(\bfu)}_{L^2(\Gah)} + ((\bfw-\Ih(\bfw))\cdot\nh,\mathcal{L}_h(\bfu))_{L^2(\Gah)}  \\
        &\quad + (\widetilde{\bfr}_h\cdot(\nh-\bfn),\mathcal{L}_h(\bfu))_{L^2(\Gah)} +  (\widetilde{\bfr}_h\cdot\bfn,\mathcal{L}_h(\bfu))_{L^2(\Gah)} \\
        &\leq c(h^{k_g+\widehat{r}_u} + h^{k_g+ 1 + r_u})\norm{\bfw}_{H^2(\Ga)}\norm{\bfu}_{H^{k_u+1}(\Ga)} + ch^{k_g}\norm{\widetilde{\bfr}_h}_{L^2(\Gah)}\norm{\mathcal{L}_h(\bfu)}_{L^2(\Gah)} \\
        &\quad + (\underbrace{\Ihz(\widetilde{\bfr}_h\cdot\bfn)}_{\in V_h = \Lambda_h },\mathcal{L}_h(\bfu))_{L^2(\Gah)} + \underbrace{(\widetilde{\bfr}_h\cdot\bfn - \Ihz(\widetilde{\bfr}_h\cdot\bfn)}_{\leq ch\norm{\widetilde{\bfr}_h}_{L^2(\Gah)} \text{ by } \eqref{eq: super-approximation estimate 2 UNS}} ,\mathcal{L}_h(\bfu))_{L^2(\Gah)}\\[-5pt]
       &\hspace{-3.5mm}\overset{\text{Def.} \eqref{eq: H^-1h definition UNS}}{\leq} \!\!\!\!\!ch^{\widehat{r}_u+1}\norm{\bfw}_{H^2(\Ga)}\norm{\bfu}_{H^{k_u+1}(\Ga)} 
        +ch\norm{\widetilde{\bfr}_h}_{L^2(\Gah)}\norm{\mathcal{L}_h(\bfu)}_{L^2(\Gah)}
        +c\norm{\widetilde{\bfr}_h}_{H^1(\Gah)}\norm{\mathcal{L}_h(\bfu)}_{H_h^{-1}(\Gah)}.
    \end{aligned}
\end{equation*}

\noindent Now taking the infimum over all $\widetilde{\bfw}_h \in \bfV_h^{div}$ and since $\bfw$ the solution to \eqref{eq: dual Ritz inside begin UNS}, from \cite[Lemma 6.12]{elliott2024sfem} we have that  $\inf_{\wh\in\bfV_h^{div}}\norm{\bfw^{-\ell} - \wh}_{H^1(\Gah)} \leq \inf_{\wh\in\bfV_h}\norm{\bfw^{-\ell} - \wh}_{H^1(\Gah)} + ch^{k_g}\norm{\bfw}_{H^1(\Ga)} \leq ch\norm{\bfw}_{H^2(\Ga)}$. Therefore it is easy to check, by the Scott-Zhang interpolant estimate \eqref{eq: bounds of Scott-Zhang interpolant UNS} and a simple triangle inequality, that 
$\inf_{\wh\in\bfV_h^{div}}\norm{\widetilde{\bfr}_h}_{H^1(\Gah)} \leq ch\norm{\bfw}_{H^2(\Ga)}$. Plugging this result and the estimates \eqref{eq: Error Bounds Ritz-Stokes improved UNS}, \eqref{eq: Error Bounds Ritz-Stokes improved L2 Lh UNS} to the previous bound, we arrive at
\begin{equation}
    \bhtil(\bfw,\{\mathcal{P}_h(\bfu),\mathcal{L}_h(\bfu)\}) \leq ch^{\widehat{r}_u+1}\norm{\bfw}_{H^2(\Ga)}\norm{\bfu}_{H^{k_u+1}(\Ga)},
\end{equation}
and therefore going back to \eqref{eq: dual Ritz inside a 3 UNS} we obtain
\begin{equation}
    \begin{aligned}\label{eq: dual Ritz inside a 4 UNS}
        \boxed{a_h(\bfe_h,\Ih(\bfw)) \leq ch^{\widehat{r}_u+1}\norm{\bfw}_{H^2(\Ga)}\norm{\bfu}_{H^{k_u+1}(\Ga)}.}
    \end{aligned}
\end{equation}

\end{itemize}

\noindent Inserting \eqref{eq: dual Ritz inside a 1 UNS},  \eqref{eq: dual Ritz inside a 2 UNS} and \eqref{eq: dual Ritz inside a 4 UNS} into \eqref{eq: dual Ritz inside a begin UNS} we finally obtain the bound for the first term:
\begin{equation}
    \begin{aligned}\label{eq: a(w,v) main UNS}
         a(\bfe_h^{\ell},\bfw) \leq ch^{\widehat{r}_u+1}\norm{\bfw}_{H^2(\Ga)}\norm{\bfu}_{H^{k_u+1}(\Ga)}.
    \end{aligned}
\end{equation}

\noindent We still need to bound $b^L(\bfe_h^{\ell},\{\pi,\mu\})$ in \eqref{eq: dual Ritz inside main UNS}. Adding and subtracting suitable terms we get
\begin{equation}
    \begin{aligned}\label{eq: dual Ritz inside a 5 UNS}
         b^L(\bfe_h^{\ell},\{\pi,\mu\}) &= b^L(\bfe_h^{\ell},\{\pi-\Ihl(\pi),\mu-\Ihl(\mu)\})  +b^L(\bfe_h^{\ell},\{\Ihl(\pi),\Ihl(\mu)\}) -\bhtil(\bfe_h,\{\Ih(\pi),\Ih(\mu)\})\\
        &\  + \bhtil(\bfe_h,\{\Ih(\pi),\Ih(\mu)\}).
    \end{aligned}
\end{equation}
Let us, once again, bound its term in \eqref{eq: dual Ritz inside a 5 UNS} separately:
\begin{itemize}
    \item By the boundedness of the bilinear form $b^{L}$, standard interpolation estimates, and the norm equivalence \eqref{eq: norm equivalence UNS} we get
    \begin{equation}\label{eq: dual Ritz inside a 6 UNS}
        \begin{aligned}
            b^L(\bfe_h^{\ell},\{\pi-\Ihl(\pi),\mu-\Ihl(\mu)\}) &\leq c\norm{\bfe_h}_{H^1(\Gah)}\norm{\pi-\Ihl(\pi)}_{L^2(\Ga)} + c\norm{\bfe_h}_{L^2(\Gah)}\norm{\mu-\Ihl(\mu)}_{L^2(\Ga)}\\
            &\leq ch^{\widehat{r}_u+1}\norm{\bfu}_{H^{k_u+1}(\Ga)}(\norm{\pi}_{H^1(\Ga)} + \norm{\mu}_{H^1(\Ga)}).
        \end{aligned}
    \end{equation}
    \item Using the perturbation estimate \eqref{eq: Geometric perturbations btilde tangent UNS} we obtain
    \begin{equation}\label{eq: dual Ritz inside a 7 UNS}
        \begin{aligned}
            b^L(\bfe_h^{\ell},\{\Ihl(\pi),\Ihl(\mu)\}) -\bhtil(\bfe_h,\{\Ih(\pi),\Ih(\mu)\}) &\leq ch^{k_g}\norm{\bfe_h}_{L^2(\Gah)}(\norm{\pi}_{H^1(\Ga)} + \norm{\mu}_{H^1(\Ga)}) \\
            &\leq ch^{k_g + \widehat{r}_u}\norm{\bfu}_{H^{k_u+1}(\Ga)}(\norm{\pi}_{H^1(\Ga)} + \norm{\mu}_{H^1(\Ga)}).
        \end{aligned}
    \end{equation}
\item For the last term in \eqref{eq: dual Ritz inside a 5 UNS}, first remember that $\bfu$ satisfies \eqref{eq: unstead NV Lagrange}. Then, using the definition of the Ritz-Stokes projection \eqref{eq: surface Ritz-Stokes projection UNS}, the perturbation estimate \eqref{eq: Geometric perturbations btilde tangent extra regularity UNS} and standard interpolation estimates we obtain\vspace{-4mm}
\begin{equation}
    \begin{aligned}\label{eq: dual Ritz inside a 8 UNS}
        \bhtil(\bfe_h,\{\Ih(\pi),\Ih(\mu)\}) &= \bhtil(\bfu,\{\Ih(\pi),\Ih(\mu)\}) - \overbrace{b^L(\bfu,\{\Ihl(\pi),\Ihl(\mu)\})}^{=0}\\
        &\leq ch^{k_g+1}\norm{\bfu}_{H^2(\Ga)}(\norm{\pi}_{H^1(\Ga)} + \norm{\mu}_{H^1(\Ga)}).
    \end{aligned}
\end{equation}
\end{itemize}
Combining \eqref{eq: dual Ritz inside a 6 UNS},\eqref{eq: dual Ritz inside a 7 UNS},\eqref{eq: dual Ritz inside a 8 UNS} into \eqref{eq: dual Ritz inside a 5 UNS} and remembering that $k_g \geq 1$ we obtain
\begin{equation}
    \begin{aligned}\label{eq: bL(u,Ih) UNS}
         \boxed{b^L(\bfe_h^{\ell},\{\pi,\mu\}) \leq  ch^{\widehat{r}_u+1}\norm{\bfu}_{H^{k_u+1}(\Ga)}(\norm{\pi}_{H^1(\Ga)} + \norm{\mu}_{H^1(\Ga)}).}
         \end{aligned}
\end{equation}

So, finally, inserting \eqref{eq: a(w,v) main UNS}, \eqref{eq: bL(u,Ih) UNS} into \eqref{eq: dual Ritz inside main UNS} and remembering the regularity \eqref{eq: regularity estimate lagrange UNS} the tangential $L^2$ estimate \eqref{eq: Error Bounds Ritz-Stokes L2 UNS} follows. The normal $L^2$ estimate in \eqref{eq: Error Bounds Ritz-Stokes L2 UNS} is calculated similarly to \cite[Lemma 6.16]{elliott2024sfem}; we omit further details for the sake of brevity.

With similar arguments, one can prove the bounds in \eqref{eq: Error Bounds Ritz-Stokes UNS} and \eqref{eq: Error Bounds Ritz-Stokes L2 UNS} for an arbitrary choice of $k_\lambda$ (i.e. $k_\lambda=k_u-1$) with $k_g \geq 2$. More precisely,  instead of \eqref{eq: Ritz-Stokes proof inside UNS}, and \eqref{eq: Ritz-Stokes proof inside 3 UNS} we consider the error estimates in \cite[Theorem 6.6, Theorem 6.7]{elliott2024sfem} and the worse $H^1$ coercivity bound \eqref{eq: coercivity and Korn's inequality Lagrange UNS}, therefore, we replace $\widehat{r}_u$ by $r_u =  min\{k_u,k_g-1\}$. Using the $L^2 \times L^2$ discrete Lagrange \textsc{inf-sup} condition \eqref{Lemma: Discrete inf-sup condition Gah Lagrange UNS} instead, we get the pressure estimate \eqref{eq: Error Bounds Ritz-Stokes UNS}. For the tangential $L^2$ velocity estimate \eqref{eq: Error Bounds Ritz-Stokes L2 UNS} we, again, need to apply a duality argument as before; it follows similarly to \cite[Theorem 6.7]{elliott2024sfem}. For the sake of brevity, we do not provide any further details since we would just repeat the calculations.

Finally, the convergence for the time derivative $(\alpha=1)$ is obtained in a similar way after taking the time derivative of the definition of the Ritz-Stokes projection \eqref{eq: surface Ritz-Stokes projection UNS}, and considering the derivative of \eqref{eq: Ritz-Stokes proof inside 1 UNS} with new $\bfg = \partial_t( \bff - \partial_t\bfu - \bfu\cdot \nbgcov\bfu  - \nbg p - \lambda \bfng)$.
\end{proof}

\begin{lemma}[Error Bounds Ritz-Stokes]\label{lemma: Error Bounds Ritz-Stokes std UNS}
 Let  $(\bfu,\{p,\lambda\}) \in \bfH^1(\Ga)\times(H^1(\Ga)\times L^2(\Ga))$ the solution of \eqref{weak lagrange hom NV}. Then, we have the following error bounds for the  Ritz-Stokes projection for $a=0,1$
 \begin{align}
     \label{eq: Error Bounds Ritz-Stokes std UNS}
         &\norm{\partial_t^{a}(\bfu - \mathcal{R}^b_h(\bfu))}_{\ah}  + \norm{\partial_t^{a}\{p-\mathcal{P}_h^b(\bfu),\lambda-\mathcal{L}_h^b(\bfu)\}}_{L^2(\Gah)}\leq c h^m \big(\norm{\partial_t^{a}\bfu}_{H^{k_u+1}(\Ga)} \nonumber\\
         &\qquad\qquad\qquad\qquad\qquad\qquad\qquad\qquad\qquad\qquad\quad \ \ \ \, +\norm{\partial_t^{a}p}_{H^{k_{pr}+1}(\Ga)} + \norm{\partial_t^{a}\lambda}_{H^{k_{\lambda}+1}(\Ga)} \big)\\
         \label{eq: Error Bounds Ritz-Stokes L2 std UNS}
         &\norm{\partial_t^{a}\bfPg(\bfu - \mathcal{R}_h^{b,\ell}(\bfu))}_{L^2(\Ga)} \leq ch^{m+1}\big(\norm{\partial_t^{a}\bfu}_{H^{k_u+1}(\Ga)}+\norm{\partial_t^{a}p}_{H^{k_{pr}+1}(\Ga)} + \norm{\partial_t^{a}\lambda}_{H^{k_{\lambda}+1}(\Ga)} \big).
 \end{align}
 with $m=min\{r_u,k_{pr}+1,k_{\lambda}+1\}$ and $r_u = min\{k_u,k_g-1\}$, $k_g \geq 2$, for $h \leq h_0$ with sufficiently small $h_0$ and with $c$ independent of $h$. 
 \end{lemma}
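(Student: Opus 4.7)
The approach is to recognize that $(\mathcal{R}_h^b(\bfu), \mathcal{P}_h^b(\bfu), \mathcal{L}_h^b(\bfu))$ is precisely the finite element Galerkin approximation of $(\bfu, p, \lambda)$, viewed as the solution of a surface Stokes problem with right-hand side $\bfg = \bff - \partial_t\bfu - \bfu\cdot\nbgcov\bfu$, up to geometric consistency errors between $\Ga$ and $\Gah$. The proof therefore either invokes the corresponding Stokes error estimates from \cite[Theorems 6.6--6.7]{elliott2024sfem} directly, or retraces their argument with the appropriate consistency bounds; the structure is completely parallel to the proof of \cref{lemma: Error Bounds Ritz-Stokes UNS}, and is in fact simpler because $p$ and $\lambda$ now appear naturally in the defining equation.

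First I would derive the error equation by subtracting $a_h(\bfu,\vh) + \bhtil(\vh,\{p,\lambda\})$ from both sides of \eqref{eq: surface Ritz-Stokes projection std UNS}, yielding for every $\vh \in \bfV_h$
\begin{align*}
a_h(\bfu - \mathcal{R}_h^b(\bfu), \vh) + \bhtil(\vh, \{p - \mathcal{P}_h^b(\bfu), \lambda - \mathcal{L}_h^b(\bfu)\}) = [a_h(\bfu,\vh) - a(\bfu,\vhl)] + [\bhtil(\vh,\{p,\lambda\}) - b^L(\vhl,\{p,\lambda\})],
\end{align*}
together with $\bhtil(\bfu - \mathcal{R}_h^b(\bfu),\{\qh,\xi_h\}) = b^L(\bfu,\{\qhl,\xi_h^\ell\}) - \bhtil(\bfu,\{\qh,\xi_h\})$, the last identity using $b^L(\bfu,\cdot) = 0$ from \eqref{weak lagrange hom NV}. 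The geometric perturbation estimates \eqref{eq: Geometric perturbations a UNS} and \eqref{eq: Geometric perturbations btilde tangent UNS} then bound both right-hand sides by $ch^{k_g}(\norm{\bfu}_{H^1(\Ga)} + \norm{p}_{H^1(\Ga)} + \norm{\lambda}_{L^2(\Ga)})\norm{\vh}_{H^1(\Gah)}$.

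Next, the energy bound in \eqref{eq: Error Bounds Ritz-Stokes std UNS} follows by a standard Brezzi saddle-point argument: split $\bfu - \mathcal{R}_h^b(\bfu) = (\bfu - \Ih\bfu) + (\Ih\bfu - \mathcal{R}_h^b(\bfu))$, test the error equations with $\vh = \Ih\bfu - \mathcal{R}_h^b(\bfu)$, and apply the $L^2\times L^2$ discrete \textsc{inf-sup} condition from \cref{Lemma: Discrete inf-sup condition Gah Lagrange UNS} together with the $H^1$ coercivity bound \eqref{eq: coercivity and Korn's inequality Lagrange UNS}. Lagrange interpolation of $\bfu$, $p$, $\lambda$ in $\bfV_h$, $Q_h$, $\Lambda_h$ (via \cref{prop: lagrange interpolant UNS} and its scalar analogue) contributes best-approximation errors of orders $h^{k_u}$, $h^{k_{pr}+1}$, $h^{k_{\lambda}+1}$ respectively, which combined with the geometric limit $h^{k_g-1}$ from the consistency term produce the stated exponent $m = \min\{r_u, k_{pr}+1, k_{\lambda}+1\}$.

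For the $L^2$ tangential bound \eqref{eq: Error Bounds Ritz-Stokes L2 std UNS} I would apply an Aubin--Nitsche duality argument: solve the adjoint surface Stokes problem with data $\bfPg(\bfu - \mathcal{R}_h^{b,\ell}(\bfu))$, invoke the extra $H^2\times H^1$ regularity of the adjoint on the smooth surface $\Ga$ (cf.\ \eqref{eq: regularity estimate lagrange UNS}), and test the error equation with the Lagrange interpolant of the adjoint velocity; the higher-order perturbation bounds \eqref{eq: Geometric perturbations btilde tangent extra regularity UNS} and \eqref{eq: Geometric perturbations a tangent extra regularity UNS} deliver the extra factor of $h$. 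This proceeds line-by-line as in the duality portion of \cref{lemma: Error Bounds Ritz-Stokes UNS}. Finally, since the system \eqref{eq: surface Ritz-Stokes projection std UNS} is linear and $t$-independent, differentiating it in time shows that $(\partial_t\mathcal{R}_h^b(\bfu), \partial_t\mathcal{P}_h^b(\bfu), \partial_t\mathcal{L}_h^b(\bfu))$ is itself the Ritz--Stokes projection of $(\partial_t\bfu, \partial_t p, \partial_t \lambda)$, so the case $a=1$ is immediate by applying the $a=0$ estimates to the differentiated data. The main obstacle is really just the bookkeeping of how the three polynomial degrees and the geometric order $k_g$ combine to give $m$; no new technical idea beyond those already deployed for the modified projection is required.
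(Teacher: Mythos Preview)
Your proposal is correct and matches the paper's approach: both recognize that $(\mathcal{R}_h^b(\bfu),\mathcal{P}_h^b(\bfu),\mathcal{L}_h^b(\bfu))$ is the Galerkin approximation of the surface Stokes problem with right-hand side $\bfg=\bff-\partial_t\bfu-\bfu\cdot\nbgcov\bfu$, so that the estimates from \cite[Theorems~6.6--6.7]{elliott2024sfem} apply directly, with the rest following the template of \cref{lemma: Error Bounds Ritz-Stokes UNS}. The paper's proof is terser (it simply cites those theorems and omits the Brezzi/Aubin--Nitsche details you spell out), but the strategy, the consistency bounds used, and the treatment of $a=1$ by differentiating the defining system are all the same.
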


\begin{proof}
The proof follows similar to the proof of \cref{lemma: Error Bounds Ritz-Stokes UNS}, where one considers $\uh \in \bfV_h$, $\{p_h,\lambda_h\}\in Q_h\times \Lambda_h$ the Galerkin approximation of \eqref{eq: Ritz-Stokes proof inside 1 UNS} with $\bfg = \bff - \partial_t \bfu - \bfu\cdot\nbgcov\bfu$ instead. Therefore since $(\bfu,\{p,\lambda\})$ the solution to \eqref{weak lagrange hom NV}, $(\bfu,\{p,\lambda\})$ is also the solution of \eqref{eq: Ritz-Stokes proof inside 1 UNS}. Then, by \cite[Theorem 6.6, Theorem 6.7]{elliott2024sfem} we readily see, with the help of known consistency errors \cref{lemma: Geometric perturbations b lagrange UNS}, that the following error estimates hold, 
\begin{align}
\label{eq: Ritz-Stokes proof inside std UNS}
    \norm{\bfu - \uh}_{\ah} +  \norm{\{p - \ph,\lambda-\lh\}}_{L^2(\Gah)} &\leq ch^{m}\big(\norm{\bfu}_{H^{k_u+1}(\Ga)}+\norm{p}_{H^{k_{pr}+1}(\Ga)} + \norm{\lambda}_{H^{k_{\lambda}+1}(\Ga)} \big),\\
         \label{eq: Ritz-Stokes proof inside 3 std UNS}
         \norm{\bfP(\bfu - \uh)}_{L^2(\Gah)} &\leq ch^{m+1}\big(\norm{\bfu}_{H^{k_u+1}(\Ga)}+\norm{p}_{H^{k_{pr}+1}(\Ga)} + \norm{\lambda}_{H^{k_{\lambda}+1}(\Ga)} \big),
\end{align}
where $m=min\{r_u,k_{pr}+1,k_{\lambda}+1\}$ and $r_u = min\{k_u,k_g-1\}$ with  $k_g \geq 2$.
Furthermore, following \cref{lemma: Error Bounds Ritz-Stokes UNS} in \eqref{eq: inside vdiv UNS} we use \eqref{eq: pseudo Galerkin orthogonality Ritz b UNS} instead of  \eqref{eq: pseudo Galerkin orthogonality Ritz 1 UNS} along with the fact that $\bhtil(\vh,\{p,\lambda\}) = \bhtil(\vh,\{p,\lambda\} - \{\Ihz(p),\Ihz(\lambda)\}) \leq c(h^{k_{pr}+1} + h^{k_{\lambda}+1})$. Now the results \eqref{eq: Error Bounds Ritz-Stokes std UNS} and \eqref{eq: Error Bounds Ritz-Stokes L2 std UNS} can be obtained analogously to \cref{lemma: Error Bounds Ritz-Stokes UNS} and \cite[Section 6.3]{elliott2024sfem}. For the sake of brevity we skip the calculations. 
\end{proof}

\begin{lemma}[$L^{\infty}$ bound Ritz-Stokes projection]\label{lemma: Linfty estimate Ritz-Stokes UNS}
    Given $\bfu \in (W^{2,\infty}(\Ga))^3$ and $\underline{k_\lambda = k_u}$ or $\underline{k_\lambda = k_u-1}$ with $k_g\geq 2$, there exists a positive constant $c$ independent of $h$ such that the following estimate holds
    \begin{equation}\label{eq: W1infty estimate Ritz-Stokes UNS}
        \norm{\mathcal{R}_h^{(\cdot)} \bfu}_{L^{\infty}(\Gah)} +  \norm{\nbgcovh\mathcal{R}_h^{(\cdot)} \bfu}_{L^{\infty}(\Gah)} \leq c\norm{\bfu}_{W^{2,\infty}(\Ga)}.
    \end{equation}
\end{lemma}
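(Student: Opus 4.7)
The plan is to compare $\mathcal{R}_h^{(\cdot)}\bfu$ with the Lagrange interpolant $\Ih\bfu\in\bfV_h$ of $\bfu^{-\ell}$, exploiting the pointwise stability of $\Ih$ together with a standard inverse estimate applied to the finite-element remainder $e_h:=\mathcal{R}_h^{(\cdot)}\bfu-\Ih\bfu\in\bfV_h$. Since $\Ga$ is compact and two-dimensional, the embedding $W^{2,\infty}(\Ga)\hookrightarrow H^2(\Ga)$ lets me replace $\norm{\bfu}_{W^{2,\infty}(\Ga)}$ by $\norm{\bfu}_{H^2(\Ga)}$ in the intermediate steps.

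First, the pointwise Lagrange-interpolation bound \eqref{eq: lagrange interpolant infty UNS} gives
\begin{equation*}
\norm{\Ih\bfu}_{L^{\infty}(\Gah)} + \norm{\nbgcovh\Ih\bfu}_{L^{\infty}(\Gah)} \leq c\norm{\bfu}_{W^{1,\infty}(\Ga)},
\end{equation*}
so what remains is to control $e_h$ in $W^{1,\infty}(\Gah)$. Because $e_h$ lives in a piecewise-polynomial space on the quasi-uniform, shape-regular triangulation $\Th$ of the two-dimensional surface $\Gah$, the standard scaling/inverse estimates give
\begin{equation*}
\norm{e_h}_{L^\infty(\Gah)}\leq ch^{-1}\norm{e_h}_{L^2(\Gah)},\qquad \norm{\nbgcovh e_h}_{L^\infty(\Gah)}\leq ch^{-1}\norm{\nbgcovh e_h}_{L^2(\Gah)}.
\end{equation*}
Splitting each right-hand side by the triangle inequality, the Lagrange-interpolation part is $O(h^2)$ in $L^2(\Gah)$ and $O(h)$ in $H^1(\Gah)$ by \eqref{eq: lagrange interpolant UNS} with $k=1$. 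For the Ritz-Stokes part I would re-run the arguments of \cref{lemma: Error Bounds Ritz-Stokes UNS} (for $k_\lambda=k_u$) and \cref{lemma: Error Bounds Ritz-Stokes std UNS} (for $k_\lambda=k_u-1$) but feeding in the minimum regularity $\bfu\in H^2(\Ga)$: the Céa-type energy step produces $\norm{\bfu-\mathcal{R}_h^{(\cdot)}\bfu}_{H^1(\Gah)}\leq ch\norm{\bfu}_{H^2(\Ga)}$, and the Aubin--Nitsche duality (together with the half-order normal-component loss visible in \eqref{eq: Error Bounds Ritz-Stokes L2 improved UNS}) gives $\norm{\bfu-\mathcal{R}_h^{(\cdot)}\bfu}_{L^2(\Gah)}\leq ch^{3/2}\norm{\bfu}_{H^2(\Ga)}$. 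Inserting these into the inverse estimates yields $\norm{e_h}_{L^\infty(\Gah)}\leq ch^{1/2}\norm{\bfu}_{H^2(\Ga)}$ and $\norm{\nbgcovh e_h}_{L^\infty(\Gah)}\leq c\norm{\bfu}_{H^2(\Ga)}$, both uniform in $h$, and combining with the $W^{1,\infty}$ bound on $\Ih\bfu$ proves \eqref{eq: W1infty estimate Ritz-Stokes UNS}.

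The main obstacle is the bookkeeping for the regularity downgrade. The statements of \cref{lemma: Error Bounds Ritz-Stokes UNS,lemma: Error Bounds Ritz-Stokes std UNS} put $\norm{\bfu}_{H^{k_u+1}(\Ga)}$ on the right-hand side, which for $k_u\geq 2$ is strictly stronger than what $W^{2,\infty}(\Ga)$ provides on a two-dimensional surface. One therefore has to verify that the Céa- and duality-steps in those proofs still deliver the expected $O(h)$ and $O(h^{3/2})$ rates in $H^1(\Gah)$ and $L^2(\Gah)$ when only $\bfu\in H^2(\Ga)$ is available; the hypothesis $k_g\geq 2$ is essential here, because the geometric consistency contributions scale like $h^{k_g}$ and would otherwise dominate the reduced rate. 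Once this minimal-regularity variant of the Ritz-Stokes estimate is in hand, the rest of the proof is a routine inverse-inequality assembly.
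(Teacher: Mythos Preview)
Your approach is essentially the paper's: split off the Lagrange interpolant, apply the $L^\infty$--$L^2$ inverse estimate on the discrete remainder $e_h=\mathcal{R}_h^{(\cdot)}\bfu-\Ih\bfu\in\bfV_h$, and close with the Ritz-Stokes error bounds of \cref{lemma: Error Bounds Ritz-Stokes UNS,lemma: Error Bounds Ritz-Stokes std UNS} together with the $L^\infty$ interpolation stability \eqref{eq: lagrange interpolant infty UNS}. The paper's proof is a three-line sketch that does exactly this and, for the gradient part, simply refers to \cite{kovacs2016error}; you have spelled out the same skeleton more carefully. One simplification: you do not need the Aubin--Nitsche step at all, since $\norm{\cdot}_{L^2(\Gah)}\leq\norm{\cdot}_{\ah}$ already gives $\norm{\bfu-\mathcal{R}_h^{(\cdot)}\bfu}_{L^2(\Gah)}\leq ch\norm{\bfu}_{H^2(\Ga)}$ from the energy estimate, which after the $h^{-1}$ inverse is bounded---the paper uses precisely this shortcut. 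Your observation about the regularity downgrade (replacing $H^{k_u+1}$ by $H^2$ on the right of the Ritz-Stokes bounds) is the one genuinely implicit step, glossed over in the paper as well; it is routine since the C\'ea argument in those lemmas only uses the best-approximation rate that the available regularity affords.
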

\begin{proof}
Considering the $L^{\infty}$-bound \eqref{eq: lagrange interpolant infty UNS} and the Ritz-Stokes estimates in \cref{lemma: Error Bounds Ritz-Stokes UNS,lemma: Error Bounds Ritz-Stokes std UNS} we see that for sufficiently small mesh-parameter $h$,
\begin{equation}
    \begin{aligned}\label{eq: Linfty ritz stokes UNS}
        \norm{\mathcal{R}_h^{(\cdot)} \bfu^n}_{L^{\infty}(\Gah)} &\leq \norm{\mathcal{R}_h^{(\cdot)} \bfu^n - \Ih(\bfu^n)}_{L^{\infty}(\Gah)} + \norm{ \Ih(\bfu^n)}_{L^{\infty}(\Gah)} \\
         &\leq ch^{-1} \big( \norm{\mathcal{R}_h^{(\cdot)} \bfu^n - \bfu^n}_{L^{2}(\Gah)} + \norm{\Ih(\bfu^n) - \bfu^n}_{L^{2}(\Gah)} \big) + \norm{\Ih(\bfu^n)}_{L^{\infty}(\Gah)}\\ 
         &\leq c\norm{\bfu^n}_{W^{2,\infty}(\Ga)}.
    \end{aligned}
\end{equation}
 Similarly, a bound for the gradient of the Ritz-Stokes projection can be established; see also \cite{kovacs2016error}. 
\end{proof}

\subsection{Discrete surface Leray projection}\label{sec: Leray projection UNS}
For $\underline{k_\lambda = k_u}$ we define the discrete surface Leray projection $\Pi^{div}_h : L^2(\Ga_h)^3 \to \bfV_h^{div}$ to be the $L^2-$projection to the space of discrete weakly tangential  divergence-free tangential fields $\bfV_h^{div}$ \eqref{eq: discrete weakly divfree space UNS}, that is 
\begin{equation}
    \begin{aligned}\label{eq: discrete Leray UNS}
        (\Pi^{div}_h\bfu,\vh)_{L^2(\Gah)} = (\bfu^{-\ell},\vh)_{L^2(\Gah)} \qquad \forall \vh \in \bfV_h^{div}.
    \end{aligned}
\end{equation}
By definition, the projection is $L^2$-norm stable. If we want to achieve $H^1$-stability we need sufficiently smooth $\Ga$ and $\bfu \in \bfV^{div}$. 
Results of this type were shown in \cite{ChrysaWalk2010} in the case of quasi-uniform exact domain meshes. We extend them in the case of discrete surface $\Gah$.


\begin{lemma}[Leray Projection]\label{lemma: bounds discrete Leray projection UNS}
Let $\bfu \in \bfV^{div}$, and assume that $\underline{k_{\lambda} = k_u}$, then there exists a constant $c>0$ such that
\begin{equation}
    \begin{aligned}\label{eq: Leray estimate improved UNS}
        \norm{\bfP(\Pi^{div}_h\bfu- \bfu^{-\ell})}_{L^2(\Gah)}  + h\norm{\Pi^{div}_h\bfu - \bfu^{-\ell}}_{\ah} \leq ch^{\widehat{r}_u+1}\norm{\bfu}_{H^{k_u+1}(\Ga)},
    \end{aligned}
\end{equation}
with $\widehat{r}_u = min\{k_u,k_g\}$. The following stability result then also holds
    \begin{equation}\label{eq: Leray stability bound improved UNS}
        \norm{\Pi^{div}_h\bfu}_{a_h} \leq  \norm{\bfu^{-\ell}}_{\ah} \qquad \forall \bfu \in \bfV^{div}.     
     \end{equation}
\end{lemma}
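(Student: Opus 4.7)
The plan is to compare the Leray projection to the modified Ritz--Stokes projection of \cref{def: surface Ritz-Stokes projection UNS}: since both $\Pi_h^{div}\bfu$ and $\mathcal{R}_h(\bfu)$ sit inside $\bfV_h^{div}$, their difference $\bfe_h := \Pi_h^{div}\bfu - \mathcal{R}_h(\bfu)$ lies in $\bfV_h^{div}$ and is therefore a legitimate test function in the Leray identity. Writing
\begin{equation*}
\Pi_h^{div}\bfu - \bfu^{-\ell} = \bfe_h + \bigl(\mathcal{R}_h(\bfu) - \bfu^{-\ell}\bigr),
\end{equation*}
the Ritz--Stokes summand is already controlled by \eqref{eq: Error Bounds Ritz-Stokes improved UNS}--\eqref{eq: Error Bounds Ritz-Stokes L2 improved UNS}, so the problem reduces to estimating $\bfe_h$ sharply.

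Testing the Leray identity with $\vh = \bfe_h$ yields $\norm{\bfe_h}_{L^2(\Gah)}^2 = (\bfu^{-\ell} - \mathcal{R}_h(\bfu), \bfe_h)_{L^2(\Gah)}$. A naive Cauchy--Schwarz inherits the $h^{\widehat{r}_u+1/2}$ loss caused by the normal component of $\bfu^{-\ell}-\mathcal{R}_h(\bfu)$ in \eqref{eq: Error Bounds Ritz-Stokes L2 improved UNS} and is not sharp enough. To recover the missing half order I intend to split both factors with respect to $\bfn$, using $\bfu^{-\ell}\cdot\bfn=0$:
\begin{equation*}
(\bfu^{-\ell}-\mathcal{R}_h(\bfu),\bfe_h)_{L^2(\Gah)} = (\bfP(\bfu^{-\ell}-\mathcal{R}_h(\bfu)),\bfP\bfe_h)_{L^2(\Gah)} - (\mathcal{R}_h(\bfu)\cdot\bfn,\bfe_h\cdot\bfn)_{L^2(\Gah)}.
\end{equation*}
The first product is $O(h^{\widehat{r}_u+1}\norm{\bfe_h}_{L^2})$ by the tangential part of \eqref{eq: Error Bounds Ritz-Stokes L2 improved UNS}. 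The crucial observation for the second product is that $\bfe_h \in \bfV_h^{div}$ combined with \eqref{eq: divfree L2 norm kl=ku UNS} and $\nh-\bfn = O(h^{k_g})$ forces $\norm{\bfe_h\cdot\bfn}_{L^2(\Gah)} \leq ch\norm{\bfe_h}_{L^2(\Gah)}$; pairing this $h$ against the $h^{\widehat{r}_u+1/2}$ normal Ritz--Stokes bound makes the second product $O(h^{\widehat{r}_u+3/2}\norm{\bfe_h}_{L^2})$, strictly subleading. A kickback argument then delivers the sharp bound $\norm{\bfe_h}_{L^2(\Gah)} \leq ch^{\widehat{r}_u+1}\norm{\bfu}_{H^{k_u+1}(\Ga)}$, and the hypothesis $k_\lambda = k_u$ is used precisely to invoke \eqref{eq: divfree L2 norm kl=ku UNS}.

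The remaining pieces of \eqref{eq: Leray estimate improved UNS} then follow quickly: the tangential $L^2$-estimate $\norm{\bfP(\Pi_h^{div}\bfu-\bfu^{-\ell})}_{L^2(\Gah)}$ by triangle inequality together with $\norm{\bfP\bfe_h}_{L^2}\leq\norm{\bfe_h}_{L^2}$, and the $\ah$-estimate by the inverse inequality $\norm{\bfe_h}_{\ah} \leq c\norm{\bfe_h}_{H^1(\Gah)} \leq ch^{-1}\norm{\bfe_h}_{L^2(\Gah)}$ combined with the just-proved sharp $L^2$-bound and the $\ah$-part of \eqref{eq: Error Bounds Ritz-Stokes improved UNS}. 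For the stability \eqref{eq: Leray stability bound improved UNS} I will apply the same splitting device to $\norm{\Pi_h^{div}\bfu}_{\ah} \leq \norm{\bfe_h}_{\ah} + \norm{\mathcal{R}_h(\bfu)}_{\ah}$, bounding the Ritz--Stokes summand by $c\norm{\bfu}_{H^1(\Ga)}\leq c\norm{\bfu^{-\ell}}_{\ah}$ via \eqref{eq: Stab estimates for Ritz-Stokes projection UNS}, surface Korn \eqref{eq: Korn inequality UNS} and norm equivalence, and controlling $\bfe_h$ by repeating the splitting argument with the Ritz--Stokes $L^2$-stability in place of the sharp $L^2$-error. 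The main technical obstacle is the recovery of the extra half-order in the $L^2$-bound on $\bfe_h$; the whole argument ultimately rests on the improved control of the weak normal component of $\bfV_h^{div}$ elements available only when $k_\lambda = k_u$.
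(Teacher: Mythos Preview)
Your proposal is correct and follows essentially the same strategy as the paper: compare the Leray projection to the modified Ritz--Stokes projection, exploit the $L^2$-orthogonality of $\Pi_h^{div}\bfu-\bfu^{-\ell}$ to $\bfV_h^{div}$, and recover the sharp order via the normal control \eqref{eq: divfree L2 norm kl=ku UNS} available only for $k_\lambda=k_u$. Your execution is in fact slightly more streamlined than the paper's: you bound $\norm{\bfe_h}_{L^2(\Gah)}$ directly and deduce the tangential estimate by triangle inequality, whereas the paper targets $\norm{\bfP(\Pi_h^{div}\bfu-\bfu^{-\ell})}_{L^2(\Gah)}^2$ first, which forces it through the auxiliary full $L^2$-bound $\norm{\Pi_h^{div}\bfu-\bfu^{-\ell}}_{L^2}\leq\norm{\mathcal{R}_h\bfu-\bfu^{-\ell}}_{L^2}$ and then requires iterated applications of \eqref{eq: divfree L2 norm kl=ku UNS}--\eqref{eq: divfree L2 norm kl=ku 2 UNS} to extract the missing order from the normal cross term $\mathbf{(I)}$. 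Both routes rest on the same ingredients and yield the same conclusion; yours just packages them more economically.
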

\begin{proof}
Considering the modified Ritz-Stokes projection \eqref{eq: surface Ritz-Stokes projection UNS} we see that $\Pi^{div}_h\bfu-\mathcal{R}_h\bfu \in \bfV_h^{div}$ and therefore
\begin{equation*}
   \norm{\Pi^{div}_h\bfu- \bfu^{-\ell}}_{L^2(\Gah)}^2 = \underbrace{(\Pi^{div}_h\bfu- \bfu^{-\ell},\Pi^{div}_h\bfu-\mathcal{R}_h\bfu)_{L^2(\Gah)}}_{= 0 \ (\text{by} \eqref{eq: discrete Leray UNS})} + (\Pi^{div}_h\bfu - \bfu^{-\ell},\mathcal{R}_h\bfu - \bfu^{-\ell})_{L^2(\Gah)},
\end{equation*}
thus, $\norm{\Pi^{div,\ell}_h\bfu- \bfu^{-\ell}}_{L^2(\Gah)} \leq  \norm{\mathcal{R}_h \bfu - \bfu^{-\ell}}_{L^2(\Gah)} $. Using the Ritz-Stokes projection estimate \eqref{eq: Error Bounds Ritz-Stokes improved UNS} and the fact that $(\Pi^{div}_h\bfu- \bfu^{-\ell}) = \bfP(\Pi^{div}_h\bfu- \bfu^{-\ell}) + \bfn\otimes\bfn(\Pi^{div}_h\bfu- \bfu^{-\ell})$ we obtain, for the tangential $L^2$ norm, 
\begin{equation}
    \begin{aligned}\label{eq: leray inside 1 UNS}
        &\norm{\bfP(\Pi^{div}_h\bfu- \bfu^{-\ell})}_{L^2(\Gah)}^2 = (\bfP(\Pi^{div}_h\bfu- \bfu^{-\ell}),\Pi^{div}_h\bfu-\mathcal{R}_h(\bfu))_{L^2(\Gah)}\\
        &\quad + (\bfP(\Pi^{div}_h\bfu- \bfu^{-\ell}),\bfP(\mathcal{R}_h(\bfu)- \bfu^{-\ell}))_{L^2(\Gah)}\\
        &\leq \underbrace{((\Pi^{div}_h\bfu- \bfu^{-\ell})\cdot\bfn,(\Pi^{div}_h\bfu-\mathcal{R}_h(\bfu))\cdot\bfn)_{L^2(\Gah)}}_{\mathbf{(I)}}+ch^{\widehat{r}_u+1}\norm{\bfP(\Pi^{div}_h\bfu- \bfu^{-\ell})}_{L^2(\Gah)}.
    \end{aligned}
\end{equation}
We need to bound the first term above. Setting $\wh =\Pi^{div}_h\bfu-\mathcal{R}_h(\bfu) \in \bfV_h^{div} $, using \eqref{eq: divfree L2 norm kl=ku UNS}, \eqref{eq: divfree L2 norm kl=ku 2 UNS} when appropriate and recalling the geometric estimate  $\norm{\bfn-\nh}_{L^{\infty}} \leq ch^{k_g}$; see \eqref{eq: geometric errors 1 UNS}, we obtain

\begin{equation}
    \begin{aligned}\label{eq: leray inside 2 UNS}
        \mathbf{(I)} &\leq ch^{k_g}\norm{\Pi^{div}_h\bfu - \bfu^{-\ell}}_{L^2(\Gah)}\norm{\wh}_{L^2(\Gah)} + \norm{\Pi^{div}_h\bfu - \bfu^{-\ell}}_{L^2(\Gah)}\norm{\wh\cdot\nh}_{L^2(\Gah)}\\
       &\leq ch^{k_g}\norm{\mathcal{R}_h \bfu - \bfu^{-\ell}}_{L^2(\Gah)}\big( \norm{\bfP\wh}_{L^2(\Gah)} + h^{k_g}\norm{\wh}_{L^2(\Gah)} + \norm{\wh\cdot\nh}_{L^2(\Gah)}\big)\\
       &\ +  \norm{\mathcal{R}_h \bfu - \bfu^{-\ell}}_{L^2(\Gah)}\norm{\wh\cdot\nh}_{L^2(\Gah)}\\
       &\leq ch^{k_g}\norm{\mathcal{R}_h \bfu - \bfu^{-\ell}}_{L^2(\Gah)}\norm{\bfP\wh}_{L^2(\Gah)} + ch^{2k_g}\norm{\mathcal{R}_h \bfu - \bfu^{-\ell}}_{L^2(\Gah)}\norm{\wh}_{L^2(\Gah)}\\
       &\ + c\norm{\mathcal{R}_h \bfu - \bfu^{-\ell}}_{L^2(\Gah)}\big(h\norm{\bfP\wh}_{L^2(\Gah)} + h^{k_g+1}\norm{\wh}_{L^2(\Gah)}\big) \qquad \text{ (by \eqref{eq: divfree L2 norm kl=ku 2 UNS})}\\
       &\leq ch^{2k_g}\norm{\mathcal{R}_h \bfu - \bfu^{-\ell}}_{L^2(\Gah)}^2 + ch^{k_g+1}\norm{\mathcal{R}_h \bfu - \bfu^{-\ell}}_{L^2(\Gah)}^2 + \\
       &\ + ch\norm{\mathcal{R}_h \bfu - \bfu^{-\ell}}_{L^2(\Gah)}\big(\norm{\bfP(\Pi^{div}_h\bfu - \bfu^{-\ell})}_{L^2(\Gah)} +\norm{\bfP(\mathcal{R}_h \bfu - \bfu^{-\ell})}_{L^2(\Gah)}\big),
       \end{aligned}
\end{equation}
where we also used the fact that $\norm{\wh}_{L^2(\Gah)} \leq \norm{\Pi^{div}_h\bfu^{-\ell} - \bfu^{-\ell}}_{L^2(\Gah)} +\norm{\mathcal{R}_h \bfu - \bfu^{-\ell}}_{L^2(\Gah)} \leq c\norm{\mathcal{R}_h \bfu - \bfu^{-\ell}}_{L^2(\Gah)}$. Now substituting \eqref{eq: leray inside 2 UNS} into \eqref{eq: leray inside 1 UNS} and applying the Ritz-Stokes estimates \eqref{eq: Error Bounds Ritz-Stokes improved UNS}, \eqref{eq: Error Bounds Ritz-Stokes L2 improved UNS} along with the fact that $2k_g \geq k_g+1\geq 2$ for $k_g \geq 1$ we get
\begin{equation}
    \begin{aligned}
        \norm{\bfP(\Pi^{div}_h\bfu- \bfu^{-\ell})}_{L^2(\Gah)}^2 \leq ch^{2\widehat{r}_u +2}\norm{\bfu}_{H^{k_u+1}(\Ga)}^2 +ch^{\widehat{r}_u + 1}\norm{\bfP(\Pi^{div}_h\bfu - \bfu^{-\ell})}_{L^2(\Gah)}.
    \end{aligned}
\end{equation}
Then a simple application of Young's inequality and a kickback argument gives our tangential $L^2$ norm estimate in \eqref{eq: Leray estimate improved UNS}. Now, for the energy estimate in \eqref{eq: Leray estimate improved UNS} we use the velocity normal approximation \eqref{eq: divfree L2 norm kl=ku UNS}, and the previous tangential $L^2$ error estimate to obtain the following
    \begin{align}
        \norm{\Pi_h^{div}\bfu - \bfu^{-\ell}}_{\ah}
        & \leq \norm{\mathcal{R}_h \bfu - \bfu^{-\ell}}_{\ah} + ch^{-1}\norm{\Pi_h^{div}\bfu - \mathcal{R}_h \bfu}_{L^2(\Gah)} \nonumber\\
        &\leq  \norm{\mathcal{R}_h \bfu - \bfu^{-\ell}}_{\ah} + ch^{-1}\big( \norm{\bfP(\Pi_h^{div}\bfu - \mathcal{R}_h \bfu)}_{L^2(\Gah)}  + ch^{k_g}\norm{\Pi_h^{div}\bfu - \mathcal{R}_h \bfu}_{L^2(\Gah)}\nonumber\\
        &\ + \norm{(\Pi_h^{div}\bfu - \mathcal{R}_h \bfu)\cdot\nh}_{L^2(\Gah)}\big) \nonumber\\
        &\leq c(h^{\widehat{r}_u} + h^{k_g-1+\widehat{r}_u})\norm{\bfu}_{H^{k_u+1}(\Ga)},
    \end{align}

which gives us our desired result. Then the stability bound \eqref{eq: Leray stability bound improved UNS} follows from the previous result, and the Ritz-Stokes stability \eqref{eq: Stab estimates for Ritz-Stokes projection UNS}, since we can write the Leray projection as $\Pi_h^{div}\bfu = (\Pi_h^{div}\bfu-\mathcal{R}_h \bfu) + \mathcal{R}_h \bfu$. 
\end{proof}

\subsection{The variational formulation}\label{sec: var stability UNS}
With the help of the bilinear forms and the skew-symmetrized trilinear form introduced in \eqref{eq: lagrange discrete bilinear forms UNS}-\eqref{eq: lagrange discrete bilinear forms end UNS} we define the discrete variational formulation:

The linearized \emph{unsteady fully discrete Lagrangian surface Navier-Stokes} finite element method, which we analyze, is the following: 

\noindent {\bf(UWL):} Given appropriate approximation of the initial condition $\uh^0 \in V_h$, data $\bff_h$, find $\uh^n \in \bfV_h$, $\{\ph^n,\lh^n\} \in Q_h\times \Lambda_h$ for $1 \leq n \leq N$, such that 
\begin{align}
\begin{cases}
\tag{UWL}
    \label{eq: weak lagrange fully discrete UNS}
        (\frac{\uh^n-\uh^{n-1}}{\Delta t},\vh)_{L^2(\Gah)}+ \ah(\uh^n,\vh) +\chtil(\uh^{n-1};\uh^n,\vh) + \bhtil(\vh,\{\ph^n,\lh^n\}) \!\!&= (\bff_h^n,\vh) \\
    \phantom{aaaaaaaaaaaaaaaaaaaaaaaaaaaaaaaaaaaaaaaaaaaaaa}\bhtil(\uh^n,\{\qh,\xi_h\})\!\!&=0 ,
    \end{cases}
\end{align}
for all $(\vh,\{\qh,\xi_h\}) \in \bfV_h \times \{Q_h \times \Lambda_h\}$, where we have linearized the skew-symmetrized inertia term, according to \eqref{eq: skew-symmetrized ch real Ph} and \cref{sec: Time-stepping scheme UNS}. For this linearized formulation it is easy to see that a solution exists and that it is also unique. More precisely, in \Cref{Sec: Stability analysis UNS} we stablish stability estimates for the \emph{velocity} $\uh^n$ in appropriate norms. However, it is not possible to find stability estimates for the two \emph{pressures} $\{\ph^n,\lh^n\}$. To do so, we need $\uh^n$ to be uniformly bounded for each $n=1,...,N$ in the energy $(\ah)$-norm; see \eqref{eq: uniform bound b stability UNS}.

For that reason, we consider the following \emph{auxiliary unsteady fully discrete Lagrangian surface Navier-Stokes} method:

\noindent {\bf(UWL{\tiny b}):} Given appropriate approximation of the initial condition $\uh^0 \in \bfV_h$, data $\bff_h$, find $\uh^n \in V_h$, $\{\ph^n,\lh^n\} \in Q_h\times \Lambda_h$ for $1 \leq n \leq N$, such that,
\begin{align}
\begin{cases}
\tag{UWL{\tiny b}}
    \label{eq: weak lagrange fully discrete}
        (\frac{\uh^n-\uh^{n-1}}{\Delta t},\vh)_{L^2(\Gah)}+ \ah(\uh^n,\vh) +\chtil(\bfb_h^{n-1};\uh^n,\vh) + \bhtil(\vh,\{\ph^n,\lh^n\}) \!\!&= (\bff_h^n,\vh) \\
\phantom{aaaaaaaaaaaaaaaaaaaaaaaaaaaaaaaaaaaaaaaaaaaaaa}\bhtil(\uh^n,\{\qh,\xi_h\})\!\!&=0 ,
    \end{cases}
\end{align}
for all $(\vh,\{\qh,\xi_h\}) \in \bfV_h \times \{Q_h \times \Lambda_h\}$.

Notice that the problem \eqref{eq: weak lagrange fully discrete}, is the same as \eqref{eq: weak lagrange fully discrete UNS} when $\bfb_h^{n-1}=\uh^{n-1}$ in the inertia term $\chtil(\bullet;\bullet,\bullet)$. Here $\bfb_h$ is a vector-valued function which we, as mentioned above, assume is uniformly bounded in the following sense:
\begin{equation}\label{eq: uniform bound b stability UNS}
\sup_{n} \norm{\bfb_h^n}_{\ah} \leq C_b.
\end{equation}

This \emph{assumption} helps us prove stability for the two pressures in the following \Cref{Sec: Stability analysis UNS} (in reality just when $k_\lambda=k_u$; for arbitrary $k_\lambda$ see \cref{remark: About the pressure stability for arbitrary}), which otherwise would not be possible, due to the skew-symmetrized form of $\chtil(\bullet;\bullet,\bullet)$ and the use of $H^1-$conforming Taylor-Hood finite elements. 

This \emph{assumption}, however, is not all that unreasonable or strong. In particular, we are able to prove that solutions of the initial approximation of the surface Navier-Stokes problem \eqref{eq: weak lagrange fully discrete UNS}, i.e. when $\bfb_h^{n-1} = \uh^{n-1}$, actually possess this property, in a later Section cf. \Cref{remark: About pressure stability UNS}, once we prove velocity error estimates, assuming some low regularity assumptions are made for the solutions of the continuous problem \eqref{eq: unstead NV Lagrange}.

More specifically, with the help of the Ritz-Stokes projection \cref{def: surface Ritz-Stokes projection UNS}, the convergence analysis, and low regularity assumptions for the solutions of the continuous surface Navier-Stokes system \eqref{eq: unstead NV Lagrange}, we prove that \eqref{eq: uniform bound b stability UNS} actually also holds for our discrete solution $\bfu_h^{n}$ of \eqref{eq: weak lagrange fully discrete UNS}. Nonetheless, we prove the ensuing \emph{pressure  stability} results with the help of eq. \eqref{eq: weak lagrange fully discrete} assuming that \eqref{eq: uniform bound b stability UNS} holds.



Regarding the approximation of the initial data, we see that we have chosen that $\uh^0 \in \bfV_h$. Eventually, we will see that we will want to choose $\uh^0 \in \bfV_h^{div}$ e.g. $\uh^0 := \mathcal{R}_h\bfu^0$ or $\uh^0 := \mathcal{R}_h^b\bfu^0 = \mathcal{R}_h(\bfu^0, \{p^0,\lambda^0\})$. This means that the initial condition is then still \emph{weakly tangential divergence-free} at the time step $t_0=0$, thus the stability of the velocity is unconditionally stable, see also \Cref{remark: about initial conditions UNS}.

\section{Stability analysis}\label{Sec: Stability analysis UNS}

In this section, we derive stability results for our finite element schemes \eqref{eq: weak lagrange fully discrete UNS} and \eqref{eq: weak lagrange fully discrete}. Specifically for the pressure stability we only consider \eqref{eq: weak lagrange fully discrete} assuming \eqref{eq: uniform bound b stability UNS} holds. In this situation we also focus on the case where $\underline{k_\lambda = k_u}$ in which just low regularity conditions for continuous velocity are needed; see \cref{remark: About pressure stability UNS}. In \cref{remark: About the pressure stability for arbitrary} we show how the pressure stability can also be established for $\underline{k_\lambda = k_u-1}$. Nevertheless, let us start with an energy estimate involving the discrete velocity $\uh$.
\begin{lemma}[Velocity Estimate]\label{Lemma: velocity stab estimate UNS}
    For the velocity solution $\uh^k \in \bfV_h$, $k=0,1,...,n$ of \eqref{eq: weak lagrange fully discrete UNS} or \eqref{eq: weak lagrange fully discrete} we have the following energy estimates, which holds for $n \geq 1$
    \begin{equation}
        \begin{aligned}\label{eq: velocity stab estimate UNS}
            \norm{\uh^n}_{L^2(\Gah)}^2 &+ \sum_{k=1}^{n}\norm{\uh^k - \uh^{k-1}}_{L^2(\Gah)}^2 + \Delta t \sum_{k=1}^{n} \norm{\uh^k}_{\ah}^2 \\
            &\qquad \qquad \qquad \leq exp(ct_n)\big(\norm{\uh^0}_{L^2(\Gah)}^2 + \Delta t \sum_{k=0}^{n}\norm{\bff_h^{k}}_{L^2(\Gah)}^2 \big),
        \end{aligned}
    \end{equation}
    where $\uh^0 \in \bfV_h^{div}$ an appropriate approximation $\bfu^0 \in \bfH^1(\Ga)$, i.e. $\uh^0 := \mathcal{R}_h(\bfu^0)$.
\end{lemma}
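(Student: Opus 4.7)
The plan is to follow the standard energy argument for backward-Euler Navier--Stokes discretizations, with the twist that both the pressure coupling and the (linearized) inertia term disappear by design.

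First I would test the momentum equation of \eqref{eq: weak lagrange fully discrete UNS} (or \eqref{eq: weak lagrange fully discrete}) with $\vh = \uh^n \in \bfV_h$. Two simplifications occur immediately. By the second equation of the scheme, $\uh^n$ lies in $\bfV_h^{div}$, so $\bhtil(\uh^n,\{\ph^n,\lh^n\})=0$ because $\{\ph^n,\lh^n\}\in Q_h\times \Lambda_h$. Moreover, the inertia form defined in \eqref{eq: skew-symmetrized ch real Ph} is skew-symmetric in its last two arguments by construction (this is precisely the point of \cref{remark: inertia term Ph}), which yields $\chtil(\uh^{n-1};\uh^n,\uh^n)=0$ in the first formulation, and $\chtil(\bfb_h^{n-1};\uh^n,\uh^n)=0$ in the auxiliary one. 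The same reasoning applies to $k=0$ only trivially (the sums start at $k=1$); we simply need $\uh^0\in\bfV_h^{div}$ to make the initial value consistent with the constraint.

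Next I would handle the time-derivative term by the polarization identity
\begin{equation*}
\Bigl(\tfrac{\uh^n-\uh^{n-1}}{\Delta t},\uh^n\Bigr)_{L^2(\Gah)}
= \tfrac{1}{2\Delta t}\Bigl(\norm{\uh^n}_{L^2(\Gah)}^2-\norm{\uh^{n-1}}_{L^2(\Gah)}^2+\norm{\uh^n-\uh^{n-1}}_{L^2(\Gah)}^2\Bigr),
\end{equation*}
leaving us with
\begin{equation*}
\norm{\uh^n}_{L^2(\Gah)}^2-\norm{\uh^{n-1}}_{L^2(\Gah)}^2+\norm{\uh^n-\uh^{n-1}}_{L^2(\Gah)}^2+2\Delta t\,\ah(\uh^n,\uh^n)=2\Delta t\,(\bff_h^n,\uh^n)_{L^2(\Gah)}.
\end{equation*}
I would then bound the right-hand side by Cauchy--Schwarz and Young's inequality, splitting as $2\Delta t\,(\bff_h^n,\uh^n)\le \Delta t\,\norm{\bff_h^n}_{L^2(\Gah)}^2+\Delta t\,\norm{\uh^n}_{L^2(\Gah)}^2$, and absorb the coercive contribution $2\Delta t\,\ah(\uh^n,\uh^n)=2\Delta t\,\norm{\uh^n}_{\ah}^2$ on the left.

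Finally, summing from $k=1$ to $n$ the telescoping gives
\begin{equation*}
\norm{\uh^n}_{L^2(\Gah)}^2+\sum_{k=1}^n\norm{\uh^k-\uh^{k-1}}_{L^2(\Gah)}^2+\Delta t\sum_{k=1}^n\norm{\uh^k}_{\ah}^2 \le \norm{\uh^0}_{L^2(\Gah)}^2+\Delta t\sum_{k=1}^n\norm{\bff_h^k}_{L^2(\Gah)}^2+c\Delta t\sum_{k=1}^n\norm{\uh^k}_{L^2(\Gah)}^2,
\end{equation*}
and an application of the discrete Gronwall inequality (valid provided $c\Delta t<1$, which is harmless since we may shrink $\Delta t$) produces the factor $\exp(ct_n)$ on the right. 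There is no real obstacle here: the essential content is the cancellation of the pressure and inertia contributions, both of which are built into the formulation, so the argument reduces to routine energy estimates plus Gronwall. The only point worth care is checking that the estimate is identical in the \eqref{eq: weak lagrange fully discrete} case, which it is, since skew-symmetry of $\chtil(\bfb_h^{n-1};\cdot,\cdot)$ holds independent of the choice of the transport field.
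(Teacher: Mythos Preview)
Your proof is correct and follows essentially the same route as the paper's: test with $\vh=\uh^n$, use skew-symmetry of $\chtil$ and the constraint to kill the inertia and pressure terms, apply the polarization identity, then Young's inequality, summation, and discrete Gronwall. The only cosmetic difference is that the paper explicitly tests the constraint equation with $\{\qh,\xi_h\}=\{\ph^n,\lh^n\}$ and adds, whereas you phrase this as ``$\uh^n\in\bfV_h^{div}$ so $\bhtil(\uh^n,\{\ph^n,\lh^n\})=0$''; these are the same observation.
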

\begin{proof}
    Testing the first equation of \eqref{eq: weak lagrange fully discrete UNS} or \eqref{eq: weak lagrange fully discrete} with $\vh = \uh^n$ and the second one with $\{\qh,\xi_h\} = \{\ph^n,\lh^n\}$, adding the two statements, remembering the skew-symmetry of $\chtil(\bullet \, ;\bullet,\bullet)$ and using the algebraic identity $(a-b)a = \frac{1}{2}(a^2 - b^2) + \frac{1}{2}(a-b)^2$ we get 
    \begin{equation}
        \begin{aligned}
            \frac{1}{2}(\norm{\uh^n}_{L^2(\Gah)}^2 - \norm{\uh^{n-1}}_{L^2(\Gah)}^2 + \norm{\uh^n - \uh^{n-1}}_{L^2(\Gah)}^2) + \Delta t \norm{\uh^n}_{\ah}^2 \leq \Delta t \norm{\bff_n}_{L^2(\Gah)}\norm{\uh^n}_{L^2(\Ga)}.
        \end{aligned}
    \end{equation}
Now, summing the inequalities for $k=1,2,...,n$, applying  general Young's inequality and a standard kickback argument for small enough time step $\Delta t$ we obtain 
\begin{equation}
    \begin{aligned}
      \norm{\uh^n}_{L^2(\Gah)}^2 + \sum_{k=1}^{n}\norm{\uh^k - \uh^{k-1}}_{L^2(\Gah)}^2 + \Delta t \sum_{k=1}^{n} \norm{\uh^k}_{\ah}^2 &\leq \norm{\uh^0}_{L^2(\Gah)}^2 + \Delta t \varepsilon^{-1}\sum_{k=0}^{n}\norm{\bff_h^k}_{L^2(\Gah)}^2 \\
      & \ + \Delta t \sum_{k=0}^{n-1}\norm{\uh^{n-1}}_{L^2(\Gah)}^2,  
    \end{aligned}
\end{equation}
where a standard discrete Gronwall's inequality gives the desired result.
\end{proof}

We now present the two discrete inf-sup conditions, which play an important role in the pressure stability of our numerical scheme. These were proved in the case of the Lagrangian formulation of the surface Stokes problem in \cite[Lemma 5.6, Lemma 5.8]{elliott2024sfem}. 

\begin{lemma}[$L^2\times L^2$  Discrete Lagrange \textsc{inf-sup} condition]
\label{Lemma: Discrete inf-sup condition Gah Lagrange UNS}
  Assume a quasi-uniform triangulation of $\Gah$, then there exists a constant $c_{b}>0$ independent of the mesh parameter $h$ such that 
    \begin{equation}\label{eq: discrete inf-sup condition Gah Lagrange UNS}
    c_{b}\norm{\{\qh,\xi_h\}}_{L^2(\Gah)}\leq \sup_{\vh \in \bfV_h} \frac{\bhtil(\vh,\{\qh,\xi_h\})}{\norm{\vh}_{\ah}} \ \ \ \forall \{\qh,\xi_h\} \in Q_h \times \Lambda_h.
    \end{equation}
\end{lemma}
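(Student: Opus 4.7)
The plan is to adapt the two-part strategy used in the Stokes case \cite[Lemma 5.6]{elliott2024sfem}: I would handle the pressure component $\qh$ and the Lagrange multiplier component $\xi_h$ separately, constructing a distinct test function for each, and then glue them together by a weighted linear combination. The key structural fact is that $\bhtil$ splits as $\int_{\Gah}\vh\cdot\nbgh\qh + \int_{\Gah}\xi_h\,\vh\cdot\nh$, so the two pieces interact only through controllable cross terms.

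\textbf{Step 1 (pressure part).} Given $\qh\in Q_h$, I would invoke the standard surface Taylor--Hood discrete inf-sup stability for the pair $(\bfV_h,Q_h)$, which is available because we enforce $k_{pr}=k_u-1$. Concretely, one starts from the continuous inf-sup of \Cref{lemma: inf-sup cont lagrange UNS}, lifts a suitable $\bfv\in \bfH^1(\Ga)$ with $b(\bfv,\qhl)\gtrsim \|\qh\|_{L^2(\Gah)}^2$, and applies a Fortin-type construction using the Scott--Zhang interpolant with its interpolation and super-approximation properties \eqref{eq: bounds of Scott-Zhang interpolant UNS}--\eqref{eq: super-approximation estimate 2 UNS}, combined with the geometric perturbation bound \eqref{eq: Geometric perturbations btilde tangent UNS}. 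The outcome is a $\bfw_h^{(1)}\in\bfV_h$ with
\begin{equation*}
b_h(\bfw_h^{(1)},\qh)\geq \beta_1\|\qh\|_{L^2(\Gah)}^2,\qquad \|\bfw_h^{(1)}\|_{\ah}\leq C\|\qh\|_{L^2(\Gah)} .
\end{equation*}

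\textbf{Step 2 (Lagrange multiplier part).} For $\xi_h\in\Lambda_h$, I would set $\bfw_h^{(2)}:=\Ihz(\xi_h\bfn^{-\ell})\in\bfV_h$, which is well-defined since $\Lambda_h\subset V_h$ componentwise. Using $\bfn\cdot\bfn=1$, the geometric estimate $\|\bfn-\nh\|_{L^\infty}\leq ch^{k_g}$, and the super-approximation stability/error bounds \eqref{eq: super-approximation estimate 2 UNS}--\eqref{eq: super-approximation stability UNS}, one obtains, for $h$ small enough,
\begin{equation*}
\int_{\Gah}\xi_h\,\bfw_h^{(2)}\cdot\nh\,\dsh\geq \beta_2\|\xi_h\|_{L^2(\Gah)}^2,\qquad \|\bfw_h^{(2)}\|_{\ah}\leq C\|\xi_h\|_{L^2(\Gah)},
\end{equation*}
after a small kickback that absorbs $\mathcal{O}(h^{k_g})$ perturbations coming from $(\bfn-\nh)$ and the Scott--Zhang commutation error.

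\textbf{Step 3 (combination).} I would take $\vh=\bfw_h^{(1)}+\delta\,\bfw_h^{(2)}$ for a small $\delta>0$ to be fixed, compute
\begin{equation*}
\bhtil(\vh,\{\qh,\xi_h\}) = b_h(\bfw_h^{(1)},\qh) + \delta\!\!\int_{\Gah}\!\xi_h\bfw_h^{(2)}\!\cdot\!\nh + \delta\, b_h(\bfw_h^{(2)},\qh) + \int_{\Gah}\!\xi_h\bfw_h^{(1)}\!\cdot\!\nh,
\end{equation*}
and bound the two cross terms by Cauchy--Schwarz together with the continuity estimates of Step~1 and Step~2. Young's inequality then reduces the cross terms to $\varepsilon\|\qh\|^2+\varepsilon\|\xi_h\|^2$ contributions, which are absorbed by choosing $\delta$ and $\varepsilon$ small relative to $\beta_1,\beta_2$. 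Since $\|\vh\|_{\ah}\leq C(\|\qh\|_{L^2}+\|\xi_h\|_{L^2})\leq C\|\{\qh,\xi_h\}\|_{L^2(\Gah)}$, dividing gives the uniform lower bound $\beta$.

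\textbf{Main obstacle.} The delicate point is Step~2: showing that $\Ihz(\xi_h\bfn^{-\ell})$, despite being a non-tangential vector in $\bfV_h$ built from the product of a discrete function and the smooth normal, still yields $\int\xi_h\bfw_h^{(2)}\cdot\nh \gtrsim \|\xi_h\|^2$ uniformly in $h$. This requires carefully tracking the three error sources---the mismatch $\bfn-\nh$, the interpolation error $\xi_h\bfn^{-\ell}-\Ihz(\xi_h\bfn^{-\ell})$, and the fact that $\xi_h\bfn^{-\ell}$ is only piecewise regular---each of which is of order $h^{k_g}$ or $h$, and confirming that they can all be absorbed into the leading $\|\xi_h\|^2$ term. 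The compatibility of scales in choosing $\delta$ in Step~3 is subsidiary but requires that the constants from Step~1 and Step~2 be tracked independently of $h$.
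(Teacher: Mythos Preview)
The paper does not give its own proof of this lemma; it simply cites \cite[Lemma 5.6]{elliott2024sfem}. Your overall architecture (two separate Fortin-type test functions $\bfw_h^{(1)},\bfw_h^{(2)}$ plus a weighted combination) is exactly the strategy used there, and your Step~2 is fine: the super-approximation and geometric estimates you invoke do yield $\int_{\Gah}\xi_h\,\bfw_h^{(2)}\!\cdot\!\nh\gtrsim\|\xi_h\|_{L^2}^2$ and $\|\bfw_h^{(2)}\|_{\ah}\lesssim\|\xi_h\|_{L^2}$.

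The genuine gap is in Step~3. The cross term $\int_{\Gah}\xi_h\,\bfw_h^{(1)}\!\cdot\!\nh$ carries \emph{no} factor of $\delta$; Cauchy--Schwarz plus your Step~1 continuity bound gives only $C\|\xi_h\|_{L^2}\|\qh\|_{L^2}$ with a constant $C$ that is not small. You cannot simultaneously reduce both this term and $\delta\,b_h(\bfw_h^{(2)},\qh)$ to $\varepsilon\|\qh\|^2+\varepsilon\|\xi_h\|^2$ by Young's inequality: optimising over $\delta$ one finds the combination succeeds only under the unverified structural condition $\beta_1\beta_2>2C_1C_2$, where $C_1,C_2$ are the cross-term continuity constants. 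So the absorption you describe does not go through as stated.

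The missing ingredient is that $\bfw_h^{(1)}$ must be built from a \emph{tangential} continuous field. For the pressure part one can take $\bfv=\nbg\phi\in\bfH^1_T$ with $-\Delta_\Gamma\phi=\qhl$, so that $\bfv\cdot\bfng=0$; then $\bfw_h^{(1)}=\Ihz(\bfv^{-\ell})$ satisfies $\|\bfw_h^{(1)}\!\cdot\!\nh\|_{L^2}\leq ch\|\qh\|_{L^2}$ (split into $(\Ihz\bfv-\bfv)\cdot\nh$ and $\bfv\cdot(\nh-\bfn)$, using \eqref{eq: bounds of Scott-Zhang interpolant UNS} and \eqref{eq: geometric errors 1 UNS}). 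With this the offending cross term becomes $O(h)\|\xi_h\|\|\qh\|$, which is absorbed for $h$ small, and the remaining cross term $\delta\,b_h(\bfw_h^{(2)},\qh)\leq c\delta\|\xi_h\|\|\qh\|$ is handled by choosing $\delta$ small. Your identification of Step~2 as the ``main obstacle'' is therefore misplaced; the real obstacle is the near-tangentiality of $\bfw_h^{(1)}$, without which the combination argument fails.
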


    \begin{lemma}[$L^2\times H_h^{-1}$ Discrete Lagrange \textsc{inf-sup} condition]\label{lemma: L^2 H^{-1} discrete inf-sup condition Gah Lagrange UNS}
Assume a quasi-uniform triangulation of $\Gah$, then there exists a constant $c_{b} >0$ independent of the mesh parameter $h$ such that 
\begin{equation}\label{eq: L^2 H^{-1} discrete inf-sup condition Gah Lagrange UNS}
    c_{b}\norm{\{\qh,\xi_h\}}_{L^2(\Gah)\times H_h^{-1}(\Gah)}\leq \sup_{\vh \in \bfV_h} \frac{\bhtil(\vh,\{\qh,\xi_h\})}{\norm{\vh}_{H^1(\Gah)}} \ \ \ \forall \{\qh,\xi_h\} \in Q_h \times \Lambda_h.
\end{equation}
\end{lemma}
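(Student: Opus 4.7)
The plan is to follow the Fortin-type strategy used for the $L^2\times L^2$ counterpart \cref{Lemma: Discrete inf-sup condition Gah Lagrange UNS} in \cite{elliott2024sfem}, bootstrapping from the continuous Lagrange inf-sup \cref{lemma: inf-sup cont lagrange UNS} through a stable interpolation and adapting the scaling to accommodate the weaker target norm $H_h^{-1}(\Gah)$ on $\xi_h$, which permits (and forces) the stronger denominator $\norm{\vh}_{H^1(\Gah)}$ in place of $\norm{\vh}_{\ah}$.

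Given $\{\qh,\xi_h\}\in Q_h\times\Lambda_h$, I first lift to $\{\qh^\ell,\xi_h^\ell\}\in L^2_0(\Ga)\times L^2(\Ga)\subset L^2_0(\Ga)\times H^{-1}(\Ga)$ and invoke \eqref{infsup lagrange} to produce $\bfv\in\bfH^1(\Ga)$ satisfying
\[
\btil(\bfv,\{\qh^\ell,\xi_h^\ell\}) \ge c_b\norm{\bfv}_{H^1(\Ga)}\bigl(\norm{\qh^\ell}_{L^2(\Ga)}+\norm{\xi_h^\ell}_{H^{-1}(\Ga)}\bigr).
\]
Next, I transfer $\bfv$ to the discrete space via a Fortin-type operator $\Pi_h:\bfH^1(\Ga)\to\bfV_h$ combining the Scott-Zhang interpolant (which yields the $H^1$-stability $\norm{\Pi_h\bfv}_{H^1(\Gah)}\le c\norm{\bfv}_{H^1(\Ga)}$ via \eqref{eq: stability of Scott-Zhang interpolant UNS}) with a local bubble correction, so that one retains the near-commutation identity $\bhtil(\Pi_h\bfv,\{\qh,\xi_h\}) = \btil(\bfv,\{\qh^\ell,\xi_h^\ell\}) + E$, with an $O(h^{k_g})$ remainder $E$ controlled through \eqref{eq: Geometric perturbations btilde UNS}.

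A separate ingredient I will need is the norm equivalence $\norm{\xi_h^\ell}_{H^{-1}(\Ga)}\sim\norm{\xi_h}_{H_h^{-1}(\Gah)}$ for lifted discrete functions. One direction is immediate by restricting the continuous supremum to Scott-Zhang lifts of elements of $\Lambda_h$ and using the lift norm equivalence \cref{lemma: norm equivalence UNS}; for the converse I split any $w\in H^1(\Ga)$ as $w=\Ihl w+(w-\Ihl w)$, bound the first summand by the discrete dual norm and control the residual through the inverse-type estimate $h\norm{\xi_h}_{L^2(\Gah)}\le c\norm{\xi_h}_{H_h^{-1}(\Gah)}$. The latter is obtained by testing the $H_h^{-1}$-dual norm against $\xi_h$ itself and applying the standard finite-element inverse estimate $\norm{\xi_h}_{H^1(\Gah)}\le ch^{-1}\norm{\xi_h}_{L^2(\Gah)}$.

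Collecting these pieces, for $h\le h_0$ sufficiently small the geometric remainder $E$ is absorbed by a standard kickback to produce the discrete inf-sup with constant $\beta>0$ independent of $h$. The hardest part, I expect, will be the careful rescaling of the remainder estimates so that the pressure factor appears in the weaker $H_h^{-1}(\Gah)$-norm rather than the stronger $L^2(\Gah)$-norm; this is precisely where the inverse inequality $h\norm{\xi_h}_{L^2(\Gah)}\le c\norm{\xi_h}_{H_h^{-1}(\Gah)}$ does the key work, and is the point of departure from the $L^2\times L^2$ proof of \cite{elliott2024sfem}.
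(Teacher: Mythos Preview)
The paper does not give its own proof of this lemma: immediately after stating \cref{Lemma: Discrete inf-sup condition Gah Lagrange UNS} and \cref{lemma: L^2 H^{-1} discrete inf-sup condition Gah Lagrange UNS}, it writes ``These were proved in the case of the Lagrangian formulation of the surface Stokes problem in \cite[Lemma 5.6, Lemma 5.8]{elliott2024sfem}'' and moves on. So there is no in-paper argument to compare your proposal against; the result is imported wholesale from the cited reference.

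Your Fortin-type outline is a plausible route, but two points deserve attention. First, in your norm-equivalence argument you split an arbitrary $w\in H^1(\Ga)$ as $\Ihl w + (w-\Ihl w)$; the Lagrange interpolant $\Ihl$ requires pointwise evaluation and is not defined on all of $H^1(\Ga)$, so you should use the Scott--Zhang interpolant $\Ihzl$ there instead. Second, the ``local bubble correction'' you mention is the standard device for the divergence/pressure pairing $\int_{\Gah}\vh\cdot\nbgh\qh$, but it does not by itself control the normal pairing $\int_{\Gah}\xi_h\,\vh\cdot\nh$; you will need a separate component of the Fortin map for this---for example, a discrete normal lift built from the $H_h^{-1}$-maximiser of $\xi_h$, combined with the super-approximation tools of \eqref{eq: super-approximation estimate 2 UNS} to absorb the $O(h)$ residuals. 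Without that additional piece the commutation identity you write for $\bhtil$ will not hold.
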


\noindent Now, as we mentioned in \Cref{sec: var stability UNS}, let us consider \eqref{eq: weak lagrange fully discrete} assuming \eqref{eq: uniform bound b stability UNS} holds.
To complete the stability results, we need to estimate the pressures $\{\ph,\,\lh\}$. But before we do so, we require to define an inverse operator $\mathcal{A}_h: \, \bfV_h^{div} \to \bfV_h^{div}$. Given $\wh \in \bfV_h^{div}$
we may define the discrete inverse Stokes operator $\mathcal{A}_h$ as the unique solution to the discrete Stokes problem
\begin{equation}
    \begin{aligned}\label{eq: Discrete inverse Stokes UNS}
        \ah(\mathcal{A}_h \wh, \vh)  &= (\wh,\vh)_{L^2(\Gah)}\\
        \bhtil(\mathcal{A}_h \wh,\{\qh,\xi_h\}) &= 0
    \end{aligned}
\end{equation}
for every $\vh \in \bfV_h^{div}$ and $\{\qh,\xi_h\} \in Q_h\times\Lambda_h$. It is clear that the operator is coercive and also bounded, thus, it is well-defined. We can also see by testing appropriately that 
\begin{equation*}
\begin{aligned}
   \norm{\wh}_{\mathcal{A}_h}^2 &=\norm{\mathcal{A}_h \wh}_{\ah}^2  = (\mathcal{A}_h\wh,\wh)_{L^2(\Gah)}, \\
   \norm{\wh}_{\mathcal{A}_h}  &=  \norm{\mathcal{A}_h \wh}_{\ah} \leq \norm{\wh}_{L^2(\Gah)}.
   \end{aligned}
\end{equation*}

With the help of the above inverse Stokes operator we are able to show the auxiliary result:


\begin{lemma}[Auxiliary bound]\label{Lemma: auxilary stab bounds UNS}
Let assumption \eqref{eq: uniform bound b stability UNS} hold. Then for the solution $\uh^k \in \bfV_h$ of \eqref{eq: weak lagrange fully discrete} we have the following energy estimates, which holds for $n \geq 1$
    \begin{equation}\label{eq: auxilary stab bounds UNS}
        \sum_{k=1}^n \Delta t \norm{\frac{\mathcal{A}_h(\uh^k - \uh^{k-1})}{\Delta t}}_{\ah}^2 \leq  C(\bfb_h)\bfA(\uh^0,\bff_h),
    \end{equation}
where $\uh^0 \in \bfV_h^{div}$ an appropriate approximation, $\uh^0 := \mathcal{R}_h(\bfu^0)$, $\bfA(\uh^0,\bff_h)$ the bound in \eqref{eq: velocity stab estimate UNS} and $C(\bullet)$ a constant depending on the bound of $\bfb_h$ \eqref{eq: uniform bound b stability UNS}.
\end{lemma}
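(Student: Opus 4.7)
The plan is to test the scheme \eqref{eq: weak lagrange fully discrete} at time level $k$ with the admissible choice $\vh = \mathcal{A}_h(\uh^k - \uh^{k-1})$. This is legitimate because every iterate $\uh^j$ lies in $\bfV_h^{div}$ (by the discrete constraint for $j\geq 1$ and by the chosen initialisation $\uh^0=\mathcal{R}_h\bfu^0$), so the difference $\uh^k-\uh^{k-1}$ is again in $\bfV_h^{div}$ and the output of $\mathcal{A}_h$ is admissible as a test function. The first immediate simplification is that $\bhtil(\vh,\{\ph^k,\lh^k\})$ vanishes since $\vh\in\bfV_h^{div}$. For the time-derivative term I use the identity $\norm{\wh}_{\mathcal{A}_h}^2 = (\wh,\mathcal{A}_h\wh)_{L^2(\Gah)} = \norm{\mathcal{A}_h\wh}_{\ah}^2$, which after dividing by $\Delta t$ yields precisely the target quantity $\Delta t\,\norm{\Delta t^{-1}\mathcal{A}_h(\uh^k-\uh^{k-1})}_{\ah}^2$ that is to be controlled.

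For the viscous term I exploit the symmetry of $\ah$ together with $\uh^k\in\bfV_h^{div}$: running the defining relation of $\mathcal{A}_h$ in reverse gives
\[
\ah(\uh^k,\mathcal{A}_h(\uh^k-\uh^{k-1})) \;=\; (\uh^k-\uh^{k-1},\uh^k)_{L^2(\Gah)} \;=\; \tfrac{1}{2}\bigl(\norm{\uh^k}^2 - \norm{\uh^{k-1}}^2 + \norm{\uh^k-\uh^{k-1}}^2\bigr)_{L^2(\Gah)},
\]
by the usual polarisation identity, so this contribution telescopes upon summation. The forcing term is estimated by Cauchy--Schwarz together with $\norm{\mathcal{A}_h(\cdot)}_{L^2(\Gah)}\leq\norm{\mathcal{A}_h(\cdot)}_{\ah}$, and the inertial term is controlled via the continuity bound \eqref{ch boundedness UNS} combined with the standing hypothesis \eqref{eq: uniform bound b stability UNS} on $\bfb_h$, delivering
\[
\chtil(\bfb_h^{k-1};\uh^k,\mathcal{A}_h(\uh^k-\uh^{k-1})) \;\leq\; c\,C_b\,\norm{\uh^k}_{\ah}\,\norm{\mathcal{A}_h(\uh^k-\uh^{k-1})}_{\ah}.
\]

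To close, I apply Young's inequality to both the forcing and inertial right-hand sides, inserting the factor $\Delta t$ so that a small multiple of the target term can be absorbed back on the left by a kickback argument. Summing from $k=1$ to $n$, the telescoped viscous piece leaves the non-negative boundary term $\tfrac{1}{2}\norm{\uh^n}_{L^2(\Gah)}^2$ plus the accumulated jump sum, and the surviving right-hand side reduces to $\norm{\uh^0}_{L^2(\Gah)}^2$, $\Delta t\sum_k\norm{\bff_h^k}_{L^2(\Gah)}^2$, and $C_b^2\,\Delta t\sum_k\norm{\uh^k}_{\ah}^2$; each of these is bounded by $\bfA(\uh^0,\bff_h)$ through the velocity stability \cref{Lemma: velocity stab estimate UNS}, and the cumulative factor $C_b^2$ accounts for the prefactor $C(\bfb_h)$ in the statement. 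The main technical point that requires care is ensuring that both the symmetry of $\ah$ and the weak tangential divergence-freeness of $\uh^k$ are genuinely needed when reducing the viscous term, since the inverse operator $\mathcal{A}_h$ is only defined via testing against $\bfV_h^{div}$; once this reduction is in place, every remaining step is a direct application of the continuity estimates from \cref{Lemma: discrete bounds and coercivity results} combined with Young's inequality.
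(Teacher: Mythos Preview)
Your argument is correct and differs from the paper only in the treatment of the viscous term. The paper simply bounds $\ah(\uh^k,\mathcal{A}_h(\uh^k-\uh^{k-1}))$ by continuity and Young's inequality, producing $c\Delta t\,\norm{\uh^k}_{\ah}^2$ together with an absorbable fraction of the target term; you instead use the symmetry of $\ah$ and the defining relation of $\mathcal{A}_h$ (legitimate since $\uh^k\in\bfV_h^{div}$) to rewrite this term exactly as $(\uh^k-\uh^{k-1},\uh^k)_{L^2(\Gah)}$ and then telescope. Your route is slightly more structural: it spares one Young inequality and yields the bonus non-negative terms $\tfrac12\norm{\uh^n}_{L^2(\Gah)}^2$ and $\tfrac12\sum_k\norm{\uh^k-\uh^{k-1}}_{L^2(\Gah)}^2$ on the left, with $\tfrac12\norm{\uh^0}_{L^2(\Gah)}^2$ appearing on the right. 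Both arguments close the same way, since after summation the inertial contribution $cC_b^2\,\Delta t\sum_k\norm{\uh^k}_{\ah}^2$ and the forcing term are controlled by the velocity stability bound $\bfA(\uh^0,\bff_h)$ of \cref{Lemma: velocity stab estimate UNS}.
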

\begin{proof}
Testing the first equation of \eqref{eq: weak lagrange fully discrete} with $\mathcal{A}_h(\uh^n - \uh^{n-1}) \in \bfV_h^{div}$, see \eqref{eq: Discrete inverse Stokes UNS} where $\uh^n - \uh^{n-1}\in \bfV_h^{div}$, and considering our choice of the initial value $\uh^0\in \bfV_h^{div}$ we get for $n \geq 1$:
\begin{equation}
    \begin{aligned}\label{eq: Auxiliary stab bound inside 1}
        (\frac{\uh^n-\uh^{n-1}}{\Delta t},\mathcal{A}_h(\uh^n - \uh^{n-1}))_{L^2(\Gah)}+ \ah(\uh^n,\mathcal{A}_h(\uh^n - \uh^{n-1})) &+\chtil(\bfb_h^{n-1};\uh^n,\mathcal{A}_h(\uh^n - \uh^{n-1})) \\
        &= (\bff_h^n,\mathcal{A}_h(\uh^n - \uh^{n-1}))_{L^2(\Gah)}.
    \end{aligned}
\end{equation}
Then we have the following estimates,
\begin{itemize}
     \item By the definition of $\mathcal{A}_h$ \eqref{eq: Discrete inverse Stokes UNS} we see that $$(\frac{\uh^n-\uh^{n-1}}{\Delta t},\mathcal{A}_h(\uh^n - \uh^{n-1}))_{L^2(\Gah)} = \frac{1}{\Delta t } \norm{\mathcal{A}_h(\uh^n - \uh^{n-1})}_{\ah}^2.$$
      \item Young's inequality provides us with the estimate
      \begin{equation*}
          \begin{aligned}
              \ah(\uh^n,\mathcal{A}_h(\uh^n - \uh^{n-1})) &\leq \norm{\uh^n}_{\ah}\norm{\mathcal{A}_h(\uh^n - \uh^{n-1})}_{\ah}\\
              &\leq c\Delta t \norm{\uh^n}_{\ah}^2 + \frac{1}{6\Delta t }\norm{\mathcal{A}_h(\uh^n - \uh^{n-1})}_{\ah}^2.
          \end{aligned}
      \end{equation*}
\end{itemize}
\begin{itemize}
      \item Recalling the assumption \eqref{eq: uniform bound b stability UNS}, the bound \eqref{ch boundedness UNS}, and using Young's inequality once again
      \begin{equation*}
          \begin{aligned}
              \chtil(\bfb_h^{n-1};\uh^n,\mathcal{A}_h(\uh^n - \uh^{n-1})) &\leq \norm{\bfb_h^{n-1}}_{\ah}\norm{\uh^n}_{\ah}\norm{\mathcal{A}_h(\uh^n - \uh^{n-1})}_{\ah}^2\\
              &\leq c(\bfb_h)\Delta t \norm{\uh^n}_{\ah}^2 +  \frac{1}{6\Delta t }\norm{\mathcal{A}_h(\uh^n - \uh^{n-1})}_{\ah}^2.
          \end{aligned}
      \end{equation*}
      \item Similar calculations also give
      \begin{equation*}
          (\bff_h^n,\mathcal{A}_h(\uh^n - \uh^{n-1}))_{L^2(\Gah)} \leq c\Delta t \norm{\bff_h^n}_{L^2(\Gah)}^2 +\frac{1}{6\Delta t }\norm{\mathcal{A}_h(\uh^n - \uh^{n-1})}_{\ah}^2.
      \end{equation*}
\end{itemize}
Combining the above results to \eqref{eq: Auxiliary stab bound inside 1} yields
\begin{equation*}
    \begin{aligned}
        \frac{1}{2\Delta t }\norm{\mathcal{A}_h(\uh^n - \uh^{n-1})}_{\ah}^2 \leq c\Delta t \norm{\bff_h^n}_{L^2(\Gah)}^2 + c(\bfb_h)\Delta t \norm{\uh^n}_{\ah}^2,
    \end{aligned}
\end{equation*}
where summing the inequality for $n=1,2,...,\, k$ and \Cref{Lemma: velocity stab estimate UNS} completes the proof.
\end{proof}

Before proving the pressure stability we need to bound some variant of the dual energy norm. Estimates for similar quantities have been proven in \cite{FrutosGradDivOseen2016} for the Oseen problem, \cite{AyusoPostNS2005,JohnBook2016} for the Navier-Stokes problem in the $H^{-1}$-norm on fixed domains without variational crimes involving the domain approximation. 

\begin{lemma}\label{lemma: dual estimate UNS}
Assume that $\underline{k_{\lambda} = k_u}$, then for a vector-valued function $\wh \in \bfV_h^{div}$ the following estimate holds for constant $c>0$ independent of time $t$, mesh-parameter $h$
\begin{equation}
    \begin{aligned}\label{eq: dual estimate UNS}
        \sup_{\vh \in \bfV_h}\frac{(\wh,\vh)_{L^2(\Gah)}}{\norm{\vh}_{H^1(\Gah)}} \leq  \sup_{\vh \in \bfV_h}\frac{(\wh,\vh)_{L^2(\Gah)}}{\norm{\vh}_{\ah}}\leq ch\norm{\wh}_{L^2(\Gah)} + c\norm{\mathcal{A}_h\wh}_{\ah}.
    \end{aligned}
\end{equation}
\end{lemma}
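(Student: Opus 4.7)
The plan is to reduce the bound to a continuous Helmholtz-Leray decomposition of the lifted test function and then handle each resulting piece separately, exploiting the two key properties of $\wh\in \bfV_h^{div}$: (i) the normal-component bound $\|\wh\cdot\nh\|_{L^2(\Gah)}\le ch\|\wh\|_{L^2(\Gah)}$ from Corollary \ref{corollary: divfree L2 norm UNS} (where $k_\lambda=k_u$ is critical), and (ii) the weak discrete divergence-free condition $\bhtil(\wh,\{q_h,\xi_h\})=0$, which allows annihilation against any discrete pressure test function.

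Fix $\vh\in\bfV_h$, lift to $\vhl\in\bfH^1(\Ga)$, and apply Lemma \ref{lemma: Helmholtz-Leray decomposition UNS}:
\[
\vhl = (\vhl)_n + \bfv^{div} + \nbg\phi, \qquad \bfv^{div}\in\bfV^{div},\ \phi\in H^2(\Ga).
\]
The first and most delicate step is to establish the stability bound
\[
\|(\vhl)_n\|_{L^2(\Ga)} + \|\bfv^{div}\|_{H^1(\Ga)} + \|\phi\|_{H^2(\Ga)}\leq c\|\vh\|_{\ah}.
\]
Since $\bfv^{div}$ and $\phi$ are determined by the tangential part $(\vhl)_T$ alone (via the Leray-type elliptic problem on $\Ga$), the continuous Korn inequality \eqref{eq: Korn inequality UNS} applies. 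The normal component's contribution to the deformation is controlled pointwise by the identity $E((\vhl)_n)=(\vhl\cdot\bfng)\bfH$, giving $\|E((\vhl)_T)\|_{L^2(\Ga)}\leq \|E(\vhl)\|_{L^2(\Ga)}+c\|\vhl\cdot\bfng\|_{L^2(\Ga)}$. Combining this with the geometric perturbation \eqref{eq: Geometric perturbations a 1 UNS}, the discrete Korn inequality \eqref{discrete Korn's inequality T nh UNS} (to absorb the resulting $h^{k_g}\|\vh\|_{H^1(\Gah)}$ into $\|\vh\|_{\ah}$ when $k_g\geq 1$), and the trivial $\|\vh\cdot\nh\|_{L^2}\leq\|\vh\|_{L^2}$, the required bound follows.

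Transferring to $\Ga$ introduces a geometric error of order $h^{k_g+1}\|\wh\|_{L^2}\|\vh\|_{L^2}$, and then the inner product splits into three contributions. The \emph{normal piece} $(\wh^\ell\cdot\bfng,\vhl\cdot\bfng)_{L^2(\Ga)}$ is handled by $\|\wh^\ell\cdot\bfng\|_{L^2(\Ga)}\leq c\|\wh\cdot\nh\|_{L^2(\Gah)}+ch^{k_g}\|\wh\|_{L^2}\leq ch\|\wh\|_{L^2(\Gah)}$ via \eqref{eq: divfree L2 norm kl=ku UNS}. The \emph{divergence-free piece} $(\wh^\ell,\bfv^{div})_{L^2(\Ga)}$ is transferred to $\Gah$; since $\bfv^{div,-\ell}-\Pi_h^{div}\bfv^{div,-\ell}$ is $L^2$-orthogonal to $\bfV_h^{div}$, one may replace $\bfv^{div,-\ell}$ by its $L^2$-Leray projection $\Pi_h^{div}\bfv^{div,-\ell}\in\bfV_h^{div}$; the definition \eqref{eq: Discrete inverse Stokes UNS} of $\mathcal{A}_h$ then gives
\[
(\wh,\Pi_h^{div}\bfv^{div,-\ell})_{L^2(\Gah)} = \ah(\mathcal{A}_h\wh,\Pi_h^{div}\bfv^{div,-\ell})
\leq \|\mathcal{A}_h\wh\|_{\ah}\|\bfv^{div,-\ell}\|_{\ah}\leq c\|\mathcal{A}_h\wh\|_{\ah}\|\vh\|_{\ah},
\]
using the $\ah$-stability \eqref{eq: Leray stability bound improved UNS} of $\Pi_h^{div}$ on $\bfV^{div}$ and the first-step bound on $\|\bfv^{div}\|_{H^1(\Ga)}$. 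The \emph{gradient piece} $(\wh^\ell,\nbg\phi)_{L^2(\Ga)}$ is transferred to $\Gah$ with error $\mathcal O(h^{k_g})\|\wh\|_{L^2}\|\phi\|_{H^1(\Ga)}$, and then split via Scott-Zhang: $\phi^{-\ell}=(\phi^{-\ell}-\Ihz\phi^{-\ell})+\Ihz\phi^{-\ell}$. The contribution from $\Ihz\phi^{-\ell}$ vanishes because $\bhtil(\wh,\{\Ihz\phi^{-\ell},0\})=0$ (constants do not affect $\nbgh$), while the super-approximation estimate \eqref{eq: bounds of Scott-Zhang interpolant UNS} bounds the remainder by $ch\|\wh\|_{L^2}\|\phi\|_{H^2(\Ga)}\leq ch\|\wh\|_{L^2}\|\vh\|_{\ah}$. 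Collecting the three contributions and dividing by $\|\vh\|_{\ah}$ gives the claim; all geometric error terms are of order $h^{k_g}$ or higher and are therefore absorbed.

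The main obstacle is the stability bound in the first paragraph: the Helmholtz components must be controlled by $\|\vh\|_{\ah}$ rather than by the larger $\|\vh\|_{H^1(\Gah)}$, which for general $\vh$ with nontrivial $\vh\cdot\nh$ can be up to $h^{-1}$ times worse. The key is that the dependence of $\bfv^{div}$ and $\phi$ on $\vhl$ factors through the tangential part $(\vhl)_T$, whose deformation picks up only a pointwise curvature-times-normal term, controllable by $\|\vh\|_{L^2}$ directly, and the remaining $\|E(\vhl)\|_{L^2(\Ga)}$ can be compared to $\|E_h(\vh)\|_{L^2(\Gah)}$ via \eqref{eq: Geometric perturbations a 1 UNS} modulo terms absorbable by discrete Korn.
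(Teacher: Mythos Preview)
Your proof follows the same overall strategy as the paper: lift, apply the continuous Helmholtz--Leray decomposition of Lemma~\ref{lemma: Helmholtz-Leray decomposition UNS}, and handle the normal, divergence-free, and gradient pieces separately via (respectively) Corollary~\ref{corollary: divfree L2 norm UNS}, the discrete Leray projection plus the inverse Stokes operator, and interpolation into $Q_h$. The only cosmetic differences are your use of Scott--Zhang rather than the Lagrange interpolant for the gradient piece, and your more explicit treatment of the stability bound $\|\bfv^{div}\|_{H^1(\Ga)}\leq c\|\vh\|_{\ah}$ via Korn on the tangential part---the paper invokes the Leray stability \eqref{eq: Leray stability bound improved UNS} at this point but is terse about passing from $\|(\Pi^{div}\vhl)^{-\ell}\|_{\ah}$ to $\|\vh\|_{\ah}$, which your argument spells out.
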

\noindent Proof of \cref{lemma: dual estimate UNS} is given in \Cref{appendix: proof of dual estimate UNS}. We can now finalize the stability results concerning the two pressures. We prove $L^2_{L^2}\times L^2_{L^2}$ pressure bounds, from which we can easily see that the $L^2_{L^2}\times L^2_{H_h^{-1}}$ stability estimate also follows.

\begin{lemma}[$L^2_{L^2}\times L^2_{L^2}$ Pressure Bounds]\label{Lemma: Pressure stab Estimate UNS}
      Let \Cref{Lemma: Discrete inf-sup condition Gah Lagrange UNS} and the assumptions in \Cref{Lemma: auxilary stab bounds UNS} and \Cref{lemma: dual estimate UNS} hold.  Then, for the pressure solutions $\{\ph^k,\lh^k\} \in Q_h\times L_h$, $k=1,...,n$ of \eqref{eq: weak lagrange fully discrete} the following stability estimate holds true
     \begin{equation}\label{eq: Pressure stab Estimate UNS}
            \Delta t \sum_{k=1}^{n} \norm{\{\ph^k,\lh^k\}}_{L^2(\Gah)}^2 \leq C(\bfb_h)\bfA(\uh^0,\bff_h),
     \end{equation}
where $\bfA = \bfA(\uh^0,\bff_h)$ the bound in \eqref{eq: velocity stab estimate UNS} and $C(\bullet)$ a constant depending on the bound of $\bfb_h$ \eqref{eq: uniform bound b stability UNS}.
 \end{lemma}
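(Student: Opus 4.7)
The plan is to convert the desired pressure bound into a residual form via the discrete \textsc{inf-sup} condition, then match each residual contribution to one of the already-established stability quantities (velocity energy stability, the $\mathcal{A}_h$-auxiliary bound, and the dual estimate). I would carry this out in three steps.

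First, by the $L^2\times L^2$ discrete Lagrange \textsc{inf-sup} inequality \eqref{eq: discrete inf-sup condition Gah Lagrange UNS},
\begin{equation*}
\beta\,\norm{\{\ph^n,\lh^n\}}_{L^2(\Gah)} \leq \sup_{\vh\in\bfV_h\setminus\{0\}}\frac{\bhtil(\vh,\{\ph^n,\lh^n\})}{\norm{\vh}_{\ah}},
\end{equation*}
and I would read off $\bhtil(\vh,\{\ph^n,\lh^n\})$ from the momentum equation in \eqref{eq: weak lagrange fully discrete}, so that the numerator decomposes as a data term $(\bff_h^n,\vh)_{L^2(\Gah)}$, a discrete time-derivative term $((\uh^n-\uh^{n-1})/\Delta t,\vh)_{L^2(\Gah)}$, a viscous term $\ah(\uh^n,\vh)$, and an inertia term $\chtil(\bfb_h^{n-1};\uh^n,\vh)$.

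Second, I would bound each of these four contributions when divided by $\norm{\vh}_{\ah}$. The data and viscous terms yield $\norm{\bff_h^n}_{L^2(\Gah)}$ and $\norm{\uh^n}_{\ah}$ respectively, using $\norm{\vh}_{L^2(\Gah)}\le\norm{\vh}_{\ah}$ and \eqref{ah boundedness UNS}. The inertia term, via \eqref{ch boundedness UNS} and absorption of $L^2$ factors by $\ah$-factors, together with the uniform hypothesis \eqref{eq: uniform bound b stability UNS}, is bounded by $c\,C_b\norm{\uh^n}_{\ah}$. The crux is the discrete time derivative: since $\uh^n,\uh^{n-1}\in\bfV_h^{div}$, their difference lies in $\bfV_h^{div}$, so \Cref{lemma: dual estimate UNS} applies and gives
\begin{equation*}
\sup_{\vh\in\bfV_h}\frac{\left(\tfrac{\uh^n-\uh^{n-1}}{\Delta t},\vh\right)_{L^2(\Gah)}}{\norm{\vh}_{\ah}} \leq \frac{ch}{\Delta t}\norm{\uh^n-\uh^{n-1}}_{L^2(\Gah)} + c\left\|\mathcal{A}_h\tfrac{\uh^n-\uh^{n-1}}{\Delta t}\right\|_{\ah}.
\end{equation*}

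Third, I would square the resulting pointwise-in-time estimate for $\norm{\{\ph^n,\lh^n\}}_{L^2(\Gah)}$, multiply by $\Delta t$, and sum $k=1,\dots,n$. The data and viscous contributions are then absorbed by the right-hand side of the velocity stability bound \eqref{eq: velocity stab estimate UNS}; the inertia piece likewise, using $\Delta t\sum\norm{\uh^k}_{\ah}^2\le \bfA$; the $\mathcal{A}_h$-piece of the time-derivative term is precisely the quantity controlled by \Cref{Lemma: auxilary stab bounds UNS}; and the remaining piece yields
\begin{equation*}
\frac{h^2}{\Delta t}\sum_{k=1}^n\norm{\uh^k-\uh^{k-1}}_{L^2(\Gah)}^2 \;\le\; \frac{h^2}{\Delta t}\,\bfA.
\end{equation*}
The main obstacle — and the only point where the argument is not routine — is precisely this last term: to convert it into an $h$-and-$\Delta t$-uniform bound one needs a mild parabolic-type compatibility $h^2\le C\Delta t$ (or a reverse-CFL interpretation), which is the price the method pays for the geometric $ch\norm{\wh}_{L^2(\Gah)}$ penalty present in the dual estimate. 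Under such a mild regime, collecting all four pieces and invoking \eqref{eq: velocity stab estimate UNS} and \eqref{eq: auxilary stab bounds UNS} produces \eqref{eq: Pressure stab Estimate UNS} with the stated constant $C(\bfb_h)\bfA(\uh^0,\bff_h)$.
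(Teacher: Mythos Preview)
Your overall strategy matches the paper's: inf-sup, isolate the four residual terms, and invoke the velocity and auxiliary stability bounds. The gap is exactly where you flag it yourself---the $ch\,\|(\uh^n-\uh^{n-1})/\Delta t\|_{L^2(\Gah)}$ contribution from \Cref{lemma: dual estimate UNS}. You leave it as $\frac{h^2}{\Delta t}\sum_k\|\uh^k-\uh^{k-1}\|_{L^2(\Gah)}^2$ and then appeal to a reverse-CFL hypothesis $h^2\le C\Delta t$. But the lemma carries no such hypothesis, so as written your argument does not prove the stated result.

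The missing observation is that this term can be absorbed back into the $\mathcal{A}_h$-term without any mesh--timestep coupling. For $\wh=(\uh^n-\uh^{n-1})/\Delta t\in\bfV_h^{div}$, the definition \eqref{eq: Discrete inverse Stokes UNS} of $\mathcal{A}_h$ gives
\[
\norm{\wh}_{L^2(\Gah)}^2=(\wh,\wh)_{L^2(\Gah)}=\ah(\mathcal{A}_h\wh,\wh)\le\norm{\mathcal{A}_h\wh}_{\ah}\norm{\wh}_{\ah}\le ch^{-1}\norm{\mathcal{A}_h\wh}_{\ah}\norm{\wh}_{L^2(\Gah)},
\]
the last step being the inverse inequality $\norm{\wh}_{\ah}\le\norm{\wh}_{H^1(\Gah)}\le ch^{-1}\norm{\wh}_{L^2(\Gah)}$. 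Hence $h\norm{\wh}_{L^2(\Gah)}\le c\norm{\mathcal{A}_h\wh}_{\ah}$, and the offending term is dominated by $c\norm{\mathcal{A}_h(\uh^n-\uh^{n-1})/\Delta t}_{\ah}$, which after squaring and summing is exactly the quantity controlled by \Cref{Lemma: auxilary stab bounds UNS}. With this replacement your third step goes through cleanly and no compatibility condition between $h$ and $\Delta t$ is needed.
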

\begin{proof}
 Recall the discrete $L^2\times L^2$ \textsc{inf-sup} condition in \Cref{Lemma: Discrete inf-sup condition Gah Lagrange UNS}, we have 
    \begin{equation}
        \begin{aligned}\label{eq: Pressure stab Estimate UNS inside 2 inf-sup}
            \norm{\{\ph^n,\lh^n\}}_{L^2(\Gah)} \leq \sup_{\vh^n \in \bfV_h} \frac{\bhtil(\vh^n,\{\ph^n,\lh^n\})}{\norm{\vh^n}_{\ah}},
        \end{aligned}
    \end{equation}
where due to \eqref{eq: weak lagrange fully discrete}, and the bounds of our bilinear and trilinear forms in \Cref{Lemma: discrete bounds and coercivity results} we get
\begin{equation*}
\begin{aligned}
    \bhtil(\vh^n,\{\ph^n,\lh^n\}) &= -(\frac{\uh^n-\uh^{n-1}}{\Delta t},\vh^n)_{L^2(\Gah)} - \ah(\uh^n,\vh^n) -\chtil(\bfb_h^{n-1},\uh^n,\vh^n) + (\bff_h^n,\vh^n) \\
    &\leq |(\frac{\uh^n-\uh^{n-1}}{\Delta t},\vh^n)_{L^2(\Gah)}| + c\norm{\uh^n}_{\ah}\norm{\vh^n}_{\ah} + \norm{\bfb_h^{n-1}}_{\ah}\norm{\uh^n}_{\ah}\norm{\vh^n}_{\ah} \\
    &\  + \norm{\bff_h^{n}}_{L^2(\Gah)}\norm{\vh^n}_{L^2(\Gah)}.
    \end{aligned}
\end{equation*}   
Therefore, going back to \eqref{eq: Pressure stab Estimate UNS inside 2 inf-sup},  by \Cref{lemma: dual estimate UNS} we have
\begin{equation}
        \begin{aligned}\label{eq: Pressure stab Estimate UNS inside inf-sup}
            \norm{\{\ph^n,\lh^n\}}_{L^2(\Gah)} &\leq ch\norm{\frac{\uh^n-\uh^{n-1}}{\Delta t}}_{L^2(\Gah)} + c\norm{\frac{\mathcal{A}_h(\uh^n-\uh^{n-1})}{\Delta t}}_{\ah} + c(1 + \norm{\bfb_h^{n-1}}_{\ah})\norm{\uh^n}_{\ah} + \norm{\bff_h^{n}}_{L^2(\Gah)} \\
        &\leq c\norm{\frac{\mathcal{A}_h(\uh^n-\uh^{n-1})}{\Delta t}}_{\ah} + c\norm{\uh^n}_{\ah}+ \norm{\bfb_h^{n-1}}_{\ah}\norm{\uh^n}_{\ah} + \norm{\bff_h^{n}}_{L^2(\Gah)},
        \end{aligned}
    \end{equation}
where we used the definition of the discrete inverse Stokes operator $\mathcal{A}_h$ \eqref{eq: Discrete inverse Stokes UNS} to see that 
\begin{equation*}
    \begin{aligned}
        \norm{\frac{\uh^n-\uh^{n-1}}{\Delta t}}_{L^2(\Gah)}^2 &\leq \norm{\frac{\mathcal{A}_h(\uh^n-\uh^{n-1})}{\Delta t}}_{\ah} \norm{\frac{\uh^n-\uh^{n-1}}{\Delta t}}_{\ah}\\
        &\leq ch^{-1}\norm{\frac{\mathcal{A}_h(\uh^n-\uh^{n-1})}{\Delta t}}_{\ah} \norm{\frac{\uh^n-\uh^{n-1}}{\Delta t}}_{L^2(\Gah)},
    \end{aligned}
\end{equation*}
hence $\norm{\frac{\uh^n-\uh^{n-1}}{\Delta t}}_{L^2(\Gah)} \leq ch^{-1}\norm{\frac{\mathcal{A}_h(\uh^n-\uh^{n-1})}{\Delta t}}_{\ah}$. Squaring and multiplying \eqref{eq: Pressure stab Estimate UNS inside inf-sup} by $\Delta t$  we obtain 
\begin{equation*}
    \begin{aligned}
        \Delta t \norm{\{\ph^k,\lh^k\}}_{L^2(\Gah)}^2 \leq c\Delta t \norm{\frac{\mathcal{A}_h(\uh^n-\uh^{n-1})}{\Delta t}}_{\ah}^2 + \Delta t(1 + \norm{\bfb_h^{n-1}}_{\ah}^2)\norm{\uh^n}_{\ah}^2 + \Delta t \norm{\bff_h^{n}}_{L^2(\Gah)}^2.
    \end{aligned}
\end{equation*}
Finally, summation for $k=1,...,n$ and using the bounds \eqref{eq: velocity stab estimate UNS}, \eqref{eq: auxilary stab bounds UNS} along with the assumption \eqref{eq: uniform bound b stability UNS} regarding the uniform bound of $\bfb_h^{n-1}$ results in our desired estimate.
\end{proof}

\begin{remark}[About $L^2_{L^2}\times L^2_{L^2}$ pressure stability for arbitrary $k_{\lambda}$]\label{remark: About the pressure stability for arbitrary}
    Notice in \Cref{Lemma: Pressure stab Estimate UNS} we had to assume $k_\lambda=k_u$, since \cref{lemma: dual estimate UNS} does not hold 
    for $k_\lambda=k_u-1$; see \cref{remark: Lower Regularity convergence estimate UNS} for further details. To prove pressure stability in the $k_\lambda=k_u-1$ case we need further assumptions for  $\bfb_h$, mainly that
    \begin{equation}\label{eq: b extra stability UNS}
\sup_{n} \norm{\bfb_h^n}_{L^{\infty}(\Gah)} \leq C_b.
\end{equation}\vspace{-4mm}

\noindent Then we can establish $L^2_{L^2}\times L^2_{L^2}$ pressure stability bounds, by testing the main equation with $\vh = \frac{\uh^n-\uh^{n-1}}{\Delta t}$ and finding a stability estimate for the stronger expression
\begin{equation*}
    \Delta t\sum_{n=1}^n\norm{\frac{\uh^n-\uh^{n-1}}{\Delta t}}_{L^2(\Gah)}^2,
\end{equation*}
instead. We omit further details, as the calculations are straightforward, but the procedure is similar to how we handle the error estimates in \cref{sec: Pressure a-priori estimates for kl= ku-1 UNS} for $k_\lambda = k_u-1$. Lastly, we notice that this new assumption \eqref{eq: b extra stability UNS} holds only if further regularity assumptions are imposed; see \cref{remark: About the pressure stability for arbitrary convergence UNS}.
\end{remark}

\section{Error Analysis}\label{sec: error analysis UNS}
For the error analysis, we consider the linearized \emph{unsteady fully discrete Lagrangian surface Navier-Stokes} finite element method \eqref{eq: weak lagrange fully discrete UNS}, that is, we choose $\bfb_n^{n-1} = \uh^{n-1}$ for \eqref{eq: weak lagrange fully discrete}. Let us introduce the problem \eqref{eq: weak lagrange fully discrete UNS} once more for clarity reasons:
Find $\uh^n \in \bfV_h$, $\{\ph^n,\lh^n\} \in Q_h\times \Lambda_h$ such that,
\begin{align}
\begin{cases}\tag{UWL}
        (\frac{\uh^n-\uh^{n-1}}{\Delta t},\vh)_{L^2(\Gah)}+ \ah(\uh^n,\vh) +\chtil(\uh^{n-1};\uh^n,\vh) + \bhtil(\vh,\{\ph^n,\lh^n\}) \!\!&= (\bff_h^n,\vh), \\
    \phantom{aaaaaaaaaaaaaaaaaaaaaaaaaaaaaaaaaaaaaaaaaaaaaa}\bhtil(\uh^n,\{\qh,\xi_h\})\!\!&=0 ,
    \end{cases}
\end{align}
for all $(\vh,\{\qh,\xi_h\}) \in \bfV_h \times \{Q_h \times \Lambda_h\}$, with the approximation of the source term $\bff_h^{\ell} = \bff$, and where the initial condition $\uh^0$ is an appropriate approximation of $\bfu^0=\bfu(t_0)=\bfu(0)$. In our case, as we mentioned before, we suppose that $\uh^0\in \bfV_h^{div}$. This initial condition means that even at time-step $t_0=0$ our solution is still \emph{discrete weakly tangential divergence-free}. In \cref{sec: velocity a-priori estimates UNS,sec: pressure a-priori kl=ku UNS} we consider $\uh^0 = \mathcal{R}_h\bfu^0$, the modified Ritz-Stokes projection \eqref{eq: surface Ritz-Stokes projection UNS}. In \cref{sec: Pressure a-priori estimates for kl= ku-1 UNS} we choose the initial condition as $\uh^0 = \mathcal{R}_h^b\bfu^0 = \mathcal{R}_h(\bfu^0,\{p^0,\lambda^0\})$, for convenience reasons.
\begin{remark}\label{remark: about initial conditions UNS}
    Different approximations could have been selected for the initial condition, yet each choice would present different difficulties. For example
    \begin{itemize}
        \item[-] $\uh^0 = \Ih(\bfu^0)$ : In this case $\uh^0$ is not weakly divergence-free at time-step 0, then we can only have condition stability, and an inverse type CFL condition is necessary, see also \cite{burman2009galerkin}.
        \item[-] $\uh^0 = \mathcal{R}_h(\bfu^0,\{p^0,\lambda^0\})$ : The main issue is that in practice the initial pressures $\{p^0,\lambda^0\}$ will be unknown.
    \end{itemize}
\end{remark}
For convenience we, once again, omit writing the inverse lift extension $(\cdot)^{-\ell}$ since it should be clear from the context whether a quantity is defined on $\Ga$ or $\Gah$, e.g. for functions $\bfu,\, p,\, \lambda$. 

The error analysis follows standard arguments. We define the errors and decompose them into an interpolation and a discrete error in the appropriate finite element spaces. For that we consider the modified Ritz-Stokes projection \eqref{eq: surface Ritz-Stokes projection UNS} for the velocity $\bfu$ and  normal Lagrange interpolant for the two pressures $\{p, \lambda\}$:
\begin{align}\label{eq: decomposition error 1 Ritz UNS}
    \eu^{n} = \bfu^n - \uh^n &= \underbrace{(\bfu^n- \mathcal{R}_h\bfu^n) }_{\text{Interpolation error}} + \underbrace{(\mathcal{R}_h\bfu^n - \uh^n)}_{\text{discrete remainder}} := \rho_{\bfu}^n + \theta_{\bfu}^n , \\
        \label{eq: decomposition error 2 lagrange UNS} 
         \ep^n = p^n - \ph &= \underbrace{(p^n- \Ih p^n)}_{\text{Interpolation error}} + \underbrace{(\Ih p^n - \ph)}_{\text{discrete remainder}}:= \rho_{p}^n + \theta_{p}^n,
        \\
        \label{eq: decomposition error 3 lagrange UNS} \el^n =  \lambda^n - \lh &= \underbrace{(\lambda^n- \Ih \lambda^n )}_{\text{Interpolation error}} + \underbrace{(\Ih \lambda^n - \lh)}_{\text{discrete remainder}}:= \rho_{\lambda}^n + \theta_{\lambda}^n.      
\end{align}
The bounds of the modified Ritz-Stokes interpolation error have already been proved; see  \cref{lemma: Error Bounds Ritz-Stokes UNS}. 
Note also that now $\thbfu^n \in \bfV_h^{div}$ for all $n=0,1,...,N$, due to the choice of interpolant and the choice of the initial condition. The interpolation errors for the pressures are well-known
\begin{equation}
    \begin{aligned}\label{eq: interpolation errors pressures UNS}
        \norm{p^n - \Ih p^n}_{L^2(\Gah)}  &\leq h^{k_{pr}+1}\norm{p^n}_{H^{k_{pr}+1}},\\
        \norm{\lambda^n - \Ih \lambda^n}_{L^2(\Gah)}  &\leq h^{k_{\lambda}+1}\norm{\lambda^n}_{H^{k_{\lambda}+1}}.
    \end{aligned}
\end{equation}

\subsection{Consistency and velocity a-priori estimates}\label{sec: velocity a-priori estimates UNS}
\begin{assumption}[Regularity assumptions I]\label{assumption: Regularity assumptions for velocity estimate}
We assume the following regularity for the solution \eqref{eq: unstead NV Lagrange}   
\begin{equation*}
    \begin{aligned}
    &\{p,\lambda\} \in L^{\infty}([0,T];H^{k_{pr}+1}(\Ga))\times L^{\infty}([0,T];H^{k_{\lambda}+1}(\Ga)),\\
        &\bfu \in H^2([0,T];L^2(\Ga))\cap W^{1,\infty}([0,T];L^2(\Ga))\cap H^1([0,T];H^{k_u+1}(\Ga))\cap L^{\infty}([0,T];H^{k_u+1}(\Ga)).
    \end{aligned}
\end{equation*}
\end{assumption}

\noindent We remind the weak formulation \eqref{weak lagrange hom NV}:
\begin{align}
\begin{cases}\label{eq: weak lagrange fully discrete 2 UNS} 
        (\partial_{t}\bfu,\vhl)_{L^2(\Ga)} + a(\bfu,\vhl) \ + c(\bfu;\bfu,\vhl)\ + \!\!\!\!&b^L(\vhl,\{p,\lambda\}) = (\bff,\vhl)_{L^2(\Ga)} \ \ \ \ \ \text{for all } \vhl\in V_h^{\ell},\\
        &b^L(\bfu,\{\qhl,\xi_h^{\ell}\})=0 \ \ \  \text{ for all } \{\qhl,\xi_h^{\ell}\}\in Q_h^{\ell} \times \Lambda_h^{\ell},
    \end{cases}
\end{align}
for a.e. $t\in [0,T]$. For the error analysis we consider the pair $(\bfu,\{p,\lambda\})$ as the strong solution of \eqref{eq: unstead NV Lagrange}  therefore we may rewrite the inertia from now on following \eqref{eq: continuous c error formula init UNS} as
\begin{equation}
    \begin{aligned}\label{eq: continuous c error formula UNS}
        c(\bfu;\bfu,\vhl) = \frac{1}{2}\Big(\int_{\Ga}((\bfu\cdot\nbgcov)\bfu) \cdot\vhl \, \ds - \int_{\Ga}((\bfu\cdot\nbgcov)\bfPg\vhl )\cdot\bfu \, \ds\Big).
    \end{aligned}
\end{equation}

We start by finding an error equation involving the discrete remainder $\thbfu^n$ with respect to some consistency, interpolations  and inertia errors. As mentioned before, by the Ritz-Stokes projection \eqref{eq: surface Ritz-Stokes projection UNS} $\thbfu^n\in \bfV_h^{div}$ and therefore we are able to \emph{decouple} the pressure discrete remainders from the error equations, which is crucial to find error bounds just for $\thbfu^n$ independent of $\theta_{p}^n,\, \theta_{\lambda}^n$. 


\begin{lemma}\label{lemma: error equation UNS}
   Let $(\vh,\{\qh,\xi_h\}) \in \bfV_h\times (Q_h\times \Lambda_h)$ be test functions, then for $\thbfu,\, \theta_{p},\theta_{\lambda}$ as in  \eqref{eq: decomposition error 1 Ritz UNS}, \eqref{eq: decomposition error 2 lagrange UNS}, \eqref{eq: decomposition error 3 lagrange UNS} the following error equation holds true for $n \geq 1$
   \begin{align}
\begin{cases}\label{eq: error equation UNS}
       (\frac{\thbfu^n -\thbfu^{n-1}}{\Delta t},\vh )_{L^2(\Gah)} + \ah(\theta_{\bfu}^n,\vh)\,+\!\!\!\!\!&\bhtil(\vh,\{\theta_{p}^n,\theta_{\lambda}^n\}) = \sum_{i=1}^5 \textsc{Err}_{i}^{C}(\vh) +  \sum_{i=1}^2 \textsc{Err}_{i}^{I}(\vh) + \mathcal{C}(\vh),\\
        & \bhtil(\theta_{\bfu}^n,\{\qh,\xi_h\})= 0,
    \end{cases}
\end{align}
for all $\vh \in \bfV_h$ and $\{\qh,\xi_h\}\in Q_h\times \Lambda_h$,
with consistency errors:

\begin{minipage}[t]{9cm}
    \begin{itemize}
        \item[\textbullet\hspace{9mm}] \hspace{-9mm}$\textsc{Err}_1^{C}(\vh) := (\frac{\bfu^n - \bfu^{n-1}}{\Delta t},\vh )_{L^2(\Gah)} - (\partial_t\bfu^n,\vhl)_{L^2(\Ga)}$
        \item[\textbullet\hspace{9mm}] \hspace{-9mm}$ \textsc{Err}_2^{C}(\vh) := \bhtil(\vh,\{p^n,\lambda^n\}) -b^L(\vhl,\{p^n,\lambda^n\})$
        \item[\textbullet\hspace{9mm}] \hspace{-9mm}$ \textsc{Err}_3^{C}(\vh) := \chtil(\bfu^{n-1};\bfu^n,\vh)  - c(\bfu^n;\bfu^n,\vhl)$
    \end{itemize}
\end{minipage}
\begin{minipage}[t]{8cm}
    \begin{itemize}
        \item[\textbullet\hspace{12mm}] \hspace{-14mm} $ \textsc{Err}_4^{C}(\vh) := (\bff^n,\vhl)_{L^2(\Ga)}-(\bff_h^n,\vh)_{L^2(\Gah)}$
        \item[\textbullet\hspace{12mm}] \hspace{-14mm} $ \textsc{Err}_5^{C}(\vh) :=\ah(\mathcal{R}_h\bfu^n,\vh) - a(\bfu^n,\vhl)$
    \end{itemize}
\end{minipage}

\vspace{2mm}
\noindent and interpolations errors:
\vspace{2mm}

\begin{minipage}[t]{9cm}
    \begin{itemize}
        \item[\textbullet\hspace{9mm}] \hspace{-9mm}$ \textsc{Err}_1^{I}(\vh) := - (\frac{\rho_{\bfu}^n - \rho_{\bfu}^{n-1}}{\Delta t},\vh )_{L^2(\Gah)}$
        \item[\textbullet\hspace{9mm}] \hspace{-9mm}$ \textsc{Err}_2^{I}(\vh) := - \bhtil(\vh,\{\rho_{p}^n,\rho_{\lambda}^n\}).$
    \end{itemize}
\end{minipage}
\begin{minipage}[t]{8cm}
    \begin{itemize}
        \item[\textbullet\hspace{19mm}] \hspace{-21mm} $ \mathcal{C}(\vh) := - \chtil(\eu^{n-1};\bfu^n,\vh) - \chtil(\uh^{n-1};\eu^{n},\vh)$
    \end{itemize}
\end{minipage}

\end{lemma}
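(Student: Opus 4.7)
The plan is to derive the identity by starting from the fully discrete scheme \eqref{eq: weak lagrange fully discrete UNS}, substituting the decompositions \eqref{eq: decomposition error 1 Ritz UNS}--\eqref{eq: decomposition error 3 lagrange UNS} (so that $\uh^n=\mathcal{R}_h\bfu^n-\thbfu^n$, $\ph^n=\Ih p^n-\thp^n$, $\lh^n=\Ih\lambda^n-\thl^n$), and reorganising so that all $\theta$-terms appear on the left while the continuous weak formulation \eqref{eq: weak lagrange fully discrete 2 UNS} at $t=t_n$, tested against $\vhl$, supplies the consistency errors on the right.

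Concretely, I would first rewrite the time increment as
\[
\Big(\tfrac{\thbfu^n-\thbfu^{n-1}}{\Delta t},\vh\Big)_{L^2(\Gah)} \;=\; \Big(\tfrac{\bfu^n-\bfu^{n-1}}{\Delta t},\vh\Big)_{L^2(\Gah)} + \textsc{Err}_1^{I}(\vh) - \Big(\tfrac{\uh^n-\uh^{n-1}}{\Delta t},\vh\Big)_{L^2(\Gah)},
\]
then eliminate the last term using the momentum equation of \eqref{eq: weak lagrange fully discrete UNS}, and insert $(\partial_t\bfu^n,\vhl)$ via \eqref{eq: weak lagrange fully discrete 2 UNS}. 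This swap produces $\textsc{Err}_1^{C}$ (time-discretisation defect) and $\textsc{Err}_4^{C}$ (data defect). The remaining bilinear differences are reduced by adding and subtracting natural intermediate terms: for the $a$-part, inserting $\uh^n=\mathcal{R}_h\bfu^n-\thbfu^n$ gives $\ah(\uh^n,\vh)-a(\bfu^n,\vhl) = -\ah(\thbfu^n,\vh)+\textsc{Err}_5^{C}(\vh)$; for the $b$-part, writing $\ph^n=\Ih p^n-\thp^n$, $\lh^n=\Ih\lambda^n-\thl^n$ and then $\Ih p^n=p^n-\rho_p^n$ (likewise for $\lambda$) yields $-\bhtil(\vh,\{\thp^n,\thl^n\})+\textsc{Err}_2^{C}(\vh)+\textsc{Err}_2^{I}(\vh)$.

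The only genuinely non-routine step, and thus the main checkpoint, is the treatment of the linearised inertia. One has to verify the algebraic identity
\[
\chtil(\uh^{n-1};\uh^n,\vh) - c(\bfu^n;\bfu^n,\vhl) \;=\; \textsc{Err}_3^{C}(\vh) + \mathcal{C}(\vh),
\]
which follows by adding and subtracting $\chtil(\bfu^{n-1};\bfu^n,\vh)$ and exploiting the linearity of $\chtil$ in its first and second slots: the piece $\chtil(\bfu^{n-1};\bfu^n,\vh)-\chtil(\eu^{n-1};\bfu^n,\vh)$ collapses via $\bfu^{n-1}-\eu^{n-1}=\uh^{n-1}$ to $\chtil(\uh^{n-1};\bfu^n,\vh)$, and combining this with $-\chtil(\uh^{n-1};\eu^n,\vh)$ recovers $\chtil(\uh^{n-1};\uh^n,\vh)$ via $\bfu^n-\eu^n=\uh^n$. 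Finally, the divergence-type identity $\bhtil(\thbfu^n,\{\qh,\xi_h\})=0$ is immediate from $\thbfu^n=\mathcal{R}_h\bfu^n-\uh^n$ combined with $\bhtil(\uh^n,\{\qh,\xi_h\})=0$ (the second equation of \eqref{eq: weak lagrange fully discrete UNS}) and $\bhtil(\mathcal{R}_h\bfu^n,\{\qh,\xi_h\})=0$ (the second equation in the definition \eqref{eq: surface Ritz-Stokes projection UNS} of the modified Ritz-Stokes projection).
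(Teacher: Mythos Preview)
Your proposal is correct and follows essentially the same approach as the paper's proof: both start from the discrete scheme \eqref{eq: weak lagrange fully discrete UNS}, substitute the decompositions \eqref{eq: decomposition error 1 Ritz UNS}--\eqref{eq: decomposition error 3 lagrange UNS}, invoke the continuous weak formulation at $t=t_n$, and regroup into the listed consistency, interpolation, and inertia terms. The paper states the inertia identity as $\chtil(\uh^{n-1};\uh^n,\vh)-\chtil(\bfu^{n-1};\bfu^n,\vh)=-\chtil(\eu^{n-1};\bfu^n,\vh)-\chtil(\uh^{n-1};\eu^n,\vh)$ in one line, which is exactly the bilinearity argument you spelled out, and it justifies $\bhtil(\thbfu^n,\{\qh,\xi_h\})=0$ in the same way, via $\thbfu^n\in\bfV_h^{div}$ from the Ritz-Stokes projection and the discrete constraint equation.
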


\begin{proof}
    Recall the decompositions \eqref{eq: decomposition error 1 Ritz UNS}, \eqref{eq: decomposition error 2 lagrange UNS}, \eqref{eq: decomposition error 3 lagrange UNS}. Then, using our F.E. Scheme \eqref{eq: weak lagrange fully discrete UNS}  the following calculations hold
\begin{equation*}
    \begin{aligned}
        &D_n:=(\frac{\thbfu^n -\thbfu^{n-1}}{\Delta t},\vh )_{L^2(\Gah)} + \ah(\theta_{\bfu}^n,\vh) + \bhtil(\vh,\{\theta_{p}^n,\theta_{\lambda}^n\})  = -(\bff_h^n,\vh)_{L^2(\Gah)}\\ 
        &\ +  (\frac{\mathcal{R}_h\bfu^n - \mathcal{R}_h\bfu^{n-1}}{\Delta t},\vh )_{L^2(\Gah)} 
         + \ah(\mathcal{R}_h\bfu^n,\vh) + \bhtil(\vh,\{\Ih p^n,\Ih \lambda^n\}) + \chtil(\uh^{n-1};\uh^n,\vh) .
    \end{aligned}
\end{equation*}
Recalling the continuous equation \eqref{eq: weak lagrange fully discrete 2 UNS} at time $t=t^n$ we get
\begin{equation*}
    \begin{aligned}
        D^n &= -(\bff_h^n,\vh)_{L^2(\Gah)} + \chtil(\uh^{n-1};\uh^n,\vh) +  (\frac{\mathcal{R}_h\bfu^n - \mathcal{R}_h\bfu^{n-1}}{\Delta t},\vh )_{L^2(\Gah)} + a(\bfu^n,\vhl)\\
        &\ + \ah(\mathcal{R}_h\bfu^n,\vh) - a(\bfu^n,\vhl) + \bhtil(\vh,\{\Ih p^n,\Ih \lambda^n\}) 
         \\&= (\bff^n,\vhl)_{L^2(\Ga)}-(\bff_h^n,\vh)_{L^2(\Gah)} + (\frac{\mathcal{R}_h\bfu^n - \mathcal{R}_h\bfu^{n-1}}{\Delta t},\vh )_{L^2(\Gah)}- (\partial_t\bfu^n,\vhl)_{L^2(\Ga)}\\
        &\ + \ah(\mathcal{R}_h\bfu^n,\vh) - a(\bfu^n,\vhl) + \bhtil(\vh,\{\Ih p^n,\Ih \lambda^n\}) -b^L(\vhl,\{p^n,\lambda^n\})\\
        &\ + \chtil(\uh^{n-1};\uh^n,\vh) - c(\bfu^n;\bfu^n,\vhl) \\
        &= - (\frac{\rho_{\bfu}^n - \rho_{\bfu}^{n-1}}{\Delta t},\vh )_{L^2(\Gah)} -  \bhtil(\vh,\{\rho_{p}^n,\rho_{\lambda}^n\}) - \chtil(\eu^{n-1};\bfu^n,\vh) - \chtil(\uh^{n-1};\eu^{n},\vh) \\
        &\ + (\frac{\bfu^n - \bfu^{n-1}}{\Delta t},\vh )_{L^2(\Gah)} - (\partial_t\bfu^n,\vhl)_{L^2(\Ga)} + \bhtil(\vh,\{p^n,\lambda^n\}) -b^L(\vhl,\{p^n,\lambda^n\}) \\
        &\ + \chtil(\bfu^{n-1};\bfu^n,\vh)  - c(\bfu^n;\bfu^n,\vhl) + (\bff^n,\vhl)_{L^2(\Ga)}-(\bff_h^n,\vh)_{L^2(\Gah)} +\ah(\mathcal{R}_h\bfu^n,\vh) - a(\bfu^n,\vhl)\\
         & =: \text{Err}_1^{I}(\vh) + \text{Err}_2^{I}(\vh) + \mathcal{C}(\vh)\\
        &\ +  \text{Err}_1^{C}(\vh) +  \text{Err}_2^{C}(\vh) +  \text{Err}_3^{C}(\vh) + \text{Err}_4^{C}(\vh) + \text{Err}_5^{C}(\vh),
    \end{aligned}
\end{equation*}
where in the second to last equality we used the fact that
\begin{equation}\label{eq: discrete inertia error UNS}
    \chtil(\uh^{n-1};\uh^n;\vh) - \chtil(\bfu^{n-1};\bfu^n;\vh) = - \chtil(\eu^{n-1};\bfu^n,\vh) - \chtil(\uh^{n-1};\eu^{n},\vh).
\end{equation}
\noindent Finally, by the modified Ritz-Stokes projection \eqref{eq: surface Ritz-Stokes projection UNS}, we can clearly see that $\thbfu^n \in \bfV_h^{div}$ for all $n\geq 0$, thus the second equation in \eqref{eq: error equation UNS} follows. 
\end{proof}

Before deriving the error estimates, we first want to  establish appropriate bounds for the consistency and interpolation errors, appearing above in \cref{lemma: error equation UNS}. 
\begin{lemma}[Interpolation errors]\label{lemma: Interpolation errors UNS}
 Let $ \partial_t\bfu \in L^{2}(I_n;(H^{k_u+1}(\Ga))^3)$. The following interpolation bounds hold \vspace{-2.5mm}
 \begin{align}\label{eq: Interpolation errors UNS}
        |\textsc{Err}^{I}_1(\vh)| &\leq   \frac{c}{\sqrt{\Delta t}}\norm{\partial_t \rho_\bfu(t,\cdot)}_{L^2(I_n;L^2(\Gah))}\norm{\vh}_{L^2(\Gah)},\\ 
        \label{eq: Interpolation error 2 UNS}
        |\textsc{Err}^{I}_2(\vh)| &\leq c(h^{k_{pr}+1} + h^{k_{\lambda}+1})( \norm{p^n}_{H^{k_{pr}+1}(\Ga)} + \norm{\lambda^n}_{H^{k_{\lambda}+1}(\Ga)})\norm{\vh}_{\ah},
\end{align}
with constant $c>0$ independent of $h$. If furthermore $\underline{k_\lambda = k_u}$ and $\vh \in \bfV_h^{div}$, the following  holds
\begin{align}
    \label{eq: Interpolation errors improved UNS}
        |\textsc{Err}^{I}_1(\vh)| &\leq   \frac{ch^{\widehat{r}_u+1}}{\sqrt{\Delta t}}\norm{\partial_t \bfu}_{L^2(I_n;H^{k_u+1}(\Ga))}\norm{\vh}_{L^2(\Gah)},
\end{align}
\vspace{-2.5mm}

\noindent where $\widehat{r}_u = min\{k_u,k_g\}$ and constant $c>0$ independent of $h$.
\end{lemma}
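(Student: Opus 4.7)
The plan is to handle the three bounds separately, since they rely on rather different ingredients.

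First, for $\textsc{Err}^{I}_1(\vh)$ I would use the fundamental theorem of calculus to rewrite
\[
\rho_\bfu^n - \rho_\bfu^{n-1} = \int_{t_{n-1}}^{t_n}\partial_t\rho_\bfu(s)\,ds,
\]
so that after Cauchy--Schwarz in the time variable one gets
\[
\Big\|\tfrac{\rho_\bfu^n - \rho_\bfu^{n-1}}{\Delta t}\Big\|_{L^2(\Gah)} \leq \tfrac{1}{\sqrt{\Delta t}}\,\|\partial_t \rho_\bfu\|_{L^2(I_n;L^2(\Gah))}.
\]
A single $L^2(\Gah)$-Cauchy--Schwarz then produces \eqref{eq: Interpolation errors UNS}.

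Next, for $\textsc{Err}^{I}_2(\vh)$ the estimate is immediate: the continuity bound \eqref{bhtilde boundedness UNS} for $\bhtil$ gives
\[
|\textsc{Err}^I_2(\vh)| \leq \|\vh\|_{\ah}\,\|\{\rho_p^n,\rho_\lambda^n\}\|_{L^2(\Gah)},
\]
and the standard Lagrange interpolation bounds \eqref{eq: interpolation errors pressures UNS} for $\rho_p^n$ and $\rho_\lambda^n$ yield \eqref{eq: Interpolation error 2 UNS}.

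The main, and only subtle, piece is the improved bound \eqref{eq: Interpolation errors improved UNS}, where $\vh\in\bfV_h^{div}$ and $k_\lambda=k_u$. Repeating the time-integration argument from step one, it suffices to show that, pointwise in time,
\[
|(\partial_t\rho_\bfu,\vh)_{L^2(\Gah)}| \leq c h^{\widehat r_u+1}\,\|\partial_t\bfu\|_{H^{k_u+1}(\Ga)}\|\vh\|_{L^2(\Gah)}.
\]
To see this I would split $\partial_t\rho_\bfu = \bfP\partial_t\rho_\bfu + (\partial_t\rho_\bfu\cdot\bfn)\bfn$ and estimate each piece:
\begin{itemize}
\item For the tangential part, the improved Ritz--Stokes $L^2$-bound \eqref{eq: Error Bounds Ritz-Stokes L2 improved UNS} with $a=1$ gives $\|\bfP\partial_t\rho_\bfu\|_{L^2(\Gah)}\leq c h^{\widehat r_u+1}\|\partial_t\bfu\|_{H^{k_u+1}(\Ga)}$.
\item For the normal part, I would further write $\bfn=\nh+(\bfn-\nh)$. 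The piece involving $\nh\cdot\vh$ is controlled using the key property \eqref{eq: divfree L2 norm kl=ku UNS}, namely $\|\vh\cdot\nh\|_{L^2(\Gah)}\leq ch\|\vh\|_{L^2(\Gah)}$ for $\vh\in\bfV_h^{div}$ when $k_\lambda=k_u$, together with $\|\partial_t\rho_\bfu\cdot\bfn\|_{L^2(\Gah)}\leq c h^{\widehat r_u+1/2}\|\partial_t\bfu\|_{H^{k_u+1}(\Ga)}$ from \eqref{eq: Error Bounds Ritz-Stokes L2 improved UNS}; the piece involving $(\bfn-\nh)\cdot\vh$ is absorbed by the geometric bound $\|\bfn-\nh\|_{L^\infty(\Gah)}\leq ch^{k_g}$ from \eqref{eq: geometric errors 1 UNS}. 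Both contributions give at least $h^{\widehat r_u + 3/2}$, which is better than the required $h^{\widehat r_u+1}$ since $k_g\geq 1$.
\end{itemize}
The delicate point here—the one I expect to need most care—is the second bullet, because one must both exploit the discrete divergence-free property of $\vh$ \emph{and} the fact that $k_\lambda=k_u$ makes $\Lambda_h=V_h$ rich enough to justify \eqref{eq: divfree L2 norm kl=ku UNS}. Combining the tangential and normal estimates, inserting into the time-averaged inequality as in step one, and applying Cauchy--Schwarz on $I_n$ produces \eqref{eq: Interpolation errors improved UNS}.
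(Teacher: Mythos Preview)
Your proposal is correct and follows essentially the same approach as the paper: the fundamental-theorem-of-calculus plus Cauchy--Schwarz argument for \eqref{eq: Interpolation errors UNS}, the continuity of $\bhtil$ together with \eqref{eq: interpolation errors pressures UNS} for \eqref{eq: Interpolation error 2 UNS}, and the tangential/normal splitting with the further decomposition $\bfn=\nh+(\bfn-\nh)$ exploiting \eqref{eq: divfree L2 norm kl=ku UNS} for \eqref{eq: Interpolation errors improved UNS}. The only cosmetic difference is that the paper pairs $\|\vh\cdot\nh\|_{L^2}\leq ch\|\vh\|_{L^2}$ with the full $\|\partial_t\rho_\bfu\|_{L^2}\leq ch^{\widehat r_u}$ bound from \eqref{eq: Error Bounds Ritz-Stokes improved UNS}, whereas you invoke the sharper normal estimate from \eqref{eq: Error Bounds Ritz-Stokes L2 improved UNS}; both yield at least $h^{\widehat r_u+1}$.
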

\begin{proof}
Let us start with the interpolation error $\textsc{Err}^{I}_1(\cdot)$. We notice that the first term may be rewritten as
\begin{equation*}
    \begin{aligned}
        (\frac{\rho_{\bfu}^n-\rho_{\bfu}^{n-1}}{\Delta t},\vh)_{L^2(\Gah)} &\leq \frac{1}{\Delta t} \norm{\rho_{\bfu}^n-\rho_{\bfu}^{n-1}}_{L^2(\Gah)} \norm{\vh}_{L^2(\Gah)}\\
        &= \frac{1}{\Delta t} \norm{\int_{t_{n-1}}^{t_n} \partial_t\rho_{\bfu}(t,\cdot) \, dt}_{L^2(\Gah)}\norm{\vh}_{L^2(\Gah)}.
    \end{aligned}
\end{equation*}
Then, using the fact that
\begin{equation*}
    \int_{t_{n-1}}^{t_n} \partial_t\rho_{\bfu}(t,\cdot) dt\leq \sqrt{\Delta t}\norm{\partial_t\rho_{\bfu}}_{L^2(I_n)},
\end{equation*}
coupled with the estimate of the time derivative of the modified Ritz-Stokes interpolation operator \eqref{eq: Error Bounds Ritz-Stokes UNS}, we obtain our desired result, i.e.
\begin{equation*}
    \begin{aligned}
        (\frac{\rho_{\bfu}^n-\rho_{\bfu}^{n-1}}{\Delta t},\vh)_{L^2(\Gah)}\leq \frac{c}{\sqrt{\Delta t}}\norm{\partial_t \rho_\bfu(t,\cdot)}_{L^2(I_n;L^2(\Gah))}\norm{\vh}_{L^2(\Gah)}.
    \end{aligned}
\end{equation*}

Next, \eqref{eq: Interpolation error 2 UNS} can be calculated with the help of the continuity bound \eqref{bhtilde boundedness UNS} and interpolation errors \eqref{eq: interpolation errors pressures UNS}.

\noindent Now, regarding \eqref{eq: Interpolation errors improved UNS} we follow the same footsteps as above but this time we split it as followed
\begin{equation*}
    \begin{aligned}
        (\frac{\rho_{\bfu}^n-\rho_{\bfu}^{n-1}}{\Delta t},\vh)_{L^2(\Gah)} &= (\frac{\bfP(\rho_{\bfu}^n-\rho_{\bfu}^{n-1})}{\Delta t},\bfP\vh)_{L^2(\Gah)} + (\frac{(\rho_{\bfu}^n-\rho_{\bfu}^{n-1})\cdot\bfn}{\Delta t},\vh\cdot(\bfn-\nh))_{L^2(\Gah)} \\
        &\ + (\frac{(\rho_{\bfu}^n-\rho_{\bfu}^{n-1})\cdot\bfn}{\Delta t},\vh\cdot\nh)_{L^2(\Gah)}\\
        &\leq  \frac{c}{\sqrt{\Delta t}}\big(\norm{\partial_t \bfP\rho_\bfu(t,\cdot)}_{L^2(I_n;L^2(\Gah))} + ch\norm{\partial_t\rho_\bfu(t,\cdot)}_{L^2(I_n;L^2(\Gah))}\big)\norm{\vh}_{L^2(\Gah)},
    \end{aligned}
\end{equation*}
where we applied  the geometric error \eqref{eq: geometric errors 2 UNS}, and  the normal bound \eqref{eq: divfree L2 norm kl=ku UNS}, since $\vh \in \bfV_h^{div}$. Recalling the modified Ritz-Stokes estimates \eqref{eq: Error Bounds Ritz-Stokes L2 improved UNS} and \eqref{eq: Error Bounds Ritz-Stokes improved UNS} our desired result follows.
\end{proof}

\begin{lemma}[Inertia error]\label{lemma: Trilinear Errors UNS}
Given that $\bfu \in L^{\infty}(I_n;(H^{k_u+1}(\Ga))^3),$ the following bounds are satisfied, for all $\vh \in \bfV_h$ 
\begin{equation}
    \begin{aligned}\label{eq: Trilinear Errors general 1 UNS}
        |\mathcal{C}(\vh)| &\leq c\big(\norm{\eu^{n-1}}_{L^2(\Gah)}^{1/2}\norm{\eu^{n-1}}_{\ah}^{1/2} +  c\norm{\uh^{n-1}}_{\ah}^{1/2}\norm{\eu^n}_{\ah}\\
        &\qquad \qquad \qquad \  + \norm{\uh^{n-1}}_{\ah}^{1/2}\norm{\eu^n}_{L^2(\Gah)}^{1/2}\norm{\eu^n}_{\ah}^{1/2}\big) \norm{\vh}_{\ah},
    \end{aligned}
\end{equation}

\begin{equation}
    \begin{aligned}\label{eq: Trilinear Errors general 2 UNS}
        |\mathcal{C}(\vh)| &\leq c\big(\norm{\uh^{n-1}}_{\ah}^{1/2}\norm{\rho_{\bfu}^n}_{\ah} + \norm{\theta_{\bfu}^{n-1}}_{L^2(\Gah)}^{1/2}\norm{\theta_{\bfu}^{n-1}}_{\ah}^{1/2}\big)\norm{\vh}_{\ah}\\
        & \qquad \qquad \qquad \ \ \, + \norm{\uh^{n-1}}_{\ah}^{1/2}\norm{\thbfu^n}_{\ah}\norm{\vh}_{\ah}.
    \end{aligned}
\end{equation}

In the case where $\vh = \thbfu^n$, considering the skew-symmetric nature of the inertia term, we arrive at the following
\begin{equation}
    \begin{aligned}\label{eq: Trilinear Errors general UNS}
     |\mathcal{C}(\thbfu^n)| &\leq c(\norm{\rho_{\bfu}^{n-1}}_{L^2(\Gah)}^{1/2}\norm{\rho_{\bfu}^{n-1}}_{\ah}^{1/2} + \norm{\theta_{\bfu}^{n-1}}_{L^2(\Gah)}^{1/2}\norm{\theta_{\bfu}^{n-1}}_{\ah}^{1/2} + \norm{\uh^{n-1}}_{\ah}^{1/2}\norm{\rho_{\bfu}^n}_{\ah})\norm{\thbfu^n}_{\ah}.
    \end{aligned}
\end{equation}
\end{lemma}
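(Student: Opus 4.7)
The plan is to prove the three bounds in succession, each by combining the continuity estimate \eqref{ch boundedness UNS} for $\chtil$ with suitable error decompositions, the regularity of the continuous solution $\bfu^n$ (Assumption~\ref{assumption: Regularity assumptions for velocity estimate}), and the uniform $L^2$ stability of $\uh^{n-1}$ from Lemma~\ref{Lemma: velocity stab estimate UNS}. The decisive ingredient for the third, sharpest estimate will be the inherent skew-symmetry of $\chtil(\zh;\cdot,\cdot)$ in its second and third arguments.

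For \eqref{eq: Trilinear Errors general 1 UNS} I would apply \eqref{ch boundedness UNS} directly to each of $\chtil(\eu^{n-1};\bfu^n,\vh)$ and $\chtil(\uh^{n-1};\eu^n,\vh)$. The smoothness of $\bfu^n$ yields $\norm{\bfu^n}_{\ah}\le C$, absorbed into the generic constant $c$, while velocity stability gives $\norm{\uh^{n-1}}_{L^2(\Gah)}\le C$, so the factor $\norm{\uh^{n-1}}_{L^2(\Gah)}^{1/2}$ is absorbed but $\norm{\uh^{n-1}}_{\ah}^{1/2}$ is retained, since the latter is what will later be summed in time. Finally, $\norm{\vh}_{L^2(\Gah)}^{1/2}\norm{\vh}_{\ah}^{1/2}\le\norm{\vh}_{\ah}$ on the test function collapses the $\vh$-dependence, and the two symmetric terms of \eqref{ch boundedness UNS} applied to $\chtil(\uh^{n-1};\eu^n,\vh)$ then produce the second and third summands, while $\chtil(\eu^{n-1};\bfu^n,\vh)$ furnishes the first.

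For \eqref{eq: Trilinear Errors general 2 UNS} I split $\eu^n=\rho_{\bfu}^n+\thbfu^n$ in the second inertia contribution, so that $\chtil(\uh^{n-1};\eu^n,\vh)=\chtil(\uh^{n-1};\rho_{\bfu}^n,\vh)+\chtil(\uh^{n-1};\thbfu^n,\vh)$. Applying \eqref{ch boundedness UNS} to each piece, together with $\norm{\rho_{\bfu}^n}_{L^2(\Gah)}^{1/2}\norm{\rho_{\bfu}^n}_{\ah}^{1/2}\le\norm{\rho_{\bfu}^n}_{\ah}$ and $\norm{\thbfu^n}_{L^2(\Gah)}^{1/2}\norm{\thbfu^n}_{\ah}^{1/2}\le\norm{\thbfu^n}_{\ah}$, yields the first and third summands of the bound. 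For $\chtil(\eu^{n-1};\bfu^n,\vh)$ I again use smoothness of $\bfu^n$ and decompose $\eu^{n-1}=\rho_{\bfu}^{n-1}+\theta_{\bfu}^{n-1}$; the $\theta_{\bfu}^{n-1}$-piece directly gives the second summand, while the $\rho_{\bfu}^{n-1}$-contribution is controlled by the Ritz--Stokes bound of Lemma~\ref{lemma: Error Bounds Ritz-Stokes UNS} and absorbed into a higher-order term in $h$.

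For \eqref{eq: Trilinear Errors general UNS} the key observation is that a direct inspection of \eqref{eq: skew-symmetrized ch real Ph} shows both the convective bracket and the curvature correction are antisymmetric in the second and third entries, so $\chtil(\zh;\wh,\wh)\equiv 0$. Choosing $\vh=\thbfu^n$ therefore annihilates the piece $\chtil(\uh^{n-1};\thbfu^n,\thbfu^n)$ in the splitting from the previous step, leaving only $\chtil(\uh^{n-1};\rho_{\bfu}^n,\thbfu^n)$ out of the second inertia term. The first inertia term is handled as in the previous paragraph, but this time both pieces of $\eu^{n-1}=\rho_{\bfu}^{n-1}+\theta_{\bfu}^{n-1}$ are kept explicit and bounded in the form $\norm{\cdot}_{L^2(\Gah)}^{1/2}\norm{\cdot}_{\ah}^{1/2}\norm{\thbfu^n}_{\ah}$ via smoothness of $\bfu^n$. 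The only substantive point in the whole proof is therefore this skew-symmetry cancellation; everything else amounts to careful bookkeeping of norm factors.
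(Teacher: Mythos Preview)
Your plan is essentially the same as the paper's: apply the continuity estimate \eqref{ch boundedness UNS} to each of $\chtil(\eu^{n-1};\bfu^n,\vh)$ and $\chtil(\uh^{n-1};\eu^n,\vh)$, absorb $\norm{\bfu^n}_{H^1}$ via regularity and $\norm{\uh^{n-1}}_{L^2}$ via stability, split $\eu^{n-1}$ and $\eu^n$ into $\rho+\theta$ for the refined bounds, and invoke $\chtil(\zh;\wh,\wh)=0$ to kill $\chtil(\uh^{n-1};\thbfu^n,\thbfu^n)$ in \eqref{eq: Trilinear Errors general UNS}.

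One small wrinkle in your treatment of \eqref{eq: Trilinear Errors general 2 UNS}: you propose to ``absorb'' the $\rho_{\bfu}^{n-1}$ contribution from $\chtil(\rho_{\bfu}^{n-1};\bfu^n,\vh)$ via the Ritz--Stokes bound as a higher-order $h$ term, but the stated inequality has no $h$ on the right, so this does not actually prove the bound as written. The paper's proof in fact retains that term, bounding $\chtil(\eu^{n-1};\bfu^n,\vh)$ by $\norm{\bfu^n}_{H^1}\big(\norm{\rho_{\bfu}^{n-1}}_{L^2}^{1/2}\norm{\rho_{\bfu}^{n-1}}_{\ah}^{1/2}+\norm{\theta_{\bfu}^{n-1}}_{L^2}^{1/2}\norm{\theta_{\bfu}^{n-1}}_{\ah}^{1/2}\big)\norm{\vh}_{\ah}$, and then simply asserts that combining gives \eqref{eq: Trilinear Errors general 2 UNS}; the $\rho_{\bfu}^{n-1}$ piece appears to be silently dropped in the displayed statement. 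Since \eqref{eq: Trilinear Errors general 2 UNS} is never invoked later (only \eqref{eq: Trilinear Errors general 1 UNS} and \eqref{eq: Trilinear Errors general UNS} are used), this is harmless, but strictly speaking neither your absorption argument nor the paper's establishes \eqref{eq: Trilinear Errors general 2 UNS} exactly as stated.
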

\begin{proof}
Let us start with \eqref{eq: Trilinear Errors general 1 UNS}. Remember that $\mathcal{C}(\vh) = - \chtil(\eu^{n-1};\bfu^n,\vh) - \chtil(\uh^{n-1};\eu^{n},\vh)$. Considering \eqref{eq: skew-symmetrized ch real Ph}, the fact that $\norm{\bfH_h}_{L^{\infty}}\leq c$ and the continuity bound \eqref{ch boundedness UNS} of the trilinear form,  we see that
\begin{equation*}
    \begin{aligned}
       \chtil(\eu^{n-1};\bfu^n,\vh) \leq  \norm{\eu^{n-1}}_{L^2(\Gah)}^{1/2}\norm{\eu^{n-1}}_{\ah}^{1/2}\norm{\bfu^n}_{H^1(\Gah)}\norm{\vh}_{\ah}.
    \end{aligned}
\end{equation*}
Similarly, with the additional use of the stability estimates for the discrete velocity \eqref{eq: velocity stab estimate UNS} we obtain
\begin{equation*} 
    \begin{aligned}
       \chtil(\uh^{n-1};\eu^{n},\vh) \leq  c\norm{\uh^{n-1}}_{\ah}^{1/2}\norm{\eu^n}_{\ah}\norm{\vh}_{\ah} + c\norm{\uh^{n-1}}_{\ah}^{1/2}\norm{\vh}_{\ah}\norm{\eu^n}_{L^2(\Gah)}^{1/2}\norm{\eu^n}_{\ah}^{1/2},
    \end{aligned}
\end{equation*}
proving the first bound  \eqref{eq: Trilinear Errors general 1 UNS}. 

\noindent By employing the decomposition \eqref{eq: decomposition error 1 Ritz UNS}, we can also derive the following
\begin{equation*}
    \begin{aligned}
        \chtil(\eu^{n-1};\bfu^n,\vh) &= \chtil(\rho_{\bfu}^{n-1};\bfu^n,\vh) + \chtil(\theta_{\bfu}^{n-1};\bfu^n,\vh)\\
        &\leq \norm{\bfu^n}_{H^1(\Gah)}\big(\norm{\rho_{\bfu}^{n-1}}_{L^2(\Gah)}^{1/2}\norm{\rho_{\bfu}^{n-1}}_{\ah}^{1/2} + c\norm{\theta_{\bfu}^{n-1}}_{L^2(\Gah)}^{1/2}\norm{\theta_{\bfu}^{n-1}}_{\ah}^{1/2}\big)\norm{\vh}_{\ah}.
    \end{aligned}
\end{equation*}
In a similar manner, for the second term, by applying the stability estimates in \Cref{Lemma: velocity stab estimate UNS}, and once more utilizing the bound \eqref{ch boundedness UNS}, we compute that
\begin{equation*}
    \begin{aligned}
      \chtil(\uh^{n-1};\eu^{n},\vh) &=  \chtil(\uh^{n-1};\rho_{\bfu}^n,\vh)+ \chtil(\uh^{n-1}; \theta_{\bfu}^n,\vh)\\
      &\leq c\norm{\uh^{n-1}}_{\ah}^{1/2}\norm{\rho_{\bfu}^n}_{\ah}\norm{\vh}_{\ah} +  c\norm{\uh^{n-1}}_{\ah}^{1/2}\norm{\thbfu^n}_{\ah}\norm{\vh}_{\ah}.
    \end{aligned}
\end{equation*}
Combining the two results, we get the second bound \eqref{eq: Trilinear Errors general 2 UNS}. Finally, \eqref{eq: Trilinear Errors general UNS} inequality becomes apparent when one considers the skew-symmetric nature of the inertia term as expressed by \eqref{eq: skew-symmetrized ch real Ph} and \Cref{remark: inertia term Ph}. 
\end{proof}

\begin{lemma}[Consistency errors]\label{lemma: Consistency errors UNS}
 Assume 
 $$ \partial_{tt}\bfu \in L^{2}(I_n;(L^2(\Ga))^3), \quad \partial_{t}\bfu \in L^{\infty}(I_n;(L^2(\Ga))^3)\cap L^{2}(I_n;(L^4(\Ga))^3),$$
 then we have the following consistency bounds 
 \begin{align}\label{eq: Consistency error 1 UNS}
        |\textsc{Err}_1^{C}(\vh)| &\leq   c\big(\sqrt{\Delta t} \norm{\partial_{tt}\bfu}_{L^2(I_n;L^2(\Ga))}+h^{k_g+1} \norm{\partial_{t}\bfu^n}_{L^2(\Ga)}\big)\norm{\vh}_{L^2(\Gah)},\\
        \label{eq: Consistency error 2 UNS}
        | \textsc{Err}_2^{C}(\vh)| &\leq ch^{k_g}(\norm{p^n}_{H^1(\Ga)} + \norm{\lambda^n}_{L^2(\Ga)})\norm{\vh}_{L^2(\Gah)},\\
        \label{eq: Consistency error 3 UNS}
        | \textsc{Err}_3^{C}(\vh)| &\leq ch^{k_g}\norm{\bfu^n}_{H^1(\Ga)}\norm{\bfu^{n-1}}_{H^1(\Ga)}\norm{\vh}_{H^1(\Gah)} + c\sqrt{\Delta t}\norm{\partial_t\bfu}_{L^2(I_n;L^4(\Ga))}\norm{\vh}_{\ah},\\
        \label{eq: Consistency error 4 UNS}
        | \textsc{Err}_4^{C}(\vh)| &\leq ch^{k_g+1}\norm{\bff^n}_{L^2(\Ga)}\norm{\vh}_{L^2(\Gah)},
\end{align}
for all $\vh \in \bfV_h$, where constant $c>0$ independent of $h$. 
\end{lemma}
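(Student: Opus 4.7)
The plan is to treat each of the four consistency errors separately, decomposing each into a time–discretization contribution (bounded by Taylor's theorem) and a geometric–approximation contribution (bounded by the perturbation lemmas already established).

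The easiest are $\textsc{Err}_1^C$ and $\textsc{Err}_4^C$. For $\textsc{Err}_1^C$ I would write
\begin{equation*}
\textsc{Err}_1^C(\vh) = \Big(\tfrac{\bfu^n-\bfu^{n-1}}{\Delta t} - \partial_t\bfu^n,\vh\Big)_{L^2(\Gah)} + \big[(\partial_t\bfu^n,\vh)_{L^2(\Gah)}-(\partial_t\bfu^n,\vhl)_{L^2(\Ga)}\big].
\end{equation*}
The first piece is controlled by the standard Taylor identity $\bfu^{n-1} = \bfu^n - \Delta t\,\partial_t\bfu^n + \int_{t_{n-1}}^{t_n}(t-t_{n-1})\partial_{tt}\bfu\,dt$, followed by Cauchy--Schwarz in time, yielding an $L^2(\Gah)$–bound of $\sqrt{\Delta t}\,\|\partial_{tt}\bfu\|_{L^2(I_n;L^2)}$. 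The second piece is handled by the domain–perturbation estimate \eqref{eq: errors of domain of integration data UNS}, giving the $h^{k_g+1}$ factor. For $\textsc{Err}_4^C$, since $\bff_h = \bff^{-\ell}$, this is an immediate application of \eqref{eq: errors of domain of integration data UNS}.

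For $\textsc{Err}_2^C$, the estimate is a direct consequence of the first form of the perturbation bound \eqref{eq: Geometric perturbations btilde UNS} applied with $\bfw_h = \vh$, $\{\qh,\xi_h\} = \{p^n,\lambda^n\}^{-\ell}$, combined with the norm equivalence \eqref{eq: norm equivalence UNS} between Sobolev norms on $\Gah$ and $\Ga$, and the elementary inclusion $\|\vh\|_{L^2(\Gah)}\leq\|\vh\|_{\ah}$.

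The main work is in $\textsc{Err}_3^C$. I would split
\begin{equation*}
\textsc{Err}_3^C(\vh) = \underbrace{\big[\chtil(\bfu^{n-1};\bfu^n,\vh) - c(\bfu^{n-1};\bfu^n,\vhl)\big]}_{\textnormal{geometric error}} + \underbrace{c(\bfu^{n-1}-\bfu^n;\bfu^n,\vhl)}_{\textnormal{time error}}.
\end{equation*}
The time error is bounded as $\|\bfu^{n-1}-\bfu^n\|_{L^4(\Ga)} \leq \sqrt{\Delta t}\,\|\partial_t\bfu\|_{L^2(I_n;L^4)}$ (by Cauchy--Schwarz on $-\int_{t_{n-1}}^{t_n}\partial_t\bfu\,dt$), combined with the $L^4$-$L^2$-$L^4$ H\"older splitting $|c(\bfz;\bfu^n,\vhl)|\leq \|\bfz\|_{L^4(\Ga)}\|\nbgcov\bfu^n\|_{L^2(\Ga)}\|\vhl\|_{L^4(\Ga)}$; here $\|\vhl\|_{L^4(\Ga)}$ is bounded via norm equivalence and \eqref{eq: L4 inequality Ph UNS} by $\|\vh\|_{\ah}$, while $\|\nbgcov\bfu^n\|_{L^2}$ is absorbed into the constant using the $L^\infty(I;H^{k_u+1})$ regularity of $\bfu$. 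For the geometric error, I would decompose $\chtil$ into its covariant–derivative part and its curvature correction according to \eqref{eq: skew-symmetrized ch real Ph}, and use the fact that on $\Ga$ the analogous curvature correction vanishes identically because $\bfu^{n-1}\cdot\bfng = \bfu^n\cdot\bfng = 0$. The discrete curvature terms therefore contribute only through geometric mismatches $\bfu^{n-1}\cdot(\nh-\bfn)=O(h^{k_g})$ and $\bfH_h-\bfH = O(h^{k_g-1})$, with the latter handled in its integrated form by \cref{lemma: weingarten map improved UNS}. Combining these with the perturbation estimates $\|\bfP-\bfPh\|_{L^\infty}\leq ch^{k_g}$, $\|1-\mu_h\|_{L^\infty}\leq ch^{k_g+1}$, and \eqref{eq: Geometric perturbations nbg UNS} yields the desired $h^{k_g}$ bound.

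The main obstacle is the geometric part of $\textsc{Err}_3^C$: unlike $\textsc{Err}_2^C$, there is no off–the–shelf perturbation lemma for the skew--symmetrized trilinear form, so one must perform the comparison term by term, taking care that the curvature correction (whose cancellation on $\Ga$ is essential) is handled via the \emph{integrated} Weingarten bound \eqref{eq: weingarten map improved UNS} rather than the crude pointwise $h^{k_g-1}$ estimate, in order to recover the full order $h^{k_g}$.
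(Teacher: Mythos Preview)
Your proposal is correct and follows essentially the same route as the paper: identical handling of $\textsc{Err}_1^C$, $\textsc{Err}_2^C$, $\textsc{Err}_4^C$, and the same geometric/temporal splitting of $\textsc{Err}_3^C$, with the key use of the integrated Weingarten bound \eqref{eq: weingarten map improved UNS} to recover the full $h^{k_g}$ order on the curvature correction. One point to tighten: in the temporal part of $\textsc{Err}_3^C$ you invoke \eqref{eq: L4 inequality Ph UNS} to bound $\norm{\vhl}_{L^4(\Ga)}$ by $\norm{\vh}_{\ah}$, but that lemma controls only $\norm{\bfPh\vh}_{L^4}$, and the full bound $\norm{\vhl}_{L^4}\leq c\norm{\vh}_{\ah}$ is false in general; the fix is immediate once you note that $(\bfz\cdot\nbgcov)\bfu^n$ is tangential, so only $\bfPg\vhl$ enters the pairing. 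The paper sidesteps this by working with the skew-symmetric rewriting \eqref{eq: continuous c error formula UNS} of $c$, in which $\bfPg\vhl$ appears explicitly from the outset.
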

\begin{proof}
Let us begin with the first consistency error, involving the time derivative.

 \noindent\underline{$\textsc{Err}_1^{C} :$} By means of changing the domain of integration, see \eqref{eq: errors of domain of integration data UNS} for $\bff = \partial_{t}\bfu^n$, and integration by parts 
\begin{equation*}
\begin{aligned}
    &(\frac{\bfu^n-\bfu^{n-1}}{\Delta t},\vh)_{L^2(\Gah)} - (\partial_{t}\bfu^n,\vhl)_{L^2(\Ga)} = (\frac{\partial_{t}\bfu^n}{\mu_h},\vhl)_{L^2(\Ga)}- (\partial_{t}\bfu^n,\vhl)_{L^2(\Ga)} \\
     &\ +(\int_{t_{n-1}}^{t_n} \frac{\partial_{t}\bfu(t,\cdot) - \bfu_t^n}{\Delta t} \, dt ,\vh)_{L^2(\Gah)}  \\
    &\leq ch^{k_g+1}\norm{\partial_{t}\bfu^n}_{L^2(\Ga)}\norm{\vhl}_{L^2(\Ga)} +\frac{1}{\Delta t } ( \int_{t_{n-1}}^{t_n} (t - t_{n-1})\partial_{tt}\bfu(t,\cdot) \, dt,\vh)_{L^2(\Gah)}\\
    & \leq ch^{k_g+1}\norm{\partial_{t}\bfu^n}_{L^2(\Ga)}\norm{\vh}_{L^2(\Gah)} +\frac{1}{\Delta t} (\Big( \int_{t_{n-1}}^{t_n} (t - t_{n-1})^2 \, dt \Big)^{1/2}\Big( \int_{t_{n-1}}^{t_n} \partial_{tt}\bfu^n(t,\cdot)^2 \, dt \Big)^{1/2},\vh)_{L^2(\Gah)}\\
    & \leq  ch^{k_g+1}\norm{\partial_{t}\bfu^n}_{L^2(\Ga)}\norm{\vh}_{L^2(\Gah)} + (\Delta t)^{1/2} \norm{\partial_{tt}\bfu^n}_{L^2(I_n;L^2(\Ga))} \norm{\vh}_{L^2(\Gah)},
    \end{aligned}
\end{equation*}
where in the last two inequalities we used the norm equivalence \eqref{eq: norm equivalence UNS} and  Schwarz inequality.\\

\noindent \underline{$\textsc{Err}_2^{C}$}, \underline{$\textsc{Err}_4^{C} :$} These estimates are  an immediate consequence of \eqref{eq: Geometric perturbations btilde UNS} and \eqref{eq: errors of domain of integration data UNS} respectively.\\

\noindent \underline{$\textsc{Err}_3^{C}:$}
\ We split this error into two parts: one associated with geometric errors and another concerning the different time steps in the first argument. So, adding and subtracting appropriate terms yields
\begin{equation}
    \begin{aligned}\label{eq: Consistency errors inside 1 UNS}
        \chtil(\bfu^{n-1};\bfu^n,\vh)  - c(\bfu^n;\bfu^n,\vhl) &= \chtil(\bfu^{n-1};\bfu^n,\vh) - c(\bfu^{n-1};\bfu^n,\vhl)\\
        &\ + c(\bfu^{n-1};\bfu^n,\vhl) - c(\bfu^n;\bfu^n,\vhl).
    \end{aligned}
\end{equation}

\noindent For the first line on the right-hand side of \eqref{eq: Consistency errors inside 1 UNS}, by \eqref{eq: skew-symmetrized ch real Ph} (see also \Cref{remark: inertia term Ph}) and \eqref{eq: continuous c error formula UNS}, along with the fact that $\nbgcov \bfu = \nbgcov(\bfPg\bfu) + (\bfu\cdot\bfn) \bfH$, we see that it is equal to
\begin{equation*}
    \begin{aligned}
        \chtil(\bfu^{n-1};\bfu^n,\vhl) - c(\bfu^{n-1};\bfu^n,\vhl) & = \frac{1}{2}\int_{\Gah}((\bfu^{n-1}\cdot\nbgcovh)\bfu^n)\cdot\vh -  \frac{1}{2}\int_{\Ga}((\bfu^{n-1}\cdot\nbgcov)\bfu^n)\cdot\vhl\\
        &\ -  \frac{1}{2}\int_{\Gah} \bfu^n\cdot\nh \bfu^{n-1}\cdot\bfH_h\vh\\
        &\ + \frac{1}{2}\int_{\Gah}((\bfu^{n-1}\cdot\nbgcovh)\vh)\cdot\bfu^n -  \frac{1}{2}\int_{\Ga}((\bfu^{n-1}\cdot\nbgcov)\vhl)\cdot\bfu^n\\
        &\ -  \frac{1}{2}\int_{\Gah} \vh\cdot\nh \bfu^{n-1}\cdot\bfH_h\bfu^n + \frac{1}{2}\int_{\Ga} \vhl\cdot\bfng \bfu^{n-1}\cdot\bfH\bfu^n.
    \end{aligned}
\end{equation*}
Using the bounds in \Cref{lemma: errors of geometric pert UNS} and the geometric estimates $\norm{\bfn- \nh}_{L^\infty(\Gah)} \leq ch^{k_g}$, $\norm{\bfP- \bfPh}_{L^\infty(\Gah)} \leq ch^{k_g}$, $|1 - \frac{1}{\muh}| \leq ch^{k_g+1}$ as seen in \eqref{eq: geometric errors 1 UNS}, \eqref{eq: geometric errors 2 UNS}, \eqref{eq: muhkg estimate UNS} respectively, along with the fact that $\bfu^n\cdot\nh = \bfu^n\cdot(\nh-\bfn)$, we clearly see that 
\begin{equation}
    \begin{aligned}\label{eq: consistency inside UNS}
        &|\chtil(\bfu^{n-1};\bfu^n,\vhl) - c(\bfu^{n-1};\bfu^n,\vhl)|\\
        & \leq ch^{k_g}\norm{\bfu^{n-1}}_{H^1(\Ga)}\big(\norm{\bfu^n}_{H^1(\Ga)}\norm{\vh}_{L^2(\Gah)}+ \norm{\bfu^n}_{L^2(\Gah)}\norm{\vh}_{H^1(\Ga)}\big)
        \\
        & \ + \Big| \frac{1}{2}\int_{\Gah} \vh\cdot(\nh-\bfn) \bfu^{n-1}\cdot\bfH_h\bfu^n \Big| + \Big|\frac{1}{2}\int_{\Ga} (\vhl\cdot\bfng) \bfu^{n-1}\cdot\bfH\bfu^n -  \frac{1}{2}\int_{\Gah} (\vh\cdot\bfn) \bfu^{n-1}\cdot\bfH_h\bfu^n\Big|\\
        & \leq  ch^{k_g}\norm{\bfu^{n-1}}_{H^1(\Ga)}\norm{\bfu^n}_{H^1(\Ga)}\norm{\vh}_{H^1(\Gah)}
    \end{aligned}
\end{equation}
where in the last inequality we used \eqref{eq: weingarten map improved UNS} with $\beta = \vhl\cdot\bfng$, and $\vh = \bfu^{n-1}\in \bfH^1(\Gah)$. For the second and last lines of \eqref{eq: Consistency errors inside 1 UNS}, we have from \eqref{eq: continuous c error formula UNS} that 
\begin{equation*}
    \begin{aligned}
        |c(\bfu^{n-1};\bfu^n,\vhl) - c(\bfu^n;\bfu^n,\vhl)| &= \frac{1}{2} \underbrace{\int_{\Ga } ((\bfu^{n} \cdot \nbgcov)\bfu^{n} ) \cdot \vhl \; \ds - \frac{1}{2}\int_{\Ga } ((\bfu^{n-1} \cdot \nbgcov)\bfu^{n} ) \cdot \vhl \; \ds}_{\textbf{I}c} \\
        &\ -\frac{1}{2} \underbrace{\int_{\Ga } ((\bfu^{n} \cdot \nbgcov)\bfPg\vhl ) \cdot \bfu^{n} \; \ds
        + \frac{1}{2}\int_{\Ga } ((\bfu^{n-1} \cdot \nbgcov)\bfPg\vhl ) \cdot \bfu^{n} \; \ds}_{\textbf{II}c}.
    \end{aligned}
\end{equation*}
We focus on $(\textbf{II}c)$, since $(\textbf{I}c)$ will follow in a similar way. Using the fact that 
\begin{equation*}
    \int_{t_{n-1}}^{t^n} \partial_t\bfu(t,\cdot) \, \mathrm{d}t\leq \sqrt{\Delta t}\norm{\partial_t\bfu}_{L^2(I_n)},
\end{equation*}
a straightforward calculation reveals that
\begin{equation}
    \begin{aligned}\label{eq: consistency inside IIc UNS}
         \textbf{II}c = \int_{\Ga} (\Big(\int_{t_{n-1}}^{t^n} \partial_t\bfu(t,\cdot) dt\Big)\cdot \nbgcov)\bfPg\vhl ) \cdot \bfu^{n} \leq \sqrt{\Delta t}\norm{\vh}_{\ah}\norm{\partial_t\bfu}_{L^2(I_n;L^4(\Ga))}\norm{\bfu^n}_{L^4(\Ga)},
    \end{aligned}
\end{equation}
where we made use of $\norm{\nbgcov\bfPg\vhl}_{L^2(\Ga)} \leq c\norm{\bfPg\vhl}_{a} \leq c\norm{\vh}_{\ah}$ by the perturbation bound \eqref{eq: Geometric perturbations a 2 UNS} and the $H^1$ coercivity bound \eqref{eq: korn inequality Ph UNS}. Similarly, recalling the bound \eqref{ch boundedness UNS} we get
\begin{equation}\label{eq: consistency inside Ic UNS}
    \textbf{I}c \leq \sqrt{\Delta t}\norm{\vh}_{\ah}\norm{\partial_t\bfu}_{L^2(I_n;L^4(\Ga))}\norm{\bfu^n}_{L^4(\Ga)}. 
\end{equation}
To complete the proof, we insert \eqref{eq: consistency inside UNS} and \eqref{eq: consistency inside IIc UNS}, \eqref{eq: consistency inside Ic UNS} into \eqref{eq: Consistency errors inside 1 UNS} and recall the embedding $L^4 \hookrightarrow H^1$.
\end{proof}

Now we are ready to show the error estimates involving the velocity. We split it into two cases. One where we assume $k_\lambda=k_u$ and a second one where we assume  $k_\lambda=k_u-1$. Let us start with the first case.

\begin{lemma}\label{lemma: discrete remainder velocity estimates UNS}
Assume $\underline{k_\lambda = k_u}$, and that  the regularity \cref{assumption: Regularity assumptions for velocity estimate} hold. 
Let $(\bfu,\{p,\lambda\})$ be the solution of \eqref{eq: unstead NV Lagrange} and let $\uh^k$ and $\{\ph^k,\lh^k\}$, $k=1,...,n$ be the discrete solutions of \eqref{eq: weak lagrange fully discrete UNS}, with initial condition $\uh^0=\mathcal{R}_h\bfu^0$. Then the following estimate holds for $1 \leq n \leq N$:
\begin{equation}
    \begin{aligned}\label{eq: discrete remainder velocity estimates UNS}
        \norm{\thbfu^n}_{L^2(\Gah)}^2 &+ \sum_{k=1}^n\norm{\thbfu^k-\thbfu^{k-1}}_{L^2(\Gah)}^2+ \Delta t \sum_{k=1}^n \norm{\thbfu^k}_{\ah}^2 \\
        &\qquad\qquad\qquad\leq Cexp(ct_n)\big((\Delta t)^2 + h^{2\widehat{r}_u} + h^{2k_{pr}+2} +h^{2k_{\lambda}+2}\big),
    \end{aligned}
\end{equation}
where $\widehat{r}_u = min\{k_{u},k_g\}$, with $C=C(u,p,\lambda,\bff) =  \sup_{t\in[0,T]} \big(\norm{p}_{H^{k_{pr}+1}(\Ga)}^2   +\norm{\lambda}_{H^{k_{\lambda}+1}(\Ga)}^2 + \norm{\bfu}_{H^{k_u+1}(\Ga)}^2 +\norm{\partial_t\bfu}_{L^2(\Ga)}^2 + \norm{\bff}_{L^2(\Ga)}^2\big) + \norm{\partial_{tt}\bfu}_{L^2([0,T];L^2(\Ga))}^2 + \norm{\partial_t\bfu}_{L^2([0,T];H^{k_u+1}(\Ga))}^2.$

\end{lemma}

\begin{proof}
We test \eqref{eq: error equation UNS} with $\vh = \thbfu^n\in\bfV_h^{div}$ and $\{q_h,\xi_h\} = \{\theta_p^n,\theta_{\lambda}^n\}$ and subtract to obtain 
\begin{equation}
    \begin{aligned}\label{eq: discrete remainder velocity estimates inside UNS}
        \norm{\thbfu^n}_{L^2(\Gah)}^2 +\norm{\thbfu^n-\thbfu^{n-1}}_{L^2(\Gah)}^2 &+ 2\Delta t \norm{\thbfu^n}_{\ah}^2 \leq  \norm{\thbfu^{n-1}}_{L^2(\Gah)}^2 \\
        & + 2\Delta t \Big( |\sum_{i=1}^4 \textsc{Err}_{i}^{C}(\thbfu^n)| +  |\sum_{i=1}^2 \textsc{Err}_{i}^{I}(\thbfu^n)| + |\mathcal{C}(\thbfu^n)|\Big),
    \end{aligned}
\end{equation}
where $\textsc{Err}_{5}^{C}(\thbfu^n)=0$ by the Ritz-Stokes projection \eqref{def: surface Ritz-Stokes projection UNS}.
Now we calculate the terms in the last line according to the interpolation and consistency estimates found previously. From \cref{lemma: Consistency errors UNS} and $\norm{\thbfu^n}_{H^1(\Gah)} \leq \norm{\thbfu^n}_{\ah}$, cf. \eqref{eq: improved h1-ah bound UNS}, using Young's inequality appropriately we see that
\begin{equation}
    \begin{aligned}\label{eq: discrete remainder velocity estimates inside 1 UNS}
      |\sum_{i=1}^4 \textsc{Err}_{i}^{C}(\thbfu^n)| &\leq c\Delta t \big(\norm{\partial_{tt}\bfu}_{L^2(I_n;L^2(\Ga))}^2 + \norm{\partial_{t}\bfu}_{L^2(I_n;L^4(\Ga))}^2\big)  +ch^{2k_g}\big(\norm{\bfu^n}_{H^1(\Ga)}^2 \\
      &\ +\norm{\partial_t\bfu^n}_{L^2(\Ga)}^2 + \norm{p^n}_{H^1(\Ga)}^2 + \norm{\lambda^n}_{L^2(\Ga)}^2\big) + ch^{2k_g+2}\norm{\bff^n}_{L^2(\Ga)}^2  + \frac{1}{5}\norm{\thbfu^n}_{\ah}^2.
    \end{aligned}
\end{equation}
For the interpolation errors due to \cref{lemma: Interpolation errors UNS} and specifically \eqref{eq: Interpolation errors improved UNS} we, likewise, have
\begin{equation}
    \begin{aligned}\label{eq: discrete remainder velocity estimates inside 2 UNS}
        |\sum_{i=1}^2 \textsc{Err}_{i}^{I}(\thbfu^n)| \leq c\frac{h^{2\widehat{r}_u+2}}{\Delta t}\norm{\partial_t\bfu}_{L^2(I_n;H^{k_u+1}(\Ga))}^2 + c(h^{2k_{pr}+2} + h^{2k_{\lambda}+2})\big( \norm{p^n}_{H^{k_{pr}+1}(\Ga)}^2 \\ 
        + \norm{\lambda^n}_{H^{k_{\lambda}+1}(\Ga)}^2\big)
        + \frac{1}{5}\norm{\thbfu^n}_{\ah}^2.
    \end{aligned}
\end{equation}
Lastly, by employing the bound \eqref{eq: Trilinear Errors general UNS} in \cref{lemma: Trilinear Errors UNS} for the inertia term,  the Ritz-Stokes projection estimate \eqref{eq: Error Bounds Ritz-Stokes improved UNS} and two Young's inequalities we obtain 
\begin{equation}
    \begin{aligned}\label{eq: discrete remainder velocity estimates inside 3 UNS}
        |\mathcal{C}(\thbfu^n)| \leq ch^{2\widehat{r}_u}(1+\norm{\uh^{n-1}}_{\ah})\norm{\bfu^n}_{H^{k_u+1}(\Ga)}^2 + c\norm{\thbfu^{n-1}}_{L^2(\Gah)}^2  + \frac{1}{5}\norm{\thbfu^{n-1}}_{\ah}^2 + \frac{1}{5}\norm{\thbfu^{n}}_{\ah}^2.
    \end{aligned}
\end{equation}
Combining \eqref{eq: discrete remainder velocity estimates inside 1 UNS}, \eqref{eq: discrete remainder velocity estimates inside 2 UNS}, and \eqref{eq: discrete remainder velocity estimates inside 3 UNS} into \eqref{eq: discrete remainder velocity estimates inside UNS}, after a kickback argument we get
\begin{equation}
    \begin{aligned}\label{eq: discrete remainder velocity estimates inside 4 UNS}
         \norm{\thbfu^n}_{L^2(\Gah)}^2 &+\norm{\thbfu^n-\thbfu^{n-1}}_{L^2(\Gah)}^2 + \frac{4}{5}\Delta t \norm{\thbfu^n}_{\ah}^2 \leq  (1+c\Delta t)\norm{\thbfu^{n-1}}_{L^2(\Gah)}^2 + \frac{2}{5}\Delta t \norm{\thbfu^{n-1}}_{\ah}^2 \\
        & + c\Delta t \Big( \Delta t \big(\norm{\partial_{tt}\bfu}_{L^2(I_n;L^2(\Ga))}^2 + \norm{\partial_{t}\bfu}_{L^2(I_n;L^4(\Ga))}^2\big) + \frac{ch^{2\widehat{r}_u+1}}{\Delta t}\norm{\partial_t\bfu}_{L^2(I_n;H^{k_u+1}(\Ga))}^2\\
        & + c h^{2\widehat{r}_u} (1+\norm{\uh^{n-1}}_{\ah})\norm{\bfu^n}_{H^{k_u+1}(\Ga)}^2 + c(h^{2\widehat{r}_u} + h^{2k_{pr}+2} + h^{2k_{\lambda}+2})\big( \norm{p^n}_{H^{k_{pr}+1}(\Ga)}^2 \\
        &+ \norm{\lambda^n}_{H^{k_{\lambda}+1}(\Ga)}^2
       + \norm{\bfu^n}_{H^1(\Ga)}^2 +\norm{\partial_t\bfu^n}_{L^2(\Ga)}^2\big)
         + ch^{2k_g+2}\norm{\bff^n}_{L^2(\Ga)}^2
        \Big),
    \end{aligned}
\end{equation}
where $\widehat{r}_u = min\{k_{u},k_g\}$. Now, since $\thbfu^0 =0$, due to the choice of the initial condition $\uh^0 = \mathcal{R}_h\bfu^0$, and since $\thbfu^n\in\bfV_h^{div}$ by   \eqref{def: surface Ritz-Stokes projection UNS}, upon summation over $k=1,...,n$ of \eqref{eq: discrete remainder velocity estimates inside 4 UNS} and straightforward application of the discrete Gronwall's inequality, it yields
\begin{equation}
    \begin{aligned}
        \norm{\thbfu^n}_{L^2(\Gah)}^2 &+ \sum_{k=1}^n\norm{\thbfu^k-\thbfu^{k-1}}_{L^2(\Gah)}^2 + \frac{1}{5}\Delta t \sum_{k=1}^n\norm{\thbfu^k}_{\ah}^2 \leq cexp(ct_n)\Big( h^{2k_g+2}\sup_{t\in[0,T]}\norm{\bff}_{L^2(\Ga)}^2\\
        & + \Delta t^2 \big(\norm{\partial_{tt}\bfu}_{L^2([0,T];L^2(\Ga))}^2 + \norm{\partial_{t}\bfu}_{L^2([0,T];L^4(\Ga))}^2\big)\\
        & + Ch^{2\widehat{r}_u}\big(\norm{\partial_t\bfu}_{L^2([0,T];H^{k_u+1}(\Ga))}^2 +\sup_{t\in[0,T]}\big(\norm{\bfu}_{H^{k_u+1}(\Ga)}^2 +\norm{\partial_t\bfu}_{L^2(\Ga)}^2 \big)\big)\\
        & + (h^{2k_{pr}+2} + h^{2k_{\lambda}+2} )\sup_{t\in[0,T]}\big(\norm{p}_{H^{k_{pr}+1}(\Ga)}^2  + \norm{\lambda}_{H^{k_{\lambda}+1}(\Ga)}^2\big) \Big),
    \end{aligned}
\end{equation}
where the constant $C$ on the third line emerged as a result of applying the discrete stability estimate $\Delta t \sum_{k=1}^n \norm{\uh^{n-1}}_{\ah} \leq C$; see \eqref{eq: velocity stab estimate UNS}. This completes the proof.
\end{proof}

\begin{lemma}\label{lemma: discrete remainder velocity estimates kl=ku-1 UNS}
Assume $\underline{k_\lambda = k_u-1}$, $k_g \geq 2$, and that the regularity \cref{assumption: Regularity assumptions for velocity estimate} hold. 
Let $(\bfu,\{p,\lambda\})$ be the solution of \eqref{eq: unstead NV Lagrange} and let $\uh^k$ and $\{\ph^k,\lh^k\}$, $k=1,...,n$ be the discrete solutions of \eqref{eq: weak lagrange fully discrete UNS}, with initial condition $\uh^0=\mathcal{R}_h\bfu^0$. Then the following estimate holds for $1 \leq n \leq N$:
\begin{equation}
    \begin{aligned}\label{eq: discrete remainder velocity estimates kl=ku-1 UNS}
        \norm{\thbfu^n}_{L^2(\Gah)}^2 &+ \sum_{k=1}^n\norm{\thbfu^k-\thbfu^{k-1}}_{L^2(\Gah)}^2+ \Delta t \sum_{k=1}^n \norm{\thbfu^k}_{\ah}^2 \\
        &\qquad\qquad\qquad\leq Cexp(ct_n)\big((\Delta t)^2 + h^{2r_u} + h^{2k_{pr}+2} +h^{2k_{\lambda}+2}\big),
    \end{aligned}
\end{equation}
where $r_u = min\{k_{u},k_g-1\}$, with $C=C(u,p,\lambda,\bff) =  \sup_{t\in[0,T]} \big(\norm{p}_{H^{k_{pr}+1}(\Ga)}^2   +\norm{\lambda}_{H^{k_{\lambda}+1}(\Ga)}^2 + \norm{\bfu}_{H^{k_u+1}(\Ga)}^2 +\norm{\partial_t\bfu}_{L^2(\Ga)}^2 + \norm{\bff}_{L^2(\Ga)}^2\big) + \norm{\partial_{tt}\bfu}_{L^2([0,T];L^2(\Ga))}^2 + \norm{\partial_t\bfu}_{L^2([0,T];H^{k_u+1}(\Ga))}^2.$
\end{lemma}
\begin{proof}
The proof follows as in \cref{lemma: discrete remainder velocity estimates UNS}, where instead of the Ritz-Stokes estimate \eqref{eq: Error Bounds Ritz-Stokes improved UNS} (for $k_\lambda=k_u$) we use the worse estimate \eqref{eq: Error Bounds Ritz-Stokes UNS} throughout. Moreover, in \eqref{eq: discrete remainder velocity estimates inside 2 UNS} we use the bound \eqref{eq: Interpolation errors UNS} instead of \eqref{eq: Interpolation errors improved UNS}, and in \eqref{eq: discrete remainder velocity estimates inside 1 UNS} we use instead $\norm{\thbfu^n}_{H^1(\Gah)} \leq h^{-1}\norm{\thbfu^n}_{\ah}$ cf. \eqref{eq: coercivity and Korn's inequality Lagrange UNS}. We omit further details.
\end{proof}

\begin{remark}[About $L^2_{L^2}\times L^2_{L^2}$ pressure stability]\label{remark: About pressure stability UNS}
Now that we have tracked the regularity needed (\Cref{assumption: Regularity assumptions for velocity estimate}) and proved the estimates \eqref{eq: discrete remainder velocity estimates UNS} and \eqref{eq: discrete remainder velocity estimates kl=ku-1 UNS}, we are able to go back and prove a bound of the form \eqref{eq: uniform bound b stability UNS} for $\bfb_h^{n-1} = \uh^{n-1}$. Using the Ritz-Stokes projection \eqref{eq: surface Ritz-Stokes projection UNS}, we notice that
\begin{equation}
    \begin{aligned}
        \norm{\uh^n}_{\ah} \leq \norm{\mathcal{R}_h \bfu^n}_{\ah} + \norm{\thbfu^n}_{\ah}\leq  \norm{\mathcal{R}_h \bfu^n}_{\ah} + h^{-1}\norm{\thbfu^n}_{L^2(\Gah)}, 
    \end{aligned}
\end{equation}
where in the last inequality we have used the inverse inequality. Now it is easy to see, due to the stability of the Ritz-Stokes projection \eqref{eq: Stab estimates for Ritz-Stokes projection UNS} and convergence results \eqref{eq: discrete remainder velocity estimates UNS} (or \eqref{eq: discrete remainder velocity estimates kl=ku-1 UNS}), that the assumption \eqref{eq: uniform bound b stability UNS} holds, more precisely we have that
\begin{align}
\label{eq: uniform bound uha stability UNS}
    \sup_n \norm{\uh^n}_{\ah} \leq C,
\end{align}
if the assumptions in  \cref{lemma: discrete remainder velocity estimates UNS} (or \cref{lemma: discrete remainder velocity estimates kl=ku-1 UNS}) hold and $\Delta t \leq c h.$
\noindent This time-step condition is reasonable since it will be automatically satisfied if we try to balance spatial and temporal discretization errors, as this leads to $\Delta t \sim h^{k_u}$.
\end{remark}

\begin{remark}[About $L^2_{L^2}\times L^2_{L^2}$ pressure stability for $k_\lambda=k_u-1$]\label{remark: About the pressure stability for arbitrary convergence UNS}
As mentioned before in \cref{remark: About the pressure stability for arbitrary}, to prove $L^2_{L^2}\times L^2_{L^2}$ pressure stability for arbitrary choice of $k_\lambda$, we would need to prove $\sup_{n} \norm{\bfu_h^n}_{L^{\infty}(\Gah)} \leq c$, cf. \eqref{eq: b extra stability UNS}. To see this, first recall the  $L^{\infty}$-bound of the  Ritz-Stokes projection  \eqref{eq: Linfty ritz stokes UNS}, then using an inverse inequality we see that
\begin{equation}
    \begin{aligned}\label{eq: Linfty uh bound UNS}
        \norm{\uh^n}_{L^{\infty}(\Gah)} \leq \norm{\mathcal{R}_h \bfu^n}_{L^{\infty}(\Gah)} + \norm{\thbfu^n}_{L^{\infty}(\Gah)}\leq  \norm{\bfu^n}_{W^{2,\infty}(\Ga)} + h^{-1}\norm{\thbfu^n}_{L^2(\Gah)},
    \end{aligned}
\end{equation}
which yields our desired result considering the velocity error convergence estimates \eqref{eq: discrete remainder velocity estimates UNS} or \eqref{eq: discrete remainder velocity estimates kl=ku-1 UNS} and assuming $\Delta t \leq ch$, cf. \cref{remark: About pressure stability UNS}, along with further regularity for the velocity $\bfu \in L^{\infty}([0,T];W^{2,\infty}(\Ga))$.
\end{remark}




\begin{theorem}[Velocity Error Estimates]\label{theorem: Velocity Error Estimates UNS}
Under the  \cref{assumption: Regularity assumptions for velocity estimate} and \Cref{lemma: discrete remainder velocity estimates UNS} the following velocity error estimates hold for $1 \leq n \leq N$
\begin{equation}\label{eq: Velocity Error Estimates UNS}
    \begin{aligned}
       \norm{\eu^n}_{L^2(\Gah)}^2 + \sum_{k=1}^n\norm{\eu^k-\eu^{k-1}}_{L^2(\Gah)}^2 &+ \Delta t \sum_{k=1}^n \norm{\eu^k}_{\ah}^2 \\
       & \leq Cexp(ct_n)\big((\Delta t)^2 + h^{2\widehat{r}_u} + h^{2k_{pr}+2} +h^{2k_{\lambda}+2}\big), 
    \end{aligned}
\end{equation}
where $\widehat{r}_u = min\{k_{u},k_g\}$, with $C$ the constant in \Cref{lemma: discrete remainder velocity estimates UNS}.

Furthermore, under the Assumption  \ref{assumption: Regularity assumptions for velocity estimate} and \Cref{lemma: discrete remainder velocity estimates kl=ku-1 UNS} we instead have the following velocity error estimates hold for $1 \leq n \leq N$
\begin{equation}\label{eq: Velocity Error Estimates kl=ku-1 UNS}
    \begin{aligned}
       \norm{\eu^n}_{L^2(\Gah)}^2 + \sum_{k=1}^n\norm{\eu^k-\eu^{k-1}}_{L^2(\Gah)}^2 &+ \Delta t \sum_{k=1}^n \norm{\eu^k}_{\ah}^2 \\
       & \leq Cexp(ct_n)\big((\Delta t)^2 + h^{2r_u} + h^{2k_{pr}+2} +h^{2k_{\lambda}+2}\big), 
    \end{aligned}
\end{equation}
where $r_u = min\{k_{u},k_g-1\}$, with $C$ the constant in \Cref{lemma: discrete remainder velocity estimates kl=ku-1 UNS}.
\end{theorem}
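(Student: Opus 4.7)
The plan is to treat \cref{theorem: Velocity Error Estimates UNS} essentially as a corollary of \cref{lemma: discrete remainder velocity estimates UNS} (respectively \cref{lemma: discrete remainder velocity estimates kl=ku-1 UNS}) combined with the Ritz-Stokes error bounds of \cref{lemma: Error Bounds Ritz-Stokes UNS}. The starting point is the decomposition introduced in \eqref{eq: decomposition error 1 Ritz UNS}, namely $\eu^n = \rho_\bfu^n + \thbfu^n$, which allows us to split each norm appearing on the left-hand side into an interpolation piece and a discrete-remainder piece by the triangle inequality (with the usual factor of $2$ in the squared form).

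First I would bound the discrete remainder contributions: the terms $\norm{\thbfu^n}_{L^2(\Gah)}^2$, $\sum_{k=1}^n \norm{\thbfu^k-\thbfu^{k-1}}_{L^2(\Gah)}^2$, and $\Delta t \sum_{k=1}^n \norm{\thbfu^k}_{\ah}^2$ are controlled directly by \eqref{eq: discrete remainder velocity estimates UNS} in the case $k_\lambda = k_u$ (respectively \eqref{eq: discrete remainder velocity estimates kl=ku-1 UNS} for $k_\lambda = k_u -1$), which already produces exactly the right-hand side stated in the theorem.

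Next I would handle the Ritz-Stokes interpolation errors. For the energy contribution, the improved bound \eqref{eq: Error Bounds Ritz-Stokes improved UNS} (resp.\ \eqref{eq: Error Bounds Ritz-Stokes UNS}) yields
\begin{equation*}
\Delta t \sum_{k=1}^n \norm{\rho_\bfu^k}_{\ah}^2 \leq c\, t_n\, h^{2\widehat{r}_u} \sup_{t\in[0,T]}\norm{\bfu}_{H^{k_u+1}(\Ga)}^2,
\end{equation*}
with $\widehat{r}_u$ replaced by $r_u$ in the second case. For the $L^2(\Gah)$ term I split $\rho_\bfu^n$ into tangential and normal components using $\bfI = \bfPg + \bfng \otimes \bfng$; the tangential part is controlled at order $h^{\widehat{r}_u+1}$ by \eqref{eq: Error Bounds Ritz-Stokes L2 improved UNS}, and the normal part at order $h^{\widehat{r}_u + 1/2}$ by the same estimate, both of which are subsumed by $h^{2\widehat{r}_u}$. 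For the time differences, I would write
\begin{equation*}
\rho_\bfu^k - \rho_\bfu^{k-1} = \int_{t_{k-1}}^{t_k} \partial_t \rho_\bfu(t,\cdot)\, dt,
\end{equation*}
apply Cauchy-Schwarz in time and sum over $k$, obtaining $\sum_{k=1}^n \norm{\rho_\bfu^k - \rho_\bfu^{k-1}}_{L^2(\Gah)}^2 \leq \Delta t \,\norm{\partial_t \rho_\bfu}_{L^2([0,T];L^2(\Gah))}^2$, and then invoke the time-derivative Ritz-Stokes estimate (the $a=1$ case of \eqref{eq: Error Bounds Ritz-Stokes L2 improved UNS} combined with the normal-part bound) to show this is at most $c\Delta t\, h^{2\widehat{r}_u+1} \norm{\partial_t \bfu}_{L^2([0,T];H^{k_u+1}(\Ga))}^2$, which is again absorbed into $h^{2\widehat{r}_u}$ for $h \leq 1$ and $\Delta t \leq T$.

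There is no genuine obstacle here: once the decomposition is in place and the appropriate Ritz-Stokes estimate is chosen according to whether $k_\lambda = k_u$ or $k_\lambda = k_u -1$, it remains only to collect the two contributions and to verify that all secondary terms can be absorbed into the stated right-hand side. The only mild subtlety is ensuring, in the $L^2(\Gah)$ bound of the normal component of $\rho_\bfu^n$, that the half-power loss coming from \eqref{eq: Error Bounds Ritz-Stokes L2 improved UNS} is dominated by $h^{2\widehat{r}_u}$, which holds unconditionally since $\widehat{r}_u \geq 1$.
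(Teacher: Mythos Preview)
Your proposal is correct and follows precisely the approach of the paper's (one-line) proof: decompose via \eqref{eq: decomposition error 1 Ritz UNS}, apply the triangle inequality, and combine the Ritz-Stokes bounds of \cref{lemma: Error Bounds Ritz-Stokes UNS} with the discrete-remainder estimates of \cref{lemma: discrete remainder velocity estimates UNS} and \cref{lemma: discrete remainder velocity estimates kl=ku-1 UNS}. One simplification worth noting: since $\norm{\cdot}_{L^2(\Gah)} \leq \norm{\cdot}_{\ah}$ by the definition \eqref{eq: energy norm UNS}, the $L^2$ and time-difference contributions of $\rho_\bfu$ are already controlled at order $h^{\widehat r_u}$ (resp.\ $h^{r_u}$) directly by \eqref{eq: Error Bounds Ritz-Stokes improved UNS} (resp.\ \eqref{eq: Error Bounds Ritz-Stokes UNS}), so the tangential/normal splitting and the appeal to \eqref{eq: Error Bounds Ritz-Stokes L2 improved UNS} (which is stated only for $k_\lambda = k_u$) are unnecessary.
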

\begin{proof}
    This can be readily seen from the decomposition \eqref{eq: decomposition error 1 Ritz UNS}, the triangle inequality, the interpolation estimates \eqref{eq: Error Bounds Ritz-Stokes UNS}, \eqref{eq: Error Bounds Ritz-Stokes improved UNS} and the estimates of the discrete remainder in \eqref{eq: discrete remainder velocity estimates UNS}, \eqref{eq: discrete remainder velocity estimates kl=ku-1 UNS}.
\end{proof}

\subsection{Pressure a-priori estimates for $\underline{k_\lambda = k_u}$}\label{sec: pressure a-priori kl=ku UNS}
We now consider \cref{assumption: Regularity assumptions for velocity estimate} and the time-step condition in \cref{remark: About pressure stability UNS}, that is $$\Delta t \leq ch.$$
We also recall that \eqref{eq: uniform bound uha stability UNS} holds under the assumptions in \cref{lemma: discrete remainder velocity estimates UNS}. We now establish a-priori estimates for the two pressures when $k_\lambda = k_u$. We will make use of the ideas introduced in the stability analysis section (\Cref{Sec: Stability analysis UNS}), that is, the discrete inverse Stokes operator $\mathcal{A}_h$ \eqref{eq: Discrete inverse Stokes UNS} to bound the approximation of the time derivative in an appropriate negative norm,
and the $L^2\times H_h^{-1}$ \textsc{inf-sup} condition \eqref{eq: L^2 H^{-1} discrete inf-sup condition Gah Lagrange UNS} to establish optimal pressure a-priori estimates.





\begin{lemma}[Auxiliary result]\label{lemma: Auxiliary convergence result UNS}
Assume $\underline{k_\lambda = k_u}$, and that the regularity \cref{assumption: Regularity assumptions for velocity estimate} hold, with time-step $\Delta t \leq ch$. Let $(\bfu,\{p,\lambda\})$ be the solution of \eqref{eq: unstead NV Lagrange}  and let $\uh^k$ and $\{\ph^k,\lh^k\}$, $k=1,...,n$ be the discrete solutions of \eqref{eq: weak lagrange fully discrete UNS}, with initial condition $\uh^0=\mathcal{R}_h\bfu^0$. Recalling the bound \eqref{eq: uniform bound uha stability UNS}, then the following estimate holds for $1 \leq n \leq N$
\begin{equation}
    \begin{aligned}\label{eq: Auxiliary convergence result imrpoved UNS}
        \sum_{k=1}^n\Delta t \norm{\frac{\mathcal{A}_{h}(\thbfu^k - \thbfu^{k-1})}{\Delta t}}_{\ah}^2 \leq C\big((\Delta t)^2 + h^{2\widehat{r}_u} + h^{2k_{pr}+2} +h^{2k_{\lambda}+2}\big),
    \end{aligned}
\end{equation}
where $\widehat{r}_u = min\{k_{u},k_g\}$ and $C$ the constant in \Cref{lemma: discrete remainder velocity estimates UNS}
\end{lemma}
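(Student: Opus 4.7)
The plan is to test the first line of the error equation \eqref{eq: error equation UNS} with $\vh = \mathcal{A}_h(\thbfu^n - \thbfu^{n-1})$, which is an admissible choice in $\bfV_h^{div}$ since both $\thbfu^n$ and $\thbfu^{n-1}$ lie in $\bfV_h^{div}$ by construction of the modified Ritz-Stokes projection. Since $\vh \in \bfV_h^{div}$, the pressure term $\bhtil(\vh,\{\theta_p^n,\theta_\lambda^n\})$ vanishes. By the defining relation \eqref{eq: Discrete inverse Stokes UNS} of $\mathcal{A}_h$, the discrete time derivative becomes
\begin{equation*}
    \Big(\tfrac{\thbfu^n-\thbfu^{n-1}}{\Delta t},\mathcal{A}_h(\thbfu^n-\thbfu^{n-1})\Big)_{L^2(\Gah)} = \tfrac{1}{\Delta t}\,\ah\big(\mathcal{A}_h(\thbfu^n-\thbfu^{n-1}),\mathcal{A}_h(\thbfu^n-\thbfu^{n-1})\big) = \tfrac{1}{\Delta t}\norm{\mathcal{A}_h(\thbfu^n-\thbfu^{n-1})}_{\ah}^2,
\end{equation*}
producing the target term on the left. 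Note that $\textsc{Err}_5^C(\vh) = 0$ by the pseudo-Galerkin orthogonality of $\mathcal{R}_h$ (\cref{lemma: pseudo Galerkin orthogonality UNS}) applied to $\vh \in \bfV_h^{div}$.

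Next I would bound every remaining term by a quantity of the form $c\Delta t\,(\text{data})^2 + \tfrac{1}{C\Delta t}\norm{\mathcal{A}_h(\thbfu^n-\thbfu^{n-1})}_{\ah}^2$ via Young's inequality, so the second contribution can be absorbed by kickback. For the viscous term I use $\ah(\thbfu^n,\vh)\le\norm{\thbfu^n}_{\ah}\norm{\vh}_{\ah}$. For the consistency errors $\textsc{Err}_1^C,\dots,\textsc{Err}_4^C$ I invoke \cref{lemma: Consistency errors UNS} and use $\norm{\vh}_{L^2(\Gah)},\norm{\vh}_{\ah}\le\norm{\vh}_{\ah}$. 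For the interpolation errors $\textsc{Err}_1^I,\textsc{Err}_2^I$ I call \cref{lemma: Interpolation errors UNS}, crucially using the improved bound \eqref{eq: Interpolation errors improved UNS}, which is available precisely because $k_\lambda=k_u$ and $\vh\in\bfV_h^{div}$.

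The main obstacle is the inertia contribution $\mathcal{C}(\vh)$; here I apply the decomposition bound \eqref{eq: Trilinear Errors general 2 UNS} of \cref{lemma: Trilinear Errors UNS}, which splits the term into pieces involving $\norm{\rho_{\bfu}^n}_{\ah}$ (controlled by the Ritz–Stokes estimate \eqref{eq: Error Bounds Ritz-Stokes improved UNS}), $\norm{\theta_{\bfu}^{n-1}}_{L^2(\Gah)}^{1/2}\norm{\theta_{\bfu}^{n-1}}_{\ah}^{1/2}$ (controlled by \cref{lemma: discrete remainder velocity estimates UNS}), and $\norm{\thbfu^n}_{\ah}$; each is paired with $\norm{\vh}_{\ah}$ and absorbed using Young's inequality. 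The uniform energy bound $\sup_n\norm{\uh^{n-1}}_{\ah}\le C$ from \eqref{eq: uniform bound uha stability UNS} (valid because $\Delta t\le ch$) provides the control on the transport velocity required to make these estimates uniform in $n$.

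Finally, after absorbing the $\tfrac{1}{\Delta t}\norm{\mathcal{A}_h(\thbfu^n-\thbfu^{n-1})}_{\ah}^2$ contributions to the left-hand side, I sum over $k=1,\dots,n$ and multiply by $\Delta t$. The sums of the data terms telescope to $(\Delta t)^2$ factors times $L^2_t$ time-norms of $\partial_{tt}\bfu,\partial_t\bfu$, while the sums of the spatial interpolation/consistency pieces produce the geometric powers $h^{2\widehat r_u}, h^{2k_{pr}+2}, h^{2k_\lambda+2}$ after using the supremum-in-time regularity from \cref{assumption: Regularity assumptions for velocity estimate}. The sum $\Delta t\sum_{k=1}^n\norm{\thbfu^k}_{\ah}^2$ arising from the viscous and inertia estimates is controlled by \eqref{eq: discrete remainder velocity estimates UNS}, which yields a contribution of the same order and closes the bound \eqref{eq: Auxiliary convergence result imrpoved UNS}.
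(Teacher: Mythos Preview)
Your proposal is correct and follows essentially the same route as the paper: test \eqref{eq: error equation UNS} with $\mathcal{A}_h(\thbfu^n-\thbfu^{n-1})\in\bfV_h^{div}$, use the defining relation of $\mathcal{A}_h$ to produce the target term, bound each error by Young's inequality, absorb, sum, and close with \cref{lemma: discrete remainder velocity estimates UNS}. Two small remarks: (i) the paper bounds the inertia term via \eqref{eq: Trilinear Errors general 1 UNS} together with the uniform bound \eqref{eq: uniform bound uha stability UNS}, whereas you invoke \eqref{eq: Trilinear Errors general 2 UNS}; both work and give the same order. (ii) Be careful with $\textsc{Err}_3^C$: its bound \eqref{eq: Consistency error 3 UNS} carries $\norm{\vh}_{H^1(\Gah)}$, not $\norm{\vh}_{\ah}$, so you need the improved coercivity \eqref{eq: improved h1-ah bound UNS} (valid since $\vh\in\bfV_h^{div}$ and $k_\lambda=k_u$) to reduce it to $\norm{\vh}_{\ah}$ before applying Young's inequality; this is another place where the hypothesis $k_\lambda=k_u$ is used.
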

\begin{proof}
We start by testing our error equations \eqref{eq: error equation UNS} with $\mathcal{A}_{h}(\thbfu^n - \thbfu^{n-1}) \in \bfV_h^{div}$ (which is rigorous since $\thbfu^n - \thbfu^{n-1} \in \bfV_h^{div}$ for all $n\geq 1$), then recalling \eqref{eq: Discrete inverse Stokes UNS} we derive that 
\begin{equation}
    \begin{aligned}\label{eq: Auxiliary convergence result inside UNS}
        \frac{1}{\Delta t }\norm{\mathcal{A}_{h}(\thbfu^n - \thbfu^{n-1})}_{\ah}^2 &+ \ah(\thbfu^n,\mathcal{A}_{h}(\thbfu^n - \thbfu^{n-1})) \leq 2\Big( |\sum_{i=1}^4 \textsc{Err}_{i}^{C}(\mathcal{A}_{h}(\thbfu^n - \thbfu^{n-1}))| \\[-5pt]
        &\qquad \  +  |\sum_{i=1}^2 \textsc{Err}_{i}^{I}(\mathcal{A}_{h}(\thbfu^n - \thbfu^{n-1}))| + |\mathcal{C}(\mathcal{A}_{h}(\thbfu^n - \thbfu^{n-1}))|\Big),
    \end{aligned}
\end{equation}
where we also utilized that $\textsc{Err}_{5}^{C}(\mathcal{A}_{h}(\thbfu^n - \thbfu^{n-1}))=0$ by \eqref{eq: surface Ritz-Stokes projection UNS}, since $\mathcal{A}_{h}(\thbfu^n - \thbfu^{n-1}) \in \bfV_h^{div}$. Now we want to bound the rest of the results appropriately. Again, this is similar to \cref{Lemma: auxilary stab bounds UNS}. The idea is a suitable use of Young's inequality followed by a kickback argument against the first term. We have the following:
\begin{itemize}
    \item The second can be bounded as
    \begin{equation*}
        \begin{aligned}
          \ah(\thbfu^n,\mathcal{A}_{h}(\thbfu^n - \thbfu^{n-1})) \leq   c\Delta t \norm{\thbfu^n}_{\ah}^2 +  \frac{1}{5\Delta t }\norm{\mathcal{A}_h(\thbfu^n - \thbfu^{n-1})}_{\ah}^2.
        \end{aligned}
    \end{equation*}
\item For the interpolations errors using \Cref{lemma: Interpolation errors UNS} and specifically \eqref{eq: Interpolation errors improved UNS}  we get
        \begin{equation*}
            \begin{aligned}
                &|\sum_{i=1}^2 \textsc{Err}_{i}^{I}(\mathcal{A}_{h}(\thbfu^n - \thbfu^{n-1}))| \leq \Delta t \frac{h^{2\widehat{r}_u+2}}{\Delta t}\norm{\partial_t\bfu}_{L^2(I_n;H^{k_u+1}(\Ga))}^2 \\
                &\qquad + c \Delta t(h^{2k_{pr}+2} + h^{2k_{\lambda}+2})\big( \norm{p^n}_{H^{k_{pr}+1}(\Ga)}^2  
        + \norm{\lambda^n}_{H^{k_{\lambda}+1}(\Ga)}^2\big)
        + \frac{1}{5\Delta t }\norm{\mathcal{A}_h(\thbfu^n - \thbfu^{n-1})}_{\ah}^2.
            \end{aligned}
        \end{equation*}
\item For the consistency errors using \Cref{lemma: Consistency errors UNS} and $\norm{\mathcal{A}_{h}(\thbfu^n - \thbfu^{n-1})}_{H^1(\Gah)} \leq \norm{\mathcal{A}_{h}(\thbfu^n - \thbfu^{n-1})}_{\ah}$, cf. \eqref{eq: improved h1-ah bound UNS}, we may obtain
        \begin{equation*}
            \begin{aligned}
      &|\sum_{i=1}^4 \textsc{Err}_{i}^{C}(\mathcal{A}_{h}(\thbfu^n - \thbfu^{n-1}))| \leq c(\Delta t)^2 \big(\norm{\partial_{tt}\bfu}_{L^2(I_n;L^2(\Ga))}^2 + \norm{\partial_{t}\bfu}_{L^2(I_n;L^4(\Ga))}^2\big)\\
      &\qquad +c\Delta t h^{2k_g}\big(\norm{\bfu^n}_{H^1(\Ga)}^2 
       +\norm{\partial_t\bfu^n}_{H^1(\Ga)}^2 + \norm{p^n}_{H^1(\Ga)}^2 + \norm{\lambda^n}_{L^2(\Ga)}^2\big) + c\Delta t h^{2k_g+2}\norm{\bff^n}_{L^2(\Ga)}^2  \\
       &\qquad \qquad + \frac{1}{5\Delta t }\norm{\mathcal{A}_h(\thbfu^n - \thbfu^{n-1})}_{\ah}^2.
            \end{aligned}
        \end{equation*}
\item Finally we can bound the inertia term with the help of the uniform estimate \eqref{eq: uniform bound uha stability UNS} and bound \eqref{eq: Trilinear Errors general 1 UNS} in \Cref{lemma: Trilinear Errors UNS}
\begin{equation*}
    \begin{aligned}
        |\mathcal{C}(\mathcal{A}_{h}(\thbfu^n - \thbfu^{n-1}))| \leq c\Delta t \norm{\eu^{n-1}}_{L^2(\Gah)}\norm{\eu^{n-1}}_{\ah} + C\Delta t \norm{\eu^n}_{\ah}^2 + \frac{1}{5\Delta t }\norm{\mathcal{A}_h(\thbfu^n - \thbfu^{n-1})}_{\ah}^2,
    \end{aligned}
\end{equation*}
where $C$ is the constant in \eqref{eq: uniform bound uha stability UNS}.
\end{itemize}
Now putting everything together, using a kickback argument against the term $\frac{1}{\Delta t }\norm{\mathcal{A}_h(\thbfu^n - \thbfu^{n-1})}_{\ah}^2$, summing for $k=1,...,n$ and using the convergence results involving the velocity, \Cref{lemma: discrete remainder velocity estimates UNS} and \eqref{eq: Velocity Error Estimates UNS} in \Cref{theorem: Velocity Error Estimates UNS}, we get our desired result \eqref{eq: Auxiliary convergence result imrpoved UNS}.
\end{proof}

We now want to finish the convergence results by providing error estimates for the two pressures. First, it suffices to control the discrete remainders $\{\theta_p^n,\theta_{\lambda}^n\}$. For that, we will use the same techniques as in pressure stability estimates; see \Cref{Lemma: Pressure stab Estimate UNS}. 



\begin{lemma}\label{lemma: discrete remainder pressure estimates improved UNS}
Assume $\underline{k_{\lambda}=k_u}$ and that \cref{assumption: Regularity assumptions for velocity estimate} hold, with time-step $\Delta t \leq ch$.  Let $(\bfu,\{p,\lambda\})$ be the solution of \eqref{eq: unstead NV Lagrange}  and let 
 $\uh^k$ and $\{\ph^k,\lh^k\}$, $k=1,...,n$ be the discrete solutions of \eqref{eq: weak lagrange fully discrete UNS}, with initial condition $\uh^0=\mathcal{R}_h\bfu^0$. Recalling the bound \eqref{eq: uniform bound uha stability UNS}, then the following estimate holds for $1 \leq n \leq N$
\begin{equation}\label{eq: discrete remainder pressure estimates improved UNS}
            \Delta t \sum_{k=1}^{n} \norm{\{\theta_p^k,\theta_{\lambda}^k\}}_{L^2(\Gah)\times H_h^{-1}(\Gah)}^2 \leq C\big((\Delta t)^2 + h^{2\widehat{r}_u} + h^{2k_{pr}+2} +h^{2k_{\lambda}+2}\big),
     \end{equation}
where $\widehat{r}_u = min\{k_{u},k_g\}$, 
and $C$ the constant in \Cref{lemma: discrete remainder velocity estimates UNS}.   
\end{lemma}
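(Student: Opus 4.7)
The plan is to mimic the pressure stability argument in \Cref{Lemma: Pressure stab Estimate UNS}, but this time applied to the error equation \eqref{eq: error equation UNS} rather than to the finite element system directly. The key gain compared with a direct $L^2\times L^2$ bound is that since $k_\lambda = k_u$ we can afford to test against the $L^2\times H_h^{-1}$ \textsc{inf-sup} condition \eqref{eq: L^2 H^{-1} discrete inf-sup condition Gah Lagrange UNS}, which divides by the stronger norm $\norm{\vh}_{H^1(\Gah)}$; this is exactly what the dual estimate in \Cref{lemma: dual estimate UNS} is tailored for.

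First, I would fix $n\geq 1$, apply the \textsc{inf-sup} inequality \eqref{eq: L^2 H^{-1} discrete inf-sup condition Gah Lagrange UNS} to $\{\theta_p^n,\theta_\lambda^n\}$, and then rewrite $\bhtil(\vh,\{\theta_p^n,\theta_\lambda^n\})$ from the first line of the error equation \eqref{eq: error equation UNS} as
\[
\bhtil(\vh,\{\theta_p^n,\theta_\lambda^n\})
= -\Big(\frac{\thbfu^n-\thbfu^{n-1}}{\Delta t},\vh\Big)_{L^2(\Gah)} - \ah(\thbfu^n,\vh)
+ \sum_{i=1}^{5}\textsc{Err}_i^C(\vh) + \sum_{i=1}^{2}\textsc{Err}_i^I(\vh) + \mathcal{C}(\vh).
\]
For each of the terms on the right, I need an upper bound of the form $(\text{data})\cdot\norm{\vh}_{H^1(\Gah)}$. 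The difference operator $\frac{\thbfu^n-\thbfu^{n-1}}{\Delta t}$ lies in $\bfV_h^{div}$, so applying \Cref{lemma: dual estimate UNS} I get
\[
\Big(\tfrac{\thbfu^n-\thbfu^{n-1}}{\Delta t},\vh\Big)_{L^2(\Gah)} \leq \Big(ch\,\norm{\tfrac{\thbfu^n-\thbfu^{n-1}}{\Delta t}}_{L^2(\Gah)} + c\,\norm{\tfrac{\mathcal{A}_h(\thbfu^n-\thbfu^{n-1})}{\Delta t}}_{\ah}\Big)\norm{\vh}_{H^1(\Gah)},
\]
and the $h$-times-$L^2$ piece can be absorbed into the $\mathcal{A}_h$-norm exactly as in the kickback step of \eqref{eq: Pressure stab Estimate UNS inside inf-sup}.

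Next, for the consistency and interpolation errors I would quote \Cref{lemma: Consistency errors UNS,lemma: Interpolation errors UNS} directly, using $\norm{\vh}_{\ah}\leq\norm{\vh}_{H^1(\Gah)}$. For the inertia remainder I would use the second form \eqref{eq: Trilinear Errors general 2 UNS} in \Cref{lemma: Trilinear Errors UNS} together with the already-proven uniform bound \eqref{eq: uniform bound uha stability UNS} on $\norm{\uh^{n-1}}_{\ah}$ (valid here because $\Delta t\le ch$), which gives an estimate of the form $|\mathcal{C}(\vh)|\le c(\,\norm{\rho_\bfu^n}_{\ah} + \norm{\theta_\bfu^{n-1}}_{L^2}^{1/2}\norm{\theta_\bfu^{n-1}}_{\ah}^{1/2} + \norm{\thbfu^n}_{\ah})\norm{\vh}_{H^1(\Gah)}$. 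For the $\ah$-term $\ah(\thbfu^n,\vh)$, Cauchy--Schwarz and $\norm{\vh}_{\ah}\le\norm{\vh}_{H^1(\Gah)}$ suffice.

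Collecting, dividing by $\norm{\vh}_{H^1(\Gah)}$, squaring, multiplying by $\Delta t$ and summing for $k=1,\dots,n$, every right-hand side term has already been controlled by $C\bigl((\Delta t)^2 + h^{2\widehat{r}_u} + h^{2k_{pr}+2} + h^{2k_\lambda+2}\bigr)$: the $\mathcal{A}_h$-discrete-time-derivative piece by \Cref{lemma: Auxiliary convergence result UNS}; the $\ah$ and inertia terms by the velocity estimate \eqref{eq: Velocity Error Estimates UNS} of \Cref{theorem: Velocity Error Estimates UNS}; and the consistency/interpolation errors by the Ritz--Stokes bounds \eqref{eq: Error Bounds Ritz-Stokes improved UNS} and the standard Taylor--Hood interpolation estimates. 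The main obstacle is bookkeeping: making sure that wherever $\norm{\vh}_{H^1(\Gah)}$ appears (as opposed to $\norm{\vh}_{\ah}$) one has invoked the dual-norm estimate \Cref{lemma: dual estimate UNS} or used \eqref{eq: improved h1-ah bound UNS} on discretely-divergence-free quantities; this is precisely what makes the $k_\lambda = k_u$ hypothesis essential and prevents the argument from extending naively to the $k_\lambda = k_u - 1$ case.
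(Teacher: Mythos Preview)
Your approach is essentially the same as the paper's: apply the $L^2\times H_h^{-1}$ \textsc{inf-sup} condition, expand $\bhtil(\vh,\{\theta_p^n,\theta_\lambda^n\})$ via the error equation \eqref{eq: error equation UNS}, handle the discrete time derivative with \Cref{lemma: dual estimate UNS}, and close with \Cref{lemma: Auxiliary convergence result UNS} and \Cref{theorem: Velocity Error Estimates UNS}.

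There is one genuine gap in your bookkeeping: you cite \Cref{lemma: Consistency errors UNS} to cover all five consistency errors, but that lemma only bounds $\textsc{Err}_1^C$ through $\textsc{Err}_4^C$. The fifth term $\textsc{Err}_5^C(\vh)=\ah(\mathcal{R}_h\bfu^n,\vh)-a(\bfu^n,\vhl)$ does \emph{not} vanish for arbitrary $\vh\in\bfV_h$ (it only vanishes on $\bfV_h^{div}$), and since the \textsc{inf-sup} supremum ranges over all of $\bfV_h$ you must bound it explicitly. The paper handles this by recognising, via the definition \eqref{eq: surface Ritz-Stokes projection UNS}, that $\textsc{Err}_5^C(\vh)=-\bhtil(\vh,\{\mathcal{P}_h(\bfu^n),\mathcal{L}_h(\bfu^n)\})$; it then bounds this term through a careful split of the normal component using the Scott--Zhang super-approximation \eqref{eq: super-approximation estimate 2 UNS}, the $H_h^{-1}$ definition \eqref{eq: H^-1h definition UNS}, and the improved Ritz--Stokes estimates \eqref{eq: Error Bounds Ritz-Stokes improved UNS}, \eqref{eq: Error Bounds Ritz-Stokes improved L2 Lh UNS}, giving $|\bhtil(\vh,\{\mathcal{P}_h(\bfu^n),\mathcal{L}_h(\bfu^n)\})|\le ch^{\widehat{r}_u}\norm{\bfu^n}_{H^{k_u+1}}\norm{\vh}_{H^1(\Gah)}$. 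A simpler alternative that also works is to write $\textsc{Err}_5^C(\vh)=\ah(\mathcal{R}_h\bfu^n-\bfu^n,\vh)+[\ah(\bfu^n,\vh)-a(\bfu^n,\vhl)]$ and apply \eqref{eq: Error Bounds Ritz-Stokes improved UNS} and \eqref{eq: Geometric perturbations a UNS} directly, which again gives $ch^{\widehat{r}_u}\norm{\vh}_{H^1(\Gah)}$. Either way, this term must be addressed; with that patch your argument goes through.
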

\begin{proof}
Recall the $L^2 \times H_h^{-1}$ \textsc{inf-sup} condition \Cref{lemma: L^2 H^{-1} discrete inf-sup condition Gah Lagrange UNS}, then since $\{\theta_p^n,\theta_{\lambda}^n\}\in Q_h\times\Lambda_h$ it holds
    \begin{equation}
        \begin{aligned}\label{eq: Pressure conv Estimate UNS inside inf-sup UNS}
            \norm{\{\theta_p^n,\theta_{\lambda}^n\}}_{L^2(\Gah)\times H_h^{-1}(\Gah)} \leq \sup_{\vh \in \bfV_h} \frac{\bhtil(\vh,\{\theta_p^n,\theta_{\lambda}^n\})}{\norm{\vh}_{H^{1}(\Gah)}}.
        \end{aligned}
    \end{equation}

Considering now the error equation \eqref{eq: error equation UNS}, solving for $\bhtil(\vh,\{\theta_p^n,\theta_{\lambda}^n\})$, using the Ritz-Stokes projection \eqref{eq: surface Ritz-Stokes projection UNS}, and the velocity error estimate \eqref{eq: Velocity Error Estimates UNS} in \Cref{theorem: Velocity Error Estimates UNS}, along with the bounds \eqref{eq: Interpolation errors UNS}--\eqref{eq: Interpolation error 2 UNS} in \Cref{lemma: Interpolation errors UNS}, \eqref{eq: Trilinear Errors general 1 UNS} in \Cref{lemma: Trilinear Errors UNS}, and  \eqref{eq: Consistency error 1 UNS}--\eqref{eq: Consistency error 4 UNS} in \Cref{lemma: Consistency errors UNS}, we have the following 
\begin{equation}
    \begin{aligned}\label{eq: disc remaninder pressure inside 1 UNS}
        &\bhtil(\vh,\{\theta_p^n,\theta_{\lambda}^n\})\\
    &\leq |(\frac{\thbfu^n - \thbfu^{n-1}}{\Delta t},\vh)_{L^2(\Gah)}|+ \norm{\thbfu^n}_{\ah}\norm{\vh}_{\ah} + C\Big( \frac{c}{\sqrt{\Delta t}}\norm{\partial_t \rho_\bfu(t,\cdot)}_{L^2(I_n;L^2(\Gah))}\norm{\vh}_{L^2(\Gah)}  \\
    &\ + c(h^{k_{pr}+1} + h^{k_{\lambda}+1})\norm{\vh}_{\ah} + ch^{k_g}\norm{\vh}_{H^1(\Gah)}\Big) + \Big(\sqrt{\Delta t} \norm{\partial_{tt}\bfu}_{L^2(I_n;L^2(\Ga))} \\
    &\ + \sqrt{\Delta t}\norm{\partial_t\bfu}_{L^2(I_n;L^4(\Ga))}\norm{\bfu^n}_{H^1(\Ga)}\Big)\norm{\vh}_{\ah} +\big|\bhtil (\vh,\{\mathcal{P}_h(\bfu),\mathcal{L}_h(\bfu)\})\big|.
    \end{aligned}
\end{equation}


We need to bound the first and last term. For the first term, similar to \Cref{Lemma: Pressure stab Estimate UNS}, since $\thbfu^n - \thbfu^{n-1}\in\bfV_h^{div}$, with the use of \eqref{eq: dual estimate UNS} and the definition of the discrete inverse Stokes operator $\mathcal{A}_h$ \eqref{eq: Discrete inverse Stokes UNS} we have that
\begin{equation}
    \begin{aligned}\label{eq: disc remaninder pressure inside 2 UNS}
        \sup_{\vh\in\bfV_h}\frac{|(\frac{\thbfu^n - \thbfu^{n-1}}{\Delta t},\vh)_{L^2(\Gah)}|}{\norm{\vh}_{H^1(\Gah)}} \leq c\norm{\frac{\mathcal{A}_{h}(\thbfu^n - \thbfu^{n-1})}{\Delta t}}_{\ah}.
    \end{aligned}
\end{equation}
For the last term, the bound \eqref{bhtilde boundedness UNS}, the super approximation estimates \eqref{eq: super-approximation estimate 2 UNS}, and \eqref{eq: super-approximation stability UNS}, the modified Ritz-Stokes interpolation estimates \eqref{eq: Error Bounds Ritz-Stokes improved UNS}, \eqref{eq: Error Bounds Ritz-Stokes improved L2 Lh UNS} and the $H_h^{-1}$-norm definition \eqref{eq: H^-1h definition UNS}, coupled with the geometric approximations \eqref{eq: geometric errors 1 UNS},
yield the following
\begin{equation}
    \begin{aligned}\label{eq: disc remaninder pressure inside 3 UNS}
        &\big|\bhtil (\vh,\{\mathcal{P}_h(\bfu),\mathcal{L}_h(\bfu)\})\big| \leq \norm{\vh}_{\ah}\norm{\mathcal{P}_h(\bfu)}_{L^2(\Gah)} + \Big|\int_{\Gah}\vh\cdot(\nh-\bfn)\mathcal{L}_h(\bfu) \Big|\\
        &\quad +  \Big|\int_{\Gah}\Ihz(\vh\cdot\bfn)\mathcal{L}_h(\bfu)  + \int_{\Gah}\big(\Ihz(\vh\cdot\bfn)-\vh\cdot\bfn\big)\mathcal{L}_h(\bfu)\Big| \\
        &\leq \norm{\vh}_{\ah}\norm{\mathcal{P}_h(\bfu)}_{L^2(\Gah)} + ch\norm{\vh}_{L^2(\Gah)}\norm{\mathcal{L}_h(\bfu)}_{L^2(\Gah)} + \norm{\vh}_{H^1(\Gah)}\norm{\mathcal{L}_h(\bfu)}_{H_h^{-1}(\Gah)}\\
        &\leq ch^{\widehat{r}_u}\norm{\bfu}_{H^{k_u+1}}\norm{\vh}_{H^1(\Gah)}.
        \end{aligned}
\end{equation}
Now we substitute \eqref{eq: disc remaninder pressure inside 1 UNS}-\eqref{eq: disc remaninder pressure inside 3 UNS} into \eqref{eq: Pressure conv Estimate UNS inside inf-sup UNS}, and factor out the term $\norm{\vh}_{H^1(\Gah)}$ (since $\norm{\vh}_{\ah} \leq \norm{\vh}_{H^1(\Gah)}$). Then, squaring and applying the operator $\Delta t \sum_{k=1}^n$, along with the velocity estimates \eqref{eq: discrete remainder velocity estimates UNS} and the auxiliary convergence result \eqref{eq: Auxiliary convergence result imrpoved UNS}, we obtain our final result. 
\end{proof}

Now, by the decompositions \eqref{eq: decomposition error 2 lagrange UNS}, \eqref{eq: decomposition error 3 lagrange UNS}, the interpolation bounds \eqref{eq: interpolation errors pressures UNS}, and a simple application of the triangles inequality one may then derive the following estimate for the pressure errors.
\begin{theorem}[Pressures Error Estimate] \label{theorem: Pressures Error Estimate UNS}
Under the Assumptions  \ref{assumption: Regularity assumptions for velocity estimate} and \Cref{lemma: discrete remainder pressure estimates improved UNS} the following pressure error estimates holds for $1 \leq n \leq N$\vspace{-1mm}
\begin{equation}\label{eq: Pressure Error Estimates improved UNS}
    \begin{aligned}
       \Delta t \sum_{k=1}^{n} \norm{\{\ep^k,\el^k\}}_{L^2(\Gah)\times H_h^{-1}(\Gah)}^2 \leq C\big((\Delta t)^2 + h^{2\widehat{r}_u} + h^{2k_{pr}+2} +h^{2k_{\lambda}+2}\big),
    \end{aligned}
\end{equation}\vspace{-1mm}
where $\widehat{r}_u = min\{k_{u},k_g\}$ with $C$ the constant in \Cref{lemma: discrete remainder velocity estimates UNS}.   
\end{theorem}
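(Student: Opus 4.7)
The plan is to use the triangle inequality on the decompositions \eqref{eq: decomposition error 2 lagrange UNS}--\eqref{eq: decomposition error 3 lagrange UNS} to split each pressure error into an interpolation part and a discrete remainder part, and then to invoke the already established bounds on each piece. Concretely, writing $\ep^k = \rho_p^k + \theta_p^k$ and $\el^k = \rho_\lambda^k + \theta_\lambda^k$, I would estimate
\begin{equation*}
\norm{\{\ep^k,\el^k\}}_{L^2(\Gah)\times H_h^{-1}(\Gah)}^2 \le 2\norm{\{\rho_p^k,\rho_\lambda^k\}}_{L^2(\Gah)\times H_h^{-1}(\Gah)}^2 + 2\norm{\{\theta_p^k,\theta_\lambda^k\}}_{L^2(\Gah)\times H_h^{-1}(\Gah)}^2,
\end{equation*}
multiply by $\Delta t$ and sum over $k=1,\dots,n$.

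First I would handle the discrete remainder contribution, which is exactly the content of \Cref{lemma: discrete remainder pressure estimates improved UNS}, giving the bound $C((\Delta t)^2 + h^{2\widehat r_u} + h^{2k_{pr}+2} + h^{2k_\lambda+2})$ directly. Next, for the Lagrange interpolation errors, I would apply the standard estimates \eqref{eq: interpolation errors pressures UNS} to obtain $\norm{\rho_p^k}_{L^2(\Gah)} \leq ch^{k_{pr}+1}\norm{p^k}_{H^{k_{pr}+1}(\Ga)}$ and $\norm{\rho_\lambda^k}_{L^2(\Gah)} \leq ch^{k_\lambda+1}\norm{\lambda^k}_{H^{k_\lambda+1}(\Ga)}$. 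To convert the latter into an $H_h^{-1}$-bound, I would use the elementary embedding $\norm{\cdot}_{H_h^{-1}(\Gah)} \leq c\norm{\cdot}_{L^2(\Gah)}$ which is immediate from the definition \eqref{eq: H^-1h definition UNS}, since for any $\xi_h \in \Lambda_h$ one has $(\rho_\lambda^k,\xi_h)_{L^2(\Gah)} \leq \norm{\rho_\lambda^k}_{L^2(\Gah)}\norm{\xi_h}_{L^2(\Gah)} \leq \norm{\rho_\lambda^k}_{L^2(\Gah)}\norm{\xi_h}_{H^1(\Gah)}$.

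Summing in time, the interpolation contribution is controlled by $c\Delta t\,n\,(h^{2k_{pr}+2}\sup_t\norm{p}_{H^{k_{pr}+1}(\Ga)}^2 + h^{2k_\lambda+2}\sup_t\norm{\lambda}_{H^{k_\lambda+1}(\Ga)}^2) \leq c T (h^{2k_{pr}+2}+h^{2k_\lambda+2})$ under \Cref{assumption: Regularity assumptions for velocity estimate}, which is dominated by the right-hand side of \eqref{eq: Pressure Error Estimates improved UNS}. Combining this with the remainder bound from \Cref{lemma: discrete remainder pressure estimates improved UNS} yields the claimed estimate. I do not expect a genuine obstacle here: the theorem is essentially a bookkeeping consequence of the decomposition, and all the hard analytic work (the auxiliary bound for $\mathcal{A}_h(\theta_{\bfu}^k-\theta_{\bfu}^{k-1})/\Delta t$, the $L^2\times H_h^{-1}$ inf-sup, and the Ritz--Stokes estimates) has already been absorbed into \Cref{lemma: discrete remainder pressure estimates improved UNS}.
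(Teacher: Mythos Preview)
Your proposal is correct and matches the paper's own approach: the paper states just before the theorem that the result follows from the decompositions \eqref{eq: decomposition error 2 lagrange UNS}--\eqref{eq: decomposition error 3 lagrange UNS}, the interpolation bounds \eqref{eq: interpolation errors pressures UNS}, and a simple application of the triangle inequality, which is precisely what you do. Your explicit mention of the embedding $\norm{\cdot}_{H_h^{-1}(\Gah)} \leq c\norm{\cdot}_{L^2(\Gah)}$ to pass from the $L^2$ interpolation bound on $\rho_\lambda^k$ to the required $H_h^{-1}$ bound is a detail the paper leaves implicit, but it is correct and harmless.
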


\subsection{Pressure a-priori estimates for $\underline{k_\lambda = k_u-1}$}\label{sec: Pressure a-priori estimates for kl= ku-1 UNS}
Now we want to prove a-priori estimates when $k_\lambda=k_u-1$. Let us briefly discuss the differences between this case and \cref{sec: pressure a-priori kl=ku UNS}.

First, as we saw in \cref{remark: About the pressure stability for arbitrary convergence UNS} and will also observe from the a-priori pressure estimates later on, we require \emph{further regularity assumptions}. Second, since we cannot follow the steps in \cref{sec: pressure a-priori kl=ku UNS} because \cref{lemma: dual estimate UNS} does not hold for $k_\lambda=k_u-1$ (see \cref{remark: Lower Regularity convergence estimate UNS} for further details where one may prove a similar estimate in that case) instead of calculating an estimate with the help of the \emph{inverse Stokes operator} as in \cref{lemma: Auxiliary convergence result UNS}, we prove an error bound for the approximation of the time derivative in the following norm $$ \Delta t\sum_{n=1}^n\norm{\frac{\thbfu^n-\thbfu^{n-1}}{\Delta t}}_{L^2(\Gah)}^2.$$
As we shall see, such estimate is not immediately attainable by testing the error equation \eqref{eq: error equation UNS} with $\thbfu^n-\thbfu^{n-1}$, since terms involving test function in the energy norm $\norm{\cdot}_{\ah}$, see for example \eqref{eq: Interpolation error 2 UNS}, \eqref{eq: Trilinear Errors general 2 UNS} and \eqref{eq: Consistency error 3 UNS} would give suboptimal order of convergence. 

For that reason, we seek bounds for the consistency, interpolation and inertia errors such that the test functions are controlled by the $L^2$-norm instead. To achieve this, first, we now consider the standard Ritz-Stokes projection \eqref{eq: surface Ritz-Stokes projection std UNS}, thus removing the problematic bound \eqref{eq: Interpolation error 2 UNS} involving the bilinear forms $\bhtil$. 
We also consider new initial condition for our discrete scheme \eqref{eq: weak lagrange fully discrete UNS}; $\uh^0 = \mathcal{R}_h^b\bfu^0= \mathcal{R}_h(\bfu^0,\{p^0,\lambda^0\})$. This is so that at time $t_0=0$ the \emph{discrete remainder} in \eqref{eq: decomposition error 1 Ritz 2 UNS} $\sigma_{\bfu}^0=0$; see \cref{remark: about initial conditions UNS}. Second, for the inertia terms \eqref{eq: Trilinear Errors general 2 UNS} we will have to apply integration by parts \eqref{eq: integration by parts UNS}, \eqref{eq: integration by parts cont UNS}. All these changes, compared to \cref{sec: pressure a-priori kl=ku UNS}, come at the expense of higher regularity assumptions, which are the following:
\begin{assumption}[Regularity assumptions II]\label{assumption: Regularity assumptions for velocity estimate 2 UNS}
We assume that the solution $(\bfu,\{p,\lambda\})$ has the following regularity   
\begin{equation*}
    \begin{aligned}
    &\{ p,\lambda\} \in W^{1,\infty}([0,T];H^{k_{pr}+1}(\Ga))\times W^{1,\infty}([0,T];H^{k_{\lambda}+1}(\Ga)),\\
        &\bfu \in H^2([0,T];L^2(\Ga))\cap W^{1,\infty}([0,T]; W^{1,\infty}(\Ga))\cap L^{\infty}([0,T]; W^{2,\infty}(\Ga))\cap  W^{1,\infty}([0,T];H^{k_u+1}(\Ga)),
    \end{aligned}
\end{equation*}
\end{assumption}

\noindent Considering the standard Ritz-Stokes projection \eqref{eq: Error Bounds Ritz-Stokes std UNS} we use the following new decompositions:
\begin{align}\label{eq: decomposition error 1 Ritz 2 UNS}
    \eu^{n} = \bfu^n - \uh^n &= \underbrace{(\bfu^n- \mathcal{R}_h^b(\bfu^n)) }_{\text{Interpolation error}} + \underbrace{(\mathcal{R}_h^b(\bfu^n) - \uh^n)}_{\text{discrete remainder}} := \eta_{\bfu}^n + \sigma_{\bfu}^n , \\
    \label{eq: decomposition error 2 Ritz 2 UNS}
     \{\ep^n, \el^n\} = \{p^n,\lambda^n\} - \{\ph^n,\lambda_h^n\} &= \underbrace{(\{p^n,\lambda^n\}-  \{\mathcal{P}_h^b(\bfu^n),\mathcal{L}_h^b(\bfu^n)\}}_{\text{Interpolation error}} + \underbrace{( \{\mathcal{P}_h^b(\bfu^n),\mathcal{L}_h^b(\bfu^n)\} - \{\ph,\lambda_h\})}_{\text{discrete remainder}}\nonumber \\
     &:= \eta_{\{p,\lambda\}}^n + \sigma_{\{p,\lambda\}}^n.
\end{align}
The bounds on the Ritz-Stokes interpolation error have already been proved in \cref{lemma: Error Bounds Ritz-Stokes std UNS}. Due to the new pressure decompositions \eqref{eq: decomposition error 2 Ritz 2 UNS}, and the new initial condition $\uh^0 = \mathcal{R}_h^b\bfu^0$, the new error equation is the following.
\begin{lemma}
   Let $(\vh,\{\qh,\xi_h\}) \in \bfV_h\times (Q_h\times \Lambda_h)$ be test functions, then for $\sigma_{\bfu},\, \sigma_{\{p,\lambda\}}$ as in  \eqref{eq: decomposition error 1 Ritz 2 UNS}, \eqref{eq: decomposition error 2 Ritz 2 UNS} the following error equations hold true $n \geq 1$
   \begin{align}
\begin{cases}\label{eq: error equation 2 UNS}
       (\frac{\sigma_{\bfu}^n -\sigma_{\bfu}^{n-1}}{\Delta t},\vh )_{L^2(\Gah)} + \ah(\sigma_{\bfu}^n,\vh)\,+\!\!\!\!\!\!&\bhtil(\vh,\sigma_{\{p,\lambda\}}^n) = \sum_{i=1}^3 \textsc{Err}_{i}^{C}(\vh) +  \textsc{Err}_{1}^{I}(\vh) + \mathcal{C}(\vh),\\
        & \bhtil(\sigma_{\bfu}^n,\{\qh,\xi_h\})= 0,
    \end{cases}
\end{align}
for all $\vh \in \bfV_h$ and $\{\qh,\xi_h\}\in Q_h\times \Lambda_h$, with the consistency errors:
\vspace{2mm}

\begin{minipage}[t]{9cm}
    \begin{itemize}
        \item[\textbullet\hspace{9mm}] \hspace{-9mm}$\textsc{Err}_1^{C}(\vh) := (\frac{\bfu^n - \bfu^{n-1}}{\Delta t},\vh )_{L^2(\Gah)} - (\partial_t\bfu^n,\vhl)_{L^2(\Ga)}$
        \item[\textbullet\hspace{9mm}] \hspace{-9mm}$ \textsc{Err}_2^{C}(\vh) := \chtil(\bfu^{n-1};\bfu^n,\vh)  - c(\bfu^n;\bfu^n,\vhl)$
    \end{itemize}
\end{minipage}
\begin{minipage}[t]{8cm}
    \begin{itemize}
        \item[\textbullet\hspace{12mm}] \hspace{-14mm} $ \textsc{Err}_3^{C}(\vh) := (\bff^n,\vhl)_{L^2(\Ga)}-(\bff_h^n,\vh)_{L^2(\Gah)}$
    \end{itemize}
\end{minipage}

\vspace{2mm}
\noindent and interpolations errors:
\vspace{2mm}

\begin{minipage}[t]{9cm}
    \begin{itemize}
        \item[\textbullet\hspace{9mm}] \hspace{-9mm}$ \textsc{Err}_1^{I}(\vh) := - (\frac{\eta_{\bfu}^n - \eta_{\bfu}^{n-1}}{\Delta t},\vh )_{L^2(\Gah)}$
    \end{itemize}
\end{minipage}
\begin{minipage}[t]{8cm}
    \begin{itemize}
        \item[\textbullet\hspace{19mm}] \hspace{-21mm} $ \mathcal{C}(\vh) := - \chtil(\eu^{n-1};\bfu^n,\vh) - \chtil(\uh^{n-1};\eu^{n},\vh).$
    \end{itemize}
\end{minipage}
\end{lemma}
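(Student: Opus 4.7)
The proof will proceed exactly along the lines of the earlier error equation lemma (\cref{lemma: error equation UNS}), and the main task is to verify that the ``extra'' bilinear form terms built into the standard Ritz--Stokes projection \eqref{eq: surface Ritz-Stokes projection std UNS} eliminate both $\textsc{Err}_2^{C}$ (the $b^L$ consistency error) and $\textsc{Err}_2^{I}$ (the pressure interpolation error) that appeared in \cref{lemma: error equation UNS}. The plan is to start from the discrete scheme \eqref{eq: weak lagrange fully discrete UNS} tested against $\vh$, isolate $(\tfrac{\uh^n-\uh^{n-1}}{\Delta t},\vh)_{L^2(\Gah)}$, and insert the decompositions \eqref{eq: decomposition error 1 Ritz 2 UNS}--\eqref{eq: decomposition error 2 Ritz 2 UNS} to convert everything into the discrete remainders $\sigma_{\bfu}^n$, $\sigma_{\{p,\lambda\}}^n$ plus Ritz--Stokes interpolation terms.

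At this point I would invoke the defining identity of the standard Ritz--Stokes projection,
\[
  \ah(\mathcal{R}_h^b\bfu^n,\vh) + \bhtil(\vh,\{\mathcal{P}_h^b(\bfu^n),\mathcal{L}_h^b(\bfu^n)\}) = a(\bfu^n,\vhl) + b^L(\vhl,\{p^n,\lambda^n\}),
\]
which is precisely what allows $\ah(\mathcal{R}_h^b\bfu^n,\vh)$ and the pressure interpolation terms $\bhtil(\vh,\{\mathcal{P}_h^b(\bfu^n),\mathcal{L}_h^b(\bfu^n)\})$ to collapse into $a(\bfu^n,\vhl)+b^L(\vhl,\{p^n,\lambda^n\})$. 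Then the continuous weak equation \eqref{eq: weak lagrange fully discrete 2 UNS} at $t=t^n$ replaces this sum with $-(\partial_t\bfu^n,\vhl)_{L^2(\Ga)} - c(\bfu^n;\bfu^n,\vhl) + (\bff^n,\vhl)_{L^2(\Ga)}$. The time-derivative and source-term differences between the $\Ga$- and $\Gah$-integrals then organize themselves into $\textsc{Err}_1^{C}$ and $\textsc{Err}_3^{C}$, while $\mathcal{R}_h^b\bfu^n = \bfu^n - \eta_{\bfu}^n$ yields the interpolation error $\textsc{Err}_1^{I}$.

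The remaining term to handle is the discrete minus continuous inertia, $\chtil(\uh^{n-1};\uh^n,\vh)-c(\bfu^n;\bfu^n,\vhl)$. Here I would simply add and subtract $\chtil(\bfu^{n-1};\bfu^n,\vh)$, split into a geometric/consistency piece $\textsc{Err}_2^{C}(\vh)$ and a difference-of-arguments piece, and apply the algebraic identity \eqref{eq: discrete inertia error UNS} to the latter, which produces the $\mathcal{C}(\vh)$ term. Putting these four rearrangements together gives exactly the right-hand side claimed in \eqref{eq: error equation 2 UNS}.

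Finally, the second equation $\bhtil(\sigma_{\bfu}^n,\{\qh,\xi_h\})=0$ is immediate: $\bhtil(\uh^n,\{\qh,\xi_h\})=0$ from the discrete constraint in \eqref{eq: weak lagrange fully discrete UNS}, and $\bhtil(\mathcal{R}_h^b\bfu^n,\{\qh,\xi_h\})=0$ from the second line of \eqref{eq: surface Ritz-Stokes projection std UNS}, so the difference vanishes. I do not foresee any real obstacle in this proof; it is an entirely algebraic rearrangement, and the only ``bookkeeping'' subtlety is tracking which bilinear-form perturbations are absorbed by the modified projection and which remain as consistency errors, which is precisely the reason this new error equation contains fewer terms than \cref{lemma: error equation UNS}.
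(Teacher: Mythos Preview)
Your proposal is correct and follows essentially the same approach as the paper: the paper's proof simply states that the argument proceeds exactly as in \cref{lemma: error equation UNS}, using the standard Ritz--Stokes projection \eqref{eq: surface Ritz-Stokes projection std UNS} in place of the modified one, and observes that the consistency and interpolation terms associated with $\bhtil(\cdot,\{\cdot,\cdot\})$ cancel out because of the decomposition \eqref{eq: decomposition error 2 Ritz 2 UNS}. Your more detailed bookkeeping (invoking the Ritz--Stokes identity, the continuous weak equation, and \eqref{eq: discrete inertia error UNS}) spells out precisely this collapse, so there is no divergence in strategy.
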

\begin{proof}
    The proof follows exactly as in \Cref{lemma: error equation UNS}, where we utilize the Ritz-Stokes projection as described in \eqref{eq: surface Ritz-Stokes projection std UNS}. This time, due to \eqref{eq: decomposition error 2 Ritz 2 UNS}, the consistency and interpolation errors related to the bilinear form $\bhtil(\cdot,\{\cdot,\cdot\})$ cancel out, leaving us with the remaining terms unchanged.
\end{proof}
We can readily see from \cref{sec: velocity a-priori estimates UNS}, and specifically following similar calculations to \cref{lemma: discrete remainder velocity estimates kl=ku-1 UNS} and \cref{theorem: Velocity Error Estimates UNS} that the following error estimates for the velocity hold:
\begin{align}
\label{eq: discrete remainder velocity estimates kl=ku-1 HR UNS}
\norm{\sigma_{\bfu}^n}_{L^2(\Gah)}^2 + \Delta t \sum_{k=1}^n \norm{\sigma_{\bfu}^k}_{\ah}^2 &\leq Cexp(ct_n)\big((\Delta t)^2 + h^{2r_u}\big),\\
\label{eq: Velocity Error Estimates kl=ku-1 HR UNS}
     \norm{\eu^n}_{L^2(\Gah)}^2 + \Delta t \sum_{k=1}^n \norm{\eu^k}_{\ah}^2  &\leq Cexp(ct_n)\big((\Delta t)^2 + h^{2r_u} + h^{2k_{pr}+2} +h^{2k_{\lambda}+2}\big),
\end{align}
where for the last estimate we used the standard Ritz-Stokes interpolation estimates \eqref{eq: Error Bounds Ritz-Stokes std UNS}. 
\begin{remark}\label{remark: init cond and stab}
Let us discuss, once more, the initial condition  and the condition \eqref{eq: Linfty uh bound UNS}:
\begin{enumerate}
    \item[i)] Regarding the initial condition, we chose $\uh^0 = \mathcal{R}_h^b\bfu^0$ such that $\sigma_{\bfu}^0=0$ and therefore the computations become straightforward at $t=0$. For example, the previous estimate \eqref{eq: discrete remainder velocity estimates kl=ku-1 HR UNS} is immediate from \cref{sec: velocity a-priori estimates UNS} or in the later auxiliary result \cref{lemma: Auxiliary convergence result HR UNS} no $\ah$-norm of $\sigma_{\bfu}^0$ appears after a summation. In practice, though, this is not convenient as mentioned in \cref{remark: about initial conditions UNS}. Therefore a more practical choice would have been $\uh^0 = \mathcal{R}_h\bfu^0$ and since, by \cref{def: surface Ritz-Stokes projection UNS,def: surface Ritz-Stokes projection std UNS}, the Ritz-Stokes projection is linear, it would then give that $\norm{\sigma_{\bfu}^0}_{\ah} = \norm{\mathcal{R}_h^b\bfu^0 - \mathcal{R}_h\bfu^0}_{\ah} =  \norm{\mathcal{R}_h(0,\{p^0,\lambda^0\})}_{\ah} \leq c(h^{k_{pr}+1}+h^{k_{\lambda}+1} + h^{k_g})(\norm{p}_{H^{k_{pr}+1}} + \norm{\lambda}_{H^{k_{\lambda}+1}})$, and so our convergence results would still be optimal.
    \item[ii)] By the $L^{\infty}$ stability of the standard Ritz-Stokes projection \eqref{eq: Linfty ritz stokes UNS} and \eqref{eq: discrete remainder velocity estimates kl=ku-1 HR UNS}  
    \cref{remark: About the pressure stability for arbitrary convergence UNS} still stands.
\end{enumerate}
\end{remark}

Now before moving forward, as mentioned in the beginning of the subsection, we need to re-calculate our consistency, interpolation and inertia bounds. From \cref{lemma: Interpolation errors UNS} and Ritz-Stokes estimate \eqref{eq: Error Bounds Ritz-Stokes std UNS} we have the following.
\begin{lemma}[Interpolation errors II]\label{lemma: Interpolation errors HR UNS}
Let 
\begin{align*}
    \partial_t\bfu \in L^{\infty}(I_n;(H^{k_u+1}(\Ga))^3), \, \partial_t p\in L^{\infty}(I_n;H^{k_{pr}+1}(\Ga)),\, \partial_t \lambda\in L^{\infty}(I_n;H^{k_{\lambda}+1}(\Ga)),
\end{align*}
then the following interpolation bound holds
 \begin{align}\label{eq: Interpolation errors HR UNS}
        |\textsc{Err}^{I}_1(\vh)| &\leq ch^{r_u} \sup_{t \in I_n} \big(\norm{\partial_t \bfu}_{H^{k_u+1}(\Ga)} \big)\norm{\vh}_{L^2(\Gah)} \\
        &+ c(h^{k_{pr}+1} + h^{k_{\lambda}+1}) \sup_{t \in I_n} \big(\norm{\partial_t p}_{H^{k_{pr}+1}(\Ga)} + \norm{\partial_t \lambda}_{H^{k_{\lambda}+1}(\Ga)} \big)\norm{\vh}_{L^2(\Gah)},
\end{align}
where $r_u = min\{k_u,k_g-1\}$ and constant $c>0$ independent of $h$.
\end{lemma}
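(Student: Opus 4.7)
My plan is to adapt the template of \cref{lemma: Interpolation errors UNS}, replacing the modified Ritz--Stokes projection by the standard one from \eqref{eq: surface Ritz-Stokes projection std UNS}, and to account for the additional pressure/multiplier dependence by splitting the projection via linearity of the saddle-point system.

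First I would rewrite the discrete difference quotient via the fundamental theorem of calculus,
$$\frac{\eta_\bfu^n - \eta_\bfu^{n-1}}{\Delta t} = \frac{1}{\Delta t}\int_{t_{n-1}}^{t_n}\partial_t\eta_\bfu(t)\,dt,$$
and apply Cauchy--Schwarz in space followed by a pointwise-in-time bound:
$$|\textsc{Err}_1^I(\vh)| \leq \frac{1}{\Delta t}\int_{t_{n-1}}^{t_n}\|\partial_t\eta_\bfu(t)\|_{L^2(\Gah)}\,dt\ \|\vh\|_{L^2(\Gah)} \leq \sup_{t \in I_n}\|\partial_t\eta_\bfu(t)\|_{L^2(\Gah)}\,\|\vh\|_{L^2(\Gah)}.$$
Observe that this step replaces the Cauchy--Schwarz-in-time manoeuvre used in \eqref{eq: Interpolation errors UNS} (which produced a $\sqrt{\Delta t}$ factor and an $L^2(I_n)$ norm) by a simpler $L^\infty_t$ bound, which is admissible thanks to the stronger temporal regularity in \cref{assumption: Regularity assumptions for velocity estimate 2 UNS}. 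It remains to estimate $\sup_{t\in I_n}\|\partial_t\eta_\bfu(t)\|_{L^2(\Gah)}$.

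Next, since $\ah$ and $\bhtil$ are bilinear and time-independent, the solution map of \eqref{eq: surface Ritz-Stokes projection std UNS} is linear in the data $(\bfu, \{p,\lambda\})$ and $\partial_t$ commutes with $\mathcal{R}_h^b$. I would split
$$\mathcal{R}_h^b(\bfu) = \mathcal{R}_h(\bfu) + \widehat{\mathcal{R}}_h(\{p,\lambda\}),$$
where $\mathcal{R}_h(\bfu)$ is the modified Ritz--Stokes projection \eqref{eq: surface Ritz-Stokes projection UNS} (corresponding to vanishing pressure/multiplier data) and $\widehat{\mathcal{R}}_h(\{p,\lambda\})$ is the Galerkin velocity obtained by sending the velocity source to zero while retaining $\{p,\lambda\}$ in the right-hand side of \eqref{eq: surface Ritz-Stokes projection std UNS}. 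For the first summand, \eqref{eq: Error Bounds Ritz-Stokes UNS} with $a=1$ gives $\|\partial_t(\bfu - \mathcal{R}_h(\bfu))\|_{\ah} \leq c h^{r_u}\|\partial_t\bfu\|_{H^{k_u+1}(\Ga)}$. For the second, the Galerkin error bound underlying \eqref{eq: Error Bounds Ritz-Stokes std UNS}, specialised to zero velocity source, yields $\|\partial_t\widehat{\mathcal{R}}_h(\{p,\lambda\})\|_{\ah} \leq c\bigl(h^{k_{pr}+1}\|\partial_t p\|_{H^{k_{pr}+1}(\Ga)} + h^{k_{\lambda}+1}\|\partial_t\lambda\|_{H^{k_{\lambda}+1}(\Ga)}\bigr)$. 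Using $\|\cdot\|_{L^2(\Gah)} \leq \|\cdot\|_{\ah}$, adding the two pieces, and taking the supremum over $I_n$ closes the argument.

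The only real subtlety lies in justifying the \emph{split} form of the Galerkin estimate used for the second summand: the displayed bound \eqref{eq: Error Bounds Ritz-Stokes std UNS} compresses the three data contributions into a single exponent $m = \min\{r_u, k_{pr}+1, k_{\lambda}+1\}$, which is coarser than what is needed here. The finer separation into distinct powers $h^{k_{pr}+1}$ and $h^{k_{\lambda}+1}$ for the pressure and multiplier is, however, the natural output of the Brezzi-type best-approximation argument (cf.\ \cite[Theorem 6.6]{elliott2024sfem}) once the interpolation errors of $\bfu$, $p$, $\lambda$ are tracked individually rather than being merged into a single worst case. With this refinement in hand the proof is a routine reassembly of the ingredients.
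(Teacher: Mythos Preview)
Your proposal is correct and follows the same route the paper indicates: the paper's own proof is a one-line citation, ``From \cref{lemma: Interpolation errors UNS} and Ritz-Stokes estimate \eqref{eq: Error Bounds Ritz-Stokes std UNS}'', and your argument is precisely the natural unpacking of that reference (replacing $\rho_\bfu$ by $\eta_\bfu$ and invoking the standard Ritz--Stokes error bound for $a=1$). Your use of an $L^\infty$-in-time bound in place of the Cauchy--Schwarz-in-time step of \eqref{eq: Interpolation errors UNS} is an equivalent variant that directly yields the $\sup_{t\in I_n}$ form in the statement; the linearity split $\mathcal R_h^b = \mathcal R_h + \widehat{\mathcal R}_h$ you introduce to recover the separated powers is exactly what the paper itself exploits later in \cref{remark: init cond and stab}~i), where the same argument produces $\norm{\mathcal R_h(0,\{p,\lambda\})}_{\ah}\le c(h^{k_{pr}+1}+h^{k_\lambda+1}+h^{k_g})(\norm{p}_{H^{k_{pr}+1}}+\norm{\lambda}_{H^{k_\lambda+1}})$.
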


\begin{lemma}[Consistency errors II]\label{lemma: Consistency errors HR UNS}
 Assume 
 $$ \partial_{tt}\bfu \in L^{2}(I_n;(L^2(\Ga))^3), \quad \partial_{t}\bfu \in L^{\infty}(I_n;W^{1,{\infty}}(\Ga)^3),$$
 then we have the following consistency bounds 
 \begin{align}\label{eq: Consistency error HR 1 UNS}
        |\textsc{Err}_1^{C}(\vh)| &\leq   c\big(\sqrt{\Delta t} \norm{\partial_{tt}\bfu}_{L^2(I_n;L^2(\Ga))}+h^{k_g+1} \norm{\partial_{t}\bfu^n}_{L^2(\Ga)}\big)\norm{\vh}_{L^2(\Gah)},\\
        \label{eq: Consistency error HR 3 UNS}
        | \textsc{Err}_2^{C}(\vh)| &\leq c\big(h^{k_g-1}\norm{\bfu^{n-1}}_{H^1(\Ga)}\norm{\bfu^{n}}_{H^1(\Ga)} + \Delta t \norm{\bfu^n}_{H^1(\Ga)} \sup_{t \in I_n}\norm{\partial_{t}\bfu}_{W^{1,{\infty}}(\Ga)}\big)\norm{\vh}_{L^2(\Gah)},\\
        | \textsc{Err}_3^{C}(\vh)| &\leq ch^{k_g+1}\norm{\bff^n}_{L^2(\Ga)}\norm{\vh}_{L^2(\Gah)},
\end{align}
for $\vh \in \bfV_h$, where constant $c>0$ independent of $h$.    
\end{lemma}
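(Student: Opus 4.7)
The first and third estimates $\textsc{Err}_1^{C}$ and $\textsc{Err}_3^{C}$ coincide verbatim with \eqref{eq: Consistency error 1 UNS} and \eqref{eq: Consistency error 4 UNS} from \cref{lemma: Consistency errors UNS}, since the right-hand sides there already carry the desired $\norm{\vh}_{L^2(\Gah)}$ factor. The only genuinely new work is needed for the inertia consistency $\textsc{Err}_2^{C}$; in \cref{lemma: Consistency errors UNS} that term was bounded in terms of $\norm{\vh}_{\ah}$, but here we need the stronger $\norm{\vh}_{L^2(\Gah)}$ factor, as otherwise the pressure estimate of \cref{sec: Pressure a-priori estimates for kl= ku-1 UNS} would not close.

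I would split $\textsc{Err}_2^{C}$ exactly as in \eqref{eq: Consistency errors inside 1 UNS},
\begin{equation*}
    \textsc{Err}_2^{C}(\vh) = \big[\chtil(\bfu^{n-1};\bfu^n,\vh) - c(\bfu^{n-1};\bfu^n,\vhl)\big] + c(\bfu^{n-1} - \bfu^n;\bfu^n,\vhl),
\end{equation*}
into a geometric mismatch and a time-approximation term. For the geometric part, the estimate \eqref{eq: consistency inside UNS} from the proof of \cref{lemma: Consistency errors UNS} already bounds it by $ch^{k_g}\norm{\bfu^{n-1}}_{H^1(\Ga)}\norm{\bfu^n}_{H^1(\Ga)}\norm{\vh}_{H^1(\Gah)}$, and since $\vh$ is a finite element function, applying the inverse inequality $\norm{\vh}_{H^1(\Gah)} \leq c h^{-1}\norm{\vh}_{L^2(\Gah)}$ immediately produces the desired $h^{k_g-1}$ factor.

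For the time-approximation term, setting $\bfw := \bfu^{n-1}-\bfu^n$, the key observation is that integration by parts on $\Ga$ permits us to eliminate the spatial derivative on $\vhl$ hidden in the second piece of the skew-symmetrised form \eqref{eq: continuous c error formula UNS}. Using the tangential product rule componentwise and then \eqref{eq: integration by parts cont UNS} together with $\divg\bfw=0$ and $\bfw\cdot\bfng=0$ (both holding because $\bfu^{n-1}$ and $\bfu^n$ are tangential, divergence-free solutions of \eqref{eq: unstead NV Lagrange}), one shows that
\begin{equation*}
    \int_{\Ga}\big((\bfw\cdot\nbgcov)\bfPg\vhl\big)\cdot\bfu^n\,\ds \;=\; -\int_{\Ga}\big((\bfw\cdot\nbgcov)\bfu^n\big)\cdot\bfPg\vhl\,\ds.
\end{equation*}
Combining this with the first piece of \eqref{eq: continuous c error formula UNS}, and using that $(\bfw\cdot\nbgcov)\bfu^n$ is tangent (so $(\bfw\cdot\nbgcov)\bfu^n \cdot \vhl = (\bfw\cdot\nbgcov)\bfu^n \cdot \bfPg\vhl$), the whole expression collapses to $\int_{\Ga}((\bfw\cdot\nbgcov)\bfu^n)\cdot\bfPg\vhl$. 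A straightforward Cauchy--Schwarz estimate and the fundamental theorem of calculus $\norm{\bfw}_{L^\infty(\Ga)} \leq \Delta t\sup_{t\in I_n}\norm{\partial_t\bfu}_{L^\infty(\Ga)}$ then produce the stated bound.

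The main obstacle is precisely this derivative-shifting manoeuvre: without it, the second piece of the skew-symmetric form retains a $\norm{\vh}_{\ah}$ factor, and combining with the $\mathcal{O}(\Delta t)$ time error would give only an $\mathcal{O}(\Delta t\,\norm{\vh}_{\ah})$ bound, which is too weak for the subsequent pressure analysis when $k_\lambda=k_u-1$. The extra $W^{1,\infty}$ regularity on $\partial_t\bfu$ in the statement provides a comfortable margin, although only $L^\infty$ regularity is actually exploited in the argument above.
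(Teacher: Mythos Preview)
Your proposal is correct and follows essentially the same route as the paper: the same split into geometric and time-approximation pieces, the same inverse-inequality trick for the geometric part, and the same integration-by-parts manoeuvre to shift the derivative off $\vhl$ in the time-approximation part. The only cosmetic difference is that you work directly with the difference $\bfw=\bfu^{n-1}-\bfu^n$ and use $\divg\bfw=0$, whereas the paper integrates by parts each term separately and then bounds the resulting divergence difference $\int_\Ga(\divg\bfu^n-\divg\bfu^{n-1})(\vhl\cdot\bfu^n)$ via $\sup_{t\in I_n}\norm{\partial_t\bfu}_{W^{1,\infty}(\Ga)}$; your observation that this term actually vanishes (and hence only $L^\infty$ regularity of $\partial_t\bfu$ is needed) is a legitimate sharpening.
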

\begin{proof}
The first and third consistency terms are bounded as the consistency errors in \Cref{lemma: Consistency errors UNS}. For the second estimate, as in \Cref{lemma: Consistency errors UNS}, we split the error into two parts: one associated with the integration domain errors and another concerning the different time steps of the first argument in the inertia term:
\begin{equation}
    \begin{aligned}\label{eq: consistency inside HR 1 UNS}
        \chtil(\bfu^{n-1};\bfu^n,\vh)  - c(\bfu^n;\bfu^n,\vhl) &= \chtil(\bfu^{n-1};\bfu^n,\vhl) - c(\bfu^{n-1};\bfu^n,\vhl)\\
        &\ + c(\bfu^{n-1};\bfu^n,\vhl) - c(\bfu^n;\bfu^n,\vhl),
    \end{aligned}
\end{equation}
then \eqref{eq: consistency inside UNS} and the inverse inequality yield
\begin{equation}
    \begin{aligned}\label{eq: consistency inside HR UNS}
        \chtil(\bfu^{n-1};\bfu^n,\vhl) - c(\bfu^{n-1};\bfu^n,\vhl) \leq ch^{k_g-1}\norm{\bfu^{n-1}}_{H^1(\Ga)}\norm{\bfu^n}_{H^1(\Ga)}\norm{\vh}_{L^2(\Gah)}.
    \end{aligned}
\end{equation}    
Moving onto the second line of \eqref{eq: consistency inside HR 1 UNS}, deferring from  \Cref{lemma: Consistency errors UNS} we have that
\begin{equation}
    \begin{aligned}\label{eq: consistency inside 1 HR UNS}
        |c(\bfu^{n-1};\bfu^n,\vhl) - c(\bfu^n;\bfu^n,\vhl)| &= \frac{1}{2} \underbrace{\int_{\Ga } ((\bfu^{n} \cdot \nbgcov)\bfu^{n} ) \cdot \vhl \; \ds - \int_{\Ga } ((\bfu^{n-1} \cdot \nbgcov)\bfu^{n} ) \cdot \vhl \; \ds}_{\textbf{I}c} \\
        &\ -\frac{1}{2} \underbrace{\int_{\Ga } ((\bfu^{n} \cdot \nbgcov)\bfPg\vhl )\cdot  \bfu^{n} \; \ds
        + \int_{\Ga } ((\bfu^{n-1} \cdot \nbgcov)\bfPg\vhl ) \cdot \bfu^{n} \; \ds}_{\textbf{II}c}.
    \end{aligned}
\end{equation}
We can bound the initial line on the right-hand side as:
\begin{equation}
    \begin{aligned}\label{eq: consistency inside 5 HR UNS}
         \textbf{I}c = \int_{\Ga} (\Big(\int_{t_{n-1}}^{t^n} \partial_t\bfu(t,\cdot) dt\Big)\cdot \nbgcov)\bfu^{n} ) \cdot \vhl \; \ds \leq \Delta t  \sup_{t \in I_n}\norm{\partial_{t}\bfu}_{L^{\infty}(\Ga)}\norm{\bfu^n}_{H^1(\Ga)}\norm{\vh}_{L^2(\Gah)}.
    \end{aligned}
\end{equation}
For the second estimate $\textbf{II}c$ we perform a simple integration by parts \eqref{eq: integration by parts cont UNS} so that we may bound the test function w.r.t. the $L^2$-norm. Let us consider the first integral of $\textbf{II}c$, since the second follows similarly. Recalling that $\bfu^n$ is tangent as the solution of the continuous problem \eqref{weak lagrange hom NV}, we obtain
\begin{equation}
    \begin{aligned}\label{eq: consistency inside 2 HR UNS}
        \int_{\Ga } ((\bfu^{n} \cdot \nbgcov)\bfPg\vhl )\cdot \bfu^{n} \; \ds &= \int_{\Ga} \bfu^n \cdot (\nbg(\vhl \cdot \bfu^n))  \; \ds -\int_{\Ga } ((\bfu^{n} \cdot \nbg)\bfu^{n} ) \cdot \bfPg \vhl  \; \ds\\
        & = - \int_{\Ga} \divg(\bfu^n) (\vhl \cdot \bfu^n)  \; \ds  -\int_{\Ga } ((\bfu^{n} \cdot \nbgcov)\bfu^{n} ) \cdot \vhl  \; \ds.
    \end{aligned}
\end{equation}
So considering \eqref{eq: consistency inside 2 HR UNS} it is clear that
\begin{equation}
    \begin{aligned}\label{eq: consistency inside 3 HR UNS}
       \textbf{II}c = \textbf{I}c +  \underbrace{\int_{\Ga} \divg(\bfu^n) (\vhl \cdot \bfu^n)  \; \ds - \int_{\Ga} \divg(\bfu^{n-1}) (\vhl \cdot \bfu^n)  \; \ds}_{\textbf{III}c},
    \end{aligned}
\end{equation}
where now, as before, we are able to bound the last term as followed
\begin{equation}
    \begin{aligned}\label{eq: consistency inside 4 HR UNS}
        \textbf{III}c = \int_{\Ga} (\Big(\int_{t_{n-1}}^{t^n} \partial_t\divg(\bfu(t,\cdot)) dt\Big)\cdot \nbgcov)\bfu^{n} ) \cdot \vhl \; \ds \leq \Delta t  \sup_{t \in I_n}\norm{\partial_{t}\bfu}_{W^{1,{\infty}}(\Ga)}\norm{\bfu^n}_{H^1(\Ga)}\norm{\vh}_{L^2(\Gah)}.
    \end{aligned}
\end{equation}
Combining  \eqref{eq: consistency inside 4 HR UNS} into \eqref{eq: consistency inside 3 HR UNS}, considering the bound \eqref{eq: consistency inside 5 HR UNS} and plugging everything into \eqref{eq: consistency inside 1 HR UNS} we get our desired estimate. 
\end{proof}

\begin{lemma}[Inertia error II]\label{lemma: Trilinear Errors HR UNS}
Given that $$\bfu \in L^{\infty}(I_n;(W^{2,\infty}(\Ga))^3),$$ it follows that the following bound holds true, with constant $c>0$  independent of $h$,
\begin{equation}
    \begin{aligned}\label{eq: Trilinear Errors general HR 1 UNS}
        |\mathcal{C}(\vh)| &\leq  c\norm{\bfu^n}_{W^{2,\infty}(\Ga)} \big(\norm{\eu^{n-1}}_{L^2(\Gah)} + \norm{\eu^{n}}_{L^2(\Gah)} + \norm{\eu^{n-1}}_{\ah}+  \norm{\eu^{n}}_{\ah}\big) \norm{\vh}_{L^2(\Gah)} \\
        &\ + c\norm{\sigma_{\bfu}^{n-1}}_{L^2(\Gah)}^{1/2}\norm{\sigma_{\bfu}^{n-1}}_{\ah}^{1/2}\norm{\eu^{n}}_{\ah}\norm{\vh}_{L^2(\Gah)}^{1/2}\norm{\vh}_{\ah}^{1/2}\\
    &\ +c\norm{\sigma_{\bfu}^{n-1}}_{L^2(\Gah)}^{1/2}\norm{\sigma_{\bfu}^{n-1}}_{\ah}^{1/2}\norm{\eu^{n}}_{L^2(\Gah)}^{1/2}\norm{\eu^{n}}_{\ah}^{1/2}\norm{\vh}_{\ah}.
    \end{aligned}
\end{equation}
\end{lemma}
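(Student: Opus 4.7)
The plan is to decompose $\mathcal{C}(\vh)$ by writing $\uh^{n-1}=\mathcal{R}_h^b\bfu^{n-1}-\sigma_{\bfu}^{n-1}$, which yields
\begin{equation*}
\mathcal{C}(\vh) = -\chtil(\eu^{n-1};\bfu^n,\vh) \,-\, \chtil(\mathcal{R}_h^b\bfu^{n-1};\eu^n,\vh) \,+\, \chtil(\sigma_{\bfu}^{n-1};\eu^n,\vh).
\end{equation*}
The third piece reproduces precisely the second and third lines of the claim by applying the continuity bound \eqref{ch boundedness UNS} with $\zh=\sigma_{\bfu}^{n-1}$ and $\wh=\eu^n$. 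The point of splitting against $\mathcal{R}_h^b\bfu^{n-1}$ rather than directly against $\bfu^{n-1}$ is that the first argument of both remaining pieces is uniformly $W^{1,\infty}$-controlled: for $\bfu^n$ directly by \Cref{assumption: Regularity assumptions for velocity estimate 2 UNS}, and for $\mathcal{R}_h^b\bfu^{n-1}$ by \Cref{lemma: Linfty estimate Ritz-Stokes UNS} via $\|\mathcal{R}_h^b\bfu^{n-1}\|_{W^{1,\infty}(\Gah)}\leq c\|\bfu^{n-1}\|_{W^{2,\infty}(\Ga)}$, both dominated by $\|\bfu^n\|_{W^{2,\infty}(\Ga)}$ in the uniform $L^\infty$-in-time sense of the assumption.

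Next I would expand each of these two pieces $\chtil(\bfz;\bfw,\vh)$ via its definition \eqref{eq: skew-symmetrized ch real Ph}. Three of the four resulting integrals carry no tangential derivative on $\vh$, namely the convective term $((\bfz\cdot\nbgcovh)\bfw)\cdot\vh$ together with the two Weingarten corrections; using the $L^{\infty}$-bound on $\bfw$, the uniform $L^{\infty}$-bound on $\bfH_h$ available for $k_g\geq 2$, and $\|\nh\|_{L^\infty}\leq 1$, each is controlled at once by $c\|\bfu^n\|_{W^{2,\infty}(\Ga)}\|\bfz\|_{L^2(\Gah)}\|\vh\|_{L^2(\Gah)}$, or by $c\|\bfu^n\|_{W^{2,\infty}}\|\bfz\|_{\ah}\|\vh\|_{L^2}$ in the one case where the derivative falls on the rough factor. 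The remaining integral $\int_{\Gah}((\bfz\cdot\nbgcovh)\vh)\cdot\bfw\,\dsh$ carries the derivative on $\vh$ and requires integration by parts. Writing it component-wise as $\int_{\Gah}\partial_l(v_h)_k(\bfPh\bfw)_k(\bfPh\bfz)_l\,\dsh$ and applying the surface IBP formula \eqref{eq: integration by parts UNS} componentwise with $\qh\leftarrow(v_h)_k$ transfers the derivative onto $\bfPh\bfw\otimes\bfPh\bfz$, producing
\begin{equation*}
-\int_{\Gah}\vh\cdot\divgh\!\bigl(\bfPh\bfw\otimes\bfPh\bfz\bigr)\,\dsh
\end{equation*}
up to curvature and edge corrections. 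Expanding by the product rule, $\divgh(\bfPh\bfw\otimes\bfPh\bfz) = ((\bfPh\bfz)\cdot\nbgh)(\bfPh\bfw) + (\bfPh\bfw)\divgh(\bfPh\bfz)$, so the $W^{1,\infty}$-factor $\bfw$ absorbs the derivative while the rough factor contributes $\|\bfz\|_{L^2}$ and $\|\bfz\|_{\ah}$, the latter via the broken Korn-type estimate \eqref{eq: korn inequality Ph UNS} applied to $\bfPh\bfz$.

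The main technical obstacle is handling the geometric corrections produced by the IBP on the piecewise-smooth $\Gah$: the curvature term $\sum_{T\in\Th}\int_T(\vh\cdot\nh)(\ldots)\divgh\nh\,\dsh$ and the edge term $\sum_{E\in\mathcal{E}_h}\int_E[\mh\cdot(\ldots)](\ldots)\,d\ell$. The curvature contribution is harmless because $\divgh\nh$ is uniformly $L^{\infty}$-bounded for $k_g\geq 2$ while $(\vh\cdot\nh)$ already supplies the $\|\vh\|_{L^2}$-factor. For the edge term, the geometric estimate $\|[\mh]\|_{L^{\infty}(\mathcal{E}_h)}\leq ch^{k_g}$ from \Cref{lemma: Geometric errors UNS}, combined with the scaled trace inequalities $\|f_h\|_{L^2(\mathcal{E}_h)}\leq ch^{-1/2}\|f_h\|_{L^2(\Gah)}$ for discrete functions, yields a harmless prefactor $h^{k_g-1}\leq c$ when $k_g\geq 2$, so these edge contributions are absorbed into a bound of the required form $\|\vh\|_{L^2}$ times $\|\bfz\|_{L^2}+\|\bfz\|_{\ah}$. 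Collecting all contributions from the two smooth-first-argument pieces produces the first line of \eqref{eq: Trilinear Errors general HR 1 UNS}, and combined with the direct bound for the $\sigma_{\bfu}^{n-1}$-piece this completes the proof.
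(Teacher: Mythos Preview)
Your approach matches the paper's: the same three-term decomposition via $\uh^{n-1}=\mathcal{R}_h^b\bfu^{n-1}-\sigma_{\bfu}^{n-1}$, the direct application of \eqref{ch boundedness UNS} to the $\sigma$-piece, and integration by parts via \eqref{eq: integration by parts UNS} to shift the derivative off $\vh$ in the other two.

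Two points to tighten. First, your labeling is inconsistent: you write ``the first argument of both remaining pieces is $W^{1,\infty}$-controlled'' and later ``the $W^{1,\infty}$-factor $\bfw$'', but in $\chtil(\eu^{n-1};\bfu^n,\vh)$ the smooth factor $\bfu^n$ is the \emph{second} argument while in $\chtil(\mathcal{R}_h^b\bfu^{n-1};\eu^n,\vh)$ the smooth factor is the first. A single $(\bfz,\bfw)$ template with ``$\bfw$ smooth'' does not cover both; the paper treats the two pieces separately for this reason, and so should you.

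Second, and more substantively, the paper replaces $\bfPh$ by the globally smooth $\bfP$ (at cost $ch^{k_g-1}$ after an inverse inequality, cf.\ \eqref{eq: Trilinear Errors HR inside 3 UNS}) \emph{before} the product rule and IBP. This matters: $\bfPh$ is discontinuous across element edges, so your componentwise vector field $(\bfPh\bfw)_k\bfz$ lies only in the broken space $\bfH^1(\Th)$, and \eqref{eq: integration by parts UNS} as stated (which requires the vector field in $\bfH^1(\Gah)$) does not apply directly. After the replacement the relevant scalar is $\vh\cdot\bfP\bfu^n=\vh\cdot\bfu^n$ (respectively $\vh\cdot\bfP\eu^n$), which is globally $H^1$, and IBP goes through cleanly. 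Your version can be repaired either by inserting the same $\bfPh\to\bfP$ replacement or by integrating by parts element-by-element and bounding the additional $O(h^{k_g})$ jump contributions, but as written this is a small technical gap.
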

\begin{proof}
Recall that
\begin{equation}\label{eq: Trilinear Errors HR inside start UNS}
    \mathcal{C}(\vh) = - \underbrace{\tilde{c}_h(\eu^{n-1};\bfu^n,\vh)}_{\textbf{C}_1} -  \underbrace{\tilde{c}_h(\uh^{n-1};\eu^{n},\vh)}_{\textbf{C}_2}.
\end{equation}
Starting with the first term on the right-hand side, we have 
\begin{equation}
    \begin{aligned}\label{eq: Trilinear Errors HR inside 1 UNS}
      \textbf{C}_1 &=  \frac{1}{2} \Big( \int_{\Gah} ((\eu^{n-1} \cdot \nbgcovh)\bfu^n ) \vh \; \dsh - \int_{\Gah} ((\eu^{n-1} \cdot \nbgcovh)\vh ) \bfu^n \; \dsh \Big)\\
      & - \frac{1}{2} \Big( \int_{\Gah} (\bfu^n\cdot\nh)\eu^{n-1}\cdot\bfH_h\vh\; \dsh -  \int_{\Gah} (\vh\cdot\nh)\eu^{n-1}\cdot\bfH_h\bfu^n \; \dsh  \Big).
    \end{aligned}
\end{equation}
The last two term are easily bounded by 
\begin{equation}\label{eq: Trilinear Errors HR inside H UNS}
    \begin{aligned}
   \Big| \int_{\Gah} (\bfu^n\cdot\nh)\eu^{n-1}\cdot\bfH_h\vh\; \dsh -  \int_{\Gah} (\vh\cdot\nh)\eu^{n-1}\cdot\bfH_h\bfu^n \; \dsh \Big| \\
   \leq c\norm{\bfu^n}_{L^{\infty}(\Ga)}\norm{\eu^{n-1}}_{L^2(\Gah)}\norm{\vh}_{L^2(\Gah)},
   \end{aligned}
\end{equation}
with the help of the $L^{\infty}$ bounds of the Weingarten map $\bfH_h$ i.e. $\norm{\bfH_h}_{L^{\infty}(\Gah)}\leq c$; see \eqref{eq: geometric errors 1 UNS}. Due to our assumed regularity the first integral on the first line of \eqref{eq: Trilinear Errors HR inside 1 UNS} is bounded by 
\begin{equation}
    \begin{aligned}\label{eq: Trilinear Errors HR inside 2 UNS}
         \int_{\Gah} ((\eu^{n-1} \cdot \nbgcovh)\bfu^n ) \vh \; \dsh \leq \norm{\bfu^n}_{W^{1,\infty}(\Ga)}\norm{\eu^{n-1}}_{L^2(\Gah)}\norm{\vh}_{L^2(\Gah)},
    \end{aligned}
\end{equation}
where we have also used the norm equivalence \eqref{eq: norm equivalence UNS}. For the second integral in \eqref{eq: Trilinear Errors HR inside 1 UNS}, we want our test function in the $L^2$-norm, as in \eqref{eq: Trilinear Errors HR inside 2 UNS}. For that, recall first that $\norm{\bfP- \bfPh}_{L^\infty(\Gah)} \leq ch^{k_g}$, cf. \eqref{eq: geometric errors 1 UNS}, and the inverse inequality $\norm{\vh}_{H^1(\Gah)} \leq ch^{-1}\norm{\vh}_{L^2(\Gah)}$, then we readily see that  
\begin{equation}
    \begin{aligned}\label{eq: Trilinear Errors HR inside 3 UNS}
      &\int_{\Gah} ((\eu^{n-1} \cdot \nbgcovh)\vh ) \cdot \bfu^n \; \dsh = \int_{\Gah} ((\eu^{n-1} \cdot \nbgh)\vh ) \cdot \bfPh\bfu^n \; \dsh  \\
      &\qquad \qquad\leq ch^{k_g-1}\norm{\eu^{n-1}}_{L^2(\Gah)}\norm{\vh}_{L^2(\Gah)}\norm{\bfu^n}_{L^\infty(\Gah)}+ \int_{\Gah}((\eu^{n-1} \cdot\nbgh)\vh )  \cdot \bfP\bfu^n \; \dsh.
    \end{aligned}
\end{equation}
Now the integration by parts formula \eqref{eq: integration by parts UNS} yields
\begin{equation}
    \begin{aligned}\label{eq: Trilinear Errors HR inside 4 UNS}
         &\int_{\Gah}((\eu^{n-1} \cdot\nbgh)\vh ) \cdot \bfP\bfu^n \; \dsh  = \int_{\Gah}\eu^{n-1} \cdot \nbgh(\vh\cdot \bfP\bfu^n) \; \dsh  - \int_{\Gah}((\eu^{n-1} \cdot \nbgh)\bfu^n ) \cdot \vh \; \dsh\\
         &=  -\int_{\Gah} \divgh(\eu^{n-1}) (\vh\cdot\bfu^n)\, \dsh - \int_{\Gah}((\eu^{n-1} \cdot \nbgh)\bfu^n ) \cdot \vh \; \dsh \\
         &+ \sum_{T\in\Th}\int_{T} \divgh(\nh)(\bfu^n\cdot\vh)(\eu^{n-1}\cdot\nh) \,\dsh + \sum_{E \in \mathcal{E}_h} \int_E [\mh] \cdot \eu^{n-1} (\vh\cdot\bfu^n) \, d\ell.
    \end{aligned}
\end{equation}
Therefore, using the co-normal bound \eqref{eq: geometric errors 1 UNS}, $\norm{\cdot}_{L^2(\mathcal{E}_h)} \leq h^{-1/2}\norm{\cdot}_{L^2(\Gah)}$ (see proof of \Cref{Lemma: discrete bounds and coercivity results}) and the fact that $k_g \geq 2$ we obtain
\begin{equation}
    \begin{aligned}\label{eq: Trilinear Errors HR inside 5 UNS}
        \big|\int_{\Gah}((\eu^{n-1} \cdot\nbgh)\vh )  \bfP\bfu^n \; \dsh \Big| \leq  c\norm{\bfu^n}_{W^{1,\infty}(\Ga)}\big( \norm{\eu^{n-1}}_{L^2(\Gah)}+\norm{\eu^{n-1}}_{\ah} \big)\norm{\vh}_{L^2(\Gah)}.
    \end{aligned}
\end{equation}
So combining  \eqref{eq: Trilinear Errors HR inside H UNS}-\eqref{eq: Trilinear Errors HR inside 5 UNS} yields that \eqref{eq: Trilinear Errors HR inside 1 UNS} is bounded by
\begin{equation}\label{eq: Trilinear Errors HR inside C1 UNS}
    \textbf{C}_1 \leq c\norm{\bfu^n}_{W^{1,\infty}(\Ga)} \big(\norm{\eu^{n-1}}_{L^2(\Gah)} + \norm{\eu^{n-1}}_{\ah}\big) \norm{\vh}_{L^2(\Gah)}.
\end{equation}

We still have to bound $\textbf{C}_2$. We are going to apply an analogous idea to the one used to bound the previous estimate. But first let us rewrite it in the following way
\begin{equation}
    \begin{aligned}\label{eq: Trilinear Errors HR inside 7 UNS}
        \textbf{C}_2 = \chtil(\uh^{n-1};\eu^{n},\vh) = -\chtil(\sigma_{\bfu}^{n-1};\eu^{n},\vh) - \chtil(\mathcal{R}_h^b \bfu^{n-1};\eu^{n},\vh),
    \end{aligned}
\end{equation}
where $\sigma_{\bfu}^{n-1} = \mathcal{R}_h^b\bfu^{n-1} - \uh^{n-1}$ as in the first decomposition \eqref{eq: decomposition error 1 Ritz 2 UNS} and  $\mathcal{R}_h^b\bfu^{n-1}$ the standard Ritz-Stokes projection \eqref{eq: surface Ritz-Stokes projection std UNS}. Now, the first term is bounded, thanks to \eqref{ch boundedness UNS},  as followed
\begin{equation}\label{eq: Trilinear Errors HR inside 11 UNS}
\begin{aligned}
        \chtil(\sigma_{\bfu}^{n-1};\eu^{n},\vh) &\leq c\norm{\sigma_{\bfu}^{n-1}}_{L^2(\Gah)}^{1/2}\norm{\sigma_{\bfu}^{n-1}}_{\ah}^{1/2}\norm{\eu^{n}}_{\ah}\norm{\vh}_{L^2(\Gah)}^{1/2}\norm{\vh}_{\ah}^{1/2}\\
    &\ +\norm{\sigma_{\bfu}^{n-1}}_{L^2(\Gah)}^{1/2}\norm{\sigma_{\bfu}^{n-1}}_{\ah}^{1/2}\norm{\eu^{n}}_{L^2(\Gah)}^{1/2}\norm{\eu^{n}}_{\ah}^{1/2}\norm{\vh}_{\ah}.
    \end{aligned}
\end{equation}
The second term, by \eqref{eq: skew-symmetrized ch real Ph} and \Cref{remark: inertia term Ph} is equal to 
\begin{equation}
    \begin{aligned}\label{eq: Trilinear Errors HR inside 8 UNS}
       \chtil(\mathcal{R}^b_h \bfu^{n-1};\eu^{n},\vh) &=  \frac{1}{2} \Big( \int_{\Gah} ((\mathcal{R}^b_h \bfu^{n-1} \cdot \nbgcovh)\eu^{n} ) \vh \; \dsh  - \int_{\Gah} ((\mathcal{R}^b_h \bfu^{n-1} \cdot \nbgcovh)\vh ) \eu^{n} \; \dsh \Big)\\
      & - \frac{1}{2} \Big(  \int_{\Gah} (\eu^n\cdot\nh)\mathcal{R}^b_h \bfu^{n-1}\cdot \bfH_h\vh \; \dsh -  \int_{\Gah} (\vh\cdot\nh)\mathcal{R}^b_h \bfu^{n-1}\cdot \bfH_h\eu^n \; \dsh\Big).
    \end{aligned}
\end{equation}
Due to the $L^{\infty}-$bound of the Ritz-Stokes projection \eqref{eq: W1infty estimate Ritz-Stokes UNS} we have
\begin{align}\label{eq: Trilinear Errors HR inside H 2 UNS}
        &\big| \int_{\Gah} (\eu^n\cdot\nh)\mathcal{R}^b_h \bfu^{n-1}\cdot \bfH_h\vh \; \dsh - \int_{\Gah} (\vh\cdot\nh)\mathcal{R}^b_h \bfu^{n-1}\cdot \bfH_h\eu^n \; \dsh\Big|\\
        &\qquad\qquad\qquad\qquad\qquad\qquad\qquad\qquad\qquad\qquad\quad\leq c\norm{\bfu^{n-1}}_{W^{2,\infty}(\Ga)} \norm{\eu^n}_{L^2(\Gah)}\norm{\vh}_{L^2(\Gah)}, \nonumber\\
        \label{eq: Trilinear Errors HR inside 9 UNS}
        &\int_{\Gah} ((\mathcal{R}^b_h \bfu^{n-1} \cdot \nbgcovh)\eu^{n} ) \vh \; \dsh \leq c\norm{\bfu^{n-1}}_{W^{2,\infty}(\Ga)}\norm{\eu^n}_{\ah}\norm{\vh}_{L^2(\Gah)}.
\end{align}
It remains to bound the second in \eqref{eq: Trilinear Errors HR inside 8 UNS}. This follows exactly as in the previous calculations \eqref{eq: Trilinear Errors HR inside 3 UNS} - \eqref{eq: Trilinear Errors HR inside 5 UNS}, where now using the $W^{1,\infty}-$bound of the Ritz-Stokes projection \eqref{eq: W1infty estimate Ritz-Stokes UNS} we readily see that
\begin{equation}
    \begin{aligned}\label{eq: Trilinear Errors HR inside 10 UNS}
      &\int_{\Gah} ((\mathcal{R}^b_h \bfu^{n-1} \cdot \nbgcovh)\vh ) \eu^{n} \; \dsh \leq  c\norm{\bfu^{n-1}}_{W^{2,\infty}(\Ga)}\big( \norm{\eu^{n}}_{L^2(\Gah)} + \norm{\eu^n}_{\ah}\big) \norm{\vh}_{L^2(\Gah)}.
    \end{aligned}
\end{equation}
Combining now \eqref{eq: Trilinear Errors HR inside 11 UNS}-\eqref{eq: Trilinear Errors HR inside 10 UNS} yields that \eqref{eq: Trilinear Errors HR inside 7 UNS} is bounded by
\begin{equation}
\begin{aligned}\label{eq: Trilinear Errors HR inside C2 UNS}
        \textbf{C}_2 &\leq c\norm{\bfu^n}_{W^{2,\infty}(\Ga)} \big(\norm{\eu^{n}}_{L^2(\Gah)} + \norm{\eu^{n}}_{\ah}\big) \norm{\vh}_{L^2(\Gah)}\\
        &\ +c\norm{\sigma_{\bfu}^{n-1}}_{L^2(\Gah)}^{1/2}\norm{\sigma_{\bfu}^{n-1}}_{\ah}^{1/2}\norm{\eu^{n}}_{\ah}\norm{\vh}_{L^2(\Gah)}^{1/2}\norm{\vh}_{\ah}^{1/2}\\ &\ +\norm{\sigma_{\bfu}^{n-1}}_{L^2(\Gah)}^{1/2}\norm{\sigma_{\bfu}^{n-1}}_{\ah}^{1/2}\norm{\eu^{n}}_{L^2(\Gah)}^{1/2}\norm{\eu^{n}}_{\ah}^{1/2}\norm{\vh}_{\ah}.
    \end{aligned}
\end{equation}
To obtain the final result we simply merge the two estimates $\textbf{C}_1$ \eqref{eq: Trilinear Errors HR inside C1 UNS} and $\textbf{C}_2$ \eqref{eq: Trilinear Errors HR inside C2 UNS} into \eqref{eq: Trilinear Errors HR inside start UNS}.
\end{proof}

In contrast to \Cref{lemma: Auxiliary convergence result UNS} we now prove the following stronger result, without the help of an inverse Stokes operator.

\begin{lemma}[Auxiliary result II]\label{lemma: Auxiliary convergence result HR UNS}
Assume $\underline{k_\lambda = k_u-1}, \, k_g\geq 2$ and that the regularity \cref{assumption: Regularity assumptions for velocity estimate 2 UNS} hold with time-step  $\Delta t \leq ch$. Let $(\bfu,\{p,\lambda\})$ be the solution of \eqref{eq: unstead NV Lagrange}  and let $\uh^k$ and $\{\ph^k,\lh^k\}$, $k=1,...,n$ be the discrete solutions of \eqref{eq: weak lagrange fully discrete UNS} with initial condition $\uh^0=\mathcal{R}_h^b\bfu^0=\mathcal{R}_h(\bfu^0,\{p^0,\lambda^0\})$. Then the following estimate holds for $1 \leq n \leq N$
\begin{equation}
    \begin{aligned}\label{eq: Auxiliary convergence result HR UNS}
        \sum_{k=1}^n\Delta t \norm{\frac{(\sigma_{\bfu}^k - \sigma_{\bfu}^{k-1})}{\Delta t}}_{L^2(\Gah)}^2  + \norm{\sigma_{\bfu}^n}_{\ah}^2 \leq C_2\big(1 + C\frac{h^{2r_u}}{\Delta t }\big)\big((\Delta t)^2 &+ h^{2r_u} + h^{2(k_{pr}+1)} +h^{2(k_{\lambda}+1)}\big),
    \end{aligned}
\end{equation}
where $r_u = min\{k_{u},k_g-1\}$ with positive constant  $C_2 = C_2(u,p,\lambda,\bff) = \sup_{t\in[0,T]} \big( \norm{\partial_tp}_{H^{k_{pr}+1}(\Ga)}^2   +\norm{\partial_t\lambda}_{H^{k_{\lambda}+1}(\Ga)}^2  +\norm{\partial_t\bfu}_{H^{k_u +1}(\Ga)}^2 +  \norm{\bff}_{L^2(\Ga)} + \norm{\bfu}_{W^{2,\infty}(\Ga)}^2+ \norm{\partial_{t}\bfu}_{W^{1,{\infty}}(\Ga)}^2 \big) + \norm{\partial_{tt}\bfu}_{L^2([0,T];L^2(\Ga))}^2$ and  constant $C$ as in \eqref{eq: discrete remainder velocity estimates kl=ku-1 HR UNS}.
\end{lemma}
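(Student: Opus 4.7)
The plan is to test the error equation \eqref{eq: error equation 2 UNS} with $\vh = \sigma_{\bfu}^n - \sigma_{\bfu}^{n-1}$. This is an admissible choice because the new initial condition $\uh^0 = \mathcal{R}_h^b\bfu^0$ gives $\sigma_{\bfu}^0 = 0$ and, by the constraint equation in \eqref{eq: surface Ritz-Stokes projection std UNS}, every $\sigma_{\bfu}^k$ lies in $\bfV_h^{div}$, so in particular the pressure term $\bhtil(\vh,\sigma_{\{p,\lambda\}}^n)$ vanishes. Writing $D^n := (\sigma_{\bfu}^n - \sigma_{\bfu}^{n-1})/\Delta t$ and using the algebraic identity $(a-b)a = \tfrac{1}{2}(a^2-b^2+(a-b)^2)$, the left-hand side becomes
\begin{equation*}
\Delta t\,\|D^n\|_{L^2(\Gah)}^2 + \tfrac{1}{2}\bigl(\|\sigma_{\bfu}^n\|_{\ah}^2 - \|\sigma_{\bfu}^{n-1}\|_{\ah}^2\bigr) + \tfrac{\Delta t^2}{2}\|D^n\|_{\ah}^2,
\end{equation*}
which, after summation, produces exactly the two quantities we want to control on the left-hand side of \eqref{eq: Auxiliary convergence result HR UNS}, using the telescoping structure and $\sigma_{\bfu}^0=0$.

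The next step is to bound every term on the right-hand side using the revised lemmas \ref{lemma: Interpolation errors HR UNS}, \ref{lemma: Consistency errors HR UNS}, and \ref{lemma: Trilinear Errors HR UNS}, each of which has been tailored to control the test function in the $L^2$ norm. The interpolation contribution $\textsc{Err}^I_1(\vh)$ and the consistency contributions $\textsc{Err}_1^C,\textsc{Err}_2^C,\textsc{Err}_3^C$ all give bounds of the form $c\,\alpha_i\,\|\vh\|_{L^2(\Gah)}$ where $\alpha_i$ collects a data term times a power $(\Delta t)^{1/2}$, $h^{k_g+1}$, $h^{r_u}$, $h^{k_{pr}+1}$ or $h^{k_\lambda+1}$; applying Young's inequality and pairing these with the LHS term $\Delta t\|D^n\|_{L^2}^2 = \|\vh\|_{L^2}^2/\Delta t$ is routine.

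The genuine obstacle is the inertia bound \eqref{eq: Trilinear Errors general HR 1 UNS}, which carries factors $\|\vh\|_{L^2}^{1/2}\|\vh\|_{\ah}^{1/2}$ and $\|\vh\|_{\ah}$ that cannot be absorbed into $\tfrac{\Delta t^2}{2}\|D^n\|_{\ah}^2$ without losing powers of $\Delta t$. To overcome this I would apply the inverse inequality $\|\vh\|_{\ah}\le \|\vh\|_{H^1(\Gah)}\le ch^{-1}\|\vh\|_{L^2(\Gah)}$ in the finite element space, converting those contributions into $L^2$-controlled ones at the cost of negative powers of $h$. The prefactors are then bounded using the Ritz--Stokes estimate \eqref{eq: Error Bounds Ritz-Stokes std UNS} for $\|\sigma_{\bfu}^{n-1}\|_{L^2}^{1/2}\|\sigma_{\bfu}^{n-1}\|_{\ah}^{1/2}$ together with the velocity error estimate \eqref{eq: Velocity Error Estimates kl=ku-1 HR UNS} for $\|\eu^n\|_{L^2},\|\eu^n\|_{\ah}$. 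The resulting contribution after summation is precisely what yields the multiplicative factor $1 + Ch^{2r_u}/\Delta t$ appearing in the statement, the $h^{-1}$ from the inverse inequality combining with the $h^{2r_u}$ bounds from the Ritz--Stokes and velocity estimates (and the hypothesis $\Delta t \le ch$ ensuring the $Ch^{2r_u}/\Delta t$ term dominates).

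Finally, with sufficiently small Young constants absorbing the $\|\vh\|_{L^2}^2$ terms into the $\Delta t\|D^n\|_{L^2}^2$ on the left, summing from $k=1$ to $n$ (with $\sigma_{\bfu}^0=0$), and using the $L^\infty_{L^2}$, $L^2_{\ah}$ velocity bounds in \eqref{eq: discrete remainder velocity estimates kl=ku-1 HR UNS}--\eqref{eq: Velocity Error Estimates kl=ku-1 HR UNS} together with the pointwise-in-time regularity from \Cref{assumption: Regularity assumptions for velocity estimate 2 UNS}, one arrives directly (no Gronwall required, since the problematic cross-terms in time get bounded in $\ell^2_{\Delta t}$ through Cauchy--Schwarz) at the announced estimate \eqref{eq: Auxiliary convergence result HR UNS}.
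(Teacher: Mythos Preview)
Your proposal is correct and follows essentially the same route as the paper: test the error equation \eqref{eq: error equation 2 UNS} with $\vh=\sigma_{\bfu}^n-\sigma_{\bfu}^{n-1}\in\bfV_h^{div}$, exploit the telescoping $\ah$-identity and $\sigma_{\bfu}^0=0$, and bound the right-hand side via Lemmas~\ref{lemma: Interpolation errors HR UNS}, \ref{lemma: Consistency errors HR UNS}, \ref{lemma: Trilinear Errors HR UNS}, followed by summation and the velocity estimates \eqref{eq: discrete remainder velocity estimates kl=ku-1 HR UNS}--\eqref{eq: Velocity Error Estimates kl=ku-1 HR UNS}.

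One small correction: your claim that the $\|\vh\|_{\ah}$ factor in \eqref{eq: Trilinear Errors general HR 1 UNS} ``cannot be absorbed into $\tfrac{\Delta t^2}{2}\|D^n\|_{\ah}^2$ without losing powers of $\Delta t$'' is not accurate. Since $\tfrac{\Delta t^2}{2}\|D^n\|_{\ah}^2=\tfrac12\|\sigma_{\bfu}^n-\sigma_{\bfu}^{n-1}\|_{\ah}^2=\tfrac12\|\vh\|_{\ah}^2$, a direct Young inequality absorbs $\tfrac12\|\vh\|_{\ah}^2$ with no $\Delta t$ penalty, and this is precisely what the paper does for the last line of \eqref{eq: Trilinear Errors general HR 1 UNS}. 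For the mixed term $\|\vh\|_{L^2}^{1/2}\|\vh\|_{\ah}^{1/2}$ the paper does use the inverse inequality, as you propose. Your alternative of applying the inverse inequality to \emph{both} troublesome terms also works and leads to the same final bound (the extra $h^{-2}$ is compensated by the available $h^{2r_u}$ factors since $r_u\ge 1$), so the difference is purely cosmetic.
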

\begin{proof}
To start, we test the error equation \eqref{eq: error equation 2 UNS} with $\vh = \sigma_{\bfu}^n - \sigma_{\bfu}^{n-1}$, and $\{\qh,\xi_h\} = \sigma_{\{p,\lambda\}}^n$, thus we derive for $1 \leq n \leq N$, since $\sigma_{\bfu}^0 =0$ due to the choice of the initial condition, that
\begin{equation}
    \begin{aligned}\label{eq: Auxiliary convergence result inside HR UNS}
        \frac{1}{\Delta t }\norm{\sigma_{\bfu}^n - \sigma_{\bfu}^{n-1}}_{L^2(\Gah)}^2 &+ \norm{\sigma_{\bfu}^n}_{\ah}^2 - \norm{\sigma_{\bfu}^{n-1}}_{\ah}^2 + \norm{\sigma_{\bfu}^n - \sigma_{\bfu}^{n-1}}_{\ah}^2 \leq 2\Big( |\sum_{i=1}^3 \textsc{Err}_{i}^{C}(\sigma_{\bfu}^n - \sigma_{\bfu}^{n-1})| \\
        &+  | \textsc{Err}_{1}^{I}(\sigma_{\bfu}^n - \sigma_{\bfu}^{n-1})| + |\mathcal{C}(\sigma_{\bfu}^n - \sigma_{\bfu}^{n-1})|\Big).
    \end{aligned}
\end{equation} 
We want to approximate the right-hand side appropriately. This will be possible with the help of the newly established bounds. A simple use of Young's inequality gives us the following bounds:
\begin{itemize} 
\item Using \Cref{lemma: Interpolation errors HR UNS} the interpolation error is bounded as
      \begin{equation*}
        \begin{aligned}
            | \textsc{Err}_{1}^{I}(\sigma_{\bfu}^n - \sigma_{\bfu}^{n-1})| \leq C_2 \Delta t (h^{2r_u} +h^{2(k_{pr}+1)} + h^{2(k_{\lambda}+1)}) + \frac{1}{4\Delta t}\norm{\sigma_{\bfu}^n - \sigma_{\bfu}^{n-1}}_{L^2(\Gah)}^2.
        \end{aligned}
    \end{equation*}
\item  For the consistency errors considering the results in \Cref{lemma: Consistency errors HR UNS} we see that
        \begin{equation*}
            \begin{aligned}
               |\sum_{i=1}^3 \textsc{Err}_{i}^{C}(\sigma_{\bfu}^n - \sigma_{\bfu}^{n-1})| \leq  C_2  \Delta t ((\Delta t)^2 + h^{2k_g-2}) + \frac{1}{4\Delta t}\norm{\sigma_{\bfu}^n - \sigma_{\bfu}^{n-1}}_{L^2(\Gah)}^2.
            \end{aligned}
        \end{equation*}

\item Finally, using the inverse and Young's inequality appropriately, $\mathcal{C}(\cdot)$ can be bounded with the help of the estimates in \Cref{lemma: Trilinear Errors HR UNS} by
\begin{equation*}
    \begin{aligned}
        \mathcal{C}(\sigma_{\bfu}^n - \sigma_{\bfu}^{n-1}) &\leq C_2\Delta t \big(\norm{\eu^{n-1}}_{L^2(\Gah)}^2 + \norm{\eu^{n}}_{L^2(\Gah)}^2 + \norm{\eu^{n-1}}_{\ah}^2 +\norm{\eu^{n}}_{\ah}^2\big) + \frac{1}{4\Delta t}\norm{\sigma_{\bfu}^n - \sigma_{\bfu}^{n-1}}_{L^2(\Gah)}^2 \\ 
        & + ch^{-2}\norm{\sigma_{\bfu}^{n-1}}_{L^2(\Gah)}^2\Delta t \norm{\eu^n}_{\ah}^2+ \frac{1}{4\Delta t}\norm{\sigma_{\bfu}^n - \sigma_{\bfu}^{n-1}}_{L^2(\Gah)}^2\\
        &  + \frac{c}{\Delta t}\norm{\sigma_{\bfu}^{n-1}}_{L^2(\Gah)}\norm{\eu^{n}}_{L^2(\Gah)}\Delta t \big(\norm{\sigma_{\bfu}^{n-1}}_{\ah}^2 + \norm{\eu^{n}}_{\ah}^2\big) + \frac{1}{2}\norm{\sigma_{\bfu}^n - \sigma_{\bfu}^{n-1}}_{\ah}^2.
    \end{aligned}
\end{equation*}
\end{itemize}

Now combining all the above, applying the velocity error estimates in \eqref{eq: discrete remainder velocity estimates kl=ku-1 HR UNS}, \eqref{eq: Velocity Error Estimates kl=ku-1 HR UNS} and employing a kickback argument for the two terms $\frac{1}{\Delta t}\norm{\sigma_{\bfu}^n - \sigma_{\bfu}^{n-1}}_{L^2(\Gah)}^2$ and $\norm{\sigma_{\bfu}^n - \sigma_{\bfu}^{n-1}}_{\ah}^2$ and summing for $k=1,...,n$ yields
    \begin{align}
        \sum_{k=1}^n\Delta t \norm{\frac{(\sigma_{\bfu}^k - \sigma_{\bfu}^{k-1})}{\Delta t}}_{L^2(\Gah)}^2  &+ \norm{\sigma_{\bfu}^n}_{\ah}^2  + \sum_{k=1}^n\norm{\sigma_{\bfu}^k - \sigma_{\bfu}^{k-1}}_{\ah}^2
        \leq C_2\big((\Delta t)^2 + h^{2r_u} 
    + h^{2k_{pr}+2} +h^{2k_{\lambda}+2}\big) \nonumber\\
    &+ C\big(\frac{1}{\Delta t }+ch^{-2}\big)\big((\Delta t)^2 + h^{2r_u} 
    \big) \Delta t \sum_{k=1}^n (\norm{\sigma_{\bfu}^{n-1}}_{\ah}^2 + \norm{\eu^n}_{\ah}^2),  
    \end{align}
since $\sigma_{\bfu}^0 =0$ due to the choice of the initial condition, where $C$ the constant in \eqref{eq: discrete remainder velocity estimates kl=ku-1 HR UNS}. Now, since $k_u,\,k_g \geq 2$ and $\Delta t \leq ch$, another application of \eqref{eq: discrete remainder velocity estimates kl=ku-1 HR UNS}, and \eqref{eq: Velocity Error Estimates kl=ku-1 HR UNS} give our desired result.
\end{proof}

We now finish the convergence results by providing error estimates for the two pressures. First, as mentioned before, it suffices to control the discrete remainders $\{\sigma_p^n,\sigma_{\lambda}^n\}$.
\begin{lemma}\label{lemma: discrete remainder pressure estimates HR UNS}
Assume $\underline{k_\lambda = k_u-1}, \, k_g \geq 2$ and that the regularity \cref{assumption: Regularity assumptions for velocity estimate 2 UNS} hold with time-step  $\Delta t \leq ch$. Let $(\bfu,\{p,\lambda\})$ be the solution of \eqref{eq: unstead NV Lagrange}  and let $\uh^k$ and $\{\ph^k,\lh^k\}$, $k=1,...,n$ be the discrete solutions of \eqref{eq: weak lagrange fully discrete UNS} with initial condition $\uh^0=\mathcal{R}_h^b\bfu^0=\mathcal{R}_h(\bfu^0\{p^0,\lambda^0\})$. Then the following estimate holds for $1 \leq n \leq N$
\begin{equation}
    \begin{aligned} 
\label{eq: discrete remainder pressure estimates HR UNS}
            \Delta t \sum_{k=1}^{n} \norm{\sigma_{\{p,\lambda\}}^k}_{L^2(\Gah)}^2 \leq  C_2\big(1 + C\frac{h^{2r_u}}{\Delta t }\big)\big((\Delta t)^2 &+ h^{2r_u} + h^{2(k_{pr}+1)} +h^{2(k_{\lambda}+1)}\big),
    \end{aligned}
\end{equation}
where $r_u = min\{k_{u},k_g-1\}$
and constants $C_2$, $C$ the same as in \Cref{lemma: Auxiliary convergence result HR UNS} and \cref{lemma: discrete remainder velocity estimates kl=ku-1 UNS} respectively.  
\end{lemma}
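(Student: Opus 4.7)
The plan is to mirror the strategy of \Cref{lemma: discrete remainder pressure estimates improved UNS} but to use the $L^2\times L^2$ discrete \textsc{inf-sup} condition from \Cref{Lemma: Discrete inf-sup condition Gah Lagrange UNS} (since \Cref{lemma: dual estimate UNS} is no longer available for $k_\lambda = k_u-1$) together with the new auxiliary estimate \eqref{eq: Auxiliary convergence result HR UNS} which controls $\|(\sigma_{\bfu}^k-\sigma_{\bfu}^{k-1})/\Delta t\|_{L^2(\Gah)}$ rather than the inverse Stokes norm. First I would apply \eqref{eq: discrete inf-sup condition Gah Lagrange UNS} to $\{\sigma_p^n,\sigma_{\lambda}^n\} \in Q_h\times\Lambda_h$, giving
\begin{equation*}
    \beta\,\norm{\{\sigma_p^n,\sigma_{\lambda}^n\}}_{L^2(\Gah)}
    \leq \sup_{\vh\in\bfV_h}\frac{\bhtil(\vh,\{\sigma_p^n,\sigma_{\lambda}^n\})}{\norm{\vh}_{\ah}},
\end{equation*}
and then solve the first equation of the error system \eqref{eq: error equation 2 UNS} for $\bhtil(\vh,\sigma_{\{p,\lambda\}}^n)$ to obtain
\begin{equation*}
    \bhtil(\vh,\sigma_{\{p,\lambda\}}^n) = -\Big(\tfrac{\sigma_{\bfu}^n-\sigma_{\bfu}^{n-1}}{\Delta t},\vh\Big)_{L^2(\Gah)} - \ah(\sigma_{\bfu}^n,\vh) + \sum_{i=1}^3\textsc{Err}_i^C(\vh) + \textsc{Err}_1^I(\vh) + \mathcal{C}(\vh).
\end{equation*}

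Next I would bound each contribution in such a way that the test function $\vh$ appears with at most $\norm{\vh}_{\ah}$. For the discrete time-derivative term I would simply use Cauchy--Schwarz and $\norm{\vh}_{L^2(\Gah)}\le \norm{\vh}_{\ah}$, producing the factor $\norm{(\sigma_{\bfu}^n-\sigma_{\bfu}^{n-1})/\Delta t}_{L^2(\Gah)}$. The term $\ah(\sigma_{\bfu}^n,\vh)$ is obviously controlled by $\norm{\sigma_{\bfu}^n}_{\ah}\norm{\vh}_{\ah}$. For the consistency and interpolation errors I would invoke the new $L^2$-type bounds \Cref{lemma: Interpolation errors HR UNS} and \Cref{lemma: Consistency errors HR UNS}, and for the inertia error the bound \eqref{eq: Trilinear Errors general HR 1 UNS} from \Cref{lemma: Trilinear Errors HR UNS}; each of these bounds places the test function in $\norm{\vh}_{L^2(\Gah)}\le\norm{\vh}_{\ah}$ (or already in the energy norm), which is exactly the point of having gone to the standard Ritz--Stokes projection and the stronger regularity \Cref{assumption: Regularity assumptions for velocity estimate 2 UNS}.

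Dividing by $\norm{\vh}_{\ah}$, squaring, multiplying by $\Delta t$ and summing over $k=1,\dots,n$ I then expect to obtain
\begin{equation*}
    \Delta t\sum_{k=1}^n \norm{\{\sigma_p^k,\sigma_{\lambda}^k\}}_{L^2(\Gah)}^2
    \ \lesssim\ \Delta t\sum_{k=1}^n\!\Big(\!\norm{\tfrac{\sigma_{\bfu}^k-\sigma_{\bfu}^{k-1}}{\Delta t}}_{L^2(\Gah)}^2+\norm{\sigma_{\bfu}^k}_{\ah}^2\!\Big) + R_n,
\end{equation*}
where $R_n$ collects the squared consistency/interpolation/inertia bounds. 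The first two sums on the right are controlled directly by the auxiliary estimate \eqref{eq: Auxiliary convergence result HR UNS}, while $R_n$ is handled using \Cref{lemma: Interpolation errors HR UNS,lemma: Consistency errors HR UNS} together with the velocity estimates \eqref{eq: discrete remainder velocity estimates kl=ku-1 HR UNS}, \eqref{eq: Velocity Error Estimates kl=ku-1 HR UNS}.

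The main obstacle I expect is the bookkeeping inside the inertia bound \eqref{eq: Trilinear Errors general HR 1 UNS}: its last two terms contain $\norm{\sigma_{\bfu}^{n-1}}_{\ah}^{1/2}$ together with $\norm{\vh}_{\ah}^{1/2}$ and $\norm{\vh}_{\ah}$, so after squaring and summing I will need \eqref{eq: Auxiliary convergence result HR UNS} to provide an $\ell^\infty$-in-time control of $\norm{\sigma_{\bfu}^k}_{\ah}$, combined with the velocity error bound \eqref{eq: Velocity Error Estimates kl=ku-1 HR UNS} and an inverse inequality in time via the CFL-like assumption $\Delta t\le ch$, to absorb the extra factors and to produce exactly the right-hand side displayed in \eqref{eq: discrete remainder pressure estimates HR UNS}. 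A kickback argument against $\norm{\sigma_{\bfu}^n}_{\ah}^2$ (which already sits on the left-hand side of \eqref{eq: Auxiliary convergence result HR UNS}) keeps the constants of the same shape $C_2(1+Ch^{2r_u}/\Delta t)$ advertised in the statement.
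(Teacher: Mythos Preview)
Your proposal is correct and follows essentially the same route as the paper: apply the $L^2\times L^2$ inf-sup condition \eqref{eq: discrete inf-sup condition Gah Lagrange UNS}, express $\bhtil(\vh,\sigma_{\{p,\lambda\}}^n)$ via the error equation \eqref{eq: error equation 2 UNS}, bound each term using \Cref{lemma: Interpolation errors HR UNS,lemma: Consistency errors HR UNS,lemma: Trilinear Errors HR UNS}, factor out $\norm{\vh}_{\ah}$, square, sum, and close with \eqref{eq: Auxiliary convergence result HR UNS} and \eqref{eq: discrete remainder velocity estimates kl=ku-1 HR UNS}--\eqref{eq: Velocity Error Estimates kl=ku-1 HR UNS}. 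One small point: no genuine ``kickback'' is needed at the end---the term $\Delta t\sum_k\norm{\sigma_{\bfu}^k}_{\ah}^2$ and the $\ell^\infty$-in-time quantity $\norm{\sigma_{\bfu}^{k-1}}_{\ah}$ are both already controlled outright by \eqref{eq: discrete remainder velocity estimates kl=ku-1 HR UNS} and \eqref{eq: Auxiliary convergence result HR UNS}, so you simply invoke those bounds rather than absorb anything.
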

\begin{proof}
The proof follows similarly to \Cref{lemma: discrete remainder pressure estimates improved UNS}, but with the new auxiliary result \Cref{lemma: Auxiliary convergence result HR UNS}. So, recalling the $L^2 \times L^2$ \textsc{inf-sup} condition \eqref{eq: discrete inf-sup condition Gah Lagrange UNS} 
\begin{equation}
        \begin{aligned}\label{eq: Pressure conv Estimate UNS inside inf-sup HR UNS}
            \norm{\sigma_{\{p,\lambda\}}^n}_{L^2(\Gah)} \leq \sup_{\vh^n \in \bfV_h} \frac{\bhtil(\vh^n,\sigma_{\{p,\lambda\}}^n)}{\norm{\vh^n}_{\ah}}.
        \end{aligned}
    \end{equation}
and using the error equation \eqref{eq: error equation 2 UNS} coupled with the estimates proved above we find
\begin{equation*}
    \begin{aligned}
        \bhtil(\vh^n,\sigma_{\{p,\lambda\}}^n) &\leq \norm{\sigma_{\bfu}^n-\sigma_{\bfu}^{n-1}}_{L^2(\Gah)}\norm{\vh}_{L^2(\Gah)} + \norm{\sigma_{\bfu}^n}_{\ah}\norm{\vh}_{\ah} \\
        &\ + C_2(\Delta t + h^{r_u} + h^{k_{pr}+1} + h^{k_{\lambda}+1})\norm{\vh}_{L^2(\Gah)}\\
        &\ +C_2(\norm{\eu^{n-1}}_{L^2(\Gah)} + \norm{\eu^{n}}_{L^2(\Gah)} + \norm{\eu^{n-1}}_{\ah}+  \norm{\eu^{n}}_{\ah}\big) \norm{\vh}_{L^2(\Gah)} \\
        &\ + c\norm{\sigma_{\bfu}^{n-1}}_{L^2(\Gah)}^{1/2}\norm{\sigma_{\bfu}^{n-1}}_{\ah}^{1/2}\norm{\eu^{n}}_{\ah}\norm{\vh}_{\ah}.
    \end{aligned}
\end{equation*}
Substituting the inequality into \eqref{eq: Pressure conv Estimate UNS inside inf-sup HR UNS} factoring out the test function $\norm{\vh}_{\ah}$, squaring and applying the operator $\Delta t \sum_{k=1}^n$, it is clear then that the velocity error convergence results \eqref{eq: discrete remainder velocity estimates kl=ku-1 HR UNS}, and \eqref{eq: Velocity Error Estimates kl=ku-1 HR UNS} and the afore-mentioned auxiliary estimate \eqref{eq: Auxiliary convergence result HR UNS} yields us our final result.
\end{proof}

Finally, by applying the triangle inequality on the decomposition \eqref{eq: decomposition error 2 Ritz 2 UNS}, the above \Cref{lemma: discrete remainder pressure estimates HR UNS} and the Ritz-Stokes interpolation error \eqref{eq: Error Bounds Ritz-Stokes std UNS} we obtain our final theorem.
\begin{theorem}[Pressures Error Estimate II]\label{theorem: pressure estimate HR UNS}
Under the Assumptions \ref{assumption: Regularity assumptions for velocity estimate 2 UNS} and \Cref{lemma: discrete remainder pressure estimates HR UNS} the following pressure error estimates holds for $1 \leq n \leq N$
\begin{equation}
    \begin{aligned}
      \Delta t \sum_{k=1}^{n} \norm{\{\ep^n,\el^n\}}_{L^2(\Gah)}^2 \leq C_2\big(1 + C\frac{h^{2r_u}}{\Delta t }\big)\big((\Delta t)^2 &+ h^{2r_u} + h^{2(k_{pr}+1)} +h^{2(k_{\lambda}+1)}\big),
    \end{aligned}
\end{equation}
where $r_u = min\{k_{u},k_g-1\}$
and constants $C_2$, $C$ the same as in \Cref{lemma: Auxiliary convergence result HR UNS} and \cref{lemma: discrete remainder velocity estimates kl=ku-1 UNS} respectively.  
\end{theorem}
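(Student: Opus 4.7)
The statement is essentially a packaging result: it follows by applying the triangle inequality to the decomposition \eqref{eq: decomposition error 2 Ritz 2 UNS} and then invoking the two pieces that have already been established. The plan is therefore to split, bound each component optimally, and collect constants.

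First, I would start from the additive splitting
\begin{equation*}
\{\ep^k,\el^k\} = \eta^k_{\{p,\lambda\}} + \sigma^k_{\{p,\lambda\}},
\end{equation*}
where $\eta^k_{\{p,\lambda\}} = \{p^k,\lambda^k\}-\{\mathcal{P}_h^b(\bfu^k),\mathcal{L}_h^b(\bfu^k)\}$ is the standard Ritz--Stokes interpolation error and $\sigma^k_{\{p,\lambda\}} = \{\mathcal{P}_h^b(\bfu^k),\mathcal{L}_h^b(\bfu^k)\}-\{\ph^k,\lh^k\}$ is the discrete remainder. Squaring, summing over $k=1,\ldots,n$, multiplying by $\Delta t$ and using Young's inequality yields
\begin{equation*}
\Delta t \sum_{k=1}^n \norm{\{\ep^k,\el^k\}}_{L^2(\Gah)}^2 \leq 2\Delta t \sum_{k=1}^n \norm{\eta^k_{\{p,\lambda\}}}_{L^2(\Gah)}^2 + 2\Delta t \sum_{k=1}^n \norm{\sigma^k_{\{p,\lambda\}}}_{L^2(\Gah)}^2.
\end{equation*}

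The two sums are then treated separately. For the interpolation term I would apply \cref{lemma: Error Bounds Ritz-Stokes std UNS} (with $a=0$), giving
\begin{equation*}
\norm{\eta^k_{\{p,\lambda\}}}_{L^2(\Gah)} \leq c h^{m} \bigl(\norm{\bfu^k}_{H^{k_u+1}(\Ga)} + \norm{p^k}_{H^{k_{pr}+1}(\Ga)} + \norm{\lambda^k}_{H^{k_\lambda+1}(\Ga)}\bigr),
\end{equation*}
with $m = \min\{r_u, k_{pr}+1, k_{\lambda}+1\}$. Under \cref{assumption: Regularity assumptions for velocity estimate 2 UNS}, these norms are uniformly bounded in $t$ by (a part of) the constant $C_2$, so $\Delta t\sum_{k=1}^n$ contributes a factor at most $T$, producing the bound
\begin{equation*}
\Delta t \sum_{k=1}^n \norm{\eta^k_{\{p,\lambda\}}}_{L^2(\Gah)}^2 \leq c\,T\,C_2\bigl(h^{2r_u} + h^{2(k_{pr}+1)} + h^{2(k_\lambda+1)}\bigr).
\end{equation*}
For the discrete remainder I would simply invoke \cref{lemma: discrete remainder pressure estimates HR UNS}, which already supplies the full right-hand side $C_2(1+Ch^{2r_u}/\Delta t)((\Delta t)^2 + h^{2r_u} + h^{2(k_{pr}+1)} + h^{2(k_\lambda+1)})$.

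Adding the two contributions and absorbing the purely $h$-dependent terms from the interpolation part into the (larger) remainder bound finishes the proof. Since all the heavy lifting has been carried out in the error equation analysis, the discrete inverse Stokes argument, and the sharper inertia estimates of \cref{lemma: Trilinear Errors HR UNS}, there is no genuine obstacle here; the only care required is in tracking that the higher regularity assumptions on $\partial_t p$, $\partial_t \lambda$ used inside \cref{lemma: discrete remainder pressure estimates HR UNS} via \cref{lemma: Interpolation errors HR UNS} are compatible with the $L^\infty_t$ bounds for $p,\lambda$ needed here for the interpolation piece, both of which are supplied by \cref{assumption: Regularity assumptions for velocity estimate 2 UNS}.
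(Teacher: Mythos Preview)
Your proposal is correct and follows essentially the same approach as the paper: apply the triangle inequality to the decomposition \eqref{eq: decomposition error 2 Ritz 2 UNS}, bound the interpolation part by the Ritz--Stokes estimate \eqref{eq: Error Bounds Ritz-Stokes std UNS}, and bound the discrete remainder by \cref{lemma: discrete remainder pressure estimates HR UNS}. One minor inaccuracy in your closing remarks: in this $k_\lambda=k_u-1$ regime the heavy lifting was \emph{not} done via the discrete inverse Stokes operator (that route fails here, cf.\ \cref{remark: Lower Regularity convergence estimate UNS}) but rather via the direct $L^2$ bound on $(\sigma_{\bfu}^k-\sigma_{\bfu}^{k-1})/\Delta t$ in \cref{lemma: Auxiliary convergence result HR UNS}; this does not affect your argument for the theorem itself.
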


\begin{remark}[Lower Regularity convergence estimate]\label{remark: Lower Regularity convergence estimate UNS}
As mentioned before, to prove the pressure error estimates for $\underline{k_\lambda =k_u-1}$ we needed \cref{assumption: Regularity assumptions for velocity estimate 2 UNS} instead of the lower regularity \cref{assumption: Regularity assumptions for velocity estimate}. This occurred because we could not follow the steps in \cref{sec: pressure a-priori kl=ku UNS} and more particularly because \cref{lemma: dual estimate UNS} does not hold in that case, which in turn means that we cannot find an estimate for the inverse Stokes operator \eqref{eq: Discrete inverse Stokes UNS} in the energy $\ah$-norm. Let us explain in more detail why \cref{lemma: dual estimate UNS} does not hold and how a change in the scheme \eqref{eq: weak lagrange fully discrete UNS} could enable us to find pressure error estimate for the low regularity \cref{assumption: Regularity assumptions for velocity estimate}:
\begin{itemize}
    \item[-] In the proof of \cref{lemma: dual estimate UNS} in \cref{appendix: proof of dual estimate UNS} we see in \eqref{eq: dual estimate inside 2 UNS} that we need an energy stability estimate for the Leray projection $\Pi_h^{div}$ \eqref{eq: discrete Leray UNS}. When $\underline{k_\lambda =k_u-1}$ this is not possible, since we cannot ``control'' the normal part appropriately, therefore only a bound of the form $\norm{\Pi_h^{div}\bfu}_{\ah} \leq c\norm{\bfu}_{H^2(\Ga)}$ is feasible.
    \item[-] If one considers instead the leray projection $(\bfP\Pi^{div}_h\bfu,\vh)_{L^2(\Gah)} = (\bfu^{-\ell},\vh)_{L^2(\Gah)}$, for all $\vh \in \bfV_h^{div}$ then it is possible to prove $\norm{\bfP\Pi_h^{div}\bfu}_{\ah} \leq c\norm{\bfu}_{H^1(\Ga)}$.
    \item[-] Going back to  \eqref{eq: dual estimate inside 2 UNS} it is then clear  that by also  defining a new inverse Stokes operator such that $\ah(\mathcal{A}_h^P \uh, \vh) = (\uh,\bfP\vh)_{L^2(\Gah)}$, for all $\vh \in \bfV_h^{div}$, we can find an estimate similar to  \eqref{eq: dual estimate UNS} in  \cref{lemma: dual estimate UNS}, w.r.t. this new $\mathcal{A}_h^P$ inverse Stokes operator instead.
    \item[-] Now, we need an auxiliary bound similar to \eqref{eq: auxilary stab bounds UNS} for $\mathcal{A}_h^P$. However, with our current scheme \eqref{eq: weak lagrange fully discrete UNS} that is not possible, since the first term of \eqref{eq: Auxiliary stab bound inside 1} cannot be controlled, unless we consider a tangential $L^2$-norm (due to the new inverse Stokes operator $\mathcal{A}_h^P$).
    \item[-] This means that in our discrete scheme \eqref{eq: weak lagrange fully discrete UNS} we have to consider the tangential $L^2$-norm for approximating the time derivative. Therefore, the ensuing results all follow (remember that the $\ah$-norm does include an $L^2$-norm) where we replace the bounds and error estimates for the $L^2$-norm, with the tangential $L^2$-norm, e.g. $\norm{\bfP\uh}_{L^2(\Gah)}\sim \norm{\bfPh\uh}_{L^2(\Gah)}$.
\end{itemize}
\end{remark}

\section{Numerical results}\label{sec: Numerical results}
In \Cref{Sec: set-up UNS}, we start with a general setup of our experiments, defining the parameters, and specifying the notation.
In \Cref{Sec: num Varying curvature surface UNS} we examine the choice of approximation of the F.E. space $\Lambda_h$, i.e. the choice of $k_{\lambda}$, and test our numerical results against the theoretical findings. Finally, in \Cref{Sec: num comparison UNS} we compare our scheme \eqref{eq: weak lagrange fully discrete UNS}, which utilizes an extra Lagrange multiplier (L.M.) to enforce the tangentiality condition, to an extension of the penalty formulation (P.M.) for the unsteady surface Navier-Stokes;
see \cite{reusken2024analysis} for the steady Stokes case, where also an appropriate penalty parameter is used.

\subsection{Setup}\label{Sec: set-up UNS}
The numerical results were implemented using the \emph{Firedrake} package \cite{FiredrakeUserManual}, where the linear systems were solved using GMRES with the help of LU preconditioners. Experiments are carried out on sequences of regular bisection mesh-refinements, to calculate the experimental orders of convergence.

Instead of using the approximation of the inertia term $\chtil(\bullet;\bullet,\bullet)$ introduced in \eqref{eq: skew-symmetrized ch real Ph}, and \Cref{remark: inertia term Ph} and used throughout the analysis, we use the following for our implementation
\begin{equation}
    c_h(\uh^{n-1};\uh^n,\vh) : = \int_{\Gah}((\uh^{n-1}\cdot \nbgcov)\uh^n)\vh,
\end{equation}
where we see no difference in convergence quality for all quantities. Using this approximation for the inertia term means that we eventually avoid any kind of approximation involving geometric quantities such as the Weingarten map, Gauss curvature, etc. Finally, we introduce the following notation:

\begin{equation*}
    \begin{aligned}
    \eu^{L^2(\ah)} &:=  \Big(\Delta t \sum_{n=1}^N \norm{\nbgcovh(\bfu^n - \uh^n)}_{L^2(\Gah)}^2\Big)^{1/2}, \quad 
        \ep^{L^2({L^2})} := \Big(\Delta t \sum_{n=1}^N \norm{p^n-\ph^n}_{L^2(\Gah)}^2\Big)^{1/2}, \\
        \eu^{L^{\infty}({L^2})} &:=  \max_{0 \leq n \leq N}\norm{\bfu^n-\uh^n}_{L^2(\Gah)}, \qquad\qquad\qquad \bfe_{\bfPg \bfu}^{L^{\infty}({L^2})} := \max_{0 \leq n \leq N}\norm{\bfPh(\bfu^n-\uh^n)}_{L^2(\Gah)},\\
        \bfe_{\bfng}^{L^{\infty}({L^2})} &:=  \max_{0 \leq n \leq N}\norm{\uh^n\cdot\nh}_{L^2(\Gah)}.
    \end{aligned}
\end{equation*}

\subsection{Example 1: Varying curvature surface}\label{Sec: num Varying curvature surface UNS}
In this example we consider the closed and compact surface $\Ga$ in \cite[Example 4.8]{DziukElliott_acta} which is described by the level set function 
\begin{equation*}
    \phi(x) = \frac{1}{4} x_1^2 + x_2^2 + \frac{4 x_3^2}{(1 + \frac{1}{2} \sin(\pi x_1))^2} - 1 \quad x \in \mathbb{R}^3.
\end{equation*}
It is constructed by mapping from a discretised unit sphere $\mathcal{S}$ with the help of the mapping
\begin{equation*}
    F(p) = (2p_0,\, p_1,\, \frac{1}{2}p_2(1 + \frac{1}{2}\sin(2\pi p_0))) \quad p \in \mathcal{S},
\end{equation*}
such that $\Ga = F(\mathcal{S})$. We describe an exact solution of the unsteady surface Navier-Stokes problems \eqref{eq: unstead NV Lagrange}, \eqref{eq: unstead NV} as
\begin{equation}
\begin{aligned}
    \bfu = \textbf{curl}_{\Ga} \psi, \ &\text{with} \ \psi =  (1-2t)\frac{1}{2\pi}\cos(2\pi x_1)\cos(2\pi x_2)\cos(2\pi x_3),\\
    &p = \sin(\pi x_1)\sin(2\pi x_2)\sin(2\pi x_3).
\end{aligned}
\end{equation}

 \begin{figure}[h]
\centering
  \subfloat[Velocity $t=0$]{\includegraphics[width=0.45\textwidth]{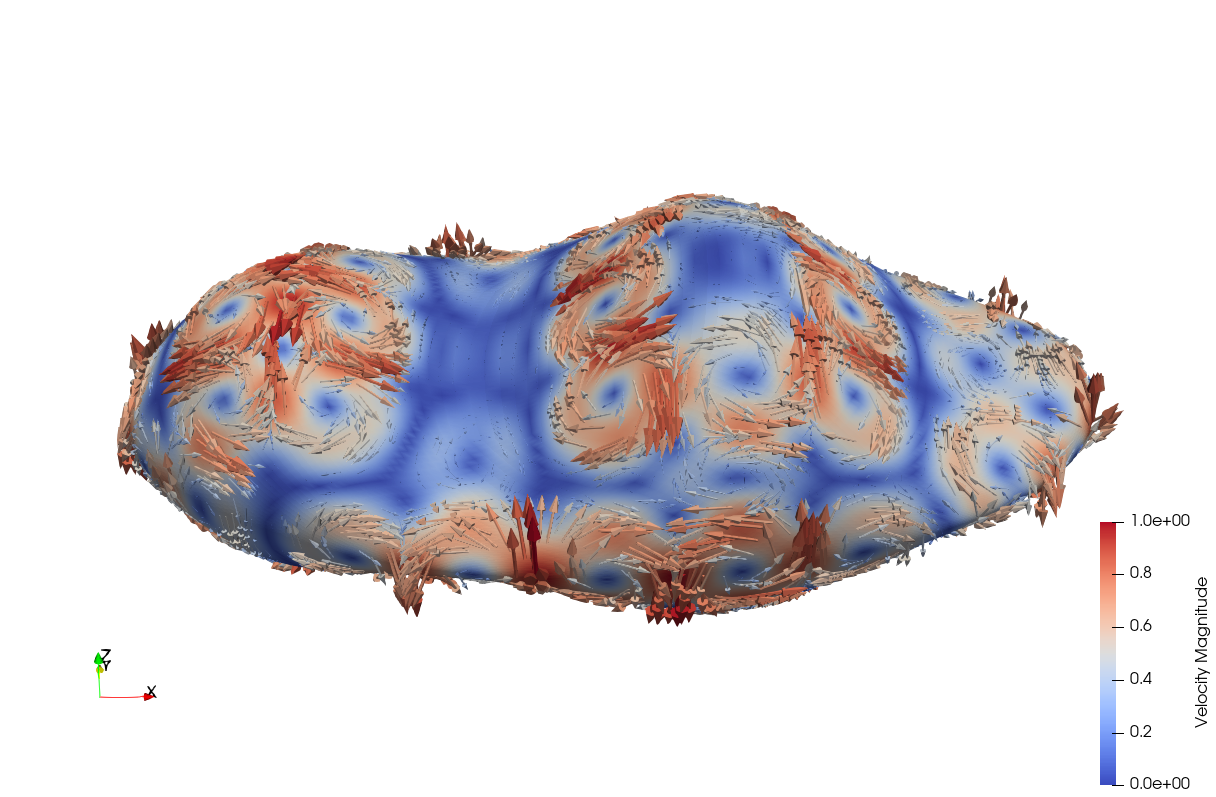}}
  \subfloat[Velocity $t=0.25$]{\includegraphics[width=0.45\textwidth]{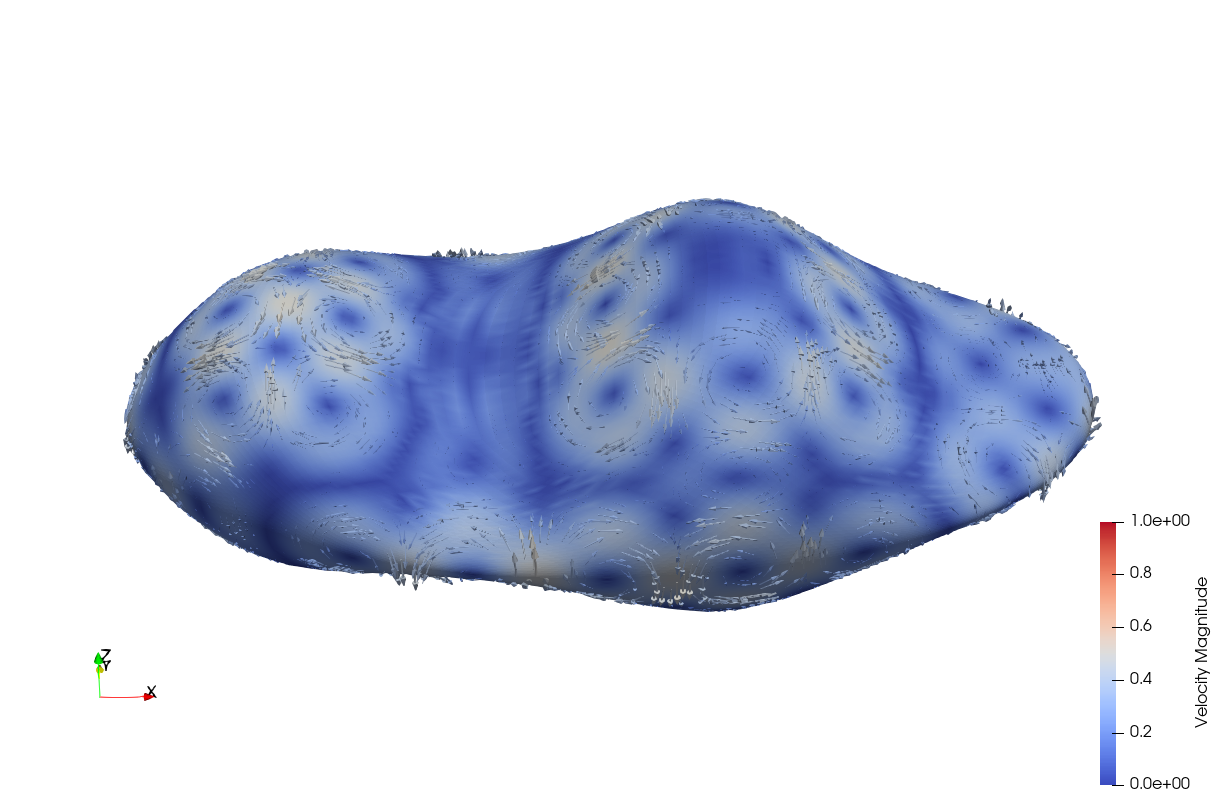}}
  \hfill
   \subfloat[Velocity $t=0.75$]{\includegraphics[width=0.45\textwidth]{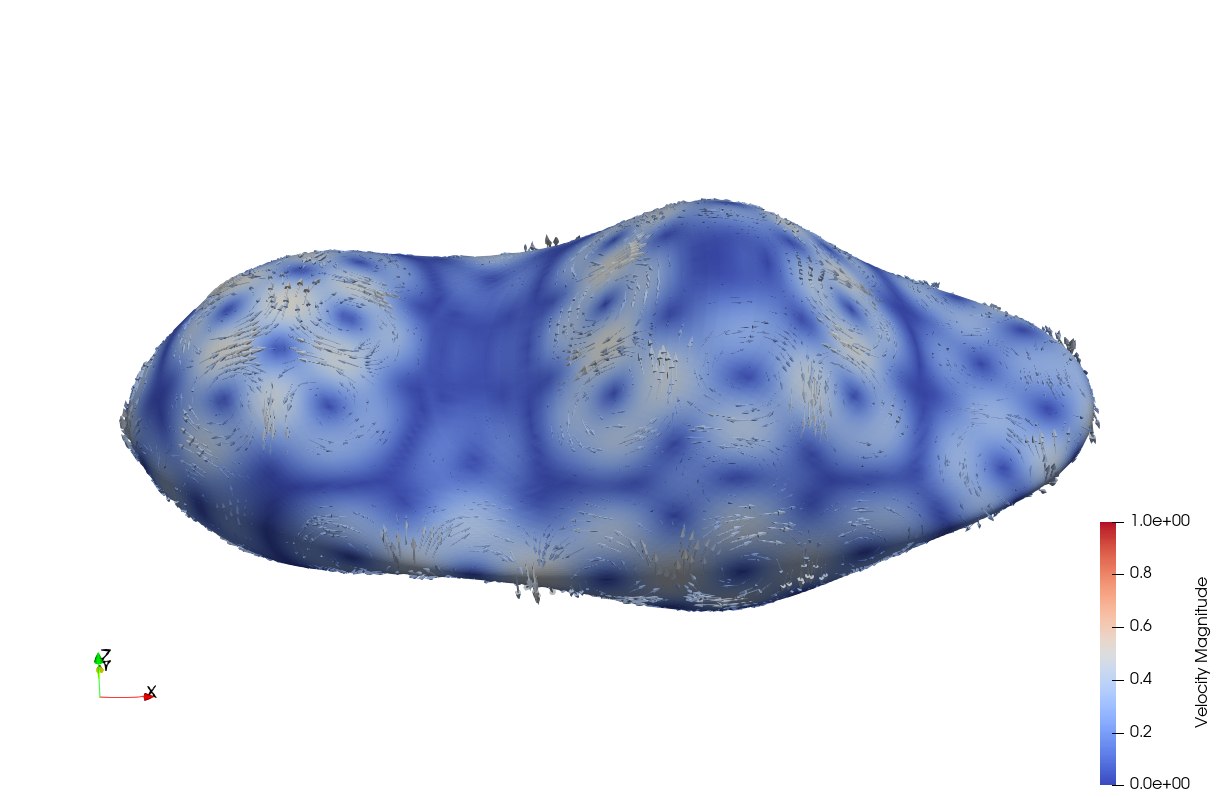}}
   \hfill
    \subfloat[Velocity $t=1$]{\includegraphics[width=0.45\textwidth]{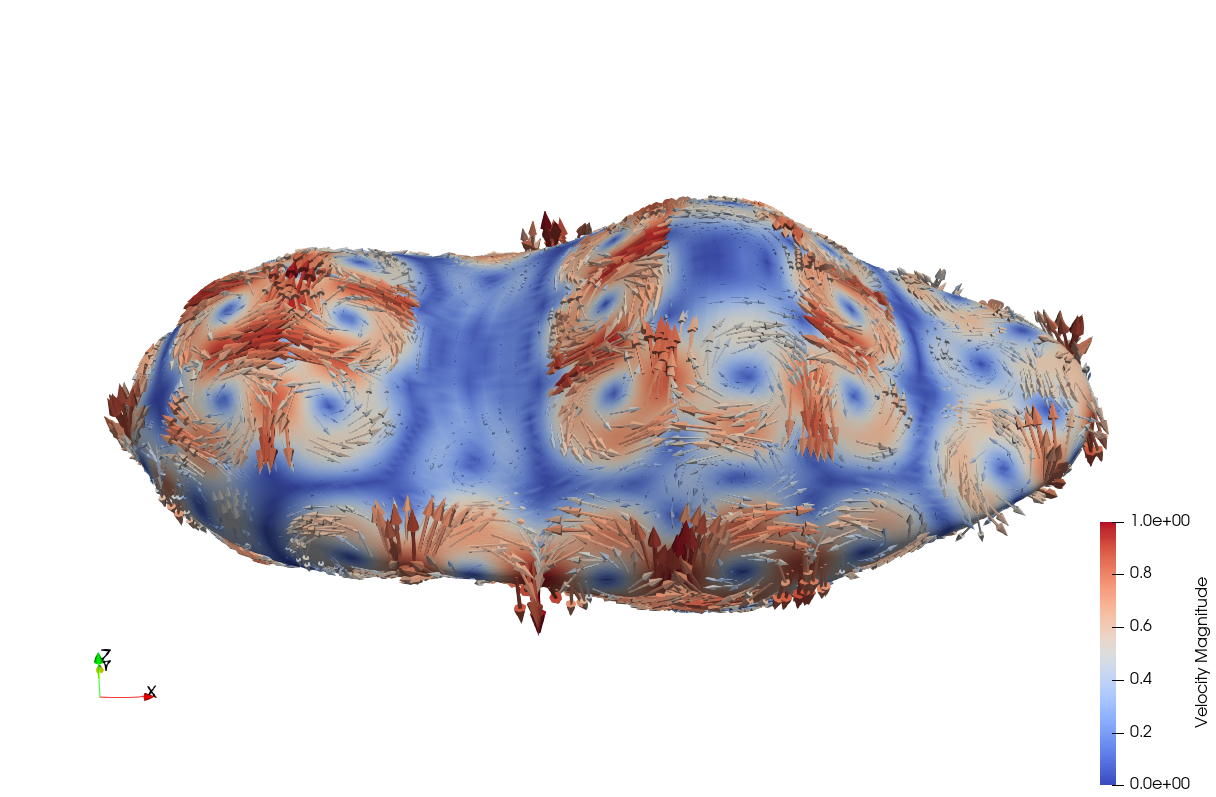}}
  \caption{Varying Curvature Surface | Velocity $\uh^n$ at different times for $k_g=3$, $k_u=2$, $k_{pr}=1$, $k_{\lambda}=1$, with mesh size $h=0.09$ and $\Delta t = 0.01$. The length and direction of the arrows depict the strength and orientation of the current.}
  \label{fig: Dziuk Velocity}
\end{figure}

From \cite{reusken2018stream} we know that the surface curl is tangential. So, our solution is tangential and divergence-free, while $p \in L^2_0(\Ga)$. The data $\bff$, $g$ , then, on the right-hand side and the Lagrange multiplier $\lambda$ can be calculated numerically with the help of the exact solution as interpolation of the smooth data. Obviously in our case $g=0$.

We compare the primitive solutions depending on the parameter choice $k_{\lambda}$. That is, the choice of the approximation space $\Lambda_h$ of the extra Lagrange multiplier $\lh$. We have our two standard \emph{cases}: When $k_\lambda = k_u=2$, and when $k_\lambda = k_u-1=1$. So, we use 
$\mathrm{\mathbf{P}}_{2}$ -- $\mathrm{P}_{1}$ -- $\mathrm{P}_{2}$ or $\mathrm{\mathbf{P}}_{2}$ -- $\mathrm{P}_{1}$ -- $\mathrm{P}_{1}$ \emph{Taylor-Hood} finite elements. Specifically, for the $\underline{k_\lambda = k_u-1}$ case, as seen in \cref{theorem: Velocity Error Estimates UNS,theorem: pressure estimate HR UNS} due to the limiting geometric convergence rate $\bigo(h^{k_g-1})$ we need to use \emph{super-parametric surface finite elements}, that is, consider higher geometric (surface) approximation compared to the degree of the velocity F.E. space $\bfV_h$, e.g. $k_g = k_u+1 =3$. On the other hand, this is not needed when $k_\lambda = k_u$.

We set the viscosity parameter $\mu =1/2$ and the density distribution $\rho=1$,  and start with an initial mesh size of $h_0=0.66$ and time-step $\Delta t_0 = 0.5$. Then, we consider a series of refinements such that the spatial refinement is halved while the temporal refinement is reduced by a factor of four, so we have $\Delta t \sim h^2$. That allows us to obtain optimal convergence; see \Cref{theorem: Velocity Error Estimates UNS,theorem: Pressures Error Estimate UNS,theorem: pressure estimate HR UNS}

\begin{figure}
    \centering
    \includegraphics[width = 0.85\textwidth]{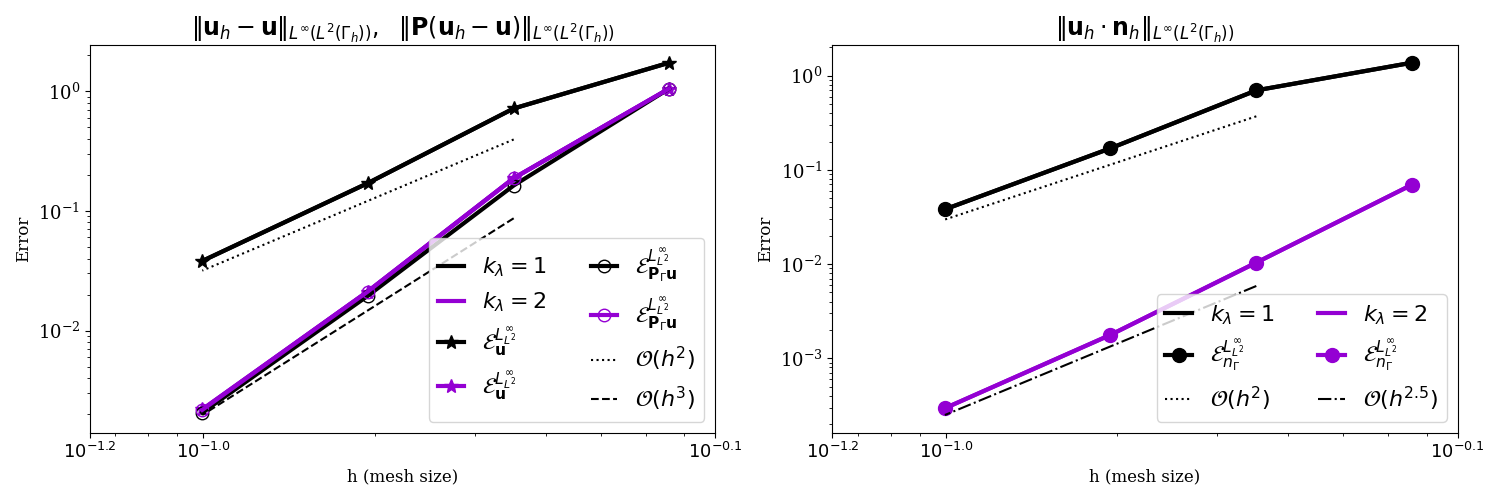}
    \caption{Varying Curvature Surface | Velocity $-$ Errors $\eu^{L^{\infty}({L^2})}$, $\bfe_{\bfPg \bfu}^{L^{\infty}({L^2})}$,  $\bfe_{\bfng}^{L^{\infty}({L^2})}$ | For different choice of $k_{\lambda}, \, k_g$| $(k_\lambda=1,\, k_g=3)$, $(k_\lambda=2,\, k_g=2)$.}
    \label{fig: DziukLagrange}
\end{figure}

The convergence results are presented in \cref{fig: DziukLagrange} and \cref{fig: DziukLagrange 2}. Regarding the covariant derivative error (energy norm) $\eu^{L^2(\ah)}$,  we observe, as expected from \Cref{theorem: Velocity Error Estimates UNS}, a second order $\bigo(h^2)$ convergence for both \emph{cases} (remember for $\underline{k_\lambda = k_u-1}$ we use super-parametric finite elements $k_g=3$). The same can be said about the $L^2_{L^2}$-norm pressure error $\ep^{L^2({L^2})}$, see \cref{theorem: Pressures Error Estimate UNS,theorem: pressure estimate HR UNS}

About the tangential $L^{\infty}_{L^2}$-norm we observe higher than expected $\bigo(h^3)$ convergence for both \emph{cases} (since $\Delta t \sim h^2$), which shows that the spatial error is dominant; see \cref{theorem: Velocity Error Estimates UNS}. This behavior, regarding the spatial error, aligns with previous findings on the optimal convergence of the tangential velocity in the Stokes case; see \cite[Section 6.4]{elliott2024sfem}. In fact, due to the tangential Ritz-Stokes bounds \eqref{eq: Error Bounds Ritz-Stokes L2 improved UNS} and \eqref{eq: Error Bounds Ritz-Stokes L2 UNS} we should expect this type of convergence, despite not deriving such estimate. Regarding the  normal approximation, determined by $\bfe_{\bfng}^{L^{\infty}({L^2})}$, we see that it exhibits the expected characteristics of $\bigo(h^2)$ order of convergence when $k_{\lambda} = k_u -1$, while when $k_{\lambda} = k_u$ we observe $\bigo(h^{2.5})$ convergence rate, which despite not deriving such estimate, exhibits similar behavior to the Stokes case; see \cite[Theorem 6.16]{elliott2024sfem} or the Ritz-Stokes estimate \eqref{eq: Error Bounds Ritz-Stokes L2 improved UNS}.

On the other hand, regarding the full $L^{\infty}_{L^2}$ velocity error $\eu^{L^{\infty}({L^2})}$, we observe that the convergence rate for the $k_\lambda=k_u$ \emph{case} ($\bigo(h^3)$) , is one order higher $(\bigo(h^2))$ compared to the $k_\lambda=k_u-1$ \emph{case}. This, again, aligns with previously proved estimates for the Stoke cases in \cite[Section 6.4]{elliott2024sfem}.


\begin{figure}
    \centering
    \includegraphics[width = 0.85\textwidth]{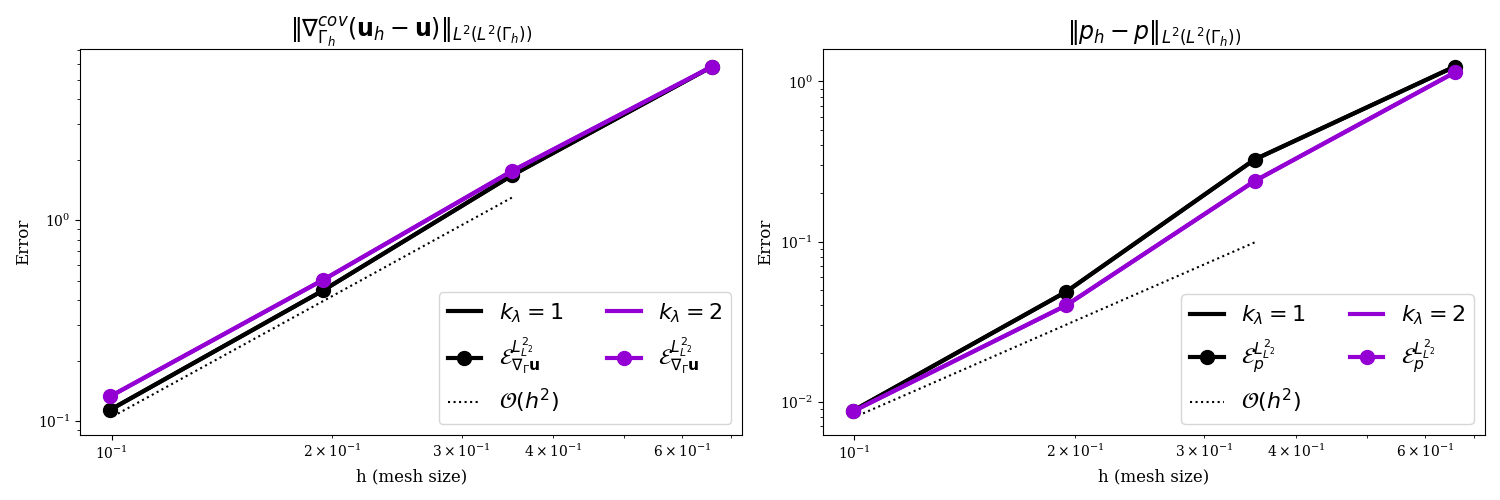}
    \caption{Varying Curvature Surface | Left: Velocity $-$ Error
 $\eu^{L^2(\ah)}$| Right: Pressure $-$ Error $\ep^{L^2({L^2})}$ | For different choice of $k_{\lambda}, \, k_g$| $(k_\lambda=1,\, k_g=3)$, $(k_\lambda=2,\, k_g=2)$.}
    \label{fig: DziukLagrange 2}
\end{figure}

\underline{To sum up the above results}: We see that the choice of $k_{\lambda}$ only plays an important role in the optimal convergence and control of the normal component of the velocity, since the tangent part of our solution convergences optimally regardless of the choice of $k_{\lambda}$ (depending, of course, on the geometric approximation $k_g$ as well). On the other hand, as noticed in \cite{fries2018higher} and in our experiments, setting $k_{\lambda}=k_u$ drastically \emph{worsens} the condition of the underlying system's equations as $h$ approaches zero. So, there is an interplay between choosing $\underline{k_{\lambda}=k_u}$ for optimal convergence (including the full  $L^{\infty}_{L^2}$ velocity) using \emph{iso-parametric surface finite elements} while worsening the system's condition and choosing $\underline{k_{\lambda}=k_u-1}$, where the condition of the system of equations is better, but one gets optimal convergence (only for tangent part of the solution) only when  using \emph{super-parametric surface finite elements}, i.e. higher approximation of the geometry.


\subsection{Example 2: Comparison} \label{Sec: num comparison UNS}
In this example we perform a simple comparison test for the Penalty method (\emph{P.M.}) and the Lagrange multiplier method (\emph{L.M.}).

We consider a  unit sphere $\mathcal{S}$ with the exact  smooth solutions of \eqref{eq: unstead NV Lagrange}, \eqref{eq: unstead NV} to be
\begin{equation}
    \bfu = J(t)\bfK_z, \quad p = (1+t)^3x_1x_2^2x_3,
\end{equation}
where $J(t) = (1 + x_3(2+0.5t)^3)$ and $\bfK_z$ the Killing vector field around the z-axis ($x_3$ in our case), with $\norm{\bfK_z} = 4$, thus it satisfies $E(\bfK_z)\equiv 0$, where $E$ the rate-of-strain tensor. The pressure also satisfies $p \in L^2_0(\Ga)$. The right-hand side $\bff, \,g$ and the Lagrange multiplier $\lambda$ can then be calculated with the help of the exact solution above. 

We consider a natural extension of the penalty formulation, as presented in \cite{reusken2024analysis} for the surface Stokes equations, in the case of the time-dependent surface Navier-Stokes equations; see also \cite{olshanskii2019penalty}. For this penalty formulation, we use $\mathrm{\mathbf{P}}_2-\mathrm{P}_1$ \emph{Taylor-Hood} iso-parametric surface finite elements with surface order approximation $k_g = 2$. In order to obtain optimal convergence we need to take care of the new penalty term introduced
\begin{equation*}
    \tau(\uh^n\cdot\nhtil,\vh\cdot\nhtil).
\end{equation*}
More specifically, we need to choose a higher-order approximation for the normal $\nhtil$, than the usual $\nh$, i.e. $\norm{\bfn-\nhtil}_{L^{\infty}(\Gah)}\leq ch^{k_g+1}$, and also choose the penalty term appropriately. For the construction of such normal $\nhtil$ see \cite[Remark 3.3]{hansbo2020analysis} for further details. In several literature \cite{hansbo2020analysis,olshanskii2019penalty,Olshanskii2018} it has also been shown that choosing $\tau = \alpha h^{-2}$ leads to optimal convergence. In our example, we observed that choosing $\tau = 2.5 h^{-2}$ produced optimal convergence.

\begin{figure}[h]
    \centering
    \includegraphics[width = 0.8\textwidth]{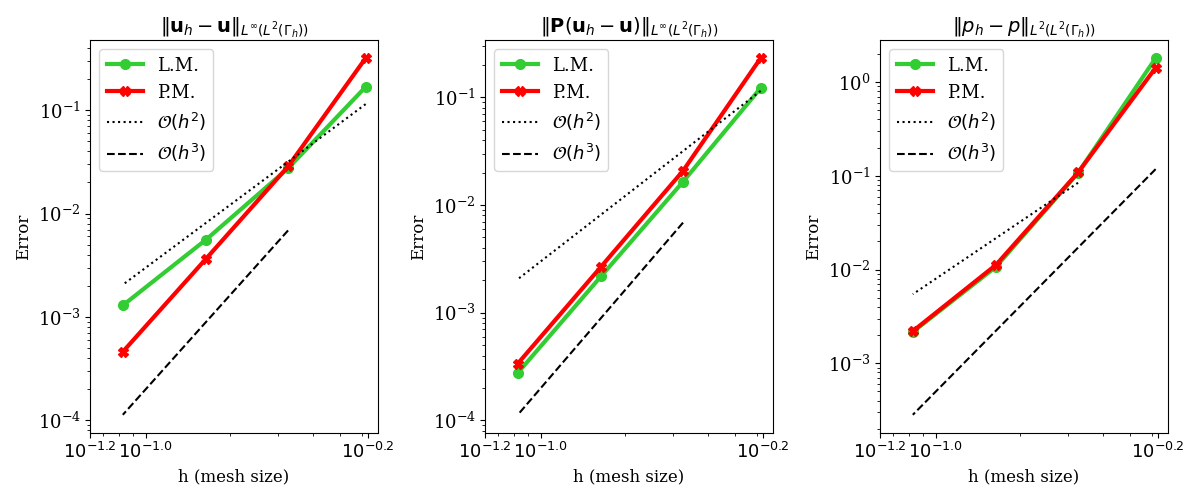}
    \caption{Sphere | Velocity-pressure $L^2_{L^2}$ norm $-$ Errors | \textcolor{BrickRed}{P.M.}: $\{k_g=2,k_u=2,k_{pr}=1,k_p=3\}$, \textcolor{OliveGreen}{L.M.}: $\{k_g=3,k_u=2,k_{pr}=1,k_{\lambda}=1\}$.}
    \label{fig: Sphere u-Pu-p}
\end{figure}

For the Lagrange multiplier method (L.M.), we prefer to use $\mathrm{\mathbf{P}}_2$ -- $\mathrm{P}_1$ -- $\mathrm{P}_1$ \emph{Taylor-Hood} \emph{super-parametric finite elements}, with surface order approximation $k_g=3$. See the above example in \Cref{Sec: num Varying curvature surface UNS} regarding the choice of approximation of the F.E. space $\Lambda_h$, i.e. the choice of $k_{\lambda}$.

In both cases, we set the viscosity parameter $\mu =1/2$ and the density distribution $\rho=1$. For both formulations of the problem, we also used a sufficiently accurate quadrature rule. We started with an initial mesh size of $h_0=0.62$ and time-step $\Delta t_0 = 0.5$. We then consider a series of refinements such that the spatial refinement is halved while the temporal refinement is reduced by a factor of four, such that $\Delta t \sim h^2$. That, then, allows us to obtain optimal convergence; see \Cref{theorem: Velocity Error Estimates UNS,theorem: pressure estimate HR UNS}

\begin{figure}[h]
    \centering
    \includegraphics[width = 0.8\textwidth]{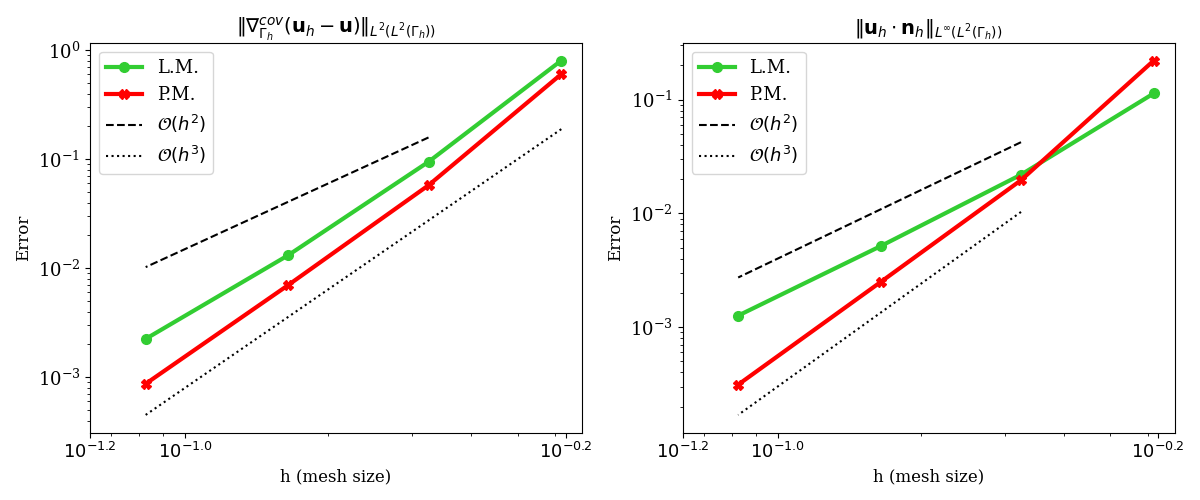}
    \caption{Sphere | Velocity $\mathcal{E}_{\nbgcov\bfu}^{L^{2}_{L^2}}-$Error (Left), $\mathcal{E}_{\bfng}^{L^{\infty}_{L^2}}-$Error (Right)\\
    | \textcolor{BrickRed}{P.M.}: $\{k_g=2,k_u=2,k_{pr}=1,k_p=3\}$,  \textcolor{OliveGreen}{L.M.}: $\{k_g=3,k_u=2,k_{pr}=1,k_{\lambda}=1\}$ .}
    \label{fig: Sphere Du-nu}
\end{figure}

In \cref{fig: Sphere u-Pu-p} we notice that the $L^{\infty}_{L^2}$ errors in the full velocity differ for the two formulations. Specifically, for the Lagrange multiplier method  $\eu^{L^{\infty}({L^2})}$ convergences with second order $\bigo(h^2)$, while for the penalty method we have optimal convergence of third order $\bigo(h^3)$. On the other hand, we see that both the $L^{\infty}_{L^2}$ errors for the tangential velocity $\bfe_{\bfPg \bfu}^{L^{\infty}({L^2})}$ agree. This follows \Cref{theorem: Velocity Error Estimates UNS} and the results that involve the optimal convergence of the tangent part of the velocity, as seen in the Stokes case \cite{elliott2024sfem}, which should also be valid in our context of time-dependent surface Navier-Stokes equations; see \cref{lemma: Error Bounds Ritz-Stokes std UNS}. Finally, we notice that both $L^2_{L^2}$ errors of the pressure $\ep^{L^2({L^2})}$ exhibits second order convergence.

In \cref{fig: Sphere Du-nu}, we also see that the $L^2_{L^2}$ error of the covariant derivative $\eu^{L^2(\ah)}$ for the two methods agree, again displaying optimal second-order convergence ($\bigo(h^2)$). Finally, we observe that the tangential condition is ``better'' enforced in the Penalty Method (P.M.), More specifically, we see that the normal velocity error $\bfe_{\bfng}^{L^{\infty}({L^2})}$ converges with order $\bigo(h^3)$ when we enforce the tangential condition via the penalty term, and $\bigo(h^2)$ when we enforce it via the Lagrange multiplier. Choosing instead \emph{iso-parametric T.-H. surface finite elements} with $k_\lambda=k_u$, would mitigate this discrepancy as seen in our previous example \cref{Sec: num Varying curvature surface UNS}.

\appendix
\section{Leray-Helmholtz decomposition}\label{appendix: leray UNS}
In this appendix we show the Leray-Helmholtz decomposition presented in Lemma \ref{lemma: Helmholtz-Leray decomposition UNS}.
\begin{lemma}[Helmholtz-Leray decomposition]\label{lemma: Helmholtz-Leray decomposition appendix}
For every $\bfu \in (H^1(\Ga))^3$ there exists unique $\phi \in H^2(\Ga)$, $\bfu_n \in (H^1(\Ga))^3$ and $\Pi^{div}(\bfu)\in \bfV^{div}$ such that 
\begin{equation*}
\bfu = \Pi^{div}(\bfu) + \bfu_n  + \nbg\phi.
\end{equation*}
\end{lemma}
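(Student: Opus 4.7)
The strategy is to first split off the normal part pointwise, then obtain the tangential weakly divergence-free part by subtracting off a gradient whose potential is produced by solving a Laplace--Beltrami problem on $\Ga$.

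\textbf{Step 1 (normal split).} Given $\bfu \in \bfH^1(\Ga)$, set $\bfu_n := (\bfu\cdot\bfng)\bfng$ and $\bfu_T := \bfPg \bfu$. Since $\Ga\in C^m$ with $m\ge 4$, $\bfng \in (C^{m-1}(\Ga))^3$, and so $\bfu_n,\bfu_T \in \bfH^1(\Ga)$.

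\textbf{Step 2 (Laplace--Beltrami problem for the potential $\phi$).} I will define $\phi$ as the unique zero--mean weak solution of
\begin{equation*}
\Delta_\Ga \phi = \divg\bfu_T \quad\text{on } \Ga, \qquad \int_\Ga \phi\,\ds = 0.
\end{equation*}
The right-hand side is in $L^2(\Ga)$ since $\bfu_T\in \bfH^1(\Ga)$. The solvability condition is
\begin{equation*}
\int_\Ga \divg\bfu_T\,\ds = \int_\Ga \bfkappa\,(\bfu_T\cdot\bfng)\,\ds = 0,
\end{equation*}
which holds by the integration-by-parts identity \eqref{eq: integration by parts cont UNS} (applied with constant test function and using $\bfu_T\cdot\bfng=0$). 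Hence existence and uniqueness of $\phi\in H^1(\Ga)\cap L^2_0(\Ga)$ follow from the standard Lax--Milgram / Fredholm argument for the surface Poisson problem on a closed surface. Elliptic regularity for $-\Delta_\Ga$ on the smooth closed surface $\Ga$ then yields $\phi\in H^2(\Ga)$ with $\|\phi\|_{H^2(\Ga)} \le c\|\divg\bfu_T\|_{L^2(\Ga)} \le c\|\bfu\|_{H^1(\Ga)}$.

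\textbf{Step 3 (definition and membership in $\bfV^{div}$).} Define
\begin{equation*}
\Pi^{div}(\bfu) := \bfu_T - \nbg \phi \in \bfH^1(\Ga).
\end{equation*}
This vector is tangential, so $(\Pi^{div}(\bfu))\cdot\bfng = 0$ pointwise, and by construction $\divg(\Pi^{div}(\bfu)) = \divg\bfu_T - \Delta_\Ga \phi = 0$ in $L^2(\Ga)$. For any test pair $\{q,\xi\}\in L^2_0(\Ga)\times L^2(\Ga)$,
\begin{equation*}
b^L(\Pi^{div}(\bfu),\{q,\xi\}) = -\int_\Ga q\,\divg(\Pi^{div}(\bfu))\,\ds + \int_\Ga \xi\,(\Pi^{div}(\bfu)\cdot\bfng)\,\ds = 0,
\end{equation*}
so $\Pi^{div}(\bfu)\in \bfV^{div}$. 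Putting the three pieces together gives the claimed decomposition $\bfu = \bfu_n + \Pi^{div}(\bfu) + \nbg\phi$.

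\textbf{Step 4 (uniqueness).} Suppose $\bfu = \bfu_n + \Pi^{div}(\bfu) + \nbg\phi = \tilde\bfu_n + \widetilde{\Pi}(\bfu) + \nbg\tilde\phi$ are two such decompositions. Taking normal components and using that $\Pi^{div}(\bfu), \widetilde{\Pi}(\bfu), \nbg\phi, \nbg\tilde\phi$ are all tangential forces $\bfu_n = \tilde\bfu_n$. Subtracting then yields $\nbg(\phi-\tilde\phi) = \widetilde{\Pi}(\bfu) - \Pi^{div}(\bfu) \in \bfV^{div}$, and taking the surface divergence gives $\Delta_\Ga(\phi-\tilde\phi) = 0$; since $\Ga$ is connected and closed this forces $\phi - \tilde\phi$ to be constant (and zero under the mean-zero normalization), so $\Pi^{div}(\bfu) = \widetilde{\Pi}(\bfu)$ and $\phi = \tilde\phi$.

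\textbf{Main obstacle.} The only non-routine point is verifying the solvability condition $\int_\Ga \divg\bfu_T\,\ds = 0$ for arbitrary $\bfu \in \bfH^1(\Ga)$ together with the $H^2$-regularity of $\phi$; both follow from the smoothness of $\Ga$ and the tangentiality of $\bfu_T$, but it is worth being explicit since $\bfu$ itself need not be tangential.
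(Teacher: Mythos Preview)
Your proof is correct and follows essentially the same approach as the paper: split off the normal component, solve the surface Laplace--Beltrami problem $\Delta_\Ga\phi=\divg\bfu_T$ with zero mean (using $\int_\Ga\divg\bfu_T\,\ds=0$ and elliptic regularity for $\phi\in H^2$), and define $\Pi^{div}(\bfu)=\bfu_T-\nbg\phi$. Your uniqueness argument in Step~4 is actually a bit more explicit than the paper's, which simply invokes uniqueness of the surface Laplace solution.
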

\begin{proof}
For $\bfu \in \bfH^1(\Ga)$, since $\bfng \in C^3(\Ga)$ we readily see that $\bfu_n = (\bfu \cdot \bfng)\bfng = u_n \bfng \in \bfH^1(\Ga)$. Now we consider the following surface Laplace problem: Let $\phi \in H^1(\Ga) \cap L^2_0(\Ga)$ be solution to,
\begin{equation*}
    \int_{\Ga}\nbg \phi \cdot \nbg \xi =  \int_{\Ga} \xi \divg(\bfu - \bfu_n),
\end{equation*}
for every $\xi \in H^1(\Ga)$. Since $\bfu \in \bfH^1(\Ga)$, we then see that $\bfu - \bfu_n \in \bfH^1_T$ and thus $ \divg(\bfu - \bfu_n) \in L^2(\Ga)$. But, we may also notice that 
\begin{equation*}
    \int_{\Ga} \divg(\bfu - \bfu_n) = \int_{\Ga} (\bfu_T\cdot \bfng) \kappa =0,
\end{equation*}
hence we also have that  $ \divg(\bfu - \bfu_n) \in L^2_0(\Ga)$. Therefore, we see that the solution is unique and well defined. With the help of the inverse Laplacian operator $\Delta_{\Ga}^{-1} : H^{-1}(\Ga) \to H^1(\Ga)$ we also see that it satisfies $\Delta_{\Ga}^{-1} \divg(\bfu - \bfu_n) = \phi$. The elliptic regularity theorem \cite{DziukElliott_acta} shows that $\norm{\nbg(\Delta_{\Ga}^{-1} \divg(\bfu - \bfu_n))}_{H^1(\Ga)} \leq \norm{\phi}_{H^2(\Ga)} \leq c\norm{\divg(\bfu - \bfu_n)}_{L^2(\Ga)}$. Setting now 
$$\Pi^{div}(\bfu) =  \bfu - \bfu_n - \nbg \phi$$
we clearly see that $\divg(\Pi^{div}(\bfu)) =0$ and $\Pi^{div}(\bfu)\cdot\bfng =0$, and and therefore  $\Pi^{div}(\bfu) \in \bfV^{div}$. Finally, the uniqueness follows directly from the uniqueness of the solution of the surface Laplace equation.
\end{proof}

\section{Proof of \Cref{lemma: dual estimate UNS}}\label{appendix: proof of dual estimate UNS}
\begin{proof}[Proof of \Cref{lemma: dual estimate UNS}]
Lifting from $\Gah$ to $\Ga$, using the decomposition in \Cref{lemma: Helmholtz-Leray decomposition appendix} since $\vh \in \bfV_h$ and $\vhl \in H^1(\Ga)$, and going back to $\Gah$, 
we see that
    \begin{align}\label{eq: dual estimate inside 0 UNS}
        (\wh,\vh)_{L^2(\Gah)} = (\wh,\Pi^{div,-\ell}\vh)_{L^2(\Gah)} + (\wh\cdot\bfn,\vh\cdot\bfn)_{L^2(\Gah)}
         + (\wh,(\nbg\phi)^{-\ell})_{L^2(\Gah)}  
    \end{align}
where $\Pi^{div,-\ell}\vh = (\Pi^{div}\vhl)^{-\ell}$, $\bfn = \bfng^{-\ell}$, and $\phi\in H^2(\Ga)$ such that $\norm{\phi}_{H^2(\Ga)}\leq \norm{\divg\vhl}_{L^2(\Ga)}\leq c\norm{\divgh\vh}_{L^2(\Gah)}\leq c\norm{\vh}_{\ah}$, where the last inequality holds due to the perturbation bound \eqref{eq: Geometric perturbations a 1 UNS} and the fact that $\norm{\divgh\vh}_{L^2(\Gah)} \leq c\norm{\nbgcovh\vh}_{L^2(\Gah)}$.

We bound its term of \eqref{eq: dual estimate inside 0 UNS} appropriately. Let us start backwards. The third term is bounded by
\begin{equation}
    \begin{aligned}\label{eq: dual estimate inside 1 UNS}
        (\wh,(\nbg\phi)^{-\ell})_{L^2(\Gah)} &= (\wh,(\nbg\phi)^{-\ell} - \nbgh \phi^{-\ell})_{L^2(\Gah)} + (\wh, \nbgh \phi^{-\ell})_{L^2(\Gah)} \\
        & = (\wh,(\nbg\phi)^{-\ell} - \nbgh \phi^{-\ell})_{L^2(\Gah)} + (\wh, \nbgh (\phi^{-\ell} - \Ih(\phi^{-\ell})))_{L^2(\Gah)}\\
        & \leq ch\norm{\wh}_{L^2(\Gah)}\norm{\vh}_{\ah}\\
    \end{aligned}
\end{equation}
where we used the fact that $\wh \in \bfV_h^{div}$ and $\Ih(\phi^{-\ell}) \in S_{h,k_g}^{k_{pr}}$ the standard Lagrange interpolant (In fact $\Ih(\phi^{-\ell}) \notin L^2_0(\Gah)$ but see that we take gradient of this interpolant), and the geometric bound \eqref{eq: Geometric perturbations nbg UNS}.

For the second term, recall again that $\wh \in \bfV_h^{div}$ with $k_{\lambda} = k_u$, therefore we use \eqref{eq: divfree L2 inner normal kl=ku UNS} and geometric error \eqref{eq: geometric errors 2 UNS} to obtain
\begin{equation}
    \begin{aligned}\label{eq: dual estimate inside 2 UNS}
        (\wh\cdot\bfn,\vh\cdot\bfn)_{L^2(\Gah)} \leq c(h+h^{k_g})\norm{\wh}_{L^2(\Gah)}\norm{\vh}_{\ah}
    \end{aligned}
\end{equation}


 We are left with the first term. It follows by the discrete Leray-projection \eqref{eq: discrete Leray UNS} that 
\begin{equation*}
    \begin{aligned}
        (\wh,\Pi^{div,-\ell}\vh)_{L^2(\Gah)} = (\wh,\Pi_h^{div}(\Pi^{div,-\ell}\vh))_{L^2(\Gah)} + \underbrace{(\wh,\Pi^{div,-\ell}\vh - \Pi_h^{div}(\Pi^{div,-\ell}\vh))_{L^2(\Gah)}}_{:=0},
    \end{aligned}
\end{equation*}
and since $\Pi^{div,-\ell}\vh$ such that $\Pi^{div}\vhl \cdot \bfng =0$ and the discrete Leray projection $\Pi_h^{div}(\Pi^{div,-\ell}\vh) \in \bfV_h^{div}$ we are able to use the inverse Stoke operator \eqref{eq: Discrete inverse Stokes UNS} to show that
\begin{equation}
    \begin{aligned}\label{eq: dual estimate inside 4 UNS}
         (\wh,\Pi_h^{div}(\Pi^{div,-\ell}\vh))_{L^2(\Gah)} &= \ah(\mathcal{A}_h\wh,\Pi_h^{div}(\Pi^{div,-\ell}\vh))\\
         &\leq c_d\norm{\mathcal{A}_h\wh}_{\ah}\norm{\Pi_h^{div}(\Pi^{div,-\ell}\vh)}_{\ah}\\
         &\leq c_d\norm{\mathcal{A}_h\wh}_{\ah}\norm{\vh}_{\ah},
    \end{aligned}
\end{equation}
where we used the Leray projection stability bound \eqref{eq: Leray stability bound improved UNS} in the last inequality.
Therefore we see that
\begin{equation}
    \begin{aligned}\label{eq: dual estimate inside 3 UNS}
        (\wh,\Pi^{div,-\ell}\vh)_{L^2(\Gah)} \leq c_d\norm{\mathcal{A}_h\wh}_{\ah}\norm{\vh}_{\ah} + c_dh\norm{\wh}_{L^2(\Gah)}\norm{\vh}_{\ah}.
    \end{aligned}
\end{equation}
Combining \eqref{eq: dual estimate inside 1 UNS}-\eqref{eq: dual estimate inside 3 UNS} along with the fact that $\norm{\vh}_{H^1(\Gah)}\leq \norm{\vh}_{\ah}$ gives us our desired result.
\end{proof}

\bibliographystyle{siam}
\bibliography{bibliography}
\end{document}